\documentclass[11pt]{amsart}
\usepackage[utf8]{inputenc}
\usepackage{Covariant_Wave}

\title[A gauge-covariant wave equation with space-time white noise]{Well-posedness of a gauge-covariant wave equation with space-time white noise forcing}

\author[Bjoern Bringmann]{Bjoern Bringmann}
\address{Bjoern Bringmann, School of Mathematics, Institute for Advanced Study, Princeton, NJ 08540 \& Department of Mathematics, Princeton University, Princeton, NJ 08544}
\email{bjoern@ias.edu}

\author[Igor Rodnianski]{Igor Rodnianski}
\address{Igor Rodnianski, Department of Mathematics, Princeton University, Princeton, NJ 08544}
\email{irod@princeton.edu}

\date{\today}

\begin{document}

\begin{abstract}
We first introduce a new model for a two-dimensional gauge-covariant wave equation with space-time white noise. In our main theorem, we obtain the probabilistic global well-posedness of this model in the Lorenz gauge. 

\noindent
Furthermore, we prove the failure of a probabilistic null-form estimate, which exposes a potential obstruction towards the probabilistic well-posedness of a stochastic Maxwell-Klein-Gordon equation. 
\end{abstract}

\maketitle

\tableofcontents

\section{Introduction}

Many of the most significant nonlinear wave equations are geometric wave equations, such as the wave map equation, the Maxwell-Klein-Gordon equation, the hyperbolic Yang-Mills equation and the Einstein equations. In contrast to scalar nonlinear wave equations, the nonlinearity in geometric wave equations comes from the curvature of the state space itself: the target manifold for the wave problem, the 
connection for the Maxwell-Klein-Gordon and the Yang-Mills equations and the metric for the Einstein equations. Simultaneously, another distinguishing feature of the geometric wave equations  is their gauge-invariance, arising from the invariance of the equations under  a group of local transformations. This, in a nutshell, can be thought of as the freedom to choose different coordinate-systems. These two phenomena: the nature of nonlinearity and gauge invariance  create interesting analytical and geometric challenges. In the last three decades, there has been many significant developments in the \emph{deterministic} theory of geometric wave equations. Since a complete (or even partial) overview of this literature is well beyond the scope of this introduction and since the focus of this paper is on the low regularity (stochastic) solutions, we limit our references to the local low-regularity well-posedness in \cite{C93,KM93,KM94,KM95,KT99,KRS15,LR15,LR17,MS04,O14,Pecher18,Pecher20,ST10,ST16,Tao01b,Tao03}, the small-data critical results in \cite{C93,KST15,KT17,RT04,Tao01a}, and the large-data critical results in \cite{C93,KL15,LO19a,LO19b,OT19,Tao04}. \\

For scalar nonlinear wave (and other dispersive) equations, there has also been significant progress towards a \emph{probabilistic} rather than \emph{deterministic} theory. The randomness is commonly introduced in the evolution problem through either random initial data and/or stochastic forcing. While the regularity of the random initial data or stochastic forcing is often beyond the threshold for deterministic well-posedness, it can still be possible to prove probabilistic well-posedness. The first theorem on the probabilistic well-posedness of a dispersive equation was obtained by \cite{B96}, who studied the two-dimensional cubic nonlinear Schr\"odinger equation with random initial data just below the scaling-critical regularity\footnote{The random initial data in \cite{B96} and many related works is drawn from the corresponding Gibbs measure, but Gibbs measures will not be discussed further here.}. More recently, there has been significant progress for higher-order nonlinearities, dispersive equations in three dimensions, and on problems from wave turbulence, see e.g. \cite{B24,B21,BDNY24,CG19,DH21b,DH23, DNY19,DNY21,DNY20,GKO18b,GKO24,OOT20,OOT21,ST21}. For a more detailed discussion, we refer the reader to the introduction of \cite{BDNY24}.  \\

Despite the significant progress towards a probabilistic theory of scalar nonlinear wave equations, there has only been little progress towards a probabilistic well-posedness theory for geometric wave equations. At this point, the primary references are \cite{BLS24} and \cite{KLS23}. In \cite{BLS24}, the first author, Lührmann, and Staffilani proved the probabilistic local well-posedness of the $(1+1)$-dimensional wave maps equation with Brownian paths as initial data. The most important aspect of \cite{BLS24} is that Brownian paths, i.e., the random initial data, are natural from both geometric and probabilistic perspectives. In \cite{KLS23}, Krieger, Lührmann, and Staffilani proved the small data probabilistic global well-posedness of the energy-critical Maxwell-Klein-Gordon equation (MKG) with initial data drawn from a Wiener randomization. The most difficult aspect of \cite{KLS23} is that the well-posedness of the energy-critical MKG is already extremely involved at the deterministic level (see e.g. \cite{KST15}), and it is therefore hard to combine the deterministic functional framework with probabilistic ideas. The random initial data in \cite{KLS23}, however, is not completely natural from a geometric perspective. In particular, it is not gauge-invariant in law.  

We note that, in comparison to random geometric wave equations, stochastic geometric parabolic equations are much better understood. We refer the reader to the review article \cite{C22} and the recent research articles \cite{BGHZ21,CC23,CC24,CCHS22,CCHS24,CS23,S21}. \\

The main goal of this article is to further explore probabilistic aspects of geometric wave equations. In \eqref{intro:eq-covariant} below, we introduce a model for gauge-covariant wave equations with space-time white noise, i.e., stochastic forcing. We consider here the simplest and most natural model: a stochastic scalar field driven by a stochastic abelian connection (vector potential). The stochastic forcing leads to randomness in both the vector potential and scalar field, coupled through the interaction between the two but {\it only} in the equation for the scalar field.
In our main theorem (Theorem \ref{intro:thm-main}), we then obtain the probabilistic global well-posedness of our model in the Lorenz gauge. 

In the last subsection of this introduction, we compare our new model with a stochastic Maxwell-Klein-Gordon equation. Through the failure of a probabilistic null-form estimate, we also expose a potential obstruction in proving probabilistic well-posedness for the stochastic Maxwell-Klein-Gordon equation. 

In view of the above, the current paper is the first step towards building a theory of stochastic geometric wave equations, with serious challenges already presenting themselves in the attempts to make sense of the {\it fully coupled} stochastic interaction between a scalar field and an abelian connection (stochastic Maxwell-Klein-Gordon). Further challenges await in constructing stochastic solutions of geometric hyperbolic {\it semilinear} and, eventually, {\it quasilinear}, equations.

\subsection{A gauge-covariant wave equation with space-time white noise}\label{section:covariant}

In this subsection, we introduce our model for a (gauge-)covariant wave equation with space-time white noise and state the main theorem of this article. 

Initially, our discussion will be performed in a general spatial dimension $d\geq 2$, but our main result only concerns the two-dimensional setting, i.e., $d=2$. 
We work on the $(1+d)$-dimensional space-time $\R_t \times \T_x^d$. Here, $\T^d := (\R \backslash 2\pi \Z)^2$ denotes the $d$-dimensional torus and thus our space-time is periodic in the spatial coordinates. We let $\eta$ be the Minkowski metric with signature $(-\, + \, \hdots \, +)$ and raise and lower all indices with respect to $\eta$. 

We let $(A_\alpha)_{\alpha=0}^d \colon \R_t \times \T_x^d \rightarrow \R^{1+d}$ be a vector potential. We then define the corresponding curvature tensor $(F_{\alpha\beta})$ and covariant derivatives $(D_\alpha)$ by
\begin{align}
F_{\alpha \beta } &:= \partial_\alpha A_\beta - \partial_\beta A_\alpha, \label{intro:eq-curvature-tensor}  \\ 
D_\alpha &:= \partial_\alpha + i A_\alpha \label{intro:eq-covariant-derivative}. 
\end{align}
We also let $\phi \colon \R_t \times \T_x^d \rightarrow \C$ be a complex-valued scalar field. The vector potential $(A_\alpha)_{\alpha=0}^d$ and scalar field $\phi$ will soon serve as the unknowns in our covariant wave equation. 
Finally, we let $\scrm \colon \R_t \rightarrow \R$ be a time-dependent mass, $\zeta \colon \R_t \times \T_x^d \rightarrow \C$ be a complex-valued stochastic forcing term, and $(J^\alpha)_{\alpha=0}^d\colon \R_t \times \T_x^d \rightarrow \R^{1+d}$ be a vector-valued stochastic forcing term.\\ 

Equipped with the above definitions, we now consider a stochastic covariant wave equation, which is given by 
\begin{equation}\label{intro:eq-covariant} 
\begin{aligned}
\partial_\alpha F^{\alpha \beta} &= J^\beta \qquad (t,x) \in \mathbb{R}_t \times \T_x^d, \\
\big( D_\alpha D^\alpha + \scrm^{\hspace{-0.4ex}2} \big) \phi &= \zeta \qquad \hspace{1.5ex} (t,x) \in \mathbb{R}_t \times \T_x^d. 
\end{aligned}
\end{equation}
We note that the evolution of the curvature tensor $F$ is completely determined by the space-time current $(J^\beta)_{\beta=0}^d$. In particular, since the space-time current $(J^\beta)_{\beta=0}^d$ will be chosen as a stochastic forcing-term, this leads to a random curvature tensor $F$.

We also note that the evolution equation of the scalar field $\phi$ is the
 Euler-Lagrange equation for 
\begin{equation}\label{intro:eq-Lagrangian}
\phi \mapsto \int_\R \dt \int_{\T^d} \dx \, \Big( \tfrac{1}{2} D_\alpha \phi \overline{D^\alpha \phi} - \tfrac{1}{2} \scrm^{\hspace{-0.4ex}2} |\phi|^2 - \Re \big( \zeta \overline{\phi} \big) \Big). 
\end{equation}
We emphasize that the massive term $-\scrm^{\hspace{-0.4ex}2} |\phi|^2$ is non-coercive. This is necessary for implementing our renormalization, see e.g. Definition \ref{vector:def-mass} and Lemma \ref{vector:lem-quadratic} below. \\

As we will see momentarily, \eqref{intro:eq-covariant} cannot (or should not) be studied for general stochastic forcing terms $\zeta$ and $(J^\beta)_{\beta=0}^d$. Instead, we now introduce assumptions on $\zeta$ and $(J^\beta)_{\beta=0}^d$ which formally guarantee both the gauge-invariance in law and the consistency of the evolution equations imposed on the curvature tensor. 

\subsubsection{Gauge-invariance} 

We first derive a condition on the stochastic forcing term $\zeta$ by formally imposing gauge-invariance. 
For any deterministic (or, more generally, progressively measurable) smooth $\varphi \colon \R_t \times \T_x^d \rightarrow \R$, we consider the gauge transformation 
\begin{equation}\label{intro:eq-gauge}
(A_\alpha, \phi) \mapsto (\widetilde{A}_\alpha,\widetilde{\phi}):= ( A_\alpha + \partial_\alpha \varphi, e^{-i\varphi} \phi ). 
\end{equation}
We note that the curvature tensor is gauge-invariant and the covariant derivative is gauge-covariant with respect to the gauge transformation in \eqref{intro:eq-gauge}. More precisely, if $\widetilde{F}$ and $\widetilde{D}_\alpha$ are the curvature tensor and covariant derivative corresponding to $\widetilde{A}$, then it holds that 
\begin{equation}\label{intro:eq-gauge-F-D}
\widetilde{F}_{\alpha\beta} = F_{\alpha \beta} \qquad \text{and} \qquad \widetilde{D}_\alpha \widetilde{\phi} = \widetilde{D_\alpha \phi}. 
\end{equation}

Using \eqref{intro:eq-gauge-F-D}, we obtain for any solution $(A_\alpha,\phi)$ of the stochastic covariant wave equation that the gauge-transformed unknowns $(\widetilde{A}_\alpha,\widetilde{\phi})$ satisfy 
\begin{equation}\label{intro:eq-covariant-gauge-transformed} 
\begin{aligned}
\partial_\alpha \widetilde{F}^{\alpha \beta} &= J^\beta \qquad \hspace{4.25ex} (t,x) \in \mathbb{R}_t \times \T_x^d, \\
\big( \widetilde{D}_\alpha \widetilde{D}^\alpha + \scrm^{\hspace{-0.4ex}2} \big) \widetilde{\phi} &= e^{-i\varphi} \zeta \qquad \hspace{1.5ex} (t,x) \in \mathbb{R}_t \times \T_x^d.
\end{aligned}
\end{equation}

From a physical perspective, it is natural to impose that the solution $(A_\alpha,\phi)$ should be gauge-invariant in law. After comparing \eqref{intro:eq-covariant} and \eqref{intro:eq-covariant-gauge-transformed}, it is therefore natural to impose that 
\begin{equation}\label{intro:eq-gauge-law-zeta}
\operatorname{Law}\big( e^{-i\varphi} \zeta \big) = \operatorname{Law} \big( \zeta\big). 
\end{equation}
In order to satisfy \eqref{intro:eq-gauge-law-zeta}, we let the stochastic forcing term $\zeta$ be a complex-valued space-time white noise. We re-emphasize that the choice of $\zeta$ as complex-valued space-time white noise is not of our own making, but is essentially dictated by gauge-invariance, i.e, dictated by physical considerations. Since space-time white noise has temporal regularity $-1/2-$ and spatial regularity $-d/2-$, which decreases in the spatial dimension, this suggests that the probabilistic well-posedness theory of our stochastic covariant wave equation is easier in lower dimensions.

\subsubsection{Continuity equation and random currents} 

We now derive a condition on the stochastic forcing terms $(J^\alpha)_{\alpha=0}^d$ from the properties of the curvature tensor. 
From the definition of the curvature tensor $F$ in \eqref{intro:eq-curvature-tensor}, it is clear that $F$ is skew-symmetric, i.e., $F_{\alpha \beta}=-F_{\beta \alpha}$ for all $0\leq \alpha, \beta \leq d$. As a result, it follows that
\begin{equation}\label{intro:eq-curvature-skew-symmetry-condition}
    \partial_\alpha \partial_\beta F^{\alpha \beta}=0. 
\end{equation}
In order for the stochastic covariant wave equation \eqref{intro:eq-covariant} to be consistent, it is therefore necessary that
\begin{equation}\label{intro:eq-continuity-equation} 
\partial_\beta J^\beta = 0. 
\end{equation}
Consistency conditions such as \eqref{intro:eq-continuity-equation} are often called continuity equations and appear naturally in many physical applications. In electrodynamics, where $J^0$ represents the charge and $(J^j)_{j=1}^d$ represents the spatial currents, \eqref{intro:eq-continuity-equation} is the conservation law for the charge. 
Due to \eqref{intro:eq-continuity-equation}, the space-time currents $(J^\alpha)_{\alpha=0}^d$ cannot be chosen independently. We now introduce a model for a space-time white noise current, in which the spatial currents $(J^j)_{j=1}^d$ are chosen independently and the time-current $J^0$ is determined by \eqref{intro:eq-continuity-equation}. 

\begin{definition}[Space-time white noise current]\label{intro:def-white-noise-current}
A vector-valued distribution $(J^\alpha)_{\alpha=0}^d \colon \R_t \times \T_x^d \rightarrow \R^{1+d}$ is called a space-time white noise current if the following two conditions are satisfied:
\begin{enumerate}[label=(\roman*)]
    \item $(J^j)_{j=1}^d$ are independent, real-valued space-time white noises. 
    \item $J^0$ satisfies $\partial_t J^0 = - \partial_j J^j$ and $J^0\big|_{t=0}=0$. 
\end{enumerate}
\end{definition}

In the following, we assume that the stochastic forcing terms $\zeta$ and $(J^\alpha)_{\alpha=0}^d$ are probabilistically independent. 
This independence assumption is rather natural and is satisfied in the context of (hyperbolic or parabolic) stochastic quantization, see e.g. \cite{CCHS24,OOT21}.

\subsubsection{Lorenz gauge} 
Due to the gauge-invariance in law of \eqref{intro:eq-covariant}, it is possible to impose a gauge condition on the vector potential. The classical gauge choices for geometric wave equations are the Coulomb gauge $\partial_j A^j =0$, the Lorenz gauge $\partial_\alpha A^\alpha=0$, and the temporal gauge $A^0=0$ (see e.g. \cite{KM94,MS04,ST10,Tao03}). In this article, we work in the Lorenz gauge and our reasons for this are two-fold: First, the Lorenz gauge leads to a complete derivative in \eqref{intro:eq-phi-Lorenz} below, which simplifies the analysis of high$\times$low-interactions (see Section \ref{section:ansatz}). Second, the Lorenz gauge leads to a system involving only hyperbolic and no elliptic equations, which enables a unified treatment of all components of the vector potential. Despite this, however, we believe that our main theorem can also be proven in the Coulomb and temporal gauges.

We now examine the covariant wave equation \eqref{intro:eq-covariant} in the Lorenz gauge. Due to the Lorenz condition $\partial_\alpha A^\alpha =0$, it holds that 
\begin{equation}\label{intro:eq-curvature-Lorenz} 
\partial_\alpha F^{\alpha \beta} 
= \partial_\alpha \partial^\alpha A^\beta - \partial_\alpha \partial^\beta A^\alpha 
= \partial_\alpha \partial^\alpha A^\beta - \partial^\beta \big( \partial_\alpha A^\alpha\big) 
= \partial_\alpha \partial^\alpha A^\beta. 
\end{equation}
Furthermore, it holds that
\begin{equation}\label{intro:eq-phi-Lorenz}
\begin{aligned}
D_\alpha D^\alpha \phi 
&= \partial_\alpha \partial^\alpha \phi + 2i \partial_\alpha \big( A^\alpha \phi) - i \big( \partial_\alpha A^\alpha \big) \phi - A_\alpha A^\alpha \phi \\
&= \partial_\alpha \partial^\alpha \phi + 2i \partial_\alpha \big( A^\alpha \phi)  - A_\alpha A^\alpha \phi . 
\end{aligned}
\end{equation}
We emphasize that the only quadratic term in \eqref{intro:eq-phi-Lorenz} contains a complete derivative, which is the main advantage of working in the Lorenz gauge. 
Thus, the stochastic covariant wave equation in the Lorenz gauge takes the form 
\begin{align}
    \partial_\alpha \partial^\alpha A^\beta &= J^\beta, \label{intro:eq-covariant-Lorenz-q1} \\ 
    \partial_\alpha A^\alpha &=0, \label{intro:eq-covariant-Lorenz-q2} \\ 
\partial_\alpha \partial^\alpha \phi + 2i \partial^\alpha \big( A_\alpha \phi \big) &= (A_\alpha A^\alpha - \scrm^{\hspace{-0.4ex}2}) \phi + \zeta. \label{intro:eq-covariant-Lorenz-q3}
\end{align}
In order to determine the vector potential $(A^\alpha)_{\alpha=0}^d$ and the scalar field $\phi$ completely, we also need to impose initial conditions. For notational simplicity (and in order to reduce the number of terms), we require that 
\begin{equation}\label{intro:eq-initial-A}
A^\alpha [0]:= \big( A^\alpha (0), \, \partial_t A^\alpha(0) \big) = 0. 
\end{equation} 
For the scalar field $\phi$, we allow non-zero initial data, i.e., impose a condition of the form 
\begin{equation}\label{intro:eq-initial-phi}
\phi [0] = \big( \phi(0), \, \partial_t \phi(0) \big)  = (\phi_0,\phi_1), 
\end{equation}
since it does not significantly increase the number of terms in our analysis. \\

After putting everything together, we arrive at the following initial value problem
\begin{align}
\partial_\alpha \partial^\alpha A^\beta &= J^\beta \qquad \hspace{17.7ex}   (t,x) \in \mathbb{R}_t \times \T_x^d, \label{intro:eq-covariant-Lorenz-e1} \\ 
\partial_\alpha A^\alpha &= 0 \qquad \hspace{19.2ex}   (t,x) \in \mathbb{R}_t \times \T_x^d, \label{intro:eq-covariant-Lorenz-e2} \\ 
\partial_\alpha \partial^\alpha \phi + 2i \partial^\alpha \big( A_\alpha \phi \big) &= (A_\alpha A^\alpha - \scrm^{\hspace{-0.4ex}2}) \phi + \zeta \qquad \hspace{1.25ex} (t,x) \in \mathbb{R}_t \times \T_x^d, \label{intro:eq-covariant-Lorenz-e3} \\
A^\alpha[0]=0, \qquad \phi[0]&=(\phi_0,\phi_1).\label{intro:eq-covariant-Lorenz-e4}
\end{align}

\begin{remark}[Random vector potential] 
The vector potential $A^\alpha$ is determined by the wave equation \eqref{intro:eq-covariant-Lorenz-e1}, the gauge condition \eqref{intro:eq-covariant-Lorenz-e2}, and the initial data $A^\alpha[0]$. As part of our stochastic gauge-covariant wave equation \eqref{intro:eq-covariant}, we have therefore introduced a model for a random vector potential. While this article is only focused on wave equations, our model for the random vector potential $A^\alpha$ may also be interesting in other settings. For example, it may be interesting to study the Schr\"{o}dinger equation for a quantum particle in the electromagnetic field corresponding to the random vector potential $A^\alpha$.
\end{remark}

\subsubsection{Renormalization} 

We now describe the final step in the derivation of our covariant wave equation with space-time white noise, which concerns a renormalization and the resulting infinite and time-dependent mass.
In our discussion of the renormalization, we specialize to the spatial dimension $d=2$. This is because the form of the renormalization depends heavily on the analytical properties of the stochastic forcing, which in turn depend heavily on the dimension.

As we will see in Section \ref{section:vector}, the vector potential $(A^\alpha)_{\alpha=0}^2$ only has spatial regularity $0-$ and, as may then be expected, the quadratic expression $A_\alpha A^\alpha$ diverges in the sense of space-time distributions. To counteract this divergence, we need to introduce a frequency-truncation and a divergent, time-dependent mass $\scrm_{\leq N}\colon \R_t \rightarrow \R$. To be precise, we let $P_{\leq N}$ be the Littlewood-Paley projection from \eqref{prelim:eq-littlewood-paley} and let $\scrm_{\leq N}$ be as in Definition \ref{vector:def-mass} below. Then, we consider the frequency-truncated initial value problem 
\begin{align}
\partial_\alpha \partial^\alpha A^\beta_{\leq N} &= P_{\leq N} J^\beta \qquad \hspace{23.4ex}   (t,x) \in \mathbb{R}_t \times \T_x^2, \label{intro:eq-covariant-Lorenz-truncated-e1} \\ 
\partial_\alpha A^\alpha_{\leq N} &= 0 \qquad \hspace{29.6ex}   (t,x) \in \mathbb{R}_t \times \T_x^2, \label{intro:eq-covariant-Lorenz-truncated-e2} \\ 
\partial_\alpha \partial^\alpha \phi_{\leq N} + 2i \partial_\alpha \big( A^\alpha_{\leq N} \phi_{\leq N} \big) &= (A_{\leq N,\alpha}A^\alpha_{\leq N} - \scrm^{\hspace{-0.4ex}2}_{\leq N}) \phi_{\leq N} + \zeta \qquad \hspace{1.25ex} (t,x) \in \mathbb{R}_t \times \T_x^2, \label{intro:eq-covariant-Lorenz-truncated-e3} \\
A^\alpha_{\leq N}[0]=0, \qquad \phi_{\leq N}[0]&=(\phi_0,\phi_1). \label{intro:eq-covariant-Lorenz-truncated-e4}
\end{align}
In \eqref{intro:eq-covariant-Lorenz-truncated-e1}-\eqref{intro:eq-covariant-Lorenz-truncated-e4}, the sub-script $\leq \hspace{-0.5ex}  N$ is used to denote the new unknowns $(A_{\leq N}^\alpha,\phi_{\leq N})$ and the only Littlewood-Paley operator occurs in $P_{\leq N} J^\beta$. 

For a fixed frequency-truncation parameter $N\geq 1$, it is easy to see that \eqref{intro:eq-covariant-Lorenz-truncated-e1}-\eqref{intro:eq-covariant-Lorenz-truncated-e4} has a unique global solution. Indeed, the inhomogeneous linear wave equation $\partial_\alpha \partial^\alpha A^\beta_{\leq N}=P_{\leq N} J^\beta$ has a unique global solution. Furthermore, it satisfies the Lorenz gauge condition $\partial_\alpha A^{\alpha}_{\leq N}=0$ and is smooth in the spatial variables. As a result, energy estimates easily imply the global well-posedness of the covariant wave equation for $\phi_{\leq N}$. Thus, our main question now concerns the convergence of $(A_{\leq N}^\alpha, \phi_{\leq N})$ as $N$ tends to infinity. 

\subsubsection{Main results} 

We previously introduced the $(1+2)$-dimensional covariant wave equation with space-time white noise in the Lorenz gauge \eqref{intro:eq-covariant-Lorenz-e1}-\eqref{intro:eq-covariant-Lorenz-e4} and its frequency-truncated version \eqref{intro:eq-covariant-Lorenz-truncated-e1}-\eqref{intro:eq-covariant-Lorenz-truncated-e4}. In our main result, we obtain the probabilistic global well-posedness of the corresponding initial value problem. In this statement, we use the notation 
\begin{equation}\label{intro:eq-Sobolev}
\mathscr{H}_x^s (\T^2) := H_x^s (\T^2) \times H_x^{s-1}(\T^2),
\end{equation}
where $s\in \R$ and $H_x^s(\T^2)$ denotes the usual $L^2$-based Sobolev space (see \eqref{prelim:eq-Sobolev} below). 

\begin{theorem}[Probabilistic global well-posedness]\label{intro:thm-main} 
Let $d=2$, let $\delta>0$, and let \mbox{$(\phi_0,\phi_1) \in \mathscr{H}_x^{1/4}(\T^2)$.}
Furthermore,  let $\zeta$ be a complex-valued space-time white noise, let  $(J^\alpha)_{\alpha=0}^2$ be a space-time white noise current, and assume that $\zeta$ and $(J^\alpha)_{\alpha=0}^2$ are probabilistically independent. For all $N\geq 1$, let $(A_{\leq N}[t],\phi_{\leq N}[t])$ be the unique global solution of \eqref{intro:eq-covariant-Lorenz-truncated-e1}-\eqref{intro:eq-covariant-Lorenz-truncated-e4}. Then, the limit 
\begin{equation}
(A[t],\phi[t]) := \lim_{N\rightarrow \infty} (A_{\leq N}[t],\phi_{\leq N}[t]) 
\end{equation}
almost surely exists in
\begin{equation}\label{intro:eq-thm-space}
\Cs_t^0 \mathscr{H}_x^{-\delta} \big( [-T,T]\times \T^2 \rightarrow \R^3) \times 
\Cs_t^0 \mathscr{H}_x^{-\delta} \big( [-T,T]\times \T^2 \rightarrow \C)
\end{equation}
for all $T>0$.
\end{theorem}

\begin{remark}
The global well-posedness follows rather easily from our local estimates, which are the main part of this article, and the algebraic structure of \eqref{intro:eq-covariant-Lorenz-truncated-e1}-\eqref{intro:eq-covariant-Lorenz-truncated-e4}. Indeed, since the vector potential $A^\alpha$ solves a stochastic linear wave equation, it obeys global estimates (see Corollary~\ref{vector:cor-regularity}). While the evolution equation for the scalar field \eqref{intro:eq-covariant-Lorenz-truncated-e3} is nonlinear in $(A^\alpha,\phi)$, it is still linear in $\phi$. Since $A^\alpha$ is already known to obey global bounds, our local estimates and a Gronwall-type argument then imply global bounds for $\phi$. \\ 
The only point of caution is that the initial data $(\phi_0,\phi_1)$ in Theorem \ref{intro:thm-main} belongs to $\mathscr{H}_x^{1/4}$, which is not preserved by the evolution of the scalar field.  However, this problem can be circumvented by iterating our estimates over a sequence of growing intervals (rather than iterating over a sequence of initial times), and we refer the reader to the proof of Proposition \ref{proof:prop-psi} for the details.
\end{remark}

We now briefly discuss the main difficulties and ideas in the proof of Theorem \ref{intro:thm-main}. During this discussion, we formally set $N=\infty$. Most of our argument deals with the Duhamel integral of the derivative nonlinearity in \eqref{intro:eq-covariant-Lorenz-e3}, i.e., 
\begin{equation}\label{intro:eq-bilinear}
\Duh \big[ \partial_\alpha (A^\alpha \phi) \big]. 
\end{equation}
In the literature on wave equations, bilinear estimates for terms similar to \eqref{intro:eq-bilinear} have a long history. Bilinear estimates were first utilized by Klainerman and Machedon in \cite{KM93} and then further studied in \cite{KM95D,KM97,KM97D,KS97,KT99,Tao01b}. For a systematic treatment, we refer the reader to \cite{DFS10,DFS12,KS02}. Due to the space-time white noise in \eqref{intro:eq-covariant-Lorenz-e1}-\eqref{intro:eq-covariant-Lorenz-e4}, however, both $A^\alpha$ and $\phi$ only have spatial regularity $0-$. This regularity is well-beyond the regime of the aforementioned bilinear estimates (see Remark \ref{operator:rem-comparison-bilinear}) and, more generally, deterministic methods\footnote{Based on \cite{CP14,Pecher20}, we expect that the deterministic well-posedness theory requires (at the very least) that $(A,\phi) \in \Cs_t^0 \mathscr{H}_x^s \times \Cs_t^0 \mathscr{H}_x^s$ with $s\geq 1/4$. In fact, the threshold $s=1/4$ shows up even in the much simpler cubic wave equation, where it corresponds to the Lorentz-critical regularity.}. For this reason, our argument is based on probabilistic rather than deterministic estimates.\\ 

One of the main ideas behind probabilistic approaches to random dispersive equations is to decompose the solution into a term with a random structure, such as the solution to the inhomogeneous linear wave equation with stochastic forcing, and a smoother remainder (see e.g. \cite{B96,B21,DNY19,GKO24}). In our setting, this method is complicated by the absence of nonlinear smoothing, which stems from low$\times$high-interactions. To motivate the absence of nonlinear smoothing, we briefly consider the Duhamel integral
\begin{equation}\label{intro:eq-absence-smoothing-motivation}
\Duh \big[ \partial_\alpha \big( A^\alpha \parall \chi \big) \big] = - \int_0^t \ds \, \sin\big((t-s)|\nabla|\big) \frac{\partial_\alpha}{|\nabla|} \big( A^\alpha_{\leq N} \parall \phi_{\leq N}\big),
\end{equation}
where $\parall$ is the low$\times$high-paraproduct from Definition \ref{prelim:def-para-products}. In \eqref{intro:eq-absence-smoothing-motivation}, the $\partial_\alpha$-derivatives are compensated by the inverse gradient $|\nabla|^{-1}$. However, as will be explained in Section \ref{section:ansatz}, there is no further gain of derivatives in the low$\times$high-interaction in \eqref{intro:eq-absence-smoothing-motivation}, and hence \eqref{intro:eq-absence-smoothing-motivation} cannot be any smoother than $\phi$. In order to isolate the main term in $\phi$, we therefore define $\chi \colon \R_t \times \T_x^2 \rightarrow \C$ as the solution of 
\begin{equation}\label{intro:eq-chi}
\partial_\alpha \partial^\alpha \chi + 2 i \partial_\alpha \big( A^\alpha \parall \chi \big) = \zeta, \hspace{10ex} \chi[0]=0.
\end{equation}
To better understand the solution of \eqref{intro:eq-chi}, we introduce
\begin{equation}\label{intro:eq-z-L}
z := \Duh \big[ \zeta \big] \qquad \text{and} \qquad \Lin[ll] \chi := 2i \Duh \big[ \partial_\alpha \big( A^\alpha \parall \chi \big) \big]. 
\end{equation}
We emphasize that both the Picard iterate $z$ and operator $\Lin[ll] $ are random. Since $z$ depends only on the complex-valued space-time white noise $\zeta$ and $\Lin[ll]$ depends only on the space-time white noise current $(J^\beta)_{\beta=0}^2$, however, $z$ and $\Lin[ll]$ are probabilistically independent. Equipped with \eqref{intro:eq-z-L}, we can then write 
\begin{equation}\label{intro:eq-chi-integral}
\chi = \big( 1 + \Lin[ll][] \big)^{-1} z. 
\end{equation}
Since the analysis of the random Picard iterate $z$ is rather elementary, the main step in the analysis of $\chi$ lies in the analysis of the random operator $\Lin[ll][]$ and its resolvent 
$( 1 + \Lin[ll][] )^{-1}$. This is done using the random tensor estimates from \cite{DNY20}, which are combined with problem-specific lattice point counting estimates (Lemma \ref{prelim:lem-basic-counting}). 
We mention that the para-controlled approach to stochastic wave equations from \cite{GKO24}, which has also been used in \cite{B24,BDNY24,OOT20,OOT21}, corresponds to the Neumann series approximation
\begin{equation*}
\big( 1 + \Lin[ll][] \big)^{-1} z = \sum_{n=0}^\infty (-1)^n \big( \Lin[ll][] \big)^n z \approx z - \Lin[ll][] z. 
\end{equation*}
Since $ \Lin[ll][]$ is not smoothing, $( \Lin[ll][])^n z $ is not smoother than $z$ or $\Lin[ll][] z$, and therefore the para-controlled approach cannot be applied to our model \eqref{intro:eq-covariant-Lorenz-e1}-\eqref{intro:eq-covariant-Lorenz-e4}.\\ 

While there is no nonlinear smoothing for low$\times$high-interactions in \eqref{intro:eq-bilinear}, both high$\times$high and high$\times$low-interactions exhibit nonlinear smoothing. For the high$\times$high-interactions, the nonlinear smoothing estimate relies heavily on the random structure of $\chi$ (see Proposition \ref{product:prop-main}). In particular, we rely on the probabilistic independence of the vector potential $A^\alpha$ and Picard iterate $z$. For the high$\times$low-interaction, the nonlinear smoothing estimate utilizes the Lorenz gauge condition, which allows us to push derivatives from $A^\alpha$ onto $\phi$ (see e.g. \eqref{intro:eq-phi-Lorenz}). As a consequence of the nonlinear smoothing estimates for high$\times$high and high$\times$low-interactions, it can be shown that the difference $\phi-\chi$ has regularity $1/4-$ and is therefore smoother than $\phi$. 

\begin{remark}
In \cite{B21}, the first author proved the probabilistic well-posedness of a quadratic derivative nonlinear wave equation on $\mathbb{R}^{1+3}$. Similar as for \eqref{intro:eq-covariant-Lorenz-e1}-\eqref{intro:eq-covariant-Lorenz-e4}, the derivative nonlinearity in \cite{B21} prevents nonlinear smoothing. However, there are significant differences between the methods of this article and \cite{B21}: First, the Ansatz in this article is much simpler than in \cite{B21}, which is possible since the random operator $\Lin[ll]$ is explicit. Second, the dispersive estimates in this article (see e.g. Proposition \ref{operator:prop-main}) are much more involved than in \cite{B21}. The reason is that the high$\times$high-product of $A^\alpha$ and $\phi$ is quite delicate, whereas the high$\times$high-products in \cite[(10)]{B21} are harmless. 
\end{remark}

\begin{remark}
In Theorem \ref{intro:thm-main}, the stochastic forcing terms  $\zeta$ and $(J^\alpha)_{\alpha=0}^2$ are chosen as a space-time white noise and space-time white noise current, respectively. While the choice of $\zeta$ is dictated by gauge-invariance, $(J^\alpha)_{\alpha=0}^2$ only has to satisfy the continuity equation \eqref{intro:eq-continuity-equation} and can therefore be chosen more freely. In Definition \ref{intro:def-white-noise-current}, we chose $(J^j)_{j=1}^2$ as a vector-valued space-time white noise, but  $(J^j)_{j=1}^2$  could also have been chosen as the image of a vector-valued space-time white noise under $\langle \nabla_x \rangle^{-\nu}$, where $\nu\in \R$. In the case $\nu>0$, the well-posedness of \eqref{intro:eq-covariant-Lorenz-truncated-e1}-\eqref{intro:eq-covariant-Lorenz-truncated-e4} can be shown using basic product estimates, since the vector potential $(A^\alpha)_{\alpha=0}^2$ has positive regularity. In the case $\nu<0$, the well-posedness of \eqref{intro:eq-covariant-Lorenz-truncated-e1}-\eqref{intro:eq-covariant-Lorenz-truncated-e4} is more difficult than in the setting of Theorem \ref{intro:thm-main}, and it poses an interesting problem for future research.
\end{remark}

\begin{remark}
In our main theorem, we prove the probabilistic well-posedness of \eqref{intro:eq-covariant-Lorenz-e1}-\eqref{intro:eq-covariant-Lorenz-e4} in  \mbox{$(1+2)$-dimensions.} Of course, \eqref{intro:eq-covariant-Lorenz-e1}-\eqref{intro:eq-covariant-Lorenz-e4} can also be studied in $(1+3)$-dimensions, but proving probabilistic well-posedness in this case seems rather challenging. In $(1+3)$-dimensions, the linear stochastic objects in $A^\alpha$ and $\phi$ have regularity $-1/2-$. Based on the lattice point counting estimates in $(1+3)$-dimensions from \cite[Lemma 4.15]{B24}, one further expects that the Duhamel integral of the high$\times$high-interactions in $\partial_\alpha (A^\alpha \phi)$ also has regularity $-1/2-$. Thus, the problem is critical with respect to the probabilistic scaling introduced in \cite{DNY19}.
\end{remark}

\begin{remark}
Using finite speed of propagation, a similar result as in Theorem~\ref{intro:thm-main} can also be obtained if the spatial domain $\T^d_x$ is replaced with the Euclidean space $\R^d_x$.
In this case, the convergence takes place in 
\begin{equation*}
\Cs_t^0 \mathscr{H}_x^{-\delta} \big( [-T,T]\times [-R,R]^2 \rightarrow \R^3) \times 
\Cs_t^0 \mathscr{H}_x^{-\delta} \big( [-T,T]\times [-R,R]^2  \rightarrow \C)
\end{equation*}
for all $T>0$ and $R>0$. We emphasize that it is important that Theorem \ref{intro:thm-main} is global-in-time since, if Theorem \ref{intro:thm-main} were only local-in-time, the time $T$ would have to be chosen as sufficiently small depending on $R$ (see e.g. \cite[Theorem 1.4]{BLS24}).
\end{remark}

\subsection{Maxwell-Klein-Gordon and the failure of a probabilistic null-form estimate}\label{section:intro-null-form} 

We recall that the $(1+d)$-dimensional (periodic) Maxwell-Klein-Gordon system is given by
\begin{alignat}{3}
\partial_\alpha F^{\alpha \beta} 
&= - \Im \big( \phi \overline{D^\beta \phi} \big)    
&\hspace{10ex}& (t,x) \in \R_t \times \T_x^d, \label{intro:eq-MKG-1} \\
D_\alpha D^\alpha \phi 
&= 0 
&& (t,x) \in \R_t \times \T_x^d. \label{intro:eq-MKG-2}
\end{alignat}
The well-posedness of \eqref{intro:eq-MKG-1}-\eqref{intro:eq-MKG-2} has been extensively studied\footnote{Most of the results on the Maxwell-Klein-Gordon equations are formulated on $\mathbb{R}^d$ instead of $\mathbb{T}^d$. Despite the elliptic aspects of the analysis, which enter through the gauge conditions, the local well-posedness theory can mostly be carried over from the Euclidean to the periodic setting.} in dimension $d=2$ in \cite{CP14,Pecher18},  dimension $d=3$ in \cite{KM94,MS04}, the energy-critical dimension $d=4$ in \cite{KST15,KL15,OT16}, and high dimensions in \cite{RT04,Pecher20}.  In order to understand probabilistic aspects of geometric wave equations, one would like to understand a stochastic version of \eqref{intro:eq-MKG-1}-\eqref{intro:eq-MKG-2}, which is given by 
\begin{alignat}{3}
\partial_\alpha F^{\alpha \beta} 
&= - \Im \big( \phi \overline{D^\beta \phi} \big)    + J^\beta 
&\hspace{10ex}& (t,x) \in \R_t \times \T_x^d, \label{intro:eq-sMKG-1} \\
D_\alpha D^\alpha \phi 
&= \zeta 
&& (t,x) \in \R_t \times \T_x^d. \label{intro:eq-sMKG-2}
\end{alignat}
Similar as in Subsection \ref{section:covariant}, gauge-invariance encourages us to choose $\zeta$ as a complex-valued space-time white noise. Due to the low regularity of space-time white noise, we again restrict ourselves to the $(1+2)$-dimensional setting, i.e., $d=2$. 
As before, the choice of the space-time current $(J^\beta)_{\beta=0}^2$ offers more flexibility. Since Theorem \ref{intro:thm-failure} below is not concerned with the space-time currents $(J^\beta)_{\beta=0}^2$, we make the simplest choice and simply set $J^\beta =0$. \\

The only difference between the stochastic Maxwell-Klein-Gordon equations \eqref{intro:eq-sMKG-1}-\eqref{intro:eq-sMKG-2} and covariant wave equation \eqref{intro:eq-covariant} is the term $\Im ( \phi \overline{D^\beta \phi})$. However, this difference turns out to be substantial. 
As we will show in Theorem \ref{intro:thm-failure} below,
 $\Im ( \phi \overline{D^\beta \phi})$ diverges as a space-time distribution at the level of the first Picard iterate. 
 This divergence is different from divergences which can be removed using renormalizations, since it concerns the probabilistically non-resonant component (see Remark \ref{intro:rem-failure} below).  In Theorem \ref{intro:thm-failure}, we actually prove a stronger claim, which not only concerns $\Im ( \phi \overline{D^\beta \phi})$ but also a certain null-form. In order to state the stronger claim, we first introduce additional notation. \\

For all $1\leq j,k\leq 2$ and $\phi,\psi\colon \R_t \times \T_x^2 \rightarrow \C$, we define the null-form
\begin{equation}\label{intro:eq-Qjk}
Q_{jk}\big( \phi, \psi \big) = \partial_j \phi \, \partial_k \psi - \partial_k \phi \,  \partial_j \psi. 
\end{equation}
We note that $Q_{11}(\phi,\psi)=Q_{22}(\phi,\psi)=0$, which implies that only $Q_{12}(\phi,\psi)$ is non-trivial. We also introduce the Leray-Projection $\Leray$, which is defined by 
\begin{equation}\label{intro:eq-Leray}
\Leray B :=  B - \Delta^{-1} \nabla \big( \nabla \cdot B \big) 
\end{equation}
for all smooth $B\colon \R_t \times \T_x^2 \rightarrow \R^2$. From a direct calculation, it follows that 
\begin{equation}\label{intro:eq-null-identity}
\Leray \Im \big( \phi \overline{\nabla \phi} \big)_j 
= \frac{1}{(2\pi)^2} \int_{\T^2} \dx  \, \Im \big( \phi \overline{\nabla \phi} \big)_j - \Delta^{-1} \partial^k \Im \big( Q_{jk}(\phi, \overline{\phi}) \big). 
\end{equation}
We note that the constant term in \eqref{intro:eq-null-identity} only appears because we are working on the torus $\T_x^2$. The identity \eqref{intro:eq-null-identity}, which rewrites the Leray-projection of $\Im \big( \phi \overline{\nabla \phi} \big)$ in terms of the null-forms, is often used when working in the Coulomb gauge (see e.g. \cite{KM94,MS04}). This is because the Coulomb condition allows the introduction of the Leray-projection into the evolution equations for the vector potential. Finally, we let $z\colon \R_t \times \T_x^2 \rightarrow \C$ be the first Picard iterate of \eqref{intro:eq-sMKG-2}, i.e., the solution of
\begin{equation}\label{intro:eq-z}
\partial_\alpha \partial^\alpha z = \zeta, \qquad z[0]=0. 
\end{equation}
Equipped with the null-forms $Q_{jk}$ and the inhomogeneous linear wave $z$, we can now state the failure of a probabilistic null-form estimate. 

\begin{theorem}[Failure of a probabilistic null-form estimate]\label{intro:thm-failure}
Let $d=2$, let $1\leq j \leq 2$, let $0<c\ll 1$ be sufficiently small, let $A=0$, and let $z$ be as in \eqref{intro:eq-z}. Then, there exist two deterministic functions $\varphi,\psi\in \Cs^\infty_c((0,\infty) \times \T_x^2 \rightarrow [-1,1])$  such that, for all $N\in \dyadic$, 
\begin{equation}\label{intro:eq-failure-1} 
 \operatorname{Var} \bigg( 
\int_\R \dt \int_{\T^2} \dx \, \varphi(t,x)  \Im \big( P_{\leq N} z \, \overline{D^j P_{\leq N} z} \big) \bigg) \geq c \log(N) - c^{-1}
\end{equation}
and 
\begin{equation}\label{intro:eq-failure-2} 
 \operatorname{Var} \bigg( 
\int_\R \dt \int_{\T^2} \dx \, \psi(t,x) \Im \big( Q_{12}\big( P_{\leq N} z, \overline{P_{\leq N}z} \big) \big) \bigg) \geq c \log(N) - c^{-1}.
\end{equation}
\end{theorem}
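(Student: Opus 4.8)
\emph{Overview of the plan.} The plan is to compute both variances exactly by Gaussian (Wick) calculus, choose the test functions so that a logarithmically long family of \emph{non-negative} frequency contributions survives, and conclude with a lattice-point count. First I would reduce to real fields: the law of a complex-valued space-time white noise is that of $\zeta_1+i\zeta_2$ with $\zeta_1,\zeta_2$ independent real space-time white noises, so (using $z[0]=0$) one may write $z=u+iv$ with $u,v$ independent, real, and solving $\partial_\alpha\partial^\alpha u=\zeta_1$, $\partial_\alpha\partial^\alpha v=\zeta_2$ with zero data. Since $A=0$ gives $D^j=\partial^j$, a direct expansion yields $\Im(z\,\overline{\partial^j z})=v\,\partial_j u-u\,\partial_j v$ and $\Im(Q_{12}(z,\overline z))=2(\partial_1 v\,\partial_2 u-\partial_2 v\,\partial_1 u)$; each is bilinear with one factor from the $u$-field and one from the independent $v$-field, so the integral against a deterministic test function has mean zero and its variance equals its second moment. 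Setting
\[
g_n(t,s):=c_0\int_0^{\min(t,s)}\frac{\sin(|n|(t-\tau))\sin(|n|(s-\tau))}{|n|^2}\,d\tau=\frac{c_0\min(t,s)}{2|n|^2}\cos(|n|(t-s))+O(|n|^{-3}),
\]
so that $\mathbb{E}[u(t,x)\,u(s,y)]=\sum_n g_n(t,s)\,e^{in\cdot(x-y)}$, and denoting by $\widehat\varphi(t,k)$ the spatial Fourier coefficient, a Wick computation using $u\perp v$, spatial translation invariance, and symmetrization in $(n,m)$ (in $x$-space the second moment is $2[\,G_N\partial_{x_j}\partial_{y_j}G_N-\partial_{x_j}G_N\,\partial_{y_j}G_N\,]$ against $\varphi(t,x)\varphi(s,y)$) gives
\[
\operatorname{Var}\Big(\int\varphi\,\Im\big(P_{\le N}z\,\overline{\partial^j P_{\le N}z}\big)\Big)=(2\pi)^4\!\!\sum_{|n|,|m|\le N}\!(n_j-m_j)^2\iint g_n g_m\,\overline{\widehat\varphi(t,n+m)}\,\widehat\varphi(s,n+m)\,dt\,ds,
\]
and the identical formula for the $Q_{12}$-functional with the null weight $(n_1 m_2-n_2 m_1)^2$ in place of $(n_j-m_j)^2$. (Since $P_{\le N}$ is a Littlewood--Paley projection, $\mathbf 1_{|n|\le N}$ is really a smooth non-negative multiplier equal to $1$ on $|n|\le N/2$, which is all that is used.)

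\emph{Choice of test functions.} For \eqref{intro:eq-failure-1} I would take $\varphi(t,x)=\rho(t)$ with $\rho\in\Cs^\infty_c((0,\infty)\rightarrow[0,1])$, $\rho\not\equiv0$. Then $\widehat\varphi(t,k)=\rho(t)\mathbf 1_{k=0}$, only $m=-n$ survives, and the variance equals $4(2\pi)^4\sum_{|n|\le N}n_j^2\iint g_n(t,s)^2\rho(t)\rho(s)\,dt\,ds$, a sum of non-negative terms. As $\iint g_n^2\rho\rho=\tfrac{c_0^2}{8}|n|^{-4}\iint\min(t,s)^2\rho\rho+O(|n|^{-5})$ and $\iint\min(t,s)^2\rho(t)\rho(s)\,dt\,ds>0$, this is bounded below by a positive multiple of $\sum_{1\le|n|\le N/2}n_j^2|n|^{-4}=\pi\log N+O(1)$ (for $j=1,2$ alike), which gives \eqref{intro:eq-failure-1}. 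For \eqref{intro:eq-failure-2} the null weight $(n_1 m_2-n_2 m_1)^2$ vanishes on the diagonal $m=-n$, so a test function independent of $x$ gives variance exactly $0$; I would instead take $\psi(t,x)=\sigma(t)\cos x_1$ with $\sigma\in\Cs^\infty_c((0,\infty)\rightarrow[0,1])$, $\sigma\not\equiv0$, so that only $n+m=\pm e_1$ (with $e_1=(1,0)$) contributes. There $(n_1 m_2-n_2 m_1)^2=n_2^2$ and $|n|-|m|=\pm n_1/|n|+O(|n|^{-1})\in[-1,1]$, and the temporal integral becomes $\iint\sigma\sigma\,g_n g_m=\tfrac{c_0^2}{8}|n|^{-2}|m|^{-2}\big(\Psi_\sigma(|n|-|m|)+\Psi_\sigma(|n|+|m|)\big)+O(|n|^{-5})$, where $\Psi_\sigma(\theta):=\iint\sigma(t)\sigma(s)\min(t,s)^2\cos(\theta(t-s))\,dt\,ds$. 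Writing $\min(t,s)^2=\int_0^\infty 2r\,\mathbf 1_{r<t}\mathbf 1_{r<s}\,dr$ shows $\Psi_\sigma(\theta)=\int_0^\infty 2r\,\big|\int_r^\infty\sigma(t)e^{i\theta t}\,dt\big|^2\,dr\ge0$, and $\Psi_\sigma$ is smooth, even, and strictly positive at every $\theta$ since $\sigma\not\equiv0$. Dropping the non-negative $\Psi_\sigma(|n|+|m|)$ terms and absolutely summing the $O(|n|^{-5})$ errors (total $O(1)$), the variance is bounded below by a positive multiple of $\sum_{1\le|n|\le N/2}\tfrac{n_2^2}{|n|^4}\Psi_\sigma\!\big(\tfrac{n_1}{|n|}\big)$, which by equidistribution of lattice points on circles equals $\big(\int_0^{2\pi}\sin^2\theta\,\Psi_\sigma(\cos\theta)\,d\theta\big)\log N+O(1)$ with a strictly positive angular integral; this gives \eqref{intro:eq-failure-2}, uniformly in $j$. (Both $\varphi,\psi$ take values in $[0,1]\subset[-1,1]$ and are supported in $[a,b]\times\T^2_x$ for some $0<a<b<\infty$, hence lie in the required class, and all $O(1)$ errors are absorbed into $-c^{-1}$ after shrinking $c$.)

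\emph{Main obstacle.} The genuinely delicate step is the null-form case: with the null weight present, the only frequency pairs escaping the cancellation at $m=-n$ are off-diagonal, and for those the $(t,s)$-integral carries the product $\cos(|n|(t-s))\cos(|m|(t-s))$ whose slowly-oscillating part $\cos((|n|-|m|)(t-s))$ has no a priori sign. Recognising, via the Fubini identity for $\min(t,s)^2$, that the relevant integral is a manifest modulus squared $\Psi_\sigma\ge0$ is exactly what makes the $\log N$ many contributions \emph{add} rather than cancel; this positivity is also the conceptual reason the divergence sits on the probabilistically non-resonant component and cannot be removed by a deterministic renormalization, cf.\ Remark~\ref{intro:rem-failure}. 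Everything else is routine: the Wick expansion; the elementary estimates $\sum_{1\le|n|\le N}n_j^2|n|^{-4}=\pi\log N+O(1)$ and $\sum_{1\le|n|\le N}|n|^{-2}f(n/|n|)=(\log N)\int_{S^1}f\,d\theta+O(1)$ for continuous $f$; and the verification that the $O(|n|^{-3})$ tail of $g_n$, the high-modulation contributions, and the $\Psi_\sigma(|n|+|m|)$ terms are absolutely summable in $n$.
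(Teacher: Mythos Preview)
Your proof is correct and takes a genuinely different route from the paper's.

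The paper first reduces \eqref{intro:eq-failure-1} to \eqref{intro:eq-failure-2} via the Leray-projection identity \eqref{intro:eq-null-identity}: writing $\Leray\Im(z\overline{\nabla z})$ in terms of $Q_{12}$, one sees that a suitable derivative of any $\psi$ realising \eqref{intro:eq-failure-2} also realises \eqref{intro:eq-failure-1}. For \eqref{intro:eq-failure-2} the paper takes $\psi(t,x)=\chi(t)e^{i\langle m,x\rangle}$ and obtains positivity of the $(t,t')$-integral by choosing $\chi$ supported in an interval of length $\lesssim |m|^{-1}$, so that $\big||k+m|-|k|\big|\cdot|t-t'|\le |m|\cdot|t-t'|\ll 1$ forces $\cos\big((t-t')(|k+m|-|k|)\big)\ge\tfrac12$ pointwise.

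You instead prove both estimates directly. For \eqref{intro:eq-failure-1} you take $\varphi=\rho(t)$ spatially constant, which collapses the Wick sum to the diagonal $m=-n$; there the temporal integrand is $g_n(t,s)^2\rho(t)\rho(s)\ge 0$, so positivity is free and no reduction to \eqref{intro:eq-failure-2} is needed. This is cleaner than the paper's indirect route. For \eqref{intro:eq-failure-2} you take $\psi=\sigma(t)\cos x_1$ and extract positivity from the identity
\[
\Psi_\sigma(\theta)=\iint \sigma(t)\sigma(s)\min(t,s)^2\cos(\theta(t-s))\,dt\,ds=\int_0^\infty 2r\Big|\int_r^\infty\sigma(t)e^{i\theta t}\,dt\Big|^2\,dr\ge 0,
\]
which holds for \emph{any} $\sigma\in C_c^\infty((0,\infty)\to[0,1])$ and all $\theta\in\R$. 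This is a nicer structural observation than the paper's time-localisation trick (which ties the support of $\chi$ to the spatial frequency $m$), and it makes transparent why the non-resonant contributions add rather than cancel. The trade-off is that your argument for \eqref{intro:eq-failure-2} then needs a mild equidistribution input to evaluate $\sum_{|n|\le N}|n|^{-2}f(n/|n|)$, whereas the paper's pointwise lower bound $\cos(\cdot)\ge\tfrac12$ reduces directly to the elementary divergence of $\sum_k (k_1m_2-k_2m_1)^2|k|^{-2}|k+m|^{-2}$.
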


\begin{remark}\label{intro:rem-failure}
As mentioned above, the divergence in \eqref{intro:eq-failure-1} and \eqref{intro:eq-failure-2} is different from divergences which are often removed using renormalizations (such as the divergence of $A_{\leq N,\alpha} A^{\alpha}_{\leq N}$ discussed in Subsection \ref{section:covariant}).
This is because \eqref{intro:eq-failure-1} and \eqref{intro:eq-failure-2} involve the variance and not the expectation of the space-time integrals. 
\end{remark}

\begin{remark}
From the proof of Theorem \ref{intro:thm-failure}, one can see that the logarithmic divergence in \eqref{intro:eq-failure-2} is not present if, in \eqref{intro:eq-z},  $\zeta$ is replaced with $\langle \nabla_x \rangle^{-\nu} \zeta$, where $\nu>0$. However, if the same replacement is made in \eqref{intro:eq-sMKG-2}, then the stochastic Maxwell-Klein-Gordon equations  \eqref{intro:eq-sMKG-1}-\eqref{intro:eq-sMKG-2} are not gauge-invariant.
\end{remark}

We note that Theorem \ref{intro:thm-failure} does not strictly imply the ill-posedness of the stochastic Maxwell-Klein-Gordon equation \eqref{intro:eq-sMKG-1}-\eqref{intro:eq-sMKG-2}. However, it does expose a potential obstruction towards proving probabilistic well-posedness and prevents us from using Picard iteration schemes (without additional ingredients). \\

\textbf{Acknowledgements:} The authors thank Ilya Chevyrev, Sung-Jin Oh, and Daniel Tataru for interesting discussions during the preparation of this work. 
The authors also thank the anonymous referees for helpful comments that improved the quality of the manuscript. B.B. was partially supported by the NSF under Grant No. DMS-1926686. I.R. is partially supported by a Simons 
Investigator Award.

\section{Ansatz}\label{section:ansatz}

In this section, we describe our Ansatz for solving \eqref{intro:eq-covariant-Lorenz-truncated-e1}-\eqref{intro:eq-covariant-Lorenz-truncated-e4}, i.e., the covariant wave equation with space-time white noise in the Lorenz gauge. We first rigorously define all terms in our Ansatz and state the main proposition of this section (Proposition \ref{ansatz:prop-ansatz}). At the end of this section, we then describe the heuristic motivation behind our Ansatz. 

We first define the Duhamel integral and the wave propagator by  
\begin{align}
  \Duh \big[ G \big] 
  &:= - \int_0^t \ds \, \frac{\sin\big( (t-s)|\nabla|\big)}{|\nabla|} G(s), \label{ansatz:eq-duhamel} \\   
\Wp(t) (\phi_0,\phi_1) &:= \cos\big( t |\nabla|\big) \phi_0 + \frac{\sin\big(t |\nabla|\big)}{|\nabla|} \phi_1, \label{ansatz:eq-wp}
\end{align}
for all $t \in \R$, smooth $G\colon \R_t \times \T_x^2 \rightarrow \C$, and smooth $\phi_0,\phi_1 \colon \T_x^2 \rightarrow \C$. If $(J^\alpha)_{\alpha=0}^2$ is the space-time white noise current from Definition \ref{intro:def-white-noise-current} and $\zeta$ is a complex-valued space-time white noise, we define the vector potential $A$, its frequency-truncation $A_{\leq N}$, and the complex-valued field $z$ by 
\begin{align}
A^\alpha &:= \Duh \big[  J^\alpha \big] \qquad \text{for all } 0 \leq \alpha \leq 2, \label{ansatz:eq-A} \\
A^\alpha_{\leq N}&:= \Duh \big[ P_{\leq N} J^\alpha \big] \qquad \text{for all } 0 \leq \alpha \leq 2, \label{ansatz:eq-A-truncated} \\ 
z &:= \Duh \big[ \zeta \big] \label{ansatz:eq-z}. 
\end{align}
We note that, since the evolution equations for $A^\alpha$ and $A^\alpha_{\leq N}$ are linear, constant-coefficient wave equations, it holds that $A_{\leq N}^\alpha= P_{\leq N} A^\alpha$. In the same spirit, we now simplify our notation by writing
\begin{equation}
A^\alpha_K := P_K A^\alpha. 
\end{equation}
As we will see in Section \ref{section:vector} below, both the vector potential $A^\alpha_{\leq N}$ and the complex-valued field $z$ have regularity $0-$. Using the notation from \eqref{ansatz:eq-A}-\eqref{ansatz:eq-z}, we can rewrite \eqref{intro:eq-covariant-Lorenz-truncated-e3} as 
\begin{equation}\label{ansatz:eq-phiN-a} 
\phi_{\leq N} + 2 i \Duh \Big[ \partial_\alpha \big( A^\alpha_{\leq N} \phi_{\leq N} \big) \Big] = 
z + 
\Duh \Big[ \Big( A_{\leq N,\alpha} A_{\leq N}^\alpha - \scrm^{\hspace{-0.4ex}2}_{\leq N}\Big) \phi_{\leq N} \Big] 
+ \Wp(t) (\phi_0,\phi_1). 
\end{equation}
As will be discussed at the end of this section, \eqref{ansatz:eq-phiN-a} cannot be treated as a perturbation of the constant-coefficient linear wave equation. To be slightly more precise, the low$\times$high-interactions in $\partial_\alpha (A^\alpha_{\leq N} \phi_{\leq N})$ do not exhibit nonlinear smoothing and the high$\times$high-interactions in $\partial_\alpha (A^\alpha_{\leq N} \phi_{\leq N})$ cannot be defined without additional structural information on $\phi_{\leq N}$. In order to overcome these difficulties, we let $\parall$, $\parasim$, and $\paragg$ be the low$\times$high, high$\times$high, and high$\times$low-paraproducts from Definition \ref{prelim:def-para-products} below. We then define the three random operators 
\begin{align}
\Lin[ll][\leq N] \phi 
&:= 2i \Duh \Big[ \partial_\alpha \Big( A_{\leq N}^\alpha \parall \phi \Big) \Big] \label{ansatz:eq-Lin-ll}, \\ 
\Lin[sim][\leq N] \phi 
&:= 2i \Duh \Big[ \partial_\alpha \Big( A_{\leq N}^\alpha \parasim \phi \Big) \Big] \label{ansatz:eq-Lin-sim}, \\ 
\Lin[gg][\leq N] \phi
&:= 2i \Duh \Big[ \partial_\alpha \Big( A_{\leq N}^\alpha \paragg \phi \Big) \Big] \label{ansatz:eq-Lin-gg}.
\end{align}
We note that, due to the definitions in \eqref{ansatz:eq-Lin-ll}, \eqref{ansatz:eq-Lin-sim}, and \eqref{ansatz:eq-Lin-gg}, it holds that 
\begin{equation}\label{ansatz:eq-Lin-sum}
\Big( \Lin[ll][\leq N] + \Lin[sim][\leq N] + \Lin[gg][\leq N] \Big) \phi = 2i \Duh \Big[ \partial_\alpha \Big( A^\alpha_{\leq N} \phi \Big) \Big].  
\end{equation}
Equipped with our definitions, we can now state our Ansatz in the form of a proposition. 

\begin{proposition}[Ansatz]\label{ansatz:prop-ansatz}
Let $N\in \dyadic$, let $A_{\leq N}$ be as in \eqref{ansatz:eq-A-truncated}, let $\chi_{\leq N}, \psi_{\leq N}\colon \R_t \times \T_x^2 \rightarrow \C$, and let 
$\phi_{\leq N} = \chi_{\leq N} + \psi_{\leq N}$. Furthermore, assume that $(\chi_{\leq N},\psi_{\leq N})$ is a solution of 
\begin{align}
\chi_{\leq N} + \Lin[ll][\leq N] \chi_{\leq N} &= z \label{ansatz:eq-chi}
\end{align}
and 
\begin{align}
\psi_{\leq N} + \Lin[ll][\leq N] \psi_{\leq N}  
=& \, - \big( \Lin[sim][\leq N] + \Lin[gg][\leq N]\big) \big( \chi_{\leq N} + \psi_{\leq N} \big) \label{ansatz:eq-psi-1} \\
+& \,  \Duh \Big[ \Big( A_{\leq N,\alpha} A_{\leq N}^\alpha - \scrm^{\hspace{-0.4ex}2}_{\leq N}\Big) \big(\chi_{\leq N}+\psi_{\leq N}\big) \Big] 
+ \Wp(t) (\phi_0,\phi_1).\label{ansatz:eq-psi-2}
\end{align}
Then, $(A_{\leq N},\phi_{\leq N})$ solves the covariant wave equation with space-time white noise in the Lorenz gauge, i.e., \eqref{intro:eq-covariant-Lorenz-truncated-e1}-\eqref{intro:eq-covariant-Lorenz-truncated-e4}. 
\end{proposition}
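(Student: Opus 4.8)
The plan is to verify that the decomposition $\phi_{\leq N} = \chi_{\leq N} + \psi_{\leq N}$, together with equations \eqref{ansatz:eq-chi} and \eqref{ansatz:eq-psi-1}-\eqref{ansatz:eq-psi-2}, reconstructs the single Duhamel-form equation \eqref{ansatz:eq-phiN-a}, and then to argue that \eqref{ansatz:eq-phiN-a} is equivalent to the original initial value problem \eqref{intro:eq-covariant-Lorenz-truncated-e1}-\eqref{intro:eq-covariant-Lorenz-truncated-e4}. Since $A_{\leq N}^\alpha$ is defined in \eqref{ansatz:eq-A-truncated} as $\Duh[P_{\leq N}J^\alpha]$, it automatically solves $\partial_\alpha\partial^\alpha A^\beta_{\leq N} = P_{\leq N}J^\beta$ with vanishing data \eqref{intro:eq-covariant-Lorenz-truncated-e1}, \eqref{intro:eq-covariant-Lorenz-truncated-e4}; the Lorenz condition \eqref{intro:eq-covariant-Lorenz-truncated-e2} follows from the continuity equation $\partial_\beta J^\beta = 0$ (guaranteed by Definition \ref{intro:def-white-noise-current}) and the vanishing initial data, exactly as in the discussion preceding the statement. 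So the only real content is the equation for $\phi_{\leq N}$.

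First I would add \eqref{ansatz:eq-chi} and \eqref{ansatz:eq-psi-1}-\eqref{ansatz:eq-psi-2}. On the left-hand side, using linearity of $\Lin[ll][\leq N]$ and $\phi_{\leq N} = \chi_{\leq N}+\psi_{\leq N}$, the sum becomes $\phi_{\leq N} + \Lin[ll][\leq N]\phi_{\leq N}$. On the right-hand side, the sum is
\[
z - \big(\Lin[sim][\leq N] + \Lin[gg][\leq N]\big)\phi_{\leq N} + \Duh\Big[\big(A_{\leq N,\alpha}A_{\leq N}^\alpha - \scrm^{\hspace{-0.4ex}2}_{\leq N}\big)\phi_{\leq N}\Big] + \Wp(t)(\phi_0,\phi_1).
\]
Moving the term $\big(\Lin[sim][\leq N] + \Lin[gg][\leq N]\big)\phi_{\leq N}$ to the left and invoking the paraproduct decomposition identity \eqref{ansatz:eq-Lin-sum}, the left-hand side collapses to $\phi_{\leq N} + 2i\Duh\big[\partial_\alpha(A^\alpha_{\leq N}\phi_{\leq N})\big]$, and one recovers precisely \eqref{ansatz:eq-phiN-a}. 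This step is purely algebraic and requires only the linearity of the operators and the partition-of-unity property built into Definition \ref{prelim:def-para-products}.

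Second, I would record why \eqref{ansatz:eq-phiN-a} is equivalent to \eqref{intro:eq-covariant-Lorenz-truncated-e3}-\eqref{intro:eq-covariant-Lorenz-truncated-e4} for $\phi_{\leq N}$. Applying the d'Alembertian $\partial_\alpha\partial^\alpha = -\partial_t^2 + \Delta$ to \eqref{ansatz:eq-phiN-a}, using that $\partial_\alpha\partial^\alpha \Duh[G] = -G$ (from \eqref{ansatz:eq-duhamel}, up to the sign convention fixed there), that $\partial_\alpha\partial^\alpha z = -\zeta$, and that $\Wp(t)(\phi_0,\phi_1)$ solves the homogeneous wave equation, we obtain
\[
\partial_\alpha\partial^\alpha\phi_{\leq N} - 2i\,\partial_\alpha\big(A^\alpha_{\leq N}\phi_{\leq N}\big) = -\zeta - \big(A_{\leq N,\alpha}A_{\leq N}^\alpha - \scrm^{\hspace{-0.4ex}2}_{\leq N}\big)\phi_{\leq N},
\]
which is \eqref{intro:eq-covariant-Lorenz-truncated-e3} after rearranging (noting $\partial^\alpha(A_\alpha\phi) = \partial_\alpha(A^\alpha\phi)$ since indices are raised with the constant Minkowski metric). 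The initial conditions $\phi_{\leq N}[0] = (\phi_0,\phi_1)$ follow since $\Duh[G][0] = 0$, $z[0]=0$, and $\Wp(0)(\phi_0,\phi_1) = (\phi_0,\phi_1)$. Conversely, any solution of \eqref{intro:eq-covariant-Lorenz-truncated-e3}-\eqref{intro:eq-covariant-Lorenz-truncated-e4} satisfies \eqref{ansatz:eq-phiN-a} by the Duhamel formula.

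I do not anticipate a genuine obstacle here: the proposition is a bookkeeping statement whose entire point is to package the Ansatz, and every ingredient — the paraproduct sum \eqref{ansatz:eq-Lin-sum}, the mapping properties of $\Duh$ and $\Wp$, and the consistency of the Lorenz gauge with the continuity equation — is either definitional or established earlier. The only place demanding a little care is tracking the sign conventions in \eqref{ansatz:eq-duhamel} and the index raising $\partial_\alpha(A^\alpha\,\cdot\,) = \partial^\alpha(A_\alpha\,\cdot\,)$, so that the factors of $2i$ and the signs match between \eqref{ansatz:eq-phiN-a} and \eqref{intro:eq-covariant-Lorenz-truncated-e3}; I would state these explicitly rather than leave them implicit.
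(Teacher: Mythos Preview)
Your approach is correct and matches the paper's own proof, which is very terse: it simply notes that the initial conditions and the $A_{\leq N}$-equations follow from the definitions and the continuity equation, and that the $\phi_{\leq N}$-equation follows by adding \eqref{ansatz:eq-chi} and \eqref{ansatz:eq-psi-1}--\eqref{ansatz:eq-psi-2} and invoking \eqref{ansatz:eq-Lin-sum}. Your write-up is more explicit about the passage from the Duhamel form \eqref{ansatz:eq-phiN-a} back to the PDE, which is fine; just note that with the sign convention in \eqref{ansatz:eq-duhamel} one has $\partial_\alpha\partial^\alpha\Duh[G]=G$ (not $-G$), so your displayed intermediate equation has the signs flipped---fixing this recovers \eqref{intro:eq-covariant-Lorenz-truncated-e3} directly, as you anticipated in your closing remark about tracking conventions.
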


\begin{proof}
We have to verify that \eqref{intro:eq-covariant-Lorenz-truncated-e1}-\eqref{intro:eq-covariant-Lorenz-truncated-e4} are satisfied. We first note that the initial conditions in \eqref{intro:eq-covariant-Lorenz-truncated-e4} can be verified directly from \eqref{ansatz:eq-A-truncated}, \eqref{ansatz:eq-chi}, and \eqref{ansatz:eq-psi-1}-\eqref{ansatz:eq-psi-2}. 

The wave equation \eqref{intro:eq-covariant-Lorenz-truncated-e1} is equivalent to its Duhamel integral formulation \eqref{ansatz:eq-A-truncated}. The Lorenz condition \eqref{intro:eq-covariant-Lorenz-truncated-e2} follows directly from \eqref{ansatz:eq-A-truncated}, the continuity equation for the current \eqref{intro:eq-continuity-equation}, and the initial condition $A_{\leq N}[0]=0$. Finally, the wave equation \eqref{intro:eq-covariant-Lorenz-truncated-e3} follows directly from \eqref{ansatz:eq-Lin-sum} and the evolution equations for $\chi_{\leq N}$ and $\psi_{\leq N}$, i.e., \eqref{ansatz:eq-chi} and \eqref{ansatz:eq-psi-1}-\eqref{ansatz:eq-psi-2}. 
\end{proof}

At the end of this section, we motivate our Ansatz heuristically. To this end, we first recall that $A_{\leq N}$ and $z$ (and therefore also $\phi_{\leq N}$) have regularity at most $0-$. In the following, it is useful to employ a heuristic which will be partially justified by Lemma \ref{prelim:lem-basic-counting} below:  
If $u,v\colon \R_t \times \T_x^2 \rightarrow \C$ are random waves and $K,L,M \in \dyadic$, then multi-linear dispersive effects gain a factor of $\min(K,L,M)^{-1/4}$ over trivial estimates of
\begin{equation*}
\Big\| P_M \int_0^t \ds \sin\big( (t-s) |\nabla|\big) P_K u(s) \, P_L v(s) \Big\|_{\mathcal{N}}  
\end{equation*}
for all relevant norms $\mathcal{N}$. Equipped with this heuristic, we now separately discuss low$\times$high, high$\times$high, and high$\times$low-interactions. \\ 

\emph{Low$\times$high-interactions:} 
    The corresponding contribution is given by 
    \begin{equation}\label{intro:eq-motivation-low-high}
    \Lin[ll][\leq N] \phi_{\leq N} = -2i  \int_0^t \ds \, \sin\big((t-s)|\nabla|\big) \frac{\partial_\alpha}{|\nabla|} \big( A^\alpha_{\leq N} \parall \phi_{\leq N}\big).  
    \end{equation}    
    The $\partial_\alpha$-derivatives are compensated by the inverse gradient $|\nabla|^{-1}$. Since multi-linear dispersive effects do not gain derivatives of the high-frequency input $\phi_{\leq N}$, we expect that the regularity of \eqref{intro:eq-motivation-low-high} is no better than the regularity of $\phi$ and hence given by $0-$. In particular, there is no probabilistic nonlinear smoothing. In order to address this difficulty, we view $\Lin[ll][\leq N]$ as a random operator and estimate the resolvent $(1+\Lin[ll][\leq N])^{-1}$. \\
    
\emph{High$\times$high-interactions:}
    The corresponding contribution is given by 
    \begin{equation}\label{intro:eq-motivation-high-high}
    \Lin[sim][\leq N] \phi_{\leq N} = -2i \int_0^t \ds \, \sin\big((t-s)|\nabla|\big) \frac{\partial_\alpha}{|\nabla|}  \big( A^\alpha_{\leq N} \parasim \phi_{\leq N} \big).  
    \end{equation}   
    As above,  the $\partial_\alpha$-derivatives are compensated by the inverse gradient $|\nabla|^{-1}$. Since high$\times$high$\rightarrow$high-interactions gain one quarter of a derivative from multi-linear dispersive effects, we expect that \eqref{intro:eq-motivation-high-high} has regularity $1/4-$. However, since high$\times$high$\rightarrow$low-interactions only gain in the low frequency-scale, and $A^\alpha_{\leq N}$ and $\phi_{\leq N}$ have negative regularity, \eqref{intro:eq-motivation-high-high} cannot be defined without additional information on $A^\alpha_{\leq N}$ and $\phi_{\leq N}$. This additional information consists of the probabilistic independence of $A_{\leq N}$ and $z$. \\ 
    
\emph{High$\times$low-interactions:}
    The corresponding contribution is given by 
    \begin{equation}\label{intro:eq-motivation-high-low}
    \Lin[gg][\leq N] \phi_{\leq N} = -2i  \int_0^t \ds \, \sin\big((t-s)|\nabla|\big) \frac{\partial_\alpha}{|\nabla|}  \big( A^\alpha_{\leq N} \paragg \phi_{\leq N} \big).  
    \end{equation}    
    As above,  the $\partial_\alpha$-derivatives are compensated by the inverse gradient $|\nabla|^{-1}$. From our discussion of low$\times$high-interactions above, one may expect that \eqref{intro:eq-motivation-high-low} only has regularity $0-$. However, due to the Lorenz gauge condition $\partial_\alpha A^{\alpha}_{\leq N}=0$, it holds that
    \begin{equation*}
    \partial_\alpha \big( A^\alpha_{\leq N} \paragg \phi_{\leq N} \big) 
    = A^\alpha_{\leq N} \paragg \partial_\alpha \phi_{\leq N}.
    \end{equation*}
    This allows us to push the derivatives onto the low-frequency input $\phi_{\leq N}$. By also utilizing multi-linear dispersive effects, we expect that it is possible to prove that  \eqref{intro:eq-motivation-high-low} has regularity $1/4-$.

\section{Notation and preliminaries}\label{section:preliminaries} 

In this section, we recall definitions, estimates, and notation from the previous literature. 

\subsection{General notation}\label{section:notation}  
In all sections except for the introduction, we restrict to the spatial dimension $d=2$. 
Throughout this article, Greek indices such as $\alpha$ and $\beta$ take values in $\{0,1,2\}$ and Roman indices such as $a$ and $b$ take values in $\{1,2\}$. For example, we write
\begin{equation*}
\partial_\alpha A^\alpha = 
\partial_0 A^0 + \partial_{1} A^1 + \partial_{2} A^2
\quad \text{and} \quad 
\partial_a A^a = \partial_{1} A^1 + \partial_{2} A^2. 
\end{equation*}
Let $\delta>0$ be as in the statement of Theorem \ref{intro:thm-main}. We introduce fixed parameters $\delta_0,\delta_1,\delta_2,\kappa>0$, 
$b_-\in (0,1/2)$, and $b_0,b_+\in (1/2,1)$ satisfying
\begin{equation}\label{prelim:eq-parameter-condition}
\kappa \ll 1/2-b_- \ll b_0 -1/2 \ll b_+ - 1/2 \ll \delta_0 \ll \delta_1 \ll \delta_2^2 \ll  \delta_2 \ll \delta. 
\end{equation}
We also introduce an implicit parameter $\theta=\theta(b_-,b_0,b_+,\delta_0,\delta_1,\delta_2,\kappa)>0$, whose value is allowed to change from line to line. \\ 

For any $A,B>0$, we write 
\begin{equation}\label{prelim:eq-lesssim}
A\lesssim B \qquad \text{if} \qquad A \leq CB,
\end{equation}
where $C$ is a constant depending only on the parameters in \eqref{prelim:eq-parameter-condition}. Similarly, we write
\begin{align}
A \gtrsim B \qquad &\text{if} \qquad B\lesssim A, \label{prelim:eq-sim}\\
A \sim  B \qquad &\text{if} \qquad A\lesssim B \quad \text{and} \quad B \lesssim A. \label{prelim:eq-gtrsim}
\end{align}
With a slight abuse of notation, we deviate from \eqref{prelim:eq-lesssim}, \eqref{prelim:eq-sim}, and \eqref{prelim:eq-gtrsim} when the quantities are frequency-scales $M,N\in \dyadic$. In that case, we write
\begin{align*}
M \lesssim N \qquad &\text{if} \quad M \leq 2^{10} N, \\ 
M \gtrsim N \qquad &\text{if} \quad M \geq 2^{-10} N, \\ 
M \sim N \qquad &\text{if} \quad 2^{-10} N \leq M \leq 2^{10} N. 
\end{align*}
We also write 
\begin{align*}
M \ll N \qquad &\text{if} \quad M < 2^{-10} N, \\ 
M \gg N \qquad &\text{if} \quad M > 2^{10} N. 
\end{align*}
The precise meaning of $\lesssim$, $\sim$, and $\gtrsim$ will always be clear from context and should not cause any confusion. 

\subsection{Probability theory}\label{section:probability}
We let $(\Omega,\mathcal{E},\mathbb{P})$ be an abstract probability space. Throughout this article, all random variables will be defined on $\Omega$, all events will be contained in $\mathcal{E}$, and all probabilities will be measured with respect to $\mathbb{P}$. \\

We first recall a few basic facts regarding maxima, moments, and tails of random variables. For a more detailed treatment (and the proofs), we refer to \cite{V18}. 

\begin{definition}\label{prelim:def-Psi}
Let $\gamma>0$ and let $X$ be a random variable. Then, we define
\begin{equation}
\big\| X \big\|_{\Psi_\gamma} := \sup_{p\geq 1} \frac{\E \big[ |X|^p \big]^{1/p}}{p^{1/\gamma}}. 
\end{equation}
\end{definition}

In the next lemma, we show that the finiteness of the $\Psi_\gamma$-norm is equivalent to a stretched-exponential tail. 

\begin{lemma}[On moments and tails]
Let $X$ be a random variable and let $\gamma>0$. Furthermore, let $C_\gamma\geq 1$ and $0<c_\gamma\leq 1$ be sufficiently large and sufficiently small, respectively. Then, the following implications hold: 
\begin{enumerate}[label=(\roman*)]
    \item (From moments to tails) Let $B>0$ and assume that $\| X\|_{\Psi_\gamma}\leq B$. Then, we have for all $\lambda \geq 0$ that 
    \begin{equation*}
    \mathbb{P} \big( |X| \geq \lambda \big) \leq 2 \exp \Big( - c_\gamma \Big(\frac{\lambda}{B}\Big)^\gamma \Big). 
    \end{equation*}
    \item (From tails to moments) Let $B>0$ and assume that 
    \begin{equation*}
    \mathbb{P} \big( |X| \geq \lambda \big) \leq 2  \exp \Big( -  \Big(\frac{\lambda}{B}\Big)^\gamma \Big)
    \end{equation*}
    for all $\lambda \geq 0$. Then, it holds that $\| X \|_{\Psi_\gamma}\leq C_\gamma B$. 
\end{enumerate}
\end{lemma}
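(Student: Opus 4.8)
The plan is to prove the two implications separately by elementary integration and optimization, which is the standard route for Orlicz-type bounds; see \cite{V18}. For part (i), suppose $\|X\|_{\Psi_\gamma} \leq B$, so that $\E[|X|^p]^{1/p} \leq B p^{1/\gamma}$ for every $p \geq 1$. First I would fix $\lambda \geq 0$ and apply Markov's inequality in the sharp form $\mathbb{P}(|X| \geq \lambda) \leq \lambda^{-p} \E[|X|^p] \leq (Bp^{1/\gamma}/\lambda)^p$, valid for all $p \geq 1$. The natural choice is $p = c (\lambda/B)^\gamma$ for a small absolute constant $c$; with this choice $(Bp^{1/\gamma}/\lambda)^p = (c^{1/\gamma})^p = \exp(-c_\gamma (\lambda/B)^\gamma)$ after relabeling constants. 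One only has to be careful that $p \geq 1$: if $(\lambda/B)^\gamma$ is too small to make $p \geq 1$, then $\lambda$ is comparable to $B$ and the bound $\mathbb{P}(|X|\geq\lambda)\leq 2\exp(-c_\gamma(\lambda/B)^\gamma)$ holds trivially because the right-hand side exceeds $1$ for $c_\gamma$ small. This case distinction, together with the factor $2$ absorbing the small-$\lambda$ regime, is the only mildly delicate bookkeeping.

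For part (ii), suppose $\mathbb{P}(|X| \geq \lambda) \leq 2\exp(-(\lambda/B)^\gamma)$ for all $\lambda \geq 0$. I would estimate the moments directly via the layer-cake formula
\begin{equation*}
\E\big[|X|^p\big] = \int_0^\infty p \lambda^{p-1} \mathbb{P}\big(|X| \geq \lambda\big)\, \mathrm{d}\lambda \leq 2p \int_0^\infty \lambda^{p-1} \exp\big(-(\lambda/B)^\gamma\big)\, \mathrm{d}\lambda.
\end{equation*}
The substitution $u = (\lambda/B)^\gamma$, i.e. $\lambda = B u^{1/\gamma}$, turns the integral into a Gamma function: the right-hand side becomes $2 p B^p \gamma^{-1} \Gamma(p/\gamma)$. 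Then I would invoke the standard bound $\Gamma(x) \leq C x^{x}$ (or more precisely $\Gamma(p/\gamma) \leq (p/\gamma)^{p/\gamma}$ up to constants, via Stirling), so that $\E[|X|^p]^{1/p} \lesssim B (p/\gamma)^{1/\gamma} \cdot (\text{lower-order factors})^{1/p}$. Since the prefactors $(2p\gamma^{-1})^{1/p}$ are bounded uniformly in $p \geq 1$ by a constant depending only on $\gamma$, this gives $\E[|X|^p]^{1/p} \leq C_\gamma B p^{1/\gamma}$, hence $\|X\|_{\Psi_\gamma} \leq C_\gamma B$ after taking the supremum over $p \geq 1$.

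The main obstacle — really the only point requiring care — is tracking the dependence of the constants $c_\gamma, C_\gamma$ on $\gamma$ and confirming they can be chosen uniformly (small and large respectively) as claimed, particularly handling the small-$\lambda$/small-$p$ boundary regime in part (i) and the Stirling estimate for $\Gamma(p/\gamma)$ when $p/\gamma$ is itself small in part (ii). None of this is conceptually hard; it is the routine Orlicz-norm computation, and I would simply cite \cite{V18} for the details if a fully self-contained argument is not needed.
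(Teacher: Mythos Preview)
Your proposal is correct and follows the standard Orlicz-norm argument. The paper itself omits the proof entirely, referring the reader to \cite{V18} for this and the surrounding basic probabilistic facts, so your sketch is exactly the content that the cited reference supplies.
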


In the next lemma, we estimate the $\Psi_\gamma$-norm of maxima of random variables. 

\begin{lemma}[Maxima of random variables]\label{prelim:lem-maxima-random}
Let $J\in \mathbb{N}$ and let $(X_j)_{j=1}^J$ be a family of random variables. Then, it holds that 
\begin{equation*}
\big\| \max_{j=1,\hdots, J} |X_j| \big\|_{\Psi_{\gamma}} \lesssim_\gamma \log ( 2+J )^{\frac{1}{\gamma}} \max_{j=1,\hdots, J} \big\| X_j \big\|_{\Psi_\gamma}. 
\end{equation*}
\end{lemma}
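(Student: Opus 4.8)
The plan is to work directly from Definition \ref{prelim:def-Psi} and reduce the estimate to a union bound at the level of $p$-th moments. Write $B := \max_{j=1,\dots,J} \| X_j \|_{\Psi_\gamma}$, so that $\E\big[ |X_j|^p \big]^{1/p} \le B\, p^{1/\gamma}$ for every index $j$ and every $p \ge 1$. For any $p \ge 1$, bounding the maximum by the sum gives
\begin{equation*}
\E\Big[ \max_{j} |X_j|^p \Big] \le \sum_{j=1}^J \E\big[ |X_j|^p \big] \le J\, B^p\, p^{p/\gamma},
\end{equation*}
and hence $\E\big[ \max_{j} |X_j|^p \big]^{1/p} \le J^{1/p}\, B\, p^{1/\gamma}$.

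The key observation is that the loss $J^{1/p}$ is harmless once $p \gtrsim \log(2+J)$: indeed, if $p \ge \log(2+J)$ then $J^{1/p} = \exp\big( (\log J)/p \big) \le \exp\big( (\log J)/\log(2+J) \big) \le e$. Set $q_0 := \log(2+J)$, noting $q_0 \ge \log 3 > 1$. For $p \ge q_0$ we thus obtain $\E\big[ \max_j |X_j|^p \big]^{1/p} / p^{1/\gamma} \le e B$. For the remaining range $1 \le p \le q_0$, I would invoke the monotonicity of $L^p$-norms on a probability space (Jensen's inequality), i.e. $\E[|Y|^p]^{1/p} \le \E[|Y|^{q_0}]^{1/q_0}$ with $Y = \max_j |X_j|$, to reduce to the exponent $q_0$; combining this with the previous range applied at $p = q_0$ gives
\begin{equation*}
\frac{\E\big[ \max_j |X_j|^p \big]^{1/p}}{p^{1/\gamma}} \le \E\Big[ \max_j |X_j|^{q_0} \Big]^{1/q_0} \le e B\, q_0^{1/\gamma} = e B\, \log(2+J)^{1/\gamma},
\end{equation*}
using $p \ge 1$ in the first inequality. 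Since $\log(2+J)^{1/\gamma} \ge 1$, both ranges of $p$ are dominated by $e B\, \log(2+J)^{1/\gamma}$, and taking the supremum over $p \ge 1$ in Definition \ref{prelim:def-Psi} yields the claim, in fact with an absolute implicit constant (independent of $\gamma$).

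There is essentially no obstacle here: this is the standard union-bound-plus-pigeonholing estimate for maxima of random variables with stretched-exponential tails, and the only mild point of care is the split at $p \sim \log(2+J)$ together with the use of $L^p$-monotonicity to handle small $p$. As an alternative one could route the argument through the preceding Lemma on moments and tails — converting each bound $\| X_j \|_{\Psi_\gamma} \le B$ into $\mathbb{P}(|X_j| \ge \lambda) \le 2\exp\big( -c_\gamma (\lambda/B)^\gamma \big)$, applying a union bound to $\mathbb{P}(\max_j |X_j| \ge \lambda)$, and converting back — but this is equivalent and incurs $\gamma$-dependent constants, so I prefer the direct moment computation above.
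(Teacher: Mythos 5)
Your argument is correct and complete: the union bound at the level of $p$-th moments, the observation that $J^{1/p}\le e$ once $p\ge\log(2+J)$, and the reduction of the range $1\le p\le\log(2+J)$ to the exponent $q_0=\log(2+J)$ via monotonicity of $L^p$-norms (using $p^{1/\gamma}\ge 1$ and $\log(2+J)^{1/\gamma}\ge1$) together give the stated bound, even with an absolute constant rather than the $\gamma$-dependent one the lemma allows. The paper itself gives no proof of this lemma, referring instead to \cite{V18}; your direct moment computation is precisely the standard argument behind that reference, so there is nothing to reconcile.
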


Finally, we recall a Gaussian hypercontractivity estimate, which is phrased in the language of Definition \ref{prelim:def-Psi}. 

\begin{lemma}[Gaussian hypercontractivity]\label{prelim:lem-hypercontractivity}
Let $J\in \mathbb{N}$, let $(g_j)_{j=1}^J$ be a family of Gaussians, and let $X$ be a polynomial in $(g_j)_{j=1}^J$ of degree less than or equal to $m$ . Then, it holds that 
\begin{equation}\label{prelim:eq-hypercontractivity}
\big\| X \big\|_{\Psi_{1/m}} \lesssim_m \E \big[ |X|^2 \big]^{1/2}. 
\end{equation}
\end{lemma}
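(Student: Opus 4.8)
The plan is to deduce the estimate from the classical Gaussian hypercontractivity theorem of Nelson: if $Y$ lies in the $k$-th Wiener chaos generated by a jointly Gaussian family, then $\E[|Y|^p]^{1/p}\le (p-1)^{k/2}\,\E[|Y|^2]^{1/2}$ for every $p\ge 2$. Granting this, the only things left to verify are the reduction from a general polynomial of degree $\le m$ to homogeneous chaoses, the low-exponent range $1\le p<2$, and the translation into the $\Psi_{1/m}$-norm of Definition \ref{prelim:def-Psi}; each of these is routine.

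First I would normalize the Gaussian family. Writing $g_j=\mu_j+\widetilde g_j$ with $\mu_j=\E g_j$, the variable $X$ is still a polynomial of degree $\le m$ in the centred jointly Gaussian family $(\widetilde g_j)_{j=1}^J$; applying Gram--Schmidt to the span of $(\widetilde g_j)$ inside $L^2(\Omega)$ represents the $\widetilde g_j$ as linear combinations of finitely many independent standard Gaussians $(h_i)_{i=1}^r$, so $X$ is a polynomial of degree $\le m$ in $(h_i)_{i=1}^r$. I would then decompose $X=\sum_{k=0}^m X_k$ along the Wiener chaoses, where $X_k$ is the orthogonal projection of $X$ onto the $k$-th chaos; orthogonality of the chaoses gives $\E[|X_k|^2]\le \E[|X|^2]$ for each $0\le k\le m$.

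Next, for $p\ge 2$, hypercontractivity applied to each $X_k$ yields $\E[|X_k|^p]^{1/p}\le (p-1)^{k/2}\,\E[|X_k|^2]^{1/2}\le p^{m/2}\,\E[|X|^2]^{1/2}$, using $(p-1)^{k/2}\le p^{m/2}$ for $0\le k\le m$. Summing over $k$ with the triangle inequality gives $\E[|X|^p]^{1/p}\le (m+1)\,p^{m/2}\,\E[|X|^2]^{1/2}$ for $p\ge 2$, while for $1\le p\le 2$ the monotonicity of $L^p$-norms on a probability space gives $\E[|X|^p]^{1/p}\le \E[|X|^2]^{1/2}$. Combining the two ranges and crudely bounding $p^{m/2}\le p^m$, I obtain
\[
\E\big[|X|^p\big]^{1/p}\le (m+1)\,p^{m}\,\E\big[|X|^2\big]^{1/2}\qquad\text{for all } p\ge 1,
\]
and dividing by $p^{1/\gamma}=p^m$ (with $\gamma=1/m$) and taking the supremum over $p\ge 1$ in Definition \ref{prelim:def-Psi} yields $\|X\|_{\Psi_{1/m}}\le (m+1)\,\E[|X|^2]^{1/2}$, which is the claimed bound with an explicit $m$-dependent constant.

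The only point that is not pure bookkeeping is that the constant must not depend on the number $J$ of Gaussians; this is precisely why one cannot simply expand $X$ into monomials in the $h_i$ and estimate each factor $h_i^{k}$ via $\|h_i^{k}\|_{L^p}\sim (kp)^{k/2}$, since the number of such monomials grows with $J$. The uniformity comes entirely from the orthogonality of the Wiener chaoses, which is built into Nelson's theorem; everything else is elementary. Alternatively, one may simply cite the $\Psi_\gamma$-formulation from \cite{V18}, where it is recorded for low-degree polynomials, and observe that the same argument applies verbatim for arbitrary degree $m$.
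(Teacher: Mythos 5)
Your proof is correct. Note that the paper does not actually prove this lemma: it is recalled among the probabilistic preliminaries with a pointer to \cite{V18}, so there is no ``paper proof'' to compare against line by line. What you supply is the standard argument behind that citation, and it is complete: reduce to a centred, independent standard Gaussian family by a linear change of variables (which preserves the degree bound), decompose $X=\sum_{k=0}^m X_k$ into Wiener chaoses, apply Nelson's estimate $\|X_k\|_{L^p}\le (p-1)^{k/2}\|X_k\|_{L^2}$ for $p\ge 2$, use chaos orthogonality to bound $\E[|X_k|^2]\le \E[|X|^2]$, handle $1\le p<2$ by monotonicity of $L^p$-norms on a probability space, and translate into the $\Psi_{1/m}$-norm via $p^{m/2}\le p^{1/\gamma}=p^m$. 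You also correctly identify the one genuinely important point, namely that the constant must be independent of $J$, and that this uniformity comes from chaos orthogonality rather than from any monomial-by-monomial expansion.

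Two small remarks for completeness. First, ``family of Gaussians'' should be read as \emph{jointly} Gaussian (as it is in all of the paper's applications, where the $g_j$ arise from a single Gaussian white noise / Brownian family); your Gram--Schmidt step implicitly uses this, so it is worth stating. Second, the paper applies the lemma to complex-valued polynomials (e.g.\ in the proof of Lemma \ref{vector:lem-quadratic} and Corollary \ref{vector:cor-regularity}); this is handled by applying your bound to the real and imaginary parts separately and paying a harmless factor of $2$, since each part is again a real polynomial of degree at most $m$ in the underlying real Gaussians. Neither point affects the validity of your argument.
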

We emphasize that the implicit constant in \eqref{prelim:eq-hypercontractivity} only depends on the degree and not on $J$, i.e., the number of Gaussians. \\ 

We now introduce the notation associated with the space-time white noise current $(J^\alpha)_{\alpha=0}^2$ and the complex-valued space-time white noise $\zeta$. 
In order to define the space-time white noise current $(J^\alpha)_{\alpha=0}^2$, we let $(W_t(n))_{n\in \Z^2}$ be a sequence of Gaussian processes satisfying the following properties: 
\begin{enumerate}[label=(\roman*)]
    \item $W_t(0)$ is a standard, real-valued, two-sided Brownian motion and, for all $n\in \Z^2\backslash \{0\}$, $W_t(n)$ is a standard, complex-valued, two-sided Brownian motion. 
    \item For all $m,n\in \Z^2 \backslash \{0\}$ satisfying $m\neq \pm n$, $W_t(m)$ and $W_t(n)$ are independent processes.
    \item For all $n\in \Z^2$, $\overline{W_t(n)}=W_t(-n)$. 
\end{enumerate}
Then, we let $(W_t^j(n))_{n\in \Z^2}$, where $j=1,2$, be two independent copies of $(W_t(n))_{n\in \Z^2}$ and define 
\begin{equation}\label{prelim:eq-Jj}
J^j(t,x) = \sum_{n\in \Z^2} e^{i\langle n,x\rangle} \partial_t W^j_t(n).
\end{equation}
We note that $J^1$ and $J^2$ are well-defined as space-time distributions. The time-current $J^0$ is then defined via $J^1$ and $J^2$ as in Definition \ref{intro:def-white-noise-current}. \\

In order to define the complex-valued space-time white noise, we let $(Z_t(n))_{n\in \Z^2}$ be a sequence of standard, complex-valued, two-sided Brownian motions. Similar as in \eqref{prelim:eq-Jj}, we then define
\begin{equation}\label{prelim:eq-zeta}
\zeta(t,x) = \sum_{n\in \Z^2} e^{i\langle n ,x \rangle} \partial_t Z_t(n). 
\end{equation}

Furthermore, we define $\sigma_A$ and $\sigma_z$ as the $\sigma$-algebras generated by the stochastic processes $A$ and $z$ from \eqref{ansatz:eq-A} and \eqref{ansatz:eq-z}, respectively. 

\subsection{Harmonic analysis}
For any smooth $f\colon \T_x^2 \rightarrow \C$, we define its Fourier transform by 
\begin{equation}
\widehat{f}(n) := \frac{1}{2\pi} \int_{\T^2} \dx \, f(x) e^{-i\langle n ,x\rangle}. 
\end{equation}
For a smooth, compactly supported function $f\colon \R_t \times \T_x^2\rightarrow \C$, we define its space-time Fourier transform by 
\begin{equation}
\widetilde{f}(\lambda,n) := \frac{1}{(2\pi)^{3/2}}
\int_\R \dt \int_{\T^2} \dx \, f(t,x) e^{-i t\lambda - i \langle n ,x \rangle}
\end{equation}
for all $\lambda \in \R$ and $n \in \Z^2$.
We let $\rho \colon \R \rightarrow [0,1]$ be a smooth, even function satisfying 
$\rho(\xi)=1$ for all $\xi \in [-7/8,7/8]$ and $\rho(\xi)=0$ for all $\xi \not \in [-9/8,9/8]$. For all $N \in \dyadic$, we define $\rho_{\leq N}(\xi):= \rho(\xi/N)$. Furthermore, we define
\begin{equation*}
\rho_{1}:= \rho_{\leq 1} \quad \text{and} \quad \rho_{N} := \rho_{\leq N} - \rho_{\leq N/2} \quad \text{for all } N \geq 2. 
\end{equation*}
We then define the associated Littlewood-Paley operators by 
\begin{equation}\label{prelim:eq-littlewood-paley}
\widehat{P_{\leq N} f}(n) = \rho_{\leq N}(n) \widehat{f}(n) \quad \text{and} \quad 
\widehat{P_{N} f}(n) = \rho_{N}(n) \widehat{f}(n)
\end{equation}
for all smooth $f\colon \T^2\rightarrow \C$ and all $n\in \Z^2$. We also define the fattened Littlewood-Paley operators by 
\begin{equation}\label{prelim:eq-fattened-littlewood-paley}
\widetilde{P}_N := \sum_{\substack{M \in \dyadic\colon \\ M \sim N }} P_M.     
\end{equation}
Equipped with our Littlewood-Paley operators, we can now introduce our paraproducts. 

\begin{definition}[Paraproducts]\label{prelim:def-para-products}
For any smooth $f,g\colon \T^2 \rightarrow \C$, we define the low$\times$high, high$\times$high, and high$\times$low-paraproduct operators by 
\begin{align}
f \parall g &:= 
\sum_{\substack{K,L \in \dyadic \colon \\ K\ll L}} P_K f \, P_L g, \\
f \parasim g &:= 
\sum_{\substack{K,L \in \dyadic \colon \\ K\sim L}} P_K f \, P_L g, \\
f \paragg g &:= 
\sum_{\substack{K,L \in \dyadic \colon \\ K\gg L}} P_K f \, P_L g.
\end{align}
\end{definition}

For any $\nu \in \R$, we define the Sobolev space $H_x^\nu(\T^2)$ and the Hölder space $\Cs_x^\nu(\T^2)$ as the completion of $C^\infty(\T^2)$ with respect to the norms
\begin{align}
\| f \|_{H_x^\nu(\T^2)} &:= \Big( \sum_{K} K^{2\nu} \big\| P_K f \big\|_{L^2_x(\T^2)}^2 \Big)^{1/2}, \label{prelim:eq-Sobolev} \\
\| f \|_{\Cs_x^\nu(\T^2)} &:= \sup_K K^\nu \big\| P_K f \big\|_{L^\infty(\T^2)}. 
\end{align}

We recall from \eqref{intro:eq-Sobolev} that 
\begin{equation*}
\mathscr{H}_x^\nu(\T^2) := H_x^\nu(\T^2)  \times H_x^{\nu-1}(\T^2). 
\end{equation*}
For any interval $I\subseteq \R_t$ and $f\colon I \times \T_x^2 \rightarrow \C$, we further define
\begin{equation*}
\| f[t]\|_{\Cs_t^0 \mathscr{H}_x^\nu(I \times \T^2)} 
:= \| f \|_{\Cs_t^0 H_x^\nu(I\times \T^2)}
+  \| \partial_t f \|_{\Cs_t^0 H_x^{\nu-1}(I\times \T^2)}. 
\end{equation*}

\subsection{\protect{$X^{s,b}$-spaces and tensors}}\label{section:xsb-tensor} 

In this subsection, we discuss $X^{\nu,b}$-spaces, tensors, and related basic estimates. For more detailed treatments, we refer to \cite{Tao06} and \cite{DNY20}. 

\begin{definition}[$X^{\nu,b}$-spaces]\label{prelim:def-Xnub}
For all $\nu \in \R$, $b\in \R$, and $u\colon \R_t \times \T_x^2 \rightarrow \C$, we define
\begin{equation}
\big\| u \big\|_{X^{\nu,b}(\R)} := \big\| \langle n \rangle^\nu \langle |\lambda| - |n| \rangle^b \big(\mathcal{F}_{t,x} u\big)(\lambda,n) \big\|_{L_\lambda^2 \ell_n^2 (\R \times \Z^2)}. 
\end{equation}
Furthermore, for any closed interval $I\subseteq \R$ and $v\colon I \times \T_x^2 \rightarrow \C$, we define
\begin{equation}
\big\| v \big\|_{X^{\nu,b}(I)} := \inf \big\{ \big\| u \big\|_{X^{\nu,b}(\R)} \colon u \big|_{I\times \T^2} = v \big\}. 
\end{equation}
\end{definition}

We remark that the variable $\nu$ is used for the regularity since the more common choice $s$ will be used as a second time variable. 
In the following lemma, we list basic estimates involving $X^{\nu,b}$-norms.

\begin{lemma}[Basic properties of $X^{\nu,b}$]\label{prelim:lem-Xnub}
Let $b,b^\prime \in (1/2,1)$ satisfy $b<b^\prime$ and let $\nu \in \R$. Then, we have the following estimates:
\begin{enumerate}[label=(\roman*)]
\item (Linear estimate) For all $(\phi_0,\phi_1)\in \mathscr{H}_x^\nu(\T^2)$ and $T_0>0$, it holds that 
\begin{equation*}
\big\| \mathcal{W}(t) (\phi_0,\phi_1) \big\|_{\Cs_t^0 \mathscr{H}_x^\nu([0,T_0])} + \big\| \mathcal{W}(t) (\phi_0,\phi_1) \big\|_{X^{\nu,b}([0,T_0])} \lesssim (1+T_0^2) \big\| (\phi_0,\phi_1) \big\|_{\mathscr{H}_x^{\nu}(\T^2)}. 
\end{equation*}
\item (Continuity) For all $\phi \in X^{\nu,b}([0,T_0])$, it holds that 
\begin{equation*}
\| \phi \|_{C_t^0 H_x^\nu([0,T_0]\times \T^2)} \lesssim \| \phi \|_{X^{\nu,b}([0,T_0])}. 
\end{equation*}
\item (Hölder-continuity) For all $0<\gamma<b-1/2$, it holds that 
\begin{equation*}
\| \phi \|_{C_t^\gamma H_x^\nu([0,T_0]\times \T^2)} \lesssim \| \phi \|_{X^{\nu,b}([0,T_0])}. 
\end{equation*}
\item (Time-localization) For all $T_0>0$, $\tau \in (0,1)$, and $\phi \in X^{\nu,b^\prime}([0,T_0+\tau])$, it holds that
\begin{equation*}
\big\| \phi \big\|_{X^{\nu,b}([0,T_0+\tau])} 
\lesssim \big\| \phi \big\|_{X^{\nu,b}([0,T_0])} 
+ \tau^{b^\prime-b} \big\| \phi \big\|_{X^{\nu,b^\prime}([0,T_0+\tau])}.
\end{equation*}
\item (Energy estimate) For all $\nu \in \R$, $T_0 >0$, and $F\in X^{\nu-1,b-1}([0,T_0])$, it holds that 
\begin{equation*}
\big\| \Duh \big[ F \big] \big\|_{\Cs_t^0 \mathscr{H}_x^{\nu}([0,T_0]\times \T^2)} + 
\big\| \Duh \big[ F \big] \big\|_{X^{\nu,b}([0,T_0])}
\lesssim (1+T_0^2) \big\| F \big\|_{X^{\nu-1,b-1}([0,T_0])}. 
\end{equation*}
\end{enumerate}
\end{lemma}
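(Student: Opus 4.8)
The plan is to obtain all five estimates from the standard theory of wave‑type Bourgain spaces (as developed e.g.\ in \cite{Tao06} and \cite{DFS10}), the only adaptations being that we work on $\T^2$ rather than $\R^2$ and on an arbitrary interval $[0,T_0]$ rather than the unit interval. Throughout I would fix a bump $\chi\in\Cs_c^\infty(\R)$ with $\chi\equiv 1$ on $[-1,2]$ and $\operatorname{supp}\chi\subseteq[-2,3]$ and work with the rescaled cutoffs $\chi_{T_0}(t):=\chi\big(t/(2+T_0)\big)$; since $\|\chi_{T_0}\|_{H_t^b}\lesssim(1+T_0)^{1/2}$, the factors $(1+T_0^2)$ will arise simply from multiplying by such cutoffs (together with the linear‑in‑$T_0$ growth of the resonant Duhamel contribution in (v)). Passage from $\R$ to $[0,T_0]$ is everywhere done by taking a near‑optimal representative in the infimum defining $X^{\nu,b}([0,T_0])$.

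For (i) I would write $\mathcal{W}(t)(\phi_0,\phi_1)=\tfrac12\big(e^{it|\nabla|}+e^{-it|\nabla|}\big)\phi_0+\tfrac{1}{2i|\nabla|}\big(e^{it|\nabla|}-e^{-it|\nabla|}\big)\phi_1$ and use $\chi_{T_0}(t)\mathcal{W}(t)(\phi_0,\phi_1)$ as the extension; its space‑time Fourier transform at frequency $n$ is a sum of $\widehat{\chi_{T_0}}(\lambda\mp|n|)$ times the spatial data, so that $\langle|\lambda|-|n|\rangle^b\lesssim\langle\lambda\mp|n|\rangle^b$ on the relevant support, and after integrating in $\lambda$ and summing in $n$ against $\langle n\rangle^\nu$ (using $|n|^{-1}\langle n\rangle^\nu\lesssim\langle n\rangle^{\nu-1}$ to absorb the $\phi_1$‑symbol) one gets the $X^{\nu,b}$‑bound, the $\Cs_t^0\mathscr{H}_x^\nu$‑bound being immediate from the explicit formula. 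For (ii) and (iii) the mechanism is Cauchy--Schwarz in the modulation variable: from $\widehat{\phi(t)}(n)=c\int_\R e^{it\lambda}\,(\mathcal{F}_{t,x}\phi)(\lambda,n)\,d\lambda$ one gets $|\widehat{\phi(t)}(n)|\lesssim\big(\int_\R\langle|\lambda|-|n|\rangle^{-2b}\,d\lambda\big)^{1/2}\big\|\langle|\lambda|-|n|\rangle^b(\mathcal{F}_{t,x}\phi)(\cdot,n)\big\|_{L_\lambda^2}$, and the first factor is $\lesssim 1$ uniformly in $n$ precisely because $b>1/2$; squaring, weighting by $\langle n\rangle^{2\nu}$ and summing gives (ii), with continuity in $t$ following from dominated convergence. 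For (iii) one inserts the extra factor $|e^{it\lambda}-e^{i\tau\lambda}|\lesssim|t-\tau|^\gamma\langle\lambda\rangle^\gamma\lesssim|t-\tau|^\gamma\langle n\rangle^\gamma\langle|\lambda|-|n|\rangle^\gamma$ (using $\langle\lambda\rangle\lesssim\langle n\rangle\langle|\lambda|-|n|\rangle$), which replaces the $\lambda$‑weight above by $\langle|\lambda|-|n|\rangle^{\gamma-b}$; this is square‑integrable in $\lambda$ exactly when $\gamma<b-1/2$, which is the stated hypothesis.

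For (iv) I would take near‑optimal extensions $u$ of $\phi|_{[0,T_0+\tau]}$ (in $X^{\nu,b'}$) and $w$ of $\phi|_{[0,T_0]}$ (in $X^{\nu,b}$), split $u=u_{\ge}+u_{<}$ according to whether the modulation $\langle|\lambda|-|n|\rangle$ is $\ge\tau^{-1}$ or $<\tau^{-1}$, and assemble an admissible extension of $\phi|_{[0,T_0+\tau]}$ by gluing $w$ on $[0,T_0]$ to $u$ on $[T_0,\infty)$ across a cutoff of width $\tau$ supported in the transition window. The high‑modulation part $u_{\ge}$ is estimated directly, using $\langle|\lambda|-|n|\rangle^b=\langle|\lambda|-|n|\rangle^{b'}\langle|\lambda|-|n|\rangle^{b-b'}\le\tau^{b'-b}\langle|\lambda|-|n|\rangle^{b'}$ on its support, which produces the gain $\tau^{b'-b}$, while the complementary pieces are controlled by $\|w\|_{X^{\nu,b}}$ and the boundedness of multiplication by the width‑$\tau$ cutoff on $X^{\nu,b}$. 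Finally, for (v) I would use that $\Duh[F]$ solves $\partial_\alpha\partial^\alpha u=F$ with $u[0]=0$ and reduce to the free case by the transference principle: decompose $\chi_{T_0}(t)\Duh[F]$ into a genuine $(\partial_\alpha\partial^\alpha)^{-1}$‑piece, whose space‑time Fourier multiplier $(\lambda^2-|n|^2)^{-1}=(|\lambda|-|n|)^{-1}(|\lambda|+|n|)^{-1}$ (restricted to modulation $\gtrsim1$) gains $\langle n\rangle^{-1}\langle|\lambda|-|n|\rangle^{-1}$ near the light cone and $\langle|\lambda|-|n|\rangle^{-2}$ away from it, and therefore maps $X^{\nu-1,b-1}$ into $X^{\nu,b}$, plus free‑wave remainders — coming from the modulation‑$\lesssim1$ resonant part of $F$ and from the cutoff $\chi_{T_0}$ — whose Cauchy data is bounded in $\mathscr{H}_x^\nu$ by $\|F\|_{X^{\nu-1,b-1}}$ and which are then handled by (i); the $\Cs_t^0\mathscr{H}_x^\nu$‑bound then follows from this and (ii).

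I expect parts (i)--(iii) to be routine Fourier‑analytic computations, and the Duhamel/energy estimate (v) to be the main point requiring care, because of the bookkeeping in the truncated‑Duhamel‑versus‑free‑wave decomposition (in particular the separate treatment of the resonant modulations and of the $|n|\lesssim1$ frequencies, which is also where the growth in $T_0$ is tracked). The time‑localization (iv) is the second delicate point, since one must exhibit an admissible extension on the enlarged interval while paying only the small power $\tau^{b'-b}$, and this is exactly where the comparison between the $X^{\nu,b'}$‑ and $X^{\nu,b}$‑structures is used.
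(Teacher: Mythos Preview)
The paper does not prove this lemma; it states the results as standard, referring the reader to \cite{Tao06} and \cite{DNY20} for details. Your sketch reconstructs the standard arguments and is essentially correct for parts (i), (ii), (iv), and (v).

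For (iii) there is a gap, and in fact the statement as written is slightly too strong. Your own inequality $\langle\lambda\rangle^\gamma\lesssim\langle n\rangle^\gamma\langle|\lambda|-|n|\rangle^\gamma$ introduces an extra factor $\langle n\rangle^\gamma$ that does not disappear; carrying your computation through, one obtains
\[
\|\phi\|_{C_t^\gamma H_x^\nu}\ \lesssim\ \|\phi\|_{X^{\nu+\gamma,b}},
\]
i.e.\ a $\gamma$-loss in spatial regularity, rather than the stated embedding from $X^{\nu,b}$. The stronger embedding actually fails: the localized free wave $\chi(t)\,e^{it|\nabla|}f$ with $f\in H_x^\nu$ has $X^{\nu,b}([0,T_0])$-norm comparable to $\|f\|_{H_x^\nu}$, yet if $f$ is supported at a single spatial frequency $|n|=N$ its $C_t^\gamma H_x^\nu$-seminorm is of order $N^\gamma$. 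In the paper's only application of (iii) (the $\Cs_t^0$-estimate for $A^0_{\leq N}\chi_{\leq N}$), the H\"older exponent is chosen with $\gamma\ll\delta_1$ via the parameter hierarchy \eqref{prelim:eq-parameter-condition}, so the corrected version with the $\gamma$-loss is harmless there; but your sketch as written does not prove (iii) as stated, and cannot.
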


In the next lemma, we state a technical estimate concerning operator norms.  This estimate is essentially the opposite direction of a typical transference principle for $X^{\nu,b}$-norms. 

\begin{lemma}[\protect{From $X^{\nu,b}$ to $\ell^2$}]\label{prelim:lem-Xnub-to-ell2}
Let $\nu_1,\nu_2,b_1,b_2\in \R$, let $T\geq 1$, and let 
\begin{equation*}
\Lc \colon X^{\nu_1,b_1}([0,T]) \rightarrow X^{\nu_2,b_2}([0,T]). 
\end{equation*}
Furthermore, let $\varphi_m \colon \R_t \rightarrow \C$, where $m\in \Z^2$, be a sequence of functions and define the functions $\Lc_m^{(\varphi)} \in X^{\nu_2,b_2}([0,T])$ and operator $\Lc^{(\varphi)}\colon \ell^2 \rightarrow X^{\nu_2,b_2}([0,T])$ by 
\begin{equation*}
\Lc^{(\varphi)}_m 
:= \Lc \big( e^{i \langle m ,x \rangle} \varphi_m \big) \qquad \text{and} \qquad 
\Lc^{(\varphi)} v := 
\sum_{m\in \Z^2} v_m \Lc^{(\varphi)}_m \quad \textup{for all } v\in \ell^2. 
\end{equation*}
Then, it holds that 
\begin{equation}
\big\| \Lc^{(\varphi)} \big\|_{\ell^2 \rightarrow X^{\nu_2,b_2}([0,T])} \lesssim \big\| \Lc \big\|_{X^{\nu_1,b_1}([0,T]) \rightarrow X^{\nu_2,b_2}([0,T])} \sup_{m\in \Z^2} \big\| e^{i\langle m,x \rangle} \varphi_m(t) \big\|_{X^{\nu_1,b_1}([0,T])}. 
\end{equation}
\end{lemma}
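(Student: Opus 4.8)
\textbf{Proof proposal for Lemma \ref{prelim:lem-Xnub-to-ell2}.} The plan is to reduce the operator bound for $\Lc^{(\varphi)}\colon \ell^2 \rightarrow X^{\nu_2,b_2}([0,T])$ to the given operator bound for $\Lc\colon X^{\nu_1,b_1}([0,T]) \rightarrow X^{\nu_2,b_2}([0,T])$ by testing against an arbitrary $v\in \ell^2$ and recognizing $\Lc^{(\varphi)} v = \Lc(g)$ for the single function $g := \sum_{m} v_m e^{i\langle m,x\rangle}\varphi_m(t)$. By linearity of $\Lc$ this identity is immediate (at least formally; one should first verify it on finitely supported $v$ and then pass to the limit using continuity of $\Lc$, which is the content of the hypothesis that $\Lc$ is bounded between the stated $X^{\nu,b}$-spaces). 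Once this is in place, the estimate becomes
\begin{equation*}
\big\| \Lc^{(\varphi)} v \big\|_{X^{\nu_2,b_2}([0,T])}
= \big\| \Lc g \big\|_{X^{\nu_2,b_2}([0,T])}
\leq \big\| \Lc \big\|_{X^{\nu_1,b_1}\rightarrow X^{\nu_2,b_2}} \big\| g \big\|_{X^{\nu_1,b_1}([0,T])},
\end{equation*}
so everything comes down to the single inequality $\| g \|_{X^{\nu_1,b_1}([0,T])} \lesssim \|v\|_{\ell^2} \sup_{m} \| e^{i\langle m,x\rangle}\varphi_m \|_{X^{\nu_1,b_1}([0,T])}$.

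To prove this last inequality I would work on the full line first, i.e., bound the $X^{\nu_1,b_1}(\R)$-norm of an arbitrary extension, and then take the infimum to get the restriction norm; concretely, for each $m$ pick a near-optimal extension $u_m$ of $e^{i\langle m,x\rangle}\varphi_m$ and set $u := \sum_m v_m u_m$, which extends $g$. The key point is orthogonality in the spatial frequency variable $n$: since $u_m$ has spatial Fourier support concentrated at frequency $n=m$ — more precisely $(\mathcal F_{t,x} u_m)(\lambda,n)$ is supported on $\{n=m\}$ because multiplying a function of $t$ alone by $e^{i\langle m,x\rangle}$ only populates the single spatial mode $m$ — the functions $u_m$ have disjoint spatial frequency supports. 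Hence in the defining norm
\begin{equation*}
\big\| u \big\|_{X^{\nu_1,b_1}(\R)}^2
= \sum_{n\in\Z^2} \int_\R \big| \langle n\rangle^{\nu_1} \langle |\lambda|-|n|\rangle^{b_1} (\mathcal F_{t,x} u)(\lambda,n)\big|^2 \, d\lambda
= \sum_{m\in\Z^2} |v_m|^2 \, \big\| u_m \big\|_{X^{\nu_1,b_1}(\R)}^2,
\end{equation*}
the cross terms vanish, and bounding each $\|u_m\|_{X^{\nu_1,b_1}(\R)}$ by (twice, say) the infimum $\| e^{i\langle m,x\rangle}\varphi_m\|_{X^{\nu_1,b_1}([0,T])}$ and then by the supremum over $m$ gives $\|u\|_{X^{\nu_1,b_1}(\R)}^2 \lesssim \|v\|_{\ell^2}^2 \sup_m \| e^{i\langle m,x\rangle}\varphi_m\|_{X^{\nu_1,b_1}([0,T])}^2$. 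Taking the infimum over extensions of $g$ yields the claimed bound.

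I do not expect a genuine obstacle here; the statement is essentially bookkeeping built on the orthogonality of distinct Fourier modes. The only points requiring a modicum of care are: (i) justifying the interchange of $\Lc$ with the (possibly infinite) sum $\sum_m v_m e^{i\langle m,x\rangle}\varphi_m$, which is handled by approximating $v$ by finitely supported sequences and invoking the boundedness of $\Lc$ together with completeness of the target space — so the sum converges in $X^{\nu_2,b_2}([0,T])$ and $\Lc^{(\varphi)}$ is well-defined exactly when the supremum on the right-hand side is finite; and (ii) the passage from $X^{\nu_1,b_1}(\R)$-norms of extensions to the restriction norm on $[0,T]$, which is routine since an extension of $g$ can be assembled from extensions of the pieces $e^{i\langle m,x\rangle}\varphi_m$. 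If one prefers, (ii) can be bypassed entirely by noting that $\sup_m \|e^{i\langle m,x\rangle}\varphi_m\|_{X^{\nu_1,b_1}(\R)}$-type quantities already appear with the restriction norm in all later applications, so it suffices to prove the inequality with restriction norms throughout, using the same frequency-orthogonality argument applied to a jointly chosen family of near-optimal extensions.
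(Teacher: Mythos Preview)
Your proposal is correct and follows the same approach as the paper: the paper's proof is a single sentence noting that the result follows directly from the orthogonality of $(e^{i\langle m,x\rangle}\varphi_m(t))_{m\in\Z^2}$ in $X^{\nu_1,b_1}$, which is exactly the spatial-frequency orthogonality you exploit. Your additional care with extensions and the passage to restriction norms is appropriate bookkeeping but does not represent a different method.
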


\begin{proof}
This follows directly from the orthogonality of 
$(e^{i\langle m,x\rangle} \varphi_m(t))_{m\in \Z^2}$ in $X^{\nu_1,b_1}$. 
\end{proof}

Finally, we state an $X^{\nu,b}$-estimate for  It\^{o}-integrals, which will be used to combine Lemma \ref{prelim:lem-Xnub-to-ell2} with stochastic estimates. 

\begin{lemma}[$X^{\nu,b}$-estimate for It\^{o}-integrals]\label{prelim:lem-Xnub-ito}
Let $M \in \dyadic$,  let $p\geq 1$, let $T\geq 1$, and let $Z$ be as in Subsection \ref{section:probability}. Then, it holds that 
\begin{equation}
\E \Big[ \sup_{\substack{m\in \Z^2 \colon \\ |m|\sim M}} \Big\| e^{i\langle m,x \rangle} \int_0^t \mathrm{d}Z_s(m) \, \sin\big( (t-s) |m| \big) \Big\|_{X^{0,b_+}([0,T])}^p \Big]^{1/p} \lesssim \sqrt{p} T^\theta M^{4(b_+-1/2)}. 
\end{equation}
\end{lemma}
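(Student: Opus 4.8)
The plan is to reduce the $X^{0,b_+}$-norm estimate to a pointwise-in-frequency bound on the temporal Fourier transform of the stochastic integral, and then to estimate moments using the Burkholder--Davis--Gundy (BDG) inequality together with the explicit covariance of the It\^o integral against $\mathrm{d}Z_s(m)$. First I would fix $m\in\Z^2$ with $|m|\sim M$, write $G_m(t):=e^{i\langle m,x\rangle}\int_0^t \mathrm{d}Z_s(m)\,\sin\big((t-s)|m|\big)$, and observe that since $\widehat{G_m}(n)$ is supported on $n=m$, the $X^{0,b_+}$-norm reduces to $\big\| \langle |\lambda|-|m|\rangle^{b_+}\, \mathcal{F}_t(\eta\, g_m)(\lambda)\big\|_{L^2_\lambda}$ for a fixed smooth cutoff $\eta$ restricting to $[0,T]$, where $g_m(t)=\int_0^t \mathrm{d}Z_s(m)\,\sin((t-s)|m|)$ is the scalar stochastic integral; equivalently, expanding $\sin((t-s)|m|)=\tfrac{1}{2i}(e^{i(t-s)|m|}-e^{-i(t-s)|m|})$, one sees $g_m$ is a linear combination of the modulated It\^o integrals $e^{\pm it|m|}\int_0^t e^{\mp is|m|}\,\mathrm{d}Z_s(m)$.

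The second step is a deterministic reduction: since the cutoff $\eta$ has compact support, I would use that for any $f\in L^2$ supported in time on $[0,T]$ and any $b_+\in(1/2,1)$,
\begin{equation*}
\big\| \langle |\lambda|-|m|\rangle^{b_+} \mathcal{F}_t f(\lambda)\big\|_{L^2_\lambda}
\lesssim \| e^{-it|m|} f \|_{H^{b_+}_t}
\lesssim T^\theta \big( \| e^{-it|m|} f\|_{L^2_t} + \| e^{-it|m|} f\|_{\dot W^{b_+,q}_t}\big)
\end{equation*}
for a suitable $q$, or more directly I would interpolate/bound the $H^{b_+}_t$-norm by an $L^2_t$-bound on a fractional derivative, and then hit each resulting quantity with BDG. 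Concretely, $e^{-it|m|}g_m$ is, up to harmless terms, the It\^o integral $h_m(t):=\int_0^t e^{-is|m|}\,\mathrm{d}Z_s(m)$, which is a continuous martingale with quadratic variation $\langle h_m\rangle_t = t$; hence by BDG, $\E\big[\sup_{t\le T}|h_m(t)|^p\big]^{1/p}\lesssim \sqrt{p}\,T^{1/2}$. For the fractional-derivative piece one writes $h_m(t)-h_m(t')=\int_{t'}^t e^{-is|m|}\,\mathrm{d}Z_s(m)$, so $\E[|h_m(t)-h_m(t')|^p]^{1/p}\lesssim \sqrt{p}\,|t-t'|^{1/2}$ by BDG again; since $b_+<1/2+\kappa$ and $1/2$-H\"older-type control in $L^p$ gives, via the Garsia--Rodemich--Rumsey inequality (or a Besov embedding), a bound on $\|h_m\|_{W^{b_+,q}([0,T])}$ of size $\lesssim \sqrt p\, T^\theta$ provided $b_+-1/q<1/2$. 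Combining, $\E\big[\|G_m\|_{X^{0,b_+}([0,T])}^p\big]^{1/p}\lesssim \sqrt p\, T^\theta$.

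The third step handles the supremum over $m$ with $|m|\sim M$. There are $\lesssim M^2$ such lattice points, so by Lemma~\ref{prelim:lem-maxima-random} applied with $\gamma=1$ (each $G_m$ has, by the above and Lemma~\ref{prelim:lem-hypercontractivity}-type reasoning for the Gaussian martingale, a sub-exponential $\Psi_1$-norm controlled by $\sqrt p$-growth, i.e. $\|\,\|G_m\|_{X^{0,b_+}}\,\|_{\Psi_1}\lesssim T^\theta$),
\begin{equation*}
\Big\| \sup_{|m|\sim M} \| G_m\|_{X^{0,b_+}([0,T])} \Big\|_{\Psi_1}
\lesssim \log(2+M^2)\,\sup_{|m|\sim M}\big\| \| G_m\|_{X^{0,b_+}} \big\|_{\Psi_1}
\lesssim T^\theta \log(M).
\end{equation*}
Finally, passing from the $\Psi_1$-norm back to the $L^p$-norm via Definition~\ref{prelim:def-Psi} gives $\E[\sup_{|m|\sim M}\|G_m\|_{X^{0,b_+}}^p]^{1/p}\lesssim \sqrt p\, T^\theta \log(M)$, and since $\log(M)\lesssim_\kappa M^{\kappa}\lesssim M^{4(b_+-1/2)}$ (here one uses $\kappa\ll b_+-1/2$ from \eqref{prelim:eq-parameter-condition}, so $4(b_+-1/2)$ dominates any fixed power of $\log M$), the claimed bound follows after absorbing the logarithm into $M^{4(b_+-1/2)}$. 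The main obstacle I anticipate is the deterministic reduction in the second step: cleanly controlling the $\langle|\lambda|-|m|\rangle^{b_+}$-weighted Fourier norm of a time-truncated martingale by a fractional Sobolev norm and then getting a BDG-amenable increment bound requires care with the interplay between the modulation weight $b_+>1/2$ and the martingale's only being $1/2-$ H\"older; this is precisely why one needs $b_+-1/2$ small and why the final power $4(b_+-1/2)$ (rather than something smaller) appears.
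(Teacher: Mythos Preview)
Your reduction in Step~1 and the handling of the supremum in Step~3 are fine in spirit, but Step~2 has a genuine gap. The process $h_m(t)=\int_0^t e^{\mp is|m|}\,\mathrm{d}Z_s(m)$ has the law of a complex Brownian motion, and Brownian motion is \emph{not} in $H^{b_+}_t$ (nor in $W^{b_+,q}_t$ for any $q$) when $b_+>1/2$. Your GRR/Besov argument does not save this: with the increment bound $\E[|h_m(t)-h_m(t')|^q]\sim |t-t'|^{q/2}$, the Gagliardo integral defining $\|h_m\|_{W^{b_+,q}}^q$ has integrand $|t-t'|^{q/2-1-b_+q}$, which is integrable near the diagonal only if $b_+<1/2$. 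Relatedly, your parameter claim ``$b_+<1/2+\kappa$'' is backwards: the hierarchy \eqref{prelim:eq-parameter-condition} says $\kappa\ll b_+-1/2$, so $b_+>1/2+\kappa$.

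The paper's proof avoids this obstruction by a two-step argument. First it establishes the $X^{0,b_-}$-bound (with $b_-<1/2$) exactly as you do, using that $h_m$ has the law of Brownian motion and hence lies in $C_t^{1/2-}$. Then, to pass from $b_-$ to $b_+$, it does \emph{not} try to extract more regularity from $h_m$ directly; instead it integrates by parts in the It\^o integral,
\[
\int_0^t \sin\big((t-s)|m|\big)\,\mathrm{d}Z_s(m) \;=\; -|m|\int_0^t \cos\big((t-s)|m|\big)\,Z_s(m)\,\mathrm{d}s,
\]
which expresses $g_m$ as $|m|$ times an ordinary time-integral of a $C^{1/2-}$ function. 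This representation shows that raising the $b$-parameter by one unit costs a factor of $M$, and interpolation then gives the $X^{0,b_+}$-bound at the price of $M^{b_+-b_-}\lesssim M^{4(b_+-1/2)}$. The factor $M^{4(b_+-1/2)}$ thus enters not as a bound on $\log M$ but as the genuine cost of buying $b_+-b_-$ extra temporal regularity via the spatial frequency.
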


\begin{proof}
We first decompose 
\begin{equation*}
 \int_0^t \mathrm{d}Z_s(m) \, \sin\big( (t-s) |m| \big) 
 = \frac{1}{2i} \Big( 
 e^{it|m|} \int_0^t \mathrm{d}Z_s(m) \, e^{-is|m|} 
 -  e^{-it|m|} \int_0^t \mathrm{d}Z_s(m) \, e^{is|m|} \Big).
\end{equation*}
For all $m\in \Z^2$, it holds that
\begin{equation*}
\operatorname{Law}\Big( t \mapsto 
\int_0^t \mathrm{d}Z_s(m) \, e^{\pm is|m|} \Big) 
= \operatorname{Law} \Big( t \mapsto Z_t(m) \Big).
\end{equation*}
Using the usual $\Cs^{1/2-}-$estimate for Brownian motions and our estimate of maxima of random variables (Lemma \ref{prelim:lem-maxima-random}), it follows for all $\epsilon>0$ that
\begin{equation*}
\E \Big[ \sup_{\substack{m\in \Z^2 \colon \\ |m|\sim M}} \Big\| e^{i\langle m,x \rangle} \int_0^t \mathrm{d}Z_s(m) \, \sin\big( (t-s) |m| \big) \Big\|_{X^{0,b_-}([0,T])}^p \Big]^{1/p} \lesssim \sqrt{p} T^\theta M^\epsilon.
\end{equation*}
In order to increase the $b$-parameter from $b_-$ to $b_+$, we note that 
\begin{align*}
\int_0^t \mathrm{d}Z_s(m) \, \sin\big( (t-s) |m| \big)  
&= \sin\big( (t-s)|m| \big) Z_s(m) \Big|_{s=0}^t - |m| \int_0^t \ds \cos\big( (t-s) |m| \big) Z_s(m)  \\
&= - |m| \int_0^t \ds \cos\big( (t-s) |m| \big) Z_s(m) . 
\end{align*}
Thus, we can increase the $b$-parameter from $b_-$ to $b_+$ by paying a factor of $M^{(b_+-b_-)}$, which is acceptable. 
\end{proof}

This completes our discussion of $X^{\nu,b}$-spaces and we now turn to tensors and their estimates. 

\begin{definition}[Tensors and tensor norms]\label{prelim:def-tensors}
Let $\Jc$ be a finite index set. A tensor $h=h_{k_\Jc}$ is a function from $(\Z^2)^\Jc$ into $\C$. A partition of $\Jc$ is a pair of sets $(\Xc,\Yc)$ such that $\Jc = \Xc \medcup \Yc$ and $\Xc \medcap \Yc = \emptyset$. For any partition $(\Xc,\Yc)$, we define
\begin{equation}\label{prelim:eq-tensor-operator-norm}
\big\| h \big\|_{k_\Xc \rightarrow k_\Yc}^2 := \sup \Big\{ \sum_{k_\Yc} \Big| \sum_{k_\Xc} h_{k_\Jc} z_{k_\Xc}\Big|^2 \colon \sum_{k_\Xc} \big| z_{k_\Xc} \big|^2 = 1 \Big\}. 
\end{equation}
We also define the Hilbert-Schmidt norm 
\begin{equation}
\big\| h \big\|_{k_\Jc}^2 := \sum_{k_\Jc} \big| h_{k_\Jc} \big|^2,
\end{equation}
which corresponds to the special cases $\Xc = \emptyset$ or $\Yc = \emptyset$ in \eqref{prelim:eq-tensor-operator-norm}. 
\end{definition}

We now  recall a special case of a random tensor estimate which was obtained in \cite{DNY20}. For the version stated below, which involves It\^{o}-integrals instead of Gaussians, we also refer to \cite[Lemma C.3]{OWZ21}.  
\begin{lemma}[Moment method]\label{prelim:lem-moment-method} 
Let $J\in \mathbb{N}$ be a positive integer and let $\Jc := \{1,\hdots, J\}$. Let $h=h_{k_0 k_{\Jc}}$ be a tensor, let $K\in \dyadic$, and assume that $h$ is supported on frequency vectors satisfying $|k_j| \leq K$ for all $0\leq j \leq J$. Furthermore, for each $s\in \R$, let 
$f(s):= f_{k_0 k_{\Jc}}(s)$
be an additional tensor. Finally, let $H=H_{k_{\Jc}}$ be a random tensor defined as 
\begin{equation}
H_{k_{\Jc}} = \sum_{k_0 \in \Z^2} h_{k_0 k_{\Jc}}\int_{\R} \mathrm{d}W_s(k_0)\,  f_{k_0 k_{\Jc}}(s),
\end{equation}
where $(W_s(k))_{k\in \Z^2}$ is as in Subsection \ref{section:probability}. Then, it holds for all partitions $(\Xc,\Yc)$ of $\Jc$, all $\epsilon>0$, and all $p\geq 1$ that 
\begin{equation}
\E \Big[ \big\| H \big\|_{k_\Xc \rightarrow k_\Yc}^p \Big]^{1/p} \lesssim_{J,\epsilon} \sqrt{p} K^\epsilon \max \Big( \big\| \, h \big\|_{k_0 k_\Xc \rightarrow k_\Yc}, 
\big\| \, h \big\|_{ k_\Xc \rightarrow k_0 k_\Yc} \Big)
\big\| f_{k_0 k_{\Jc}} \big\|_{\ell^\infty_{k_0}\ell^\infty_{k_\Jc} L^2_s}. 
\end{equation}
\end{lemma}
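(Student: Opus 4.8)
Since the paper introduces this result as a \emph{recollection}, the shortest route is to quote it: it is the first-chaos (Itô-linear) specialization of the random tensor estimate of Deng--Nahmod--Yue \cite{DNY20}, and the passage from the Gaussian formulation there to the Itô formulation used here is precisely \cite[Lemma C.3]{OWZ21}. To make the argument self-contained I would run the moment method. Since $\|\cdot\|_{L^p}$ is increasing in $p$ on $\Omega$, it suffices to prove the bound for $p=2m$ with $m\in\mathbb N$ (choosing $m=\lceil p/2\rceil$ then gives $\sqrt{2m}\lesssim\sqrt p$ for all $p\geq 1$). Fix a partition $(\Xc,\Yc)$ of $\Jc$ and view $H$ as the kernel of an operator $\ell^2_{k_\Xc}\rightarrow\ell^2_{k_\Yc}$; since $HH^{\ast}$ is positive semidefinite,
\[
\big\| H \big\|_{k_\Xc \rightarrow k_\Yc}^{2m} \;\leq\; \operatorname{Tr}\big( (HH^{\ast})^m \big),
\]
and the right-hand side expands as a sum, over $2m$-tuples of frequency multi-indices, of products of $m$ factors $H$ and $m$ factors $\overline{H}$ in which the $k_\Xc$- and $k_\Yc$-components are tied cyclically ("trace pairing").

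Next I would insert the definition $H_{k_\Jc}=\sum_{k_0}h_{k_0 k_\Jc}\int_{\R}\mathrm{d}W_s(k_0)\,f_{k_0 k_\Jc}(s)$ and take expectations. Each of the $2m$ Itô integrals is linear in the Gaussian white noise, so by the Itô isometry together with the independence and reality relations of Subsection \ref{section:probability}, the expectation decomposes into a sum over the $(2m-1)!!$ perfect matchings of the $2m$ slots. A matched pair forces the two $k_0$-indices to agree (up to sign, for an $H$--$H$ or an $\overline{H}$--$\overline{H}$ pair) and contributes the scalar $\int_{\R} f_{k_0 k_\Jc}(s)\,\overline{f_{k_0 k_\Jc'}(s)}\,\mathrm{d}s$, which is at most $\|f_{k_0 k_\Jc}\|_{\ell^\infty_{k_0}\ell^\infty_{k_\Jc}L^2_s}^2$ by Cauchy--Schwarz. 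What remains, for each matching, is a purely deterministic contraction of $2m$ copies of $h$ (and $\overline{h}$) along a graph whose edges come from the trace pairing and from the matching.

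I would then bound this deterministic contraction by organizing Cauchy--Schwarz along the graph: at each matching edge the contracted $k_0$ can be moved either to the $k_\Xc$-side or to the $k_\Yc$-side, which is exactly the mechanism producing $\max\big(\|h\|_{k_0 k_\Xc \rightarrow k_\Yc},\,\|h\|_{k_\Xc \rightarrow k_0 k_\Yc}\big)^{2m}$ — this step is the deterministic tensor-contraction estimate underlying \cite{DNY20}. Summing over the at most $(2m-1)!!\leq(2m)^m$ matchings and taking $2m$-th roots then yields $\E[\|H\|_{k_\Xc\rightarrow k_\Yc}^{2m}]^{1/2m}\lesssim_{J}\sqrt{2m}\,\max(\cdots)\,\|f\|_{\ell^\infty_{k_0}\ell^\infty_{k_\Jc}L^2_s}$, and hence the claim; the factor $K^\epsilon$ comfortably absorbs the combinatorial constants (and the crude treatment of the non-generic matchings). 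The main obstacle is this last deterministic step: one must check that \emph{every} matching, not merely the diagonal one, is dominated by the \emph{same} two tensor operator norms, which is the graph-counting heart of \cite{DNY20} and the only part that would require genuine work if one did not cite it directly.
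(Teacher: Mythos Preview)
The paper does not prove this lemma at all: it is stated as a recollection with references to \cite{DNY20} and \cite[Lemma C.3]{OWZ21}, exactly as you identify in your first sentence. Your proposal is therefore correct and matches the paper's treatment; your additional moment-method sketch goes beyond what the paper provides.

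One small correction to the sketch: the factor $K^\epsilon$ does not come from the matching combinatorics. The $(2m-1)!!\leq (2m)^m$ matchings, after the $2m$-th root, give precisely the $\sqrt{p}$ factor. The $K^\epsilon$ instead arises from the gap between the trace and the operator norm: $\operatorname{Tr}\big((HH^\ast)^m\big)$ overcounts $\|H\|_{k_\Xc\to k_\Yc}^{2m}$ by at most the rank, which is $\lesssim K^{2|\Yc|}$ since $h$ is supported on $|k_j|\leq K$. To absorb $(K^{2|\Yc|})^{1/(2m)}\leq K^\epsilon$ one must take $m\gtrsim |\Jc|/\epsilon$, i.e., the moment order depends on $K$ through $\epsilon$. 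This is why the constant depends on $J$ and $\epsilon$ but the estimate still holds uniformly in $p\geq 1$ (one takes $m=\max(\lceil p/2\rceil,\lceil C_J/\epsilon\rceil)$).
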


\subsection{Lattice point counting estimates} 

In this subsection, we present the two-dimensional version of the basic lattice point counting estimate from \cite[Lemma 4.15]{B24}. Combined with the tensor estimates from Subsection \ref{section:xsb-tensor}, the lattice point counting estimates will be used in Section \ref{section:operator} to estimate random linear operators. 

\begin{lemma}[Basic lattice point counting estimate]\label{prelim:lem-basic-counting} 
Let $K,L \in \dyadic$ and let $l \in \Z^2$ satisfy $|l|\sim L$. Then, it holds that
\begin{align}
\sup_{\mu \in \R} \# \Big\{ k \in \Z^2 \colon |k| \sim K, \, \big| |k+l| - |k| -\mu \big| \leq 1 \Big\} & \lesssim \min(K,L)^{-1/2} K^2, \label{prelim:eq-basic-counting-minus} \\ 
\sup_{\mu \in \R} \# \Big\{ k \in \Z^2 \colon |k| \sim K, \, \big| |k+l| + |k| -\mu \big| \leq 1 \Big\} & \lesssim K^{-1/2} K^2, \label{prelim:eq-basic-counting-plus} \\
\sup_{\mu \in \R} \# \Big\{ k \in \Z^2 \colon |k| \sim K, \, \big| |k+l|  -\mu \big| \leq 1 \Big\} & \lesssim K^{-1} K^2. \label{prelim:eq-basic-counting-zero}
\end{align}
Furthermore, for all $\sigma \in \{ -1,0,1 \}$, it holds that 
\begin{equation}\label{prelim:eq-basic-counting-linear}
\sup_{\mu \in \R} \sup_{\unormal \in \mathbb{S}^1} \# \Big\{ 
k \in \Z^2 \colon |k|\sim K, \, \big| \unormal \cdot k + \sigma |k| - \mu \big| \leq 1 \Big\} \lesssim K^{-1/2} K^2. 
\end{equation}
\end{lemma}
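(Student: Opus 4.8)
\emph{Overall plan.} I would treat \eqref{prelim:eq-basic-counting-zero} separately (it is essentially immediate: after the substitution $m=k+l$ it asks for $\#\{m\in\Z^2:|m|\in[\mu-1,\mu+1]\}\lesssim\max(\mu,1)$, and the constraint $|k|\sim K$ forces the set to be empty unless $\mu\lesssim K$, so the count is $\lesssim K$). For the other three estimates I would first dispose of the trivial regime $L\lesssim 1$ — there $\min(K,L)^{-1/2}K^2\sim K^2$ and $K^{-1/2}K^2=K^{3/2}$ are both $\gtrsim\#\{|k|\sim K\}$ or $\gtrsim K$, which is all that is needed — and then reduce all of \eqref{prelim:eq-basic-counting-minus}, \eqref{prelim:eq-basic-counting-plus}, \eqref{prelim:eq-basic-counting-linear} to a single geometric lattice-point count, summed over dyadic-unit shells.

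\emph{Reduction to shells.} Decompose $\{|k|\sim K\}$ into the $O(K)$ shells $\mathcal A_\rho:=\{k\in\Z^2:|k|\in[\rho,\rho+1)\}$ with $\rho\lesssim K$. On a fixed shell the resonance constraint becomes membership in a second thin set $\mathcal B$: in \eqref{prelim:eq-basic-counting-plus}, and in the regime $L\gtrsim K$ of \eqref{prelim:eq-basic-counting-minus}, it pins $|k+l|$ to an interval of length $O(1)$, so $\mathcal B$ is a circular shell of width $O(1)$ centered at $-l$; in the regime $L\lesssim K$ of \eqref{prelim:eq-basic-counting-minus} one instead uses $|k+l|^2-|k|^2=2k\cdot l+|l|^2$ together with $|k|\in[\rho,\rho+1)$ and $|k+l|-|k|\in[\mu-1,\mu+1]$ (note the set is empty unless $|\mu|\lesssim L\lesssim K$) to pin $k\cdot l$ to an interval of length $O(K)$, so $\mathcal B$ is a slab of width $O(K/L)$ with normal $l/|l|$; in \eqref{prelim:eq-basic-counting-linear} with $\sigma=\pm1$ it pins $\unormal\cdot k$ to an interval of length $O(1)$, so $\mathcal B$ is a slab of width $O(1)$, while the case $\sigma=0$ is directly a width-$O(1)$ slab intersected with a disk of radius $\lesssim K$ and hence contributes $\lesssim K$. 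Thus it suffices to prove the geometric count $\#\big((\mathcal A_\rho\cap\mathcal B)\cap\Z^2\big)\lesssim\sqrt{K\max(w,1)}+1$ whenever $\rho\lesssim K$, $w$ is the width of $\mathcal B$, and $\mathcal B$ does not contain the whole circle $\{|k|=\rho\}$ (which is automatic once $L\gg1$). Summing over the $O(K)$ shells then yields $\lesssim K\cdot K/\sqrt L=K^2L^{-1/2}$ in the $L\lesssim K$ case of \eqref{prelim:eq-basic-counting-minus}, and $\lesssim K\cdot\sqrt K=K^{3/2}=K^{-1/2}K^2$ in \eqref{prelim:eq-basic-counting-plus} and \eqref{prelim:eq-basic-counting-linear} and in the $L\gtrsim K$ case of \eqref{prelim:eq-basic-counting-minus}; since $\min(K,L)^{-1/2}K^2\geq K^{3/2}$ always, this gives all three estimates.

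\emph{The geometric count and the main obstacle.} The intersection $\mathcal A_\rho\cap\mathcal B$ has at most two connected components, and I would estimate each separately using the arclength along the circle $\{|k|=\rho\}$ as one coordinate and the signed distance from that circle as the other: the width-$O(1)$ constraint coming from $\mathcal A_\rho$ confines the distance coordinate to an interval of length $O(1)$, while the constraint from $\mathcal B$ confines the arclength to an interval of length $S$; covering this thin curved region by $O(S)+1$ unit squares, one per unit of arclength, produces $\lesssim S+1$ lattice points. Everything then hinges on the bound $S\lesssim\sqrt{\rho\max(w,1)}\lesssim\sqrt{K\max(w,1)}$: an arc of the circle $\{|k|=\rho\}$ which stays inside a slab of width $\lesssim w$ has length $\lesssim\sqrt{\rho w}$, because that circle has curvature $\sim 1/\rho$, and when $\mathcal B$ is itself a circular shell its (smaller or comparable) curvature only helps. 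The one genuinely non-bookkeeping step, and the place I expect to spend the effort, is making this last estimate rigorous in the \emph{near-tangential} configuration, where $\mathcal B$ is almost parallel to the tangent of $\{|k|=\rho\}$ so that transversality alone yields nothing and one must actually trade the curvature of $\{|k|=\rho\}$ (and, when $\mathcal B$ is a shell, its curvature too) against the near-tangency; the unit-square covering of the resulting curvilinear region is then routine, though it needs some care near the endpoints of the arc.
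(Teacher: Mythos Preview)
Your radial-shell decomposition is a different route from the paper's, which instead fixes both $|\xi|$ and $|\xi+l|$ to $O(1)$ windows and then runs a dyadic \emph{angular} decomposition in $\Phi\sim\min(\angle(\xi,l),\pi-\angle(\xi,l))$, balancing a trivial bound $\Phi K$ against a polar-coordinate bound $\Phi^{-1}\min(K,L)^{-1}K$. Your argument is more directly geometric and goes through as written for \eqref{prelim:eq-basic-counting-minus}, \eqref{prelim:eq-basic-counting-zero}, and \eqref{prelim:eq-basic-counting-linear}.

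There is, however, a real gap in your treatment of \eqref{prelim:eq-basic-counting-plus} when $L\ll K$. The claim that for a circular shell $\mathcal B$ ``its (smaller or comparable) curvature only helps'' is false at \emph{internal} tangency: if $\{|k|=\rho\}$ and $\{|k+l|=r\}$ are internally tangent with $\rho,r\sim K$ and $|\rho-r|=|l|\sim L$, then the arc of the first circle lying within $O(1)$ of the second has length $\sim|\kappa_1-\kappa_2|^{-1/2}=\sqrt{\rho r/|l|}\sim K/\sqrt L$, not $\sqrt K$, because at internal tangency the separation is governed by the \emph{difference} of curvatures, which here nearly vanishes. So your per-shell bound $\sqrt K$ is simply wrong in this regime; the paper's own per-shell area estimate \eqref{prelim:eq-basic-counting-p1} is likewise only $\min(K,L)^{-1/2}K$.

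The fix stays entirely in your framework. First, only $O(L)$ shells are non-empty in \eqref{prelim:eq-basic-counting-plus}: combining $|k+l|+|k|=\mu+O(1)$ with $\big||k+l|-|k|\big|\le|l|$ pins $|k|$ to $[(\mu-|l|)/2,(\mu+|l|)/2]+O(1)$. Second, each such shell contributes $\lesssim K/\sqrt L$; this follows either from the arc-length computation above, or by reusing your slab argument verbatim (on a fixed shell both $|k|$ and $|k+l|$ lie in $O(1)$ windows, so $k\cdot l=\tfrac12(|k+l|^2-|k|^2-|l|^2)$ lies in an $O(K)$ window, i.e.\ a slab of width $O(K/L)$). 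Summing over $O(L)$ shells gives $L\cdot K/\sqrt L=K\sqrt L\le K^{3/2}$.
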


On the right-hand sides of \eqref{prelim:eq-basic-counting-minus}, \eqref{prelim:eq-basic-counting-plus}, \eqref{prelim:eq-basic-counting-zero} and \eqref{prelim:eq-basic-counting-linear}, we always isolate the factor $K^2$, which corresponds to the cardinality of $\{ k \in \Z^2 \colon |k|\sim K\}$. 

\begin{remark}
While the precise form of Lemma \ref{prelim:lem-basic-counting} is most closely related to \cite[Lemma 4.15]{B24}, similar estimates have been used for wave equations since the earliest bilinear and null-form estimates (see e.g. \cite{FK20,KM93,KM95D}). In the language of \cite{FK20}, \eqref{prelim:eq-basic-counting-minus} and \eqref{prelim:eq-basic-counting-plus} essentially correspond to weighted surface integrals of hyperboloids and ellipsoids, which were estimated in \cite[Proposition 4.3 and 4.5]{FK20}.
\end{remark}

\begin{proof}
Since the estimates are (essentially) available in the literature, we only sketch the argument. 

Similar as in \cite[Lemma 4.15]{B24} (and \cite[Lemma 5.1]{BDNY24} for \eqref{prelim:eq-basic-counting-plus}), the first two estimates \eqref{prelim:eq-basic-counting-minus} and \eqref{prelim:eq-basic-counting-plus} can be reduced to proving that 
\begin{equation}\label{prelim:eq-basic-counting-p1}
\sup_{\mu_1,\mu_2\in \R} \Leb \Big( \Big\{ \xi \in \R^2 \colon |\xi| \sim K, \, |\xi| = \mu_1 + \mathcal{O}(1), \, \Big| \xi + |l| e_1 \Big| = \mu_2 + \mathcal{O}(1) \Big\} \Big) \lesssim \min(K,L)^{-1/2} K. 
\end{equation}
In order to estimate \eqref{prelim:eq-basic-counting-p1}, we use an angular decomposition and therefore let $\varphi:= \angle ( \xi, e_1 ) \in [0,\pi]$. Then, we dyadically decompose 
\begin{align*}
 &\Leb \Big( \Big\{ \xi \in \R^2 \colon |\xi| \sim K, \, |\xi| = \mu_1 + \mathcal{O}(1), \, \Big| \xi + |l| e_1 \Big| = \mu_2 + \mathcal{O}(1) \Big\} \Big) \\
 \lesssim& \, \sum_{\Phi \in 2^{-\Nzero}} 
 \Leb \Big( \Big\{ \xi \in \R^2 \colon |\xi| \sim K, \, |\xi| = \mu_1 + \mathcal{O}(1), \, \Big| \xi + |l| e_1 \Big| = \mu_2 + \mathcal{O}(1), \, \min \big( \varphi, \pi - \varphi\big) \sim \Phi \Big\} \Big). 
\end{align*}
Trivially, it holds that 
\begin{equation}\label{prelim:eq-basic-counting-p2}
\begin{aligned}
&\Leb \Big( \Big\{ \xi \in \R^2 \colon |\xi| \sim K, \, |\xi| = \mu_1 + \mathcal{O}(1), \, \Big| \xi + |l| e_1 \Big| = \mu_2 + \mathcal{O}(1), \, \min \big( \varphi, \pi - \varphi\big) \sim \Phi \Big\} \Big) \\ 
\lesssim & \, \Leb \Big( \Big\{ \xi \in \R^2 \colon |\xi| \sim K, \, |\xi| = \mu_1 + \mathcal{O}(1), \,  \min \big( \varphi, \pi - \varphi\big) \sim \Phi \Big\} \Big) 
\lesssim  \, \Phi K. 
\end{aligned}
\end{equation}
By computing the volume using polar coordinates as in \cite[Proof of Lemma 4.15]{B24}, it also holds that 
\begin{equation}\label{prelim:eq-basic-counting-p3}
\begin{aligned}
&\Leb \Big( \Big\{ \xi \in \R^2 \colon |\xi| \sim K, \, |\xi| = \mu_1 + \mathcal{O}(1), \, \Big| \xi + |l| e_1 \Big| = \mu_2 + \mathcal{O}(1), \min \big( \varphi, \pi - \varphi\big) \sim \Phi \Big\} \Big) \\ 
\lesssim & \, \Phi^{-1} \min(K,L)^{-1} K. 
\end{aligned}
\end{equation}
The desired estimate \eqref{prelim:eq-basic-counting-p1} then follows by combining \eqref{prelim:eq-basic-counting-p2} and \eqref{prelim:eq-basic-counting-p3} and summing over $\Phi$. We note that the additional factor of $\Phi^{-1}$, which is not present in \cite{B24}, is due to the differences in the volume elements of polar coordinates in $\mathbb{R}^2$ and $\mathbb{R}^3$. \\

The third estimate \eqref{prelim:eq-basic-counting-zero} can be reduced to proving that
\begin{equation*}
\sup_{\mu \in \R} \Leb \Big( \Big\{ \xi \in \R^2 \colon |\xi| \sim K,  \Big| |\xi + l | - \mu \Big| \leq 1 \Big\} \Big) \lesssim K, 
\end{equation*}
which is trivial. \\

It remains to prove the fourth estimate \eqref{prelim:eq-basic-counting-linear}. Similar as in \eqref{prelim:eq-basic-counting-p1}, it suffices to prove that 
\begin{equation}\label{prelim:eq-basic-counting-p4}
\sup_{\mu \in \R} \Leb \Big( \Big\{ \xi \in \R^2 \colon |\xi| \sim K,  \Big| \xi_1 + \sigma |\xi| - \mu \Big| \leq 1 \Big\} \Big) \lesssim K^{-1/2} K^2. 
\end{equation}
The case $\sigma=0$ is trivial and can even be bounded by $K^{-1} K^2$. Thus, it remains to treat the cases $\sigma=\pm 1$. Now, if the constraint $|\xi_1 + \sigma |\xi| - \mu| \leq 1$ is satisfied and $|\xi| \sim K$, then it follows that 
\begin{equation*}
\xi_2^2 = - \xi_1^2 + (\mu - \xi_1)^2 + \mathcal{O}(K). 
\end{equation*}
Thus, \eqref{prelim:eq-basic-counting-p4} can be obtained by first integrating over $\xi_2$, which contributes $K^{1/2}$, and then integrating over $\xi_1$, which contributes $K$. 
\end{proof}

\section{The vector potential}\label{section:vector}

In this section, we primarily examine the vector potential $(A^\alpha)_{\alpha=0}^2$, which is a solution of the system of wave equations
\begin{align}
\partial_\alpha \partial^\alpha A^\beta &= J^\beta, \\ 
A^\beta[0] &=0. 
\end{align}
We recall that the spatial components $J^1,J^2\colon \R_t\times \T_x^2 \rightarrow \R$ of the space-time white noise current  are given by 
\begin{equation}
J^j(t,x) = \sum_{k\in \Z^2} e^{i \langle k,x \rangle} \partial_t W_t^j(k),
\end{equation}
where $(W_t^1(k))_{k\in \Z^2}$ and $(W_t^2(k))_{k\in \Z^2}$ are independent sequences of standard complex-valued Brownian motions as in Subsection \ref{section:probability}. In the first lemma of this section, we obtain an explicit formula for the vector potential. 

\begin{lemma}[Explicit formula for the vector potential]\label{vector:lem-explicit} 
For all $t\geq 0$ and $k \in \Z^2$, it holds that 
\begin{equation}\label{vector:eq-explicit-A0}
\widehat{A}^0(t,k) = - 1\big\{ k\neq 0 \big\} \frac{ik_a}{|k|^2} \int_0^t \dW^a_{s}[k] \Big( \cos\big( (t-s)|k|\big)-1\Big). 
\end{equation}
Similarly, for $a=1,2$ and all $k\in \Z^2\backslash\{0\}$, it holds that 
\begin{equation}
\widehat{A}^a(t,k) = - |k|^{-1} \int_0^t \dW^a_{s}[k] \sin\big((t-s) |k|\big). \label{vector:eq-explicit-Aa}
\end{equation}
In \eqref{vector:eq-explicit-Aa}, we implicitly set $\sin((t-s)|0|)/|0|:=(t-s)$. 
\end{lemma}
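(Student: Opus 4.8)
The plan is to solve the constant-coefficient wave equation $\partial_\alpha\partial^\alpha A^\beta = J^\beta$, $A^\beta[0]=0$ componentwise in Fourier space and then to use the continuity equation from Definition~\ref{intro:def-white-noise-current} together with the Lorenz gauge to pin down $A^0$. First I would recall that $\partial_\alpha\partial^\alpha = -\partial_t^2 + \Delta$, so that on the Fourier side the equation for $\widehat{A}^\beta(t,k)$ reads $-\partial_t^2 \widehat{A}^\beta(t,k) - |k|^2 \widehat{A}^\beta(t,k) = \widehat{J}^\beta(t,k)$, i.e. $\partial_t^2 \widehat{A}^\beta + |k|^2 \widehat{A}^\beta = -\widehat{J}^\beta$, with zero initial position and velocity. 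By Duhamel's formula this gives
\begin{equation*}
\widehat{A}^\beta(t,k) = -\int_0^t \ds\, \frac{\sin\big((t-s)|k|\big)}{|k|}\, \widehat{J}^\beta(s,k),
\end{equation*}
which matches the definition $A^\beta = \Duh[J^\beta]$ in \eqref{ansatz:eq-A}. For the spatial components $\beta = a \in \{1,2\}$, the current is $\widehat{J}^a(s,k) = \partial_s W_s^a(k)$, so the Duhamel integral becomes the It\^o-type integral $-|k|^{-1}\int_0^t \sin((t-s)|k|)\,\dW_s^a[k]$ after an integration by parts in $s$ (the boundary terms vanish because $\sin(0)=0$ and $W_0=0$), which is exactly \eqref{vector:eq-explicit-Aa}. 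The convention $\sin((t-s)|0|)/|0| := (t-s)$ is just the $k=0$ limit of the propagator and handles the zero mode of $A^a$.

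For $\widehat{A}^0(t,k)$ I would \emph{not} integrate the white-noise current $J^0$ directly — it is cleaner to use condition (ii) of Definition~\ref{intro:def-white-noise-current}, namely $\partial_t J^0 = -\partial_j J^j$ and $J^0|_{t=0}=0$, which on the Fourier side reads $\partial_t \widehat{J}^0(t,k) = -i k_a \widehat{J}^a(t,k) = -ik_a \partial_t W_t^a(k)$, hence $\widehat{J}^0(t,k) = -ik_a \partial_t W_t^a(k)$ as a distribution in $t$ (the constant of integration vanishes by $J^0|_{t=0}=0$; for $k=0$ one gets $\widehat{J}^0(t,0)=0$, consistent with the indicator $1\{k\neq 0\}$). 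Plugging this into Duhamel and integrating by parts in $s$ twice — or, equivalently, writing $\int_0^t \sin((t-s)|k|)(-ik_a \partial_s W_s^a(k))\,\ds$ and integrating by parts once to move the $\partial_s$ onto $\sin((t-s)|k|)$ — produces
\begin{equation*}
\widehat{A}^0(t,k) = -\,1\{k\neq 0\}\,\frac{ik_a}{|k|}\int_0^t \cos\big((t-s)|k|\big)\,\dW_s^a[k] + (\text{boundary term}),
\end{equation*}
and one more integration by parts (or direct evaluation of the $\dW$-integral against the constant $1$) converts the $\cos$ into $\cos((t-s)|k|)-1$ with the prefactor $ik_a/|k|^2$, giving \eqref{vector:eq-explicit-A0}. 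A sanity check worth including: this formula is consistent with solving $\partial_\alpha\partial^\alpha A^0 = J^0$ directly, and it visibly satisfies $\widehat{A}^0(0,k)=0$ and $\partial_t\widehat{A}^0(0,k)=0$; moreover one can verify on the spot that $\partial_\alpha A^\alpha = 0$ in Fourier, i.e. $\partial_t \widehat{A}^0(t,k) + ik_a \widehat{A}^a(t,k) = 0$, which is how the Lorenz gauge is inherited from the continuity equation (this is exactly the computation alluded to in the proof of Proposition~\ref{ansatz:prop-ansatz}).

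The main obstacle, such as it is, is purely bookkeeping: being careful with the integration by parts against Brownian motion (really just the deterministic identity $\int_0^t f(s)\,\dW_s = f(t)W_t - f(0)W_0 - \int_0^t f'(s)W_s\,\ds$ for smooth $f$, since $W$ has finite variation on no interval but the integrand here is deterministic, so It\^o vs. Stratonovich does not matter and no quadratic-variation correction appears), keeping track of which boundary terms vanish (all of them, thanks to $\sin(0)=0$ and $W_0=0$ in the relevant places), and correctly isolating the $k=0$ mode where $\widehat{A}^0$ vanishes identically while $\widehat{A}^a$ picks up the linear-in-time propagator. There is no analytic difficulty; the statement is an exact solution formula, so the proof is a short computation and the only thing to get right is the constant $-1$ in $\cos((t-s)|k|)-1$ and the power $|k|^{-2}$ versus $|k|^{-1}$, both of which come out of doing the two integrations by parts in the correct order.
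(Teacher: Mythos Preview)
Your approach is the same as the paper's—Duhamel plus the continuity equation plus integration by parts—but there is a slip in your formula for $\widehat{J}^0$. From $\partial_t \widehat{J}^0(t,k) = -ik_a\,\partial_t W_t^a(k)$ and $\widehat{J}^0(0,k)=0$ you should obtain $\widehat{J}^0(t,k) = -ik_a\, W_t^a(k)$, not $-ik_a\,\partial_t W_t^a(k)$. If you carry your stated formula through Duhamel you get $\widehat{A}^0(t,k) = \tfrac{ik_a}{|k|}\int_0^t \sin((t-s)|k|)\,dW_s^a(k) = -ik_a\,\widehat{A}^a(t,k)$, which is not \eqref{vector:eq-explicit-A0} and has the wrong power of $|k|$. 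With the correct $\widehat{J}^0(t,k) = -ik_a\, W_t^a(k)$, Duhamel gives
\[
\widehat{A}^0(t,k) = \frac{ik_a}{|k|}\int_0^t \sin\big((t-s)|k|\big)\, W_s^a(k)\,ds,
\]
and a single stochastic integration by parts (using $W_0^a=0$ and $\sin 0=0$) converts this to $-\tfrac{ik_a}{|k|^2}\int_0^t (\cos((t-s)|k|)-1)\,dW_s^a(k)$, as desired. The paper organizes the same computation slightly differently: it keeps $J^0$ abstract, integrates the Duhamel kernel by parts once to reach $|\nabla|^{-2}\int_0^t (\cos((t-s)|\nabla|)-1)\,\partial_s J^0\,ds$, and only then substitutes $\partial_s J^0 = -\partial_a J^a$. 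Either order works once the slip is fixed.

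A smaller point: for the spatial components no integration by parts is needed. The expression $\int_0^t \tfrac{\sin((t-s)|k|)}{|k|}\,\partial_s W_s^a(k)\,ds$ \emph{is} the It\^o integral $\int_0^t \tfrac{\sin((t-s)|k|)}{|k|}\,dW_s^a(k)$ by the very definition of integrating a deterministic kernel against white noise, so \eqref{vector:eq-explicit-Aa} is immediate from the Duhamel formula—this is exactly how the paper presents it.
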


\begin{proof}
The second explicit formula \eqref{vector:eq-explicit-Aa} follows directly from the definition of $A^a$ and $J^a$. To be precise, it holds that  
\begin{align*}
A^a(t,x) = - \int_0^t \ds \, \frac{\sin\big( (t-s) |\nabla|\big)}{|\nabla|} J^a(s,x) 
= - \sum_{k \in \Z^2} 
\bigg( e^{i \langle k,x\rangle} \int_0^t \dW^a_s[k] \frac{\sin\big((t-s)|k|\big)}{|k|} \bigg), 
\end{align*}
which implies the desired identity. Thus, it remains to prove the first explicit formula \eqref{vector:eq-explicit-A0}. Using the definition of $A^0$, integration by parts, and $J^0\big|_{t=0}=0$, it holds that 
\begin{align*}
A^0(t,x) 
&= - \int_0^t \ds \, \frac{\sin\big((t-s)|\nabla|\big)}{|\nabla|} J^0(s,x) \\
&= - \frac{\cos\big((t-s)|\nabla|\big)}{|\nabla|^2} J^0(s,x) \Big|_{s=0}^t 
+ \int_0^t \ds \,  \frac{\cos\big( (t-s) |\nabla|\big)}{|\nabla|^2} (\partial_s J^0)(s,x) \\
&= - |\nabla|^{-2} J^0(t,x) + \int_0^t \ds \,  \frac{\cos\big( (t-s) |\nabla|\big)}{|\nabla|^2} (\partial_s J^0)(s,x) \\
&= |\nabla|^{-2} \int_0^t \ds \, \Big( \cos\big( (t-s) |\nabla|\big) -1 \Big) (\partial_s J^0)(s,x). 
\end{align*}
After inserting $\partial_t J^0 = - \partial_a J^a$ and using the definition of $J^a$, it follows that 
\begin{align*}
& |\nabla|^{-2} \int_0^t \ds \Big( \cos\big( (t-s) |\nabla|\big) -1 \Big) (\partial_s J^0)(s,x) \\ 
=& \, - |\nabla|^{-2} \int_0^t \ds \Big( \cos\big( (t-s) |\nabla|\big) -1 \Big) (\partial_a J^a)(s,x) \\
=& - \sum_{k \in \Z^2 \backslash \{0\}} \bigg( \frac{ik_a}{|k|^2} \Big( \int_0^t \dW^a_s[k] \big( \cos\big( (t-s) |k|\big) - 1\big) \Big)  e^{i \langle k,x \rangle}. 
\end{align*}
This yields the desired identity \eqref{vector:eq-explicit-A0}. 
\end{proof}

Equipped with Lemma \ref{vector:lem-explicit}, we now examine the space-time covariances of the vector potential. 

\begin{lemma}[Space-time covariances of the vector potential]\label{vector:lem-covariance} Let $t,t^\prime \geq 0$ and let $k,l \in \Z^2$. Then, we have the the following two identities: 
\begin{enumerate}[label=(\roman*)]
    \item \label{vector:item-covariance-time} (Time-component) It holds that 
    \begin{align*}
    &\E \Big[ \widehat{A}_0(t,k) \widehat{A}^0(t^\prime,l) \Big] \\
    =&  \mathbf{1} \big\{ k+l =0 \neq k \big\} \,  \frac{1}{|k|^2}  
    \bigg( 
    \big( t \wedge t^\prime \big) 
    \Big( 1+ \frac{1}{2} \cos \big( (t-t^\prime) |k| \big) \Big)  \\
    &+ \frac{1}{4|k|} \Big( 
    \sin\big( (t+t^\prime) |k| \big) + 4 \sin\big( t |k| \big) + 4 \sin\big( t^\prime |k| \big) + 3 \sin \big( |t-t^\prime| |k| \big) 
    \Big) \bigg). 
    \end{align*}
    \item \label{vector:item-convariance-spatial} (Spatial components) For all $a,b=1,2$, it holds that 
    \begin{align*}
    &\E \Big[ \widehat{A}^a(t,k) \widehat{A}^b(t^\prime,l) \Big] \\
    =& \mathbf{1}\big\{ k+l =0 \neq k \big\} \,  \delta^{ab}  \frac{1}{2|k|^2}  \bigg( 
    \big( t \wedge t^\prime \big) \cos \big( (t-t^\prime) |k| \big) 
    \\
    &\, + \frac{1}{2|k|} \Big( \sin\big( |t-t^\prime| |k| \big) - \sin\big( (t+t^\prime) |k| \big) \Big) \bigg) \\
    +& \mathbf{1} \big\{ k=l=0 \big\} \delta^{ab} \min(t,t^\prime)^2 \Big( \tfrac{1}{2} \max(t,t^\prime) - \tfrac{1}{6} \min(t,t^\prime) \Big). 
    \end{align*}
\end{enumerate}
\end{lemma}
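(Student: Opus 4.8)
The plan is to insert the explicit formulas for $\widehat{A}^\alpha(t,k)$ from Lemma~\ref{vector:lem-explicit} and reduce everything to expectations of products of Wiener integrals with deterministic integrands, which are evaluated by the It\^o isometry. The key input is the covariance structure of the driving noises: since $W^1$ and $W^2$ are independent, since $W_t(m)$ and $W_t(n)$ are independent whenever $m\neq\pm n$, and since $\overline{W_t(n)}=W_t(-n)$ (with $W_t(0)$ a real Brownian motion), the It\^o isometry yields that
\[
\E\Big[\int_0^{t}\dW^a_s[k]\, f(s)\cdot\int_0^{t'}\dW^b_{s'}[l]\, g(s')\Big]
\]
vanishes unless $a=b$ and $k+l=0$, in which case it equals $\int_0^{t\wedge t'} f(s)\, g(s)\,\mathrm{d}s$. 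This immediately forces the indicator $\mathbf{1}\{k+l=0\}$ in both identities and the Kronecker delta $\delta^{ab}$ in the spatial case.

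Second, I would carry out the index contractions. In the spatial case \eqref{vector:eq-explicit-Aa} gives directly, for $k\neq 0$, that $\E[\widehat{A}^a(t,k)\widehat{A}^b(t',l)]$ equals $\delta^{ab}\mathbf{1}\{k+l=0\}\,|k|^{-2}\int_0^{t\wedge t'}\sin((t-s)|k|)\sin((t'-s)|k|)\,\mathrm{d}s$. In the time-component case \eqref{vector:eq-explicit-A0}, the sum over $a$ against $\delta^{ab}$ produces, using $l=-k$, the factor $k_a k_b\,\delta^{ab}\,|k|^{-4}=|k|^{-2}$, which accounts for the clean prefactor $|k|^{-2}$, and one is left with $\int_0^{t\wedge t'}(\cos((t-s)|k|)-1)(\cos((t'-s)|k|)-1)\,\mathrm{d}s$.

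Third, I would evaluate these time integrals by elementary trigonometry. Writing $\omega=|k|$ and using $\sin A\sin B=\tfrac12(\cos(A-B)-\cos(A+B))$ and $\cos A\cos B=\tfrac12(\cos(A-B)+\cos(A+B))$ with $A=(t-s)\omega$ and $B=(t'-s)\omega$, one observes that $A-B=(t-t')\omega$ is independent of $s$, while the change of variables $v=t+t'-2s$ turns $\int_0^{t\wedge t'}\cos((t+t'-2s)\omega)\,\mathrm{d}s$ into $\tfrac{1}{2\omega}(\sin((t+t')\omega)-\sin(|t-t'|\omega))$, using the identity $t+t'-2(t\wedge t')=|t-t'|$. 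The time-component case additionally requires the two first-order integrals $\int_0^{t\wedge t'}\cos((t-s)\omega)\,\mathrm{d}s$ and $\int_0^{t\wedge t'}\cos((t'-s)\omega)\,\mathrm{d}s$, whose sum collapses because $\{\,t-(t\wedge t'),\,t'-(t\wedge t')\,\}=\{0,|t-t'|\}$. Collecting the terms yields the closed forms in the statement. The degenerate frequency $k=0$ is handled separately: for $\widehat{A}^0$ the indicator $\mathbf{1}\{k\neq 0\}$ makes the contribution vanish, consistent with (i); for $\widehat{A}^a$ one uses the convention $\sin((t-s)|0|)/|0|=t-s$, so $\widehat{A}^a(t,0)=-\int_0^t\dW^a_s[0]\,(t-s)$, and a direct integration gives $\E[\widehat{A}^a(t,0)\widehat{A}^b(t',0)]=\delta^{ab}\int_0^{t\wedge t'}(t-s)(t'-s)\,\mathrm{d}s=\delta^{ab}\min(t,t')^2\big(\tfrac12\max(t,t')-\tfrac16\min(t,t')\big)$.

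There is no conceptual obstacle; the proof is a routine It\^o-isometry computation. The only genuine work is the trigonometric bookkeeping in the third step — keeping careful track of the several $\sin$ terms and their signs, and correctly reducing the asymmetric first-order integrals appearing in the time-component case via $|t-t'|=t+t'-2(t\wedge t')$.
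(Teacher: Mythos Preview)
Your proposal is correct and follows essentially the same approach as the paper: insert the explicit formulas from Lemma~\ref{vector:lem-explicit}, apply the It\^o isometry together with the covariance structure of the driving Brownian motions to reduce to a deterministic $s$-integral, and then evaluate that integral. The paper simply declares the remaining $s$-integral ``elementary'' and omits the trigonometric bookkeeping and the $k=0$ spatial case that you spell out, so your write-up is in fact more detailed than the paper's own proof.
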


\begin{remark}
Naturally, Lemma \ref{vector:lem-explicit} also allows us to compute the space-time covariances of $A^0$ and $A^a$, where $a=1,2$, but they will not be needed in this article. 
\end{remark}

\begin{proof}
We prove \ref{vector:item-covariance-time} and \ref{vector:item-convariance-spatial} separately. \\

\emph{Proof of \ref{vector:item-covariance-time}:} If $k=0$ or $l=0$, it follows from Lemma \ref{vector:lem-explicit} that $\widehat{A}_0(t,k)=0$ or $\widehat{A}^0(t^\prime,l)=0$. Thus, it remains to treat the case $k,l\neq 0$. Using Lemma \ref{vector:lem-explicit} and It\^{o}'s isometry, it holds that 
\begin{align*}
&\E \Big[ \widehat{A}_0(t,k) \widehat{A}^0(t^\prime,l) \Big] \\
=&  \E \bigg[ \frac{ik_a}{|k|^2} \int_0^t \dW^a_{s}[k] \Big( \cos\big( (t-s)|k|\big)-1\Big) \times \frac{il_b}{|l|^2} \int_0^{t^\prime} \dW^b_{s}[l] \Big( \cos\big( (t^\prime-s)|l|\big)-1\Big) \bigg] \\
=& \mathbf{1}\big\{ k+ l = 0 \big\}  \frac{k_a k_b \delta^{ab}}{|k|^4} \int_0^{t\wedge t^\prime} \ds \, \big( 1- \cos\big( (t-s) |k| \big) \big)  \big( 1- \cos\big( (t^\prime-s) |k| \big) \big) \\
=& \mathbf{1}\big\{ k+ l = 0 \big\}  \frac{1}{|k|^2}  \int_0^{t\wedge t^\prime} \ds \, \big( 1- \cos\big( (t-s) |k| \big) \big)  \big( 1- \cos\big( (t^\prime-s) |k| \big) \big). 
\end{align*}
Thus, it only remains to compute the $s$-integral, which is elementary. \\ 

\emph{Proof of \ref{vector:item-convariance-spatial}:} We only treat the case $k,l\neq 0$, since the remaining cases are similar.  
Using Lemma \ref{vector:lem-explicit} and It\^{o}'s isometry, it holds that 
\begin{align*}
&\E \Big[ \widehat{A}^a(t,k) \widehat{A}^b(t^\prime,l) \Big]\\
=& \E \bigg[ |k|^{-1} \int_0^t \dW^a_{s}[k] \sin\big((t-s) |k|\big) \times 
 |l|^{-1} \int_0^{t^\prime} \dW^b_{s}[k] \sin\big((t^\prime-s) |l|\big) \bigg] \\
 =& \, \mathbf{1}\big\{ k+ l = 0 \big\} \delta^{ab} |k|^{-2} \int_0^{t\wedge t^\prime} \ds \,   \sin\big((t-s) |k|\big)  \sin\big((t^\prime-s) |k|\big). 
\end{align*}
Thus, it only remains to compute the $s$-integral, which is elementary. 
\end{proof}

As a consequence of Lemma \ref{vector:lem-covariance}, we obtain the following regularity estimates. 

\begin{corollary}[Regularity estimates for the vector potential]\label{vector:cor-regularity}
For all $K\in \dyadic$, $T\geq 1$, $p\geq 1$, $\epsilon>0$, and $0\leq \alpha \leq 2$, it holds that 
\begin{align}
\E \Big[ \big\| A^\alpha_K \big\|_{\Cs_t^0 \Cs_x^{-\epsilon}([0,T]\times \T^2)}^p \Big]^{1/p} 
&\lesssim_\epsilon \sqrt{p} T^\theta K^{-\epsilon/2},  \label{vector:eq-regularity-1} \\
\E \Big[ \big\| \partial_t A^\alpha_K \big\|_{\Cs_t^0 \Cs_x^{-1-\epsilon}([0,T]\times \T^2)}^p \Big]^{1/p} 
&\lesssim_\epsilon \sqrt{p} T^\theta K^{-\epsilon/2}. \label{vector:eq-regularity-2}
\end{align}
\end{corollary}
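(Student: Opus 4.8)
The plan is to reduce both estimates to pointwise-in-$(t,x)$ Gaussian bounds, upgrade these to $L_x^\infty$ using Bernstein together with Gaussian hypercontractivity, and finally upgrade to $\Cs_t^0$ by a time-discretization argument based on increment estimates. Since $A^\alpha_K = P_K A^\alpha$ is frequency-localized to $|k|\sim K$, we have $\|A^\alpha_K\|_{\Cs_x^{-\epsilon}}\lesssim K^{-\epsilon}\|A^\alpha_K\|_{L_x^\infty}$ and $\|\partial_t A^\alpha_K\|_{\Cs_x^{-1-\epsilon}}\lesssim K^{-1-\epsilon}\|\partial_t A^\alpha_K\|_{L_x^\infty}$, so it suffices to control the $\Cs_t^0 L_x^\infty$-norms of $A^\alpha_K$ and $\partial_t A^\alpha_K$. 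By Lemma \ref{vector:lem-explicit}, the Fourier coefficients $\widehat A^\alpha(t,k)$ are Wiener integrals of deterministic integrands, and the same holds for $\widehat{\partial_t A^\alpha}(t,k)$ after differentiating the Itô integrals (e.g.\ $\widehat{\partial_t A^a}(t,k) = -\int_0^t \cos((t-s)|k|)\,\dW^a_s[k]$ for $a=1,2$); in particular $A^\alpha_K(t,x)$ and $\partial_t A^\alpha_K(t,x)$ are centered Gaussians for each fixed $(t,x)$. Itô's isometry---equivalently Lemma \ref{vector:lem-covariance}---gives, uniformly for $t\in[0,T]$, the variance bounds
\begin{equation*}
\E\big[|\widehat A^\alpha(t,k)|^2\big]\lesssim T^\theta\langle k\rangle^{-2}, \qquad \E\big[|\widehat{\partial_t A^\alpha}(t,k)|^2\big]\lesssim T^\theta,
\end{equation*}
hence, summing over $|k|\sim K$, $\E[|A^\alpha_K(t,x)|^2]\lesssim T^\theta$ and $\E[|\partial_t A^\alpha_K(t,x)|^2]\lesssim T^\theta K^2$.

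For the spatial supremum, fix an integer $q=q(\epsilon)$ with $2/q\le\epsilon/8$. By Bernstein, $\|A^\alpha_K(t)\|_{L_x^\infty}\lesssim K^{2/q}\|A^\alpha_K(t)\|_{L_x^q}$; reducing to the case $p\ge q$ via Hölder in $\Omega$ and then applying Minkowski's integral inequality together with the Gaussian moment bound $\E[|X|^p]^{1/p}\lesssim\sqrt p\,\E[|X|^2]^{1/2}$ (cf.\ Lemma \ref{prelim:lem-hypercontractivity}) and the pointwise variance bound, we get
\begin{equation*}
\E\big[\|A^\alpha_K(t)\|_{L_x^q}^p\big]^{1/p}\le\Big(\int_{\T^2}\dx\,\E\big[|A^\alpha_K(t,x)|^p\big]^{q/p}\Big)^{1/q}\lesssim\sqrt p\,T^\theta,
\end{equation*}
so $\E[\|A^\alpha_K(t)\|_{L_x^\infty}^p]^{1/p}\lesssim\sqrt p\,T^\theta K^{2/q}$ and $K^{-\epsilon}K^{2/q}\le K^{-3\epsilon/4}$. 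The same computation, carrying the extra factor of $K$ from the variance, gives $\E[\|\partial_t A^\alpha_K(t)\|_{L_x^\infty}^p]^{1/p}\lesssim\sqrt p\,T^\theta K^{1+2/q}$ and $K^{-1-\epsilon}K^{1+2/q}\le K^{-3\epsilon/4}$. This Bernstein--hypercontractivity interplay---taking $q$ large to beat the deterministic Bernstein loss---is what substitutes for the (too lossy) deterministic estimates here.

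It remains to pass from a fixed $t$ to the supremum over $t\in[0,T]$ while losing only logarithms in $K$. Differentiating the explicit formulas and applying Itô's isometry again yields the increment estimates
\begin{equation*}
\E\big[|A^\alpha_K(t,x)-A^\alpha_K(t',x)|^2\big]\lesssim T^\theta\,|t-t'|, \qquad \E\big[|\partial_t A^\alpha_K(t,x)-\partial_t A^\alpha_K(t',x)|^2\big]\lesssim T^\theta K^2\,|t-t'|,
\end{equation*}
valid uniformly in $x$ for all $|t-t'|\le K^{-2}$, with trivial bounds $\lesssim T^\theta$ and $\lesssim T^\theta K^2$ for larger $|t-t'|$. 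Discretizing $[0,T]$ at scale $K^{-2}$ (so with $\sim TK^2$ nodes), the maximum over the nodes is handled by Lemma \ref{prelim:lem-maxima-random} at the cost of a factor polylogarithmic in $TK^2$, while the fluctuation of the Gaussian process inside each sub-interval of length $K^{-2}$ is controlled by a Kolmogorov-type continuity estimate from the increment bounds, costing a further polylogarithm and a favorable power of $K$ ($K^{-1}$ for $A^\alpha_K$, $K^0$ for $\partial_t A^\alpha_K$) from the sub-interval length. All of these losses are $\lesssim T^\theta K^{\epsilon/8}$, and combined with the factor $K^{-3\epsilon/4}$ from the previous paragraph they yield the claimed $\lesssim\sqrt p\,T^\theta K^{-\epsilon/2}$. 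The main obstacle is precisely this last step: one must exploit the sharp $|t-t'|^{1/2}$ increment rate at the regularizing time scale $K^{-2}$, so that summing the $\sim TK^2$ sub-interval contributions produces only logarithmic---rather than polynomial---losses in $K$. Everything else is routine.
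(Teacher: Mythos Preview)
Your argument is correct, and the core ingredients---Gaussian hypercontractivity plus the covariance bounds from Lemma~\ref{vector:lem-covariance} (and the analogous formulas for $\partial_t A^\alpha$)---are exactly those of the paper. The difference lies in how the reduction to a fixed-$(t,x)$ second-moment bound is carried out. The paper invokes spatial translation-invariance of the law of $A^\alpha_K$ (referring to \cite[Proof of Lemma~7.4]{BDNY22}) to reduce the $\Cs_t^0\Cs_x^{-\epsilon}$-estimate directly to the bound $\sup_{t\in[0,T]}\E\big[\|P_K A^\alpha(t)\|_{H_x^{-\epsilon}}^2\big]\lesssim T^\theta K^{-2\epsilon}$, which then follows in one line from the covariance lemma; the passage from the $H_x^{-\epsilon}$ second moment to the $\Cs_t^0\Cs_x^{-\epsilon}$ moment bound (including the time-supremum) is packaged entirely inside that reference. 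You instead unpack this black box by hand: Bernstein with large $q$ replaces the translation-invariance trick for the spatial supremum, and explicit time-discretization at scale $K^{-2}$ together with the $|t-t'|^{1/2}$ increment rate replaces the Kolmogorov-type argument hidden in the reference. Your route is more self-contained and makes the mechanism transparent; the paper's route is shorter but relies on an external lemma encoding the same ideas.
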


\begin{proof}
Using Gaussian hypercontractivity (Lemma \ref{prelim:lem-hypercontractivity}) and translation-invariance (similar as in  \cite[Proof of Lemma 7.4]{BDNY24}), the proof of the first estimate  \eqref{vector:eq-regularity-1} can be reduced to
\begin{equation}\label{vector:eq-regularity-p1}
\sup_{t\in [0,T]} \E \Big[ \big\| P_K A^\alpha (t) \big\|_{H_x^{-\epsilon}}^2 \Big] \lesssim K^{-2\epsilon} T^\theta.
\end{equation}
In order to proof \eqref{vector:eq-regularity-p1}, we use Lemma \ref{vector:lem-covariance}, which yields
\begin{align*}
 \E \Big[ \big\| P_K A^\alpha (t) \big\|_{H_x^{-\epsilon}}^2 \Big] 
 = \sum_{k \in \Z^2} \langle k \rangle^{-2\epsilon} \rho_K^2(k) \E \Big[ \big| \widehat{A}^\alpha(t,k) \big|^2 \Big] \lesssim T^\theta \sum_{k\in \Z^2} \langle k \rangle^{-2-2\epsilon} \rho_K^2(k) \lesssim  K^{-2\epsilon} T^\theta. 
\end{align*}
It therefore remains to prove \eqref{vector:eq-regularity-2}. Using Lemma \ref{vector:lem-explicit}, it follows\footnote{The It\^{o}-integrals in Lemma \ref{vector:lem-explicit} are continuously differentiable since the integrand vanishes at $s=t$.} that
\begin{align}
\partial_t \widehat{A}^0(t,k) &= 1\big\{ k\neq 0 \big\} \frac{ik_a}{|k|} \int_0^t \dW^a_{s}[k]  \sin\big( (t-s)|k|\big), \label{vector:eq-regularity-p2} \\ 
\partial_t \widehat{A}^a(t,k) &= -  \int_0^t \dW^a_{s}[k] \cos\big((t-s) |k|\big). \label{vector:eq-regularity-p3} 
\end{align}
Using the explicit formulas in \eqref{vector:eq-regularity-p2} and \eqref{vector:eq-regularity-p3}, the second estimate \eqref{vector:eq-regularity-2} can then be proven using similar arguments as in the proof of \eqref{vector:eq-regularity-1} and we omit the details.
\end{proof}

We now treat the quadratic expression 
\begin{equation}
A_{\leq N,\alpha} A^{\alpha}_{\leq N} - \scrm^{\hspace{-0.4ex}2}_{\leq N}. 
\end{equation}
To this end, we first provide the precise definition of the renormalized mass $\scrm_{\leq N}$, which was previously left undefined. 

\begin{definition}[The renormalized mass]\label{vector:def-mass}
For all $N \in \dyadic$ and $t\in \R$, we define
\begin{equation*}
\scrm_{\leq N}(t) := \sqrt{ \frac{5}{2} \bigg( \sum_{n\in \Z^2\backslash \{ 0 \}} \frac{\rho_{\leq N}^2(n)}{|n|^2} \bigg) |t|}. 
\end{equation*}
\end{definition}

Equipped with Definition \ref{vector:def-mass}, we can now state the estimate of the quadratic expression.

\begin{lemma}[The renormalized quadratic term in the vector potential]\label{vector:lem-quadratic}
For all $M,N \in \dyadic$, $T\geq 1$, $p\geq 1$, and $\epsilon>0$, it holds that 
\begin{equation}\label{vector:eq-quadratic-1}
\E \Big[ \sup_N \Big\| A_{\leq N,\alpha} A^{\alpha}_{\leq N}- \scrm^{\hspace{-0.4ex}2}_{\leq N} \Big\|_{L_t^\infty \Cs^{-\epsilon}_x([0,T]\times \T_x^2)}^p \Big]^{1/p} \lesssim_\epsilon \, p T^\theta. 
\end{equation}
Furthermore, it holds that
\begin{equation}\label{vector:eq-quadratic-2}
\begin{aligned}
&\E \Big[ \sup_N \Big\| 
\Big( A_{\leq M,\alpha} A^{\alpha}_{\leq M}- \scrm^{\hspace{-0.4ex}2}_{\leq M} \Big) - 
\Big(   A_{\leq N,\alpha} A^{\alpha}_{\leq N}- \scrm^{\hspace{-0.4ex}2}_{\leq N}\Big) 
\Big\|_{L_t^\infty \Cs^{-\epsilon}_x([0,T]\times \T_x^2)}^p \Big]^{1/p} \\
\lesssim_\epsilon&\,  p T^\theta \min(M,N)^{-\kappa}. 
\end{aligned}
\end{equation}
\end{lemma}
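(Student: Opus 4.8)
\textbf{Proof proposal for Lemma \ref{vector:lem-quadratic}.}

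The plan is to reduce both estimates to second-moment bounds via Gaussian hypercontractivity, since $A_{\leq N,\alpha}A^\alpha_{\leq N}$ is a polynomial of degree $2$ in the underlying Gaussians and $\scrm_{\leq N}^2$ is deterministic, so the difference $\WickPair:=A_{\leq N,\alpha}A^\alpha_{\leq N}-\scrm_{\leq N}^2$ lives in the (inhomogeneous) second chaos. By Lemma \ref{prelim:lem-hypercontractivity} together with a Besov/Sobolev embedding argument (as in Corollary \ref{vector:cor-regularity}, reducing $\Cs_x^{-\epsilon}$ to $H_x^{-\epsilon/2}$ at fixed time, then upgrading to $L_t^\infty$ and inserting the $\sup_N$ via a union-bound over dyadic $N$ at the cost of a $T^\theta$, $\log N$ factor absorbed into the power of $p$), it suffices to show, for each fixed $t\in[0,T]$ and each frequency $n$,
\begin{equation}\label{plan:eq-chaos-bound}
\E\Big[\big| \widehat{\WickPair_{\leq N}}(t,n)\big|^2\Big] \lesssim T^\theta \langle n\rangle^{0+}
\qquad\text{and the analogous difference version is }\lesssim T^\theta \min(M,N)^{-2\kappa}\langle n\rangle^{0+}.
\end{equation}
The first step, therefore, is to expand $A_{\leq N,\alpha}A^\alpha_{\leq N}$ using the explicit formulas from Lemma \ref{vector:lem-explicit} — writing $A_{\leq N,\alpha}A^\alpha_{\leq N}=-A^0_{\leq N}A^0_{\leq N}+A^1_{\leq N}A^1_{\leq N}+A^2_{\leq N}A^2_{\leq N}$ — and to separate the contribution into a \emph{diagonal} (resonant) part, where the two Gaussian factors are paired, and an \emph{off-diagonal} part lying genuinely in the second chaos.

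The key point, and the reason the particular constant $5/2$ and the factor $|t|$ appear in Definition \ref{vector:def-mass}, is that the diagonal part of $\E[A_{\leq N,\alpha}A^\alpha_{\leq N}]$ must be computed exactly and shown to equal $\scrm_{\leq N}^2$ up to an $O(1)$ (in $N$) remainder. From Lemma \ref{vector:lem-covariance} with $t=t'$ and $k+l=0$, the diagonal self-contraction of $A^a_{\leq N}$ contributes $\delta^{aa}\cdot\tfrac{1}{2|k|^2}(t+\tfrac{1}{2|k|}(\sin 0-\sin 2t|k|))$ summed against $\rho_{\leq N}^2(k)/|k|^0$-type weights, i.e.\ the leading term is $\big(\sum_a 1\big)\tfrac{t}{2}\sum_k \rho_{\leq N}^2(k)|k|^{-2}=t\sum_k\rho_{\leq N}^2(k)|k|^{-2}$; the $A^0_{\leq N}$ self-contraction, using the time-component covariance, contributes the leading term $\tfrac{t}{2}\sum_k\rho_{\leq N}^2(k)|k|^{-2}$ (the $(1+\tfrac12\cos 0)=\tfrac32$ factor with an overall $\tfrac12$ gives $\tfrac34$, but one must be careful with the sign $-A^0A^0$ and recheck the bookkeeping — in any case it produces a rational multiple of $t\sum_k\rho_{\leq N}^2(k)|k|^{-2}$). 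Adding these with the correct signs yields exactly $\tfrac52\big(\sum_{n\neq0}\rho_{\leq N}^2(n)|n|^{-2}\big)|t|=\scrm_{\leq N}^2(t)$, so after subtracting $\scrm_{\leq N}^2$ the $t$-linear, $\log N$-divergent part cancels and only an $N$-uniformly bounded remainder survives. Here I would carry out the elementary $s$-integrals flagged in the proof of Lemma \ref{vector:lem-covariance} and keep track that the oscillatory remainders (the $\sin(2t|k|)/|k|$ terms) sum to something $O(1)$ after an extra $|k|^{-1}$ gain. This is the main obstacle: it is not hard analysis but purely a constant-chasing computation where a sign or combinatorial factor error would invalidate the choice of renormalization, and it is the place where the non-coercive mass $-\scrm^2|\phi|^2$ (flagged in the introduction) is essential.

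For the off-diagonal (true second-chaos) part, the second moment is computed by Wick's theorem (pairing the four Ito integrals into the two non-diagonal pairings), which produces, schematically,
\begin{equation}\label{plan:eq-offdiag}
\E\Big[\big|\widehat{\WickPair_{\leq N}}(t,n)\big|^2\Big]_{\mathrm{off}} \lesssim \sum_{\substack{k+l=n\\k,l\neq0}} \rho_{\leq N}^2(k)\rho_{\leq N}^2(l)\,\big(C^{\alpha}_t(k)\big)\big(C^{\beta}_{t}(l)\big),
\end{equation}
where $C^\alpha_t(k)=\E[|\widehat{A}^\alpha_{\leq N}(t,k)|^2]\lesssim T^\theta|k|^{-2}$ by Lemma \ref{vector:lem-covariance}. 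Thus the sum is bounded by $T^\theta\sum_{k+l=n}|k|^{-2}|l|^{-2}\lesssim T^\theta\langle n\rangle^{-2+}$ (the convolution of $|\cdot|^{-2}$ with itself on $\Z^2$ is $\log$-divergent only at $n=0$ and otherwise summable), which is far better than the $\langle n\rangle^{0+}$ we need — consistent with the expectation that the off-diagonal part is actually smoother. The frequency-truncation differences for \eqref{vector:eq-quadratic-2} are handled by the standard telescoping: $\WickPair_{\leq M}-\WickPair_{\leq N}$ for $M\geq N$ involves at least one factor projected to frequencies $\gtrsim N$, so in the diagonal part one gains $\sum_{|k|\gtrsim N}|k|^{-2}\lesssim$ (difference of the defining sums, which is exactly cancelled against $\scrm_{\leq M}^2-\scrm_{\leq N}^2$ up to an $O(N^{-2\kappa})$-summable tail after splitting $|k|^{-2}=|k|^{-2+2\kappa}|k|^{-2\kappa}$), and in the off-diagonal part one gains a factor $\min(M,N)^{-2\kappa}$ directly from the restricted convolution sum. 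Finally, I would reinstate the $\Cs_x^{-\epsilon}$-in-space, $L_t^\infty$-in-time, and $\sup_N$ outer structure: Hölder-in-time continuity of the $X^{s,b}$-type objects (or a direct Kolmogorov/Garsia–Rodemich–Rumsey argument using that $\WickPair$ is polynomial in Brownian motions, as in Corollary \ref{vector:cor-regularity}) upgrades fixed-$t$ bounds to $L_t^\infty$, the Besov embedding $H_x^{\epsilon/2+0}\hookrightarrow \Cs_x^{-\epsilon}$ wait — rather $H_x^{-\epsilon/2}\hookrightarrow \Cs_x^{-\epsilon}$ after summing a geometric series in the Littlewood–Paley pieces — upgrades the Sobolev bound, and the $\sup_N$ is moved inside by a union bound combined with Lemma \ref{prelim:lem-maxima-random}, all of which only cost the stated $T^\theta$ and the indicated powers $p$ resp.\ $pT^\theta\min(M,N)^{-\kappa}$ (the $\kappa$ rather than $2\kappa$ in the statement leaving comfortable room).
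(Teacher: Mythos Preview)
Your proposal is correct and follows essentially the same route as the paper: decompose $A_{\leq N,\alpha}A^\alpha_{\leq N}$ into its expectation (the resonant part) and the mean-zero second-chaos part, show via Lemma~\ref{vector:lem-covariance} that the expectation equals $\scrm_{\leq N}^2$ plus a remainder summing to $O(T^3)$ uniformly in $N$ (the paper records this remainder explicitly as $\sum_{n\neq 0}\rho_{\leq N}^2(n)|n|^{-3}\big(-\tfrac14\sin(2t|n|)+2\sin(t|n|)\big)+\tfrac{t^3}{3}$), and control the second-chaos part by hypercontractivity reduced to a fixed-time $H_x^{-\epsilon}$ second-moment bound using orthogonality and the $|k|^{-2}$ covariance decay. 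Your bookkeeping for the $5/2$ is slightly off (the time-component covariance in Lemma~\ref{vector:lem-covariance}(i) carries prefactor $|k|^{-2}$, not $\tfrac12|k|^{-2}$, so $(1+\tfrac12)=\tfrac32$ from $A_0A^0$ plus $2\cdot\tfrac12=1$ from the two spatial components gives $\tfrac52$ directly), but you correctly flag this as the only delicate point and the rest of your outline matches the paper's argument.
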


\begin{proof}
We only prove \eqref{vector:eq-quadratic-1}, since \eqref{vector:eq-quadratic-2} follows from a minor modification of the same argument. 

We first decompose $A_{\leq N,\alpha} A^{\alpha}_{\leq N}$ into a resonant and non-resonant term. To be more precise, we decompose
\begin{align}
&A_{\leq N,\alpha} A_{\leq N}^\alpha \notag \\ 
=& \sum_{k,l\in \Z^2} \rho_{\leq N}(k) \rho_{\leq N}(l) \, 
\Big( \widehat{A}_\alpha(t,k) \widehat{A}^\alpha(t,l) 
- \E \big[ \widehat{A}_\alpha(t,k) \widehat{A}^\alpha(t,l) \big] \Big) 
e^{i\langle k+l,x \rangle} \label{vector:eq-quadratic-p1} \\ 
+& \sum_{k,l\in \Z^2} \rho_{\leq N}(k) \rho_{\leq N}(l) \,  
\E \big[ \widehat{A}_\alpha(t,k) \widehat{A}^\alpha(t,l) \big]  
e^{i\langle k+l,x \rangle} \label{vector:eq-quadratic-p2}. 
\end{align}
We now treat the non-resonant term \eqref{vector:eq-quadratic-p1} and resonant term \eqref{vector:eq-quadratic-p2} separately. \\ 

\emph{Contribution of the non-resonant term \eqref{vector:eq-quadratic-p1}:} We use the dyadic decomposition
\begin{align*}
&\sum_{k,l\in \Z^2} \rho_{\leq N}(k) \rho_{\leq N}(l) \, 
\Big( \widehat{A}_\alpha(t,k) \widehat{A}^\alpha(t,l) 
- \E \big[ \widehat{A}_\alpha(t,k) \widehat{A}^\alpha(t,l) \big] \Big) 
e^{i\langle k+l,x \rangle} \\
=& \sum_{\substack{K,L\colon \\ K,L \leq N}} 
\sum_{k,l\in \Z^2} \rho_{K}(k) \rho_{L}(l) \, 
\Big( \widehat{A}_\alpha(t,k) \widehat{A}^\alpha(t,l) 
- \E \big[ \widehat{A}_\alpha(t,k) \widehat{A}^\alpha(t,l) \big] \Big) 
e^{i\langle k+l,x \rangle}. 
\end{align*}
Due to Gaussian hypercontractivity (Lemma \ref{prelim:lem-hypercontractivity}) and translation-invariance (see e.g. \cite[Proof of Lemma 7.4]{BDNY24}), it suffices to prove that 
\begin{equation}\label{vector:eq-quadratic-p3}
\begin{aligned}
&\sup_{t\in [0,T]} \E \Big[ \Big\| 
\sum_{k,l\in \Z^2} \rho_{K}(k) \rho_{L}(l) \, 
\Big( \widehat{A}_\alpha(t,k) \widehat{A}^\alpha(t,l) 
- \E \big[ \widehat{A}_\alpha(t,k) \widehat{A}^\alpha(t,l) \big] \Big) 
e^{i\langle k+l,x \rangle} \Big\|_{H_x^{-\epsilon}}^2 \Big] \\ 
\lesssim& \,  T^\theta \max(K,L)^{-2\epsilon}. 
\end{aligned}
\end{equation}
We now note that, up to permutations, the family of random variables
\begin{equation*}
\Big( \widehat{A}_\alpha(t,k) \widehat{A}^\alpha(t,l) 
- \E \big[ \widehat{A}_\alpha(t,k) \widehat{A}^\alpha(t,l) \big] \Big)_{k,l \in \Z^2}
\end{equation*}
is orthogonal in $L^2(\Omega,\mathbb{P})$. Together with Lemma \ref{vector:lem-covariance} and Gaussian hypercontractivity\footnote{Gaussian hypercontractivity is used only for converting the second-moment bound from Lemma \ref{vector:lem-covariance} into a fourth-moment bound.} (Lemma \ref{prelim:lem-hypercontractivity}), it follows that 
\begin{align*}
&\E \Big[ \Big\| 
\sum_{k,l\in \Z^2} \rho_{K}(k) \rho_{L}(l) \, 
\Big( \widehat{A}_\alpha(t,k) \widehat{A}^\alpha(t,l) 
- \E \big[ \widehat{A}_\alpha(t,k) \widehat{A}^\alpha(t,l) \big] \Big) 
e^{i\langle k+l,x \rangle} \Big\|_{H_x^{-\epsilon}}^2 \Big]\\
\lesssim& \, \sum_{k,l\in \Z^2} \rho_K^2(k) \rho_L^2(l) \langle k + l \rangle^{-2\epsilon} \E \Big[ \Big|  \widehat{A}_\alpha(t,k) \widehat{A}^\alpha(t,l) 
- \E \big[ \widehat{A}_\alpha(t,k) \widehat{A}^\alpha(t,l) \big] \Big|^2 \Big] \\
\lesssim& \,  T^\theta \sum_{k,l\in \Z^2} \rho_K^2(k) \rho_L^2(l) \langle k + l \rangle^{-2\epsilon} \langle k\rangle^{-2} \langle l \rangle^{-2} \\ 
\lesssim& \, T^\theta \max(K,L)^{-2\epsilon}. 
\end{align*}
This completes the proof of \eqref{vector:eq-quadratic-p3}. \\ 

\emph{Contribution of the resonant term \eqref{vector:eq-quadratic-p2}:} By relabeling $k\in \Z^2$ as $n\in \Z^2$, using Lemma \ref{vector:lem-covariance}, and using Definition \ref{vector:def-mass}, we obtain that 
\begin{align}
& \sum_{k,l\in \Z^2} \rho_{\leq N}(k) \rho_{\leq N}(l) \,  
\E \big[ \widehat{A}_\alpha(t,k) \widehat{A}^\alpha(t,l) \big]  
e^{i\langle k+l,x \rangle} \notag \\
=& \, \frac{5}{2} \Big( \sum_{n\in \Z^2 \backslash \{0 \} } \frac{\rho_{\leq N}^2(n)}{|n|^2} \Big) t 
+ \sum_{n\in \Z^2 \backslash \{0\}} \frac{\rho_{\leq N}^2(n)}{|n|^3} \Big( -\frac{1}{4} \sin\big( 2t |n|\big) + 2 \sin\big( t|n|\big) \Big) + \frac{t^3}{3}. \notag
\end{align}
Here, the last summand comes from the contribution for $n=0$. 
Since the first summand coincides with $\scrm^{\hspace{-0.4ex}2}_{\leq N}$, it easily follows for all $\nu \in \R$  that 
\begin{equation*}
\Big\| \eqref{vector:eq-quadratic-p2} - \scrm^{\hspace{-0.4ex}2}_{\leq N} \Big\|_{L_t^\infty \Cs_x^\nu ([0,T]\times \T_x^2)} \lesssim T^3.  \qedhere
\end{equation*}
\end{proof}

In the next lemma, we utilize dispersive effects in order to bound a certain time-integral involving the vector potential $A$. This lemma will be used in the proof of Proposition \ref{operator:prop-main} below. 
\begin{lemma}[Dispersive estimate for a time-integral]\label{vector:lem-dispersive-time-integral}
Let $K \in \dyadic$ and let $0\leq \alpha \leq 2$. Then, it holds for all $T\geq 1$ and $p\geq 1$ that
\begin{equation*}
\E \Big[ \max_{\unormal \in \mathbb{S}^1} \sup_{\substack{\lambda \in \R \colon \\ |\lambda| \lesssim K^{10}}} \Big\| \int_0^t \dt^\prime e^{it^\prime \lambda} e^{-t^\prime \unormal \cdot \nabla} A^\alpha_K(t^\prime,x) \Big\|_{L_x^\infty H_t^{b_+}(\T_x^2 \times [0,T])}^p \Big]^{1/p} \lesssim \sqrt{p} T^\theta K^{-1/4+\delta}. 
\end{equation*}
\end{lemma}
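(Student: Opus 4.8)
\emph{Step 1 (reductions).} Write $F(t,x):=\int_0^t\dt^\prime\,e^{it^\prime\lambda}\,e^{-t^\prime\unormal\cdot\nabla}A^\alpha_K(t^\prime,x)$ for the quantity to be estimated, so that $F(0,x)=0$ and $\partial_t F(t,x)=e^{it\lambda}e^{-t\unormal\cdot\nabla}A^\alpha_K(t,x)$. Since $A^\alpha_K$ is Fourier-supported in $\{|n|\sim K\}$ and $|\lambda|\lesssim K^{10}$, the map $(x,\unormal,\lambda)\mapsto F(\cdot,x)\in H^{b_+}_t([0,T])$ is Lipschitz with constant bounded, via Bernstein in $x$ and the elementary $t^\prime$-derivatives of $e^{it^\prime\lambda}$ and $e^{-t^\prime\unormal\cdot\nabla}$, by a fixed power of $K$ times $T^\theta\|A^\alpha_K\|_{L^\infty_tL^\infty_x}$, whose moments are $\lesssim_\epsilon\sqrt p\,T^\theta K^\epsilon$ by Corollary \ref{vector:cor-regularity}. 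Discretizing the three suprema over $K^{O(1)}$-nets therefore costs only $(\log K)^{O(1)}$ after Lemma \ref{prelim:lem-maxima-random}, which is absorbed into $K^{\delta}$; it suffices to bound $\E\big[\|F(\cdot,x)\|_{H^{b_+}_t([0,T])}^p\big]^{1/p}$ for a fixed admissible triple $(x,\unormal,\lambda)$. Since $F(\cdot,x)$ is a Hilbert-space-valued random variable in the first Wiener chaos, Gaussian concentration gives $\big\|\,\|F(\cdot,x)\|_{H^{b_+}_t}\,\big\|_{\Psi_2}\lesssim\E\big[\|F(\cdot,x)\|_{H^{b_+}_t}^2\big]^{1/2}$, which produces the factor $\sqrt p$ and reduces the lemma to the second-moment estimate $\E\big[\|F(\cdot,x)\|_{H^{b_+}_t([0,T])}^2\big]\lesssim T^\theta K^{-1/2+\delta}$.

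\emph{Step 2 (covariance).} I would use the Slobodeckij description
\[
\|F(\cdot,x)\|_{H^{b_+}_t([0,T])}^2\lesssim_T\|F(\cdot,x)\|_{L^2_t([0,T])}^2+\int_0^T\!\!\int_0^T\frac{|F(t,x)-F(t^\prime,x)|^2}{|t-t^\prime|^{1+2b_+}}\,\dt\,\dt^\prime ,
\]
which reduces everything to $\E[|F(t,x)-F(t^\prime,x)|^2]$ (with $t^\prime=0$ for the first term). Because $e^{-r\unormal\cdot\nabla}$ is translation by $r\unormal$ and $A^\alpha_K$ is real-valued, $F(t,x)-F(t^\prime,x)=\int_{t^\prime}^{t}\mathrm{d}r\,e^{ir\lambda}A^\alpha_K(r,x-r\unormal)$; expanding and invoking the frequency-diagonal structure of the space-time covariances of $A^\alpha$ from Lemma \ref{vector:lem-covariance} gives
\[
\E\big[|F(t,x)-F(t^\prime,x)|^2\big]=\sum_{n\in\Z^2}\rho_K^2(n)\int_{[t^\prime,t]^2}e^{i(r_1-r_2)(\lambda-\unormal\cdot n)}\,C_\alpha(r_1,r_2,n)\,\mathrm{d}r_1\,\mathrm{d}r_2 ,
\]
where $C_\alpha(r_1,r_2,n):=\E\big[\widehat A^\alpha(r_1,n)\widehat A^\alpha(r_2,-n)\big]$ is given explicitly by Lemma \ref{vector:lem-covariance}.

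\emph{Step 3 (dispersive gain and counting).} The leading part of $C_\alpha$ equals $|n|^{-2}(r_1\wedge r_2)$ times a bounded linear combination of $1$ and $\cos((r_1-r_2)|n|)$, while the remaining pieces carry an extra factor $|n|^{-1}$ and contribute only $\lesssim T^\theta K^{-1}$ to the whole expression. For the leading part the $r$-integral becomes $\int_{[t^\prime,t]^2}(r_1\wedge r_2)\,e^{i(r_1-r_2)\mu_\sigma(n)}\,\mathrm{d}r_1\,\mathrm{d}r_2$ with $\mu_\sigma(n):=\lambda-\unormal\cdot n+\sigma|n|$ and $\sigma\in\{-1,0,1\}$; integrating by parts twice (the only boundary terms come from the kink of $r_1\wedge r_2$) bounds it by $T|t-t^\prime|\min(|t-t^\prime|,\langle\mu_\sigma(n)\rangle^{-1})$, and then multiplying by $|t-t^\prime|^{-1-2b_+}$ and integrating over $[0,T]^2$ — splitting at $|t-t^\prime|\sim\langle\mu_\sigma(n)\rangle^{-1}$ and using $0<2b_+-1\ll1$ — yields $\lesssim T^\theta\langle\mu_\sigma(n)\rangle^{2b_+-2}$. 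Hence $\E[\|F(\cdot,x)\|_{H^{b_+}_t}^2]\lesssim T^\theta K^{-2}\sum_{\sigma}\sum_{|n|\sim K}\langle\mu_\sigma(n)\rangle^{2b_+-2}+T^\theta K^{-1}$. Since $\{|n|\sim K,\ \langle\mu_\sigma(n)\rangle\sim L\}$ is covered by $\sim L$ unit-width slabs of the type appearing in \eqref{prelim:eq-basic-counting-linear}, that estimate gives $\#\{|n|\sim K:\langle\mu_\sigma(n)\rangle\sim L\}\lesssim L\,K^{3/2}$; summing the (top-heavy, as $2b_+-1>0$) series over dyadic $1\le L\lesssim|\lambda|+K\lesssim K^{10}$ produces $\sum_{|n|\sim K}\langle\mu_\sigma(n)\rangle^{2b_+-2}\lesssim K^{3/2}(K^{10})^{2b_+-1}=K^{3/2+20(b_+-1/2)}$, which by the parameter hierarchy \eqref{prelim:eq-parameter-condition} ($b_+-1/2\ll\delta$) is at most $K^{3/2+\delta/2}$. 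Therefore $\E[\|F(\cdot,x)\|_{H^{b_+}_t}^2]\lesssim T^\theta K^{-1/2+\delta/2}$, which is exactly what Step 1 required.

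\emph{Main difficulty.} The delicate point is producing precisely the exponent $-1/4$: one needs both the full fractional gain $\langle\mu_\sigma(n)\rangle^{2b_+-2}$ from the (mildly oscillatory) time integral — which is close to $\langle\mu_\sigma(n)\rangle^{-1}$ only because $b_+$ is taken just above $1/2$ — and the sharp lattice count \eqref{prelim:eq-basic-counting-linear}, which reflects genuine dispersion along the light cone. A cruder route, e.g.\ interpolating $H^{b_+}_t$ between $L^2_t$ and $H^1_t$, loses half the exponent since $\partial_tF$ has the same (non-smoothed) regularity as $A^\alpha_K$. One also has to check that all incurred losses — the logarithms from the nets, the factor $(K^{10})^{2b_+-1}$ coming from the range of $\lambda$, and the powers of $T$ — remain compatible with the claimed $\sqrt p\,T^\theta K^{-1/4+\delta}$, which is precisely where the ordering between $b_+-1/2$ and $\delta$ in \eqref{prelim:eq-parameter-condition} is used.
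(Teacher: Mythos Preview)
Your argument is correct and arrives at the same endpoint as the paper --- the covariance formula from Lemma~\ref{vector:lem-covariance} combined with the lattice count \eqref{prelim:eq-basic-counting-linear} --- but the route is genuinely different. The paper does not work with the Slobodeckij seminorm. Instead it first passes from $\|F\|_{H^{b_+}_t}$ to $\|\partial_t F\|_{H^{b_+-1}_t}$, then uses a crude $H^0_t$-bound (via $\|A^\alpha_K\|_{L^\infty_{t,x}}$) to interpolate the target down to $H^{b_--1}_t$ with $b_--1<-1/2$. At that negative regularity the $H^{b_--1}_t$-norm is controlled by the sup of the temporal Fourier transform, so after the meshing/hypercontractivity reduction one is left with a pointwise Fourier quantity of the form $\sum_{|k|\sim K}|k|^{-2}\langle \xi+\lambda-\unormal\cdot k+\sigma|k|\rangle^{-1}$, and the level-set decomposition plus \eqref{prelim:eq-basic-counting-linear} finish.

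Your physical-space approach trades the interpolation step for an explicit oscillatory estimate of $\int_{[t',t]^2}(r_1\wedge r_2)e^{i(r_1-r_2)\mu}\,\mathrm{d}r_1\mathrm{d}r_2$ (one integration by parts in $u=r_1-r_2$ already gives $Th\min(h,\langle\mu\rangle^{-1})$; the ``twice'' is not needed). The price is the top-heavy factor $(K^{10})^{2b_+-1}$ coming from the dyadic sum over $\langle\mu\rangle\sim L$, which you correctly absorb using $b_+-1/2\ll\delta$ from \eqref{prelim:eq-parameter-condition}. The paper's Fourier-side version avoids that particular loss but incurs a comparable one through the interpolation exponent between $b_--1$ and $0$. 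Minor cosmetic point: the phase should read $-(\lambda+\unormal\cdot n)$ rather than $\lambda-\unormal\cdot n$, but this is irrelevant after symmetrizing in $n\mapsto -n$ and taking the supremum over $(\lambda,\unormal)$.
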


\begin{proof}
We first estimate
\begin{equation}\label{vector:eq-dispersive-p1}
 \Big\| \int_0^t \dt^\prime e^{it^\prime \lambda} e^{-t^\prime \unormal \cdot \nabla} A^\alpha_K(t^\prime,x) \Big\|_{L_x^\infty H_t^{b_+}(\T_x^2 \times [0,T])} 
 \lesssim  \Big\|e^{it \lambda} e^{-t \unormal \cdot \nabla} A^\alpha_K(t,x) \Big\|_{L_x^\infty H_t^{b_+-1}(\T_x^2 \times [0,T])}. 
\end{equation}
In order to estimate the right-hand side of \eqref{vector:eq-dispersive-p1}, we first want to replace $b_+-1$ by $b_--1$ (where $b_-$ is as in Subsection \ref{section:notation}). To this end, we note that 
\begin{align*}
\Big\|e^{it \lambda} e^{-t \unormal \cdot \nabla} A^\alpha_K(t,x) \Big\|_{L_x^\infty H_t^{0}(\T_x^2 \times [0,T])} 
&\lesssim  T^{1/2} \Big\|e^{it \lambda} e^{-t \unormal \cdot \nabla} A^\alpha_K(t,x) \Big\|_{L_t^\infty L_x^\infty( [0,T] \times \T_x^2)} \\
&\lesssim T^{1/2} \big\| A_K^\alpha \big\|_{L_t^\infty L_x^\infty( [0,T] \times \T_x^2)}. 
\end{align*}
Using Corollary \ref{vector:cor-regularity}, it follows for all $\epsilon>0$ that 
\begin{equation}\label{vector:eq-dispersive-p2}
\E \Big[ \max_{\unormal \in \mathbb{S}^1} \sup_{\substack{\lambda \in \R \colon \\ |\lambda| \lesssim K^{10}}} \Big\|  e^{it \lambda} e^{-t \unormal \cdot \nabla} A^\alpha_K(t,x) \Big\|_{L_x^\infty H_t^{0}(\T_x^2 \times [0,T])}^p \Big]^{1/p} \lesssim \sqrt{p} T^\theta K^\epsilon.
\end{equation}
Equipped with \eqref{vector:eq-dispersive-p2}, we may then prove the desired estimate at $b_+-1$ by interpolation between $b_--1$ and $0$, and it then only remains to prove that 
\begin{equation}\label{vector:eq-dispersive-p3}
\E \Big[ \max_{\unormal \in \mathbb{S}^1} \sup_{\substack{\lambda \in \R \colon \\ |\lambda| \lesssim K^{10}}} \Big\|  e^{it \lambda} e^{-t \unormal \cdot \nabla} A^\alpha_K(t,x) \Big\|_{L_x^\infty H_t^{b_--1}(\T_x^2 \times [0,T])}^p \Big]^{1/p} \lesssim \sqrt{p} T^\theta K^{-1/4+\delta/2}. 
\end{equation}
Using a standard meshing argument (see e.g. \cite[Section 5.7]{BDNY24}) and Gaussian hypercontractivity (Lemma \ref{prelim:lem-hypercontractivity}), \eqref{vector:eq-dispersive-p3} can be further reduced to the estimate
\begin{equation}\label{vector:eq-dispersive-p4}
\max_{\unormal \in \mathbb{S}^1} \sup_{\substack{\lambda \in \R \colon \\ |\lambda| \lesssim K^{10}}} \max_{x\in \T^2} \E \Big[  \Big\|  e^{it \lambda} e^{-t \unormal \cdot \nabla} A^\alpha_K(t,x) \Big\|_{ H_t^{b_--1}( [0,T])}^2 \Big]^{1/2} \lesssim  T^\theta K^{-1/4+\delta/4}.
\end{equation}
Finally, since $b_- <1/2$, the $H_t^{b_--1/2}$-norm can be estimated by the sup-norm of the Fourier transform. Thus,  \eqref{vector:eq-dispersive-p4} can be further reduced to the estimate
\begin{equation}\label{vector:eq-dispersive-p5}
\max_{\unormal \in \mathbb{S}^1} \sup_{\substack{\lambda \in \R \colon \\ |\lambda| \lesssim K^{10}}} \max_{x\in \T^2} \sup_{\xi \in \R} \E \Big[  \Big|  \int_{\R} \dt \, \chi\big( t/T\big) e^{it\xi} e^{it \lambda} e^{-t \unormal \cdot \nabla} A^\alpha_K(t,x) \Big|^2 \Big]^{1/2} \lesssim  T^\theta K^{-1/4+\delta/4},
\end{equation}
where $\chi$ is a smooth, nonnegative cut-off function. 
After all of the reductions above, we now prove \eqref{vector:eq-dispersive-p5} using the covariance identity (Lemma \ref{vector:lem-covariance}) and lattice point counting estimates (Lemma \ref{prelim:lem-basic-counting}). We have that 
\begin{align}
&\E \Big[  \Big|  \int_{\R} \dt \, \chi\big( t/T\big) e^{it\xi} e^{it \lambda} e^{-t \unormal \cdot \nabla} A^\alpha_K(t,x) \Big|^2 \notag \\
=& \int_{\R} \dt \int_{\R} \dt^\prime \chi\big(t/T\big) \chi\big(t^\prime/T) e^{i (\xi+\lambda) (t-t^\prime)} \sum_{k,k^\prime \in \Z^2} \bigg( \rho_K(k) \label{vector:eq-dispersive-p6} \\
&\times \rho_K(k^\prime) e^{-i t \unormal \cdot k + i t^\prime \unormal \cdot k^\prime} 
\E \Big[ \widehat{A}_K^\alpha(t,k) \overline{\widehat{A}^\alpha_K(t^\prime,k)} \Big] e^{i\langle k-k^\prime, x \rangle} \bigg).   \notag 
\end{align}
To avoid confusion, we note that the index $0\leq \alpha \leq 2$ in \eqref{vector:eq-dispersive-p6} is fixed and not summed over. 
Using Lemma \ref{vector:eq-dispersive-p6}, we can write
\begin{equation}\label{vector:eq-dispersive-p7}
\E \Big[ \widehat{A}_K^\alpha(t,k) \overline{\widehat{A}^\alpha_K(t^\prime,k)} \Big]
= \mathbf{1}\big\{ k=k^\prime \big\} \bigg( \frac{\varphi(t,t^\prime)}{\langle k \rangle^2} \sum_{\sigma=0,\pm 1} c_\sigma e^{i\sigma (t-t^\prime) |k|} + \mathcal{O} \Big( \frac{T}{\langle k\rangle^3} \Big) \bigg), 
\end{equation}
where $\varphi \colon \R \times \R \rightarrow \R$ is continuous with bounded first-order derivatives, $c_{-1}$, $c_0$, and $c_1$ are constants, and $\mathcal{O}$ is the usual Landau symbol. Due to the estimate 
\begin{equation*}
\int_{\R} \dt \int_{\R} \dt^\prime \chi\big(t/T\big) \chi\big(t^\prime/T) \sum_{k\in \Z^2} \rho_K^2(k) \langle k \rangle^{-3} \lesssim T^2 K^{-1}, 
\end{equation*}
the contribution of the $\mathcal{O}$-term in \eqref{vector:eq-dispersive-p7} is (better than) acceptable. Thus, it remains to treat the contribution of the main term in \eqref{vector:eq-dispersive-p7}. To this end, we estimate 
\begin{align}
&\bigg| \int_{\R} \dt \int_{\R} \dt^\prime   \chi\big(t/T\big) \chi\big(t^\prime/T) e^{i (\xi+\lambda) (t-t^\prime)} \varphi(t,t^\prime)  \sum_{k\in \Z^2}  \bigg( \frac{\rho_K^2(k)}{\langle k \rangle^2}   e^{-i (t-t^\prime) \unormal \cdot k} e^{i\sigma (t-t^\prime) |k|} \bigg) \bigg| \notag \\
\lesssim& \, T^\theta \sum_{k\in \Z^2} \frac{\rho_K^2(k)}{\langle k \rangle^2} \Big( 1+ \big| \xi + \lambda - \unormal \cdot k + \sigma |k| \big| \Big)^{-1}. 
\end{align}
By using a level-set decomposition and the lattice point counting estimate (Lemma \ref{prelim:lem-basic-counting}), we have that 
\begin{align*}
&\sum_{k\in \Z^2} \frac{\rho_K^2(k)}{\langle k \rangle^2} \Big( 1+ \big| \xi + \lambda - \unormal \cdot k + \sigma |k| \big| \Big)^{-1} \\
\lesssim& \sum_{\substack{\mu\in \Z \colon \\ |\mu|\lesssim K}}
\Big( 1+ \big| \xi + \lambda - \mu \big| \Big)^{-1} 
\times K^{-2} \sup_{\mu \in \Z^2}  \sum_{k\in \Z^2}  \rho_K^2(k)
\mathbf{1} \big\{ \big| - \unormal \cdot k + \sigma |k| - \mu \big| \leq 1\big\} \\
\lesssim& \log(K) K^{-1/2}. 
\end{align*}
This yields an acceptable contribution to \eqref{vector:eq-dispersive-p4} and therefore completes the proof. 
\end{proof}

At the end of this section, we shift our attention from the vector potential $A$ to the linear stochastic object $z$ from \eqref{ansatz:eq-z}. Just like the vector potential $A$, $z$ solves a constant-coefficient wave equation with stochastic forcing. Using similar arguments as above, we also obtain the following regularity estimate for $z$.

\begin{corollary}[Regularity estimate for $z$]\label{vector:cor-regularity-z}
For all $K\in \dyadic$, $T\geq 1$, $p\geq 1$, and $\epsilon>0$, it holds that 
\begin{equation}\label{vector:eq-z-cs}
\E \Big[ \big\| P_K z \big\|_{\Cs_t^0 \Cs_x^{-\epsilon}([0,T] \times \T^2)}^p \Big]^{1/p} \lesssim \sqrt{p} T^\theta K^{-\epsilon/2}. 
\end{equation}
Furthermore, for all $\epsilon \geq 10 (b_+ - 1/2)$, it also holds that 
\begin{equation}\label{vector:eq-z-xnub}
\E \Big[ \big\| P_K z \big\|_{X^{-\epsilon,b_+}([0,T])}^p \Big]^{1/p} \lesssim \sqrt{p} T^\theta K^{-\epsilon/2}.
\end{equation}
\end{corollary}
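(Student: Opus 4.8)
The plan is to mirror the proofs of Corollary~\ref{vector:cor-regularity} (for \eqref{vector:eq-z-cs}) and of Lemma~\ref{prelim:lem-Xnub-ito} (for \eqref{vector:eq-z-xnub}), since, like the vector potential $A$, the field $z$ solves the constant-coefficient wave equation $\partial_\alpha\partial^\alpha z=\zeta$ with $z[0]=0$. First I would record the explicit formula: from the Duhamel integral \eqref{ansatz:eq-duhamel} and the expansion \eqref{prelim:eq-zeta} of $\zeta$,
\begin{equation*}
\widehat{z}(t,n)=-\int_0^t \mathrm{d}Z_s(n)\,\frac{\sin\big((t-s)|n|\big)}{|n|}\qquad(t\geq 0,\ n\in\Z^2),
\end{equation*}
with the convention $\sin((t-s)|0|)/|0|:=t-s$. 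Each $\widehat z(t,n)$, and hence $P_K z(t,x)$ for fixed $(t,x)$, is Gaussian (a degree-one polynomial in the underlying Brownian increments), so Gaussian hypercontractivity (Lemma~\ref{prelim:lem-hypercontractivity}) applies, and It\^{o}'s isometry gives $\E[|\widehat z(t,n)|^2]=\int_0^t|n|^{-2}\sin^2((t-s)|n|)\,\ds\lesssim T\langle n\rangle^{-2}$ for $n\neq 0$ and $\E[|\widehat z(t,0)|^2]\lesssim T^3$; the exceptional mode $n=0$ enters only $P_1z$ and is harmless.

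For \eqref{vector:eq-z-cs} I would run the reduction used for \eqref{vector:eq-regularity-1} without change: Gaussian hypercontractivity (Lemma~\ref{prelim:lem-hypercontractivity}), translation invariance in $x$, and the standard space-time meshing argument (as in the proof of Corollary~\ref{vector:cor-regularity}, cf.\ \cite[Proof of Lemma~7.4]{BDNY22} and Lemma~\ref{prelim:lem-maxima-random}) reduce the statement to the pointwise-in-time second-moment bound $\sup_{t\in[0,T]}\E[\|P_K z(t)\|_{H_x^{-\epsilon}(\T^2)}^2]\lesssim T^\theta K^{-2\epsilon}$, which follows immediately from the variance computation above by summing:
\begin{equation*}
\E\big[\|P_K z(t)\|_{H_x^{-\epsilon}}^2\big]=\sum_{n\in\Z^2}\langle n\rangle^{-2\epsilon}\rho_K^2(n)\,\E\big[|\widehat z(t,n)|^2\big]\lesssim T\sum_{|n|\sim K}\langle n\rangle^{-2-2\epsilon}+\mathbf{1}\{K=1\}\,T^3\lesssim T^\theta K^{-2\epsilon}.
\end{equation*}
The exponent in \eqref{vector:eq-z-cs} appears as $-\epsilon/2$ rather than $-\epsilon$ only because extracting the $\Cs_x^{-\epsilon}$ Hölder norm from the $H_x^{-\epsilon}$ bound forces one to take $p\gtrsim\log K$ (to pass from $L^p_x$ to $L^\infty_x$ on a frequency-localized function), which costs a sub-polynomial factor comfortably absorbed into $K^{\epsilon/2}$; any decay of the form $K^{-\epsilon+\theta}$ would suffice.

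For \eqref{vector:eq-z-xnub} I would instead use the orthogonality of the modulated profiles $(e^{i\langle n,x\rangle}\varphi_n(t))_n$ in $X^{\nu,b}$ (as in the proof of Lemma~\ref{prelim:lem-Xnub-to-ell2}). Writing $\psi_n(t):=\int_0^t\mathrm{d}Z_s(n)\sin((t-s)|n|)$, so that $\widehat z(t,n)=-|n|^{-1}\psi_n(t)$, orthogonality together with $|n|^{-2}\langle n\rangle^{-2\epsilon}\lesssim K^{-2-2\epsilon}$ on the support of $\rho_K$ gives
\begin{equation*}
\big\|P_K z\big\|_{X^{-\epsilon,b_+}([0,T])}^2\lesssim\sum_{n\in\Z^2}\rho_K^2(n)\,|n|^{-2}\langle n\rangle^{-2\epsilon}\big\|e^{i\langle n,x\rangle}\psi_n\big\|_{X^{0,b_+}([0,T])}^2\lesssim K^{-2\epsilon}\max_{|n|\sim K}\big\|e^{i\langle n,x\rangle}\psi_n\big\|_{X^{0,b_+}([0,T])}^2,
\end{equation*}
where I bounded the $\lesssim K^2$ summands over $\{|n|\sim K\}$ by $K^2$ times the maximum (the $n=0$ mode handled separately via its polynomial-in-$t$ behaviour). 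Taking $p$-th moments and invoking Lemma~\ref{prelim:lem-Xnub-ito} with $M=K$ then yields $\E[\|P_K z\|_{X^{-\epsilon,b_+}}^p]^{1/p}\lesssim K^{-\epsilon}\sqrt{p}\,T^\theta K^{4(b_+-1/2)}$, and the hypothesis $\epsilon\geq10(b_+-1/2)$ gives $-\epsilon+4(b_+-1/2)\leq-\epsilon/2$ with room to spare.

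I do not expect a genuine obstacle here: every ingredient — the explicit Fourier formula, It\^{o}'s isometry, Gaussian hypercontractivity, the meshing argument, orthogonality in $X^{\nu,b}$, and the $X^{\nu,b}$-It\^{o} estimate Lemma~\ref{prelim:lem-Xnub-ito} — is already available. The only points requiring mild care are the exceptional zero mode $n=0$ (polynomial in $t$, affecting only $P_1z$) and the bookkeeping of powers of $K$: the $L^p\to L^\infty$ loss is what separates $K^{-\epsilon}$ from $K^{-\epsilon/2}$ in \eqref{vector:eq-z-cs}, while the built-in $K^{4(b_+-1/2)}$ loss of Lemma~\ref{prelim:lem-Xnub-ito} is precisely what dictates the threshold $\epsilon\geq10(b_+-1/2)$ in \eqref{vector:eq-z-xnub}.
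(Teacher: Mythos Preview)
Your proposal is correct and follows essentially the same approach as the paper: the paper's proof simply states that \eqref{vector:eq-z-cs} follows as in Corollary~\ref{vector:cor-regularity} and that \eqref{vector:eq-z-xnub} follows from Lemma~\ref{prelim:lem-Xnub-ito}, which is precisely the route you outline (with the additional details filled in).
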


\begin{proof}
The first estimate \eqref{vector:eq-z-cs} follows essentially as in Corollary \ref{vector:cor-regularity}. The second estimate \eqref{vector:eq-z-xnub} follows easily from Lemma \ref{prelim:lem-Xnub-ito}. 
\end{proof}
\section{Random operator estimates}\label{section:operator}

In this section, we prove random operator estimates for $\Lin[ll][\leq N]$, $\Lin[sim][\leq N]$, and $\Lin[gg][\leq N]$ from \eqref{ansatz:eq-Lin-ll}-\eqref{ansatz:eq-Lin-gg}. To this end, we first use a dyadic decomposition of the vector potential and define
\begin{align}
\Lin[ll][K] \phi &:=  2i \Duh \Big[ \partial_\alpha \Big( A_K^\alpha \parall \phi \Big) \Big] ,\label{operator:eq-LK-ll} \\
\Lin[sim][K] \phi &:=  2i \Duh \Big[ \partial_\alpha \Big( A_K^\alpha \parasim \phi \Big) \Big] ,\label{operator:eq-LK-sim} \\
\Lin[gg][K] \phi &:=  2i \Duh \Big[ \partial_\alpha \Big( A_K^\alpha \paragg \phi \Big) \Big] . \label{operator:eq-LK-gg}
\end{align}
Our main estimates, which address all three operators in \eqref{operator:eq-LK-ll}, \eqref{operator:eq-LK-sim}, and \eqref{operator:eq-LK-gg}, are collected in the next proposition. 

\begin{proposition}[Random operator estimates]\label{operator:prop-main}
Let $b_0,b_+$, and $\delta_1$ be as in Subsection \ref{section:notation}, let $p\geq 1$, let $T\geq 1$, and let $K\in \dyadic$. Then, we have the following three estimates: 
\begin{enumerate}[label=(\roman*)]
\item (High$\times$high-estimate) \label{operator:item-high-high} 
For all $\nu\geq \delta_1$, it holds that 
\begin{equation*}
\E \Big[ \big\| \Lin[sim][K] \big\|_{X^{\nu,b_0}([0,T])\rightarrow X^{\nu+1/4-\delta_1,b_+}([0,T])}^p \Big]^{1/p}  \lesssim \sqrt{p}\, T^\theta K^{-\kappa}. 
\end{equation*}
\item (High$\times$low-estimate) \label{operator:item-high-low} 
For all $\nu\leq 0 $, it holds that 
\begin{equation*}
\E \Big[ \big\| \Lin[gg][K] \big\|_{X^{\nu,b_0}([0,T])\rightarrow X^{\nu,b_+}([0,T])}^p \Big]^{1/p}  \lesssim \sqrt{p}\, T^\theta K^{\delta_1 - 1/4-\kappa}. 
\end{equation*}
\item (Low$\times$high-estimate) \label{operator:item-low-high} 
For all $\nu\in \R$, it holds that 
\begin{equation*}
\E \Big[ \big\| \Lin[ll][K] \big\|_{X^{\nu,b_0}([0,T])\rightarrow X^{\nu,b_+}([0,T])}^p \Big]^{1/p}  \lesssim \sqrt{p}\, T^\theta K^{\delta_1 - 1/4-\kappa}. 
\end{equation*}
\end{enumerate}
\end{proposition}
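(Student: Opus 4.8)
The plan is to treat all three bounds with one and the same scheme, which is the random-operator method of \cite{DNY20}: decompose the input dyadically in frequency; on each dyadic block realize $\Lin[ll][K]$, $\Lin[sim][K]$, or $\Lin[gg][K]$ as a Duhamel integral of a random bilinear object; estimate its operator norm by the moment method (Lemma~\ref{prelim:lem-moment-method}); reduce the resulting deterministic tensor norms to the lattice point counting of Lemma~\ref{prelim:lem-basic-counting}; and sum over the dyadic scales. Concretely, writing $\phi=\sum_L P_L\phi$, only the scales $L\gg K$ contribute to $A_K^\alpha\parall\phi$, only $L\sim K$ to $A_K^\alpha\parasim\phi$, and only $L\ll K$ to $A_K^\alpha\paragg\phi$, and the output is then supported at frequency $\sim L$, at some frequency $M\lesssim K$, and at frequency $\sim K$ respectively. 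Hence in the low$\times$high case the $\langle n\rangle^\nu$-weights on input and output cancel and it suffices to treat $\nu=0$, while in the high$\times$low case the output frequency dominates the input frequency, so for $\nu\le 0$ the weighted bound follows from the $\nu=0$ bound on each block (which will carry a gain of order $L/K$, see below) together with the summation over $L\ll K$.

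On each block I would first invoke the energy estimate (Lemma~\ref{prelim:lem-Xnub}(v)), which reduces matters to placing $\partial_\alpha$ of the relevant paraproduct of $A_K^\alpha$ and $\phi$ in $X^{\sigma-1,b_+-1}$, with $X^{\sigma,b_+}$ the target space. Since $b_+-1<0$ and the expression lives at a single output scale, the $\partial_\alpha$ is compensated by the $|\nabla|^{-1}$ inside $\Duh$, and $A_K^\alpha$ may be placed in $X^{-\epsilon,b}$ for every $\epsilon>0$ and every $b<1$ by Lemma~\ref{vector:lem-explicit} and Lemma~\ref{prelim:lem-Xnub-ito}; in the high$\times$low case one first uses the Lorenz identity $\partial_\alpha(A_K^\alpha\paragg\phi)=A_K^\alpha\paragg\partial_\alpha\phi$ to move the derivative onto the low-frequency factor, which produces the gain of order $L/K$ that makes the sum over $L\ll K$ converge. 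After a meshing argument and a dyadic decomposition in the input modulation (as in the proof of Lemma~\ref{vector:lem-dispersive-time-integral}), the action of the block operator on modulated plane waves is, via Lemma~\ref{prelim:lem-Xnub-to-ell2}, described by a random tensor whose randomness enters through the It\^{o} integrals $\int_0^s \mathrm{d}W^a_{s^\prime}(k)\sin((s-s^\prime)|k|)$ of Lemma~\ref{vector:lem-explicit}, with $|k|\sim K$ playing the role of the contracted index $k_0$ of Lemma~\ref{prelim:lem-moment-method}. Lemma~\ref{prelim:lem-moment-method} then bounds the $p$-th moment of the operator norm by $\sqrt p\,K^\epsilon$ times the larger of two deterministic tensor norms — one with the output index grouped with $k_0$, one with it grouped with the input index — times an $\ell^\infty L^2_s$-norm of the It\^{o} kernel, which is $O(1)$ once the $|k|^{-1}$ of $\widehat A^\alpha_K$ has been extracted. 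Expanding the square, each deterministic tensor norm is a sum over $k$ with $|k|\sim K$ constrained by a modulation identity of the shape $\big|\, |k+l|\pm|l|-\mu \,\big|\lesssim 1$ or $\big|\, |k+l|-\mu \,\big|\lesssim 1$, and Lemma~\ref{prelim:lem-basic-counting} bounds the count by $\min(K,L)^{-1/2}K^2$ (resp.\ $K^{-1}K^2$); the $K^2$ cancels the normalization and the surviving $\min(K,L)^{-1/4}$ is the dispersive gain. Combined with the negative-regularity bounds of Corollary~\ref{vector:cor-regularity}, which leave a spare power $K^{-\kappa}$, and the summation of the geometric series in the free dyadic scale, this yields the $K^{\delta_1-1/4-\kappa}$ claimed in \ref{operator:item-low-high} and \ref{operator:item-high-low}, the $\delta_1$ absorbing the $K^\epsilon$ and logarithmic losses.

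For the high$\times$high estimate \ref{operator:item-high-high} the output scale $M$ runs over all dyadic values $\lesssim K$. When $M\sim K$ the argument above directly yields the $1/4$-smoothing via \eqref{prelim:eq-basic-counting-plus} (and its $-$ analogue). The delicate regime is $M\ll K$ (high$\times$high$\to$low): the two large inputs nearly cancel, the output carries no derivative, and the Sobolev weights become unfavorable — which is exactly why one must restrict to $\nu\ge\delta_1$ — so the crude count \eqref{prelim:eq-basic-counting-zero} is not enough. Instead I would freeze the high-frequency input as a superposition of modulated plane waves $e^{i\langle l,x\rangle}e^{is\mu}$, which leads precisely to the quantity estimated in Lemma~\ref{vector:lem-dispersive-time-integral}, namely the $H^{b_+}_t$-norm of the time-integral of $A_K^\alpha$ against the oscillation $e^{it^\prime\lambda}e^{-t^\prime\unormal\cdot\nabla}$; by the transversality encoded in \eqref{prelim:eq-basic-counting-linear} this is $O(K^{-1/4+\delta})$ uniformly over $\unormal\in\mathbb{S}^1$ and $|\lambda|\lesssim K^{10}$. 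Feeding this back and summing over $M\ll K$ recovers the $1/4-\delta_1$ derivatives of smoothing.

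I expect the main obstacle to be precisely this high$\times$high$\to$low interaction: dispersion has to be harvested from the space-time oscillation of $A_K^\alpha$ rather than from a naive bilinear product estimate (which is unavailable at this regularity), and the bookkeeping must be arranged so that the gain survives the summation over the output scale $M$ together with the unfavorable weight comparison. A secondary, mostly technical difficulty is the composition of the Duhamel time-integral with the It\^{o} time-integral inside $A_K^\alpha$; this is handled by an integration by parts that trades a power of $K$ for the extra $b$-regularity, exactly as in the proof of Lemma~\ref{prelim:lem-Xnub-ito}, together with the fact that Lemma~\ref{prelim:lem-moment-method} is already formulated for It\^{o}-type random tensors.
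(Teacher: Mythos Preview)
Your scheme is sound for the high$\times$high and high$\times$low estimates, and indeed the paper proceeds essentially as you describe there (moment method plus Lemma~\ref{prelim:lem-basic-counting}, with the Lorenz identity to shift the derivative in the high$\times$low case). But you have misidentified which of the three cases is the delicate one, and in consequence the low$\times$high argument has a genuine gap.

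The issue is this: in the low$\times$high regime $K\ll L$, the moment method (Lemma~\ref{prelim:lem-moment-method}) does not produce a loss of $K^\epsilon$ as you write, but a loss of $\max(K,L,M)^\epsilon\sim L^\epsilon$. Since the dispersive gain is only $\min(K,L,M)^{-1/4}=K^{-1/4}$ and there is no compensating gain in $L$ (input and output both live at frequency $\sim L$), the sum over $L\gg K$ diverges. This is exactly the ``absence of nonlinear smoothing'' flagged in Section~\ref{section:ansatz}. The paper's remedy is to split at $L=K^{100}$: for $L<K^{100}$ the loss $L^{\delta_0}\le K^{100\delta_0}$ is harmless since $100\delta_0\ll\delta_1$, while for $L\ge K^{100}$ one first restricts to low input modulation, then performs an angular decomposition that linearizes the phase $|k+l|\approx \unormal_\varphi\cdot k+|l|$, and finally invokes Lemma~\ref{vector:lem-dispersive-time-integral} to extract the $K^{-1/4}$ gain without any $L$-loss. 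In other words, the tool you proposed for the high$\times$high$\to$low interaction is in fact needed for low$\times$high.

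Conversely, the high$\times$high$\to$low case that worried you is handled directly by the basic random operator estimate: with $K\sim L\gtrsim M$ the bound reads $M^{\nu+1/4-\delta_1}L^{-\nu}\max(K,L,M)^{\delta_0}\min(K,L,M)^{-1/4}\sim M^{\nu-\delta_1}K^{\delta_0-\nu}$, and the hypothesis $\nu\ge\delta_1$ is precisely what makes the sum over $M\lesssim K$ converge to $K^{\delta_0-\delta_1}\lesssim K^{-\kappa}$. No appeal to Lemma~\ref{vector:lem-dispersive-time-integral} is needed here. A smaller point: in the high$\times$low temporal component, writing $\partial_\alpha(A_K^\alpha\paragg\phi)=A_K^\alpha\paragg\partial_\alpha\phi$ places a $\partial_t$ on $\phi$, which $X^{\nu,b_0}$ does not absorb; the paper instead integrates by parts in the Duhamel time variable so that the time derivative lands on the kernel and on $\widehat{A}^0$, and then exploits the improved bound for $\sigma_1\neq\sigma_2$ in Lemma~\ref{operator:lem-basic}.
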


We remark that the three estimates in Proposition \ref{operator:prop-main} are stated in increasing order of difficulty.

\begin{remark}[High$\times$high-estimate]
We emphasize that the condition $\nu>0$, which is slightly weaker than $\nu\geq \delta_1$, is likely necessary for the high$\times$high-estimate. The reason is that the vector potential $A^\alpha$ is a linear wave with negative spatial regularity, and therefore the high$\times$high-product $A^\alpha \parasim \phi$ cannot be defined for all elements of $X^{0,b}$. \\
If the high$\times$high-estimate was satisfied for any $\nu <0$, then the proof of Theorem \ref{intro:thm-main} would likely be much simpler. At least for the derivative nonlinearity in \eqref{ansatz:eq-phiN-a}, one could then close all estimates directly in $X^{\nu,b}$. 
\end{remark}

\begin{remark}[Null-structure]
In the low$\times$high-estimate, we exhibit a $1/4$-gain in the lowest frequency-scale, i.e., a gain of $K^{-1/4}$. By using null structures (and potentially imposing a different gauge condition), it may be possible to replace $K^{-1/4}$ by $K^{-1/2}$. The reason for this is as follows: 

The proof of the low$\times$high-estimate crucially relies on the lattice point counting estimate from Lemma \ref{prelim:lem-basic-counting}. In the proof of Lemma \ref{prelim:lem-basic-counting}, the main contribution comes from angles $\Phi \sim K^{-1/2}$. Therefore, the worst contributions come from nearly parallel interactions, which are weakened by null forms. By utilizing a null structure, it may be possible to restrict to angles $\Phi \sim 1$, in which case the $K^{-1/2}$-factor in  Lemma \ref{prelim:lem-basic-counting} can be replaced by $K^{-1}$. This improved lattice point counting estimate would then lead to the aforementioned improvement in the low$\times$high-estimate. 
Using similar heuristics, one may hope to improve the high$\times$high-estimate by replacing $X^{\nu+1/4-\delta_1,b_+}$ with $X^{\nu+1/2-\delta_1,b_+}$ and the high$\times$low-estimate by replacing   $K^{\delta_1 - 1/4-\kappa}$ in with  $K^{\delta_1 - 1/2-\kappa}$.
However, since Theorem \ref{intro:thm-main} can be proven without any of these further improvements, we did not pursue this direction here.  
\end{remark}

\begin{remark}[Comparison of bilinear and operator estimates]
In the deterministic literature, bilinear estimates for wave equations are often stated in the form 
\begin{equation}\label{operator:eq-bilinear}
\big\| \Duh \big[ uv \big] \big\|_{X^{\nu_0,b_0}} 
\lesssim \| u \|_{X^{\nu_1,b_1}} \| v \|_{X^{\nu_2,b_2}}. 
\end{equation}
Of course, the bilinear estimate \eqref{operator:eq-bilinear} is equivalent to the operator estimate
\begin{equation}\label{operator:eq-operator}
\big\| v \mapsto \Duh \big[ uv \big] \big\|_{X^{\nu_2,b_2} \rightarrow X^{\nu_0,b_0}} \lesssim \| u \|_{X^{\nu_1,b_1}}.  
\end{equation}
In deterministic settings, the operator formulation \eqref{operator:eq-operator} is unnecessarily complicated. In random settings, however, it is more convenient to state estimates in the form of operator bounds.  The reason is that random objects, such as our vector potential $A^\alpha$, are often explicit and not just any element of a given function space. 
\end{remark}

\begin{remark}[Comparison with a deterministic bilinear estimate]\label{operator:rem-comparison-bilinear}
While the literature contains several deterministic bilinear estimates for wave equations (see e.g. \cite{DFS10,DFS12,FK20,KS02}), they generally require more regularity than is available in Proposition \ref{operator:prop-main}. For example, consider the null-form estimate from \cite[Corollary 13.4]{FK20}, which implies\footnote{In the notation of \cite{FK20}, the null-form estimate corresponds to $\beta_0=-3/4+\delta$, $\beta_+=0$, $\beta_-=-1/4+\delta$, $\alpha_1=5/4+\delta$, and $\alpha_2=1/4+\delta$.}
\begin{equation}\label{operator:eq-nullform}
\big\| \Duh \big[ Q_{12}(|\nabla|^{-1} A_\alpha , \phi) \big] 
\big\|_{X^{1/4+\delta,3/4+\delta}}
\lesssim \big\| A_\alpha \big\|_{X^{1/4+\delta,b}}
\big\| \phi \big\|_{X^{1/4+\delta,b}},
\end{equation}
where $0<\delta\ll 1$ and $0<b-1/2\ll 1$. While \eqref{operator:rem-comparison-bilinear} requires that $A$ and $\phi$ have spatial regularity $1/4+$, Proposition \ref{operator:prop-main} concerns a vector potential $A$ with regularity $0-$ and scalar fields $\phi$ with regularity $0+$.
\end{remark}

Once Proposition \ref{operator:prop-main} has been established, it is relatively easy to establish the following two lemmas. In the first lemma,  we control the resolvent $\big(1+\Lin[ll][\leq N]\big)^{-1}$ and the structured component $\chi_{\leq N}$. In the statement, $\sigma_A$ and $\sigma_z$ are the $\sigma$-Algebras generated by $A$ and $z$, which were previously introduced in Subsection \ref{section:probability}.

\begin{lemma}[Resolvent estimate]\label{operator:lem-resolvent}
Let $C=C(b_0,b_+,\delta_1)\geq 1$ be sufficiently large and $0<c\leq 1$ be sufficiently small. Then, for all $\lambda\geq 1$, there exist events $E^{(A)}_\lambda \in \sigma_A$ and $E^{(z)}_\lambda \in \sigma_z$ which satisfy 
\begin{equation}\label{operator:eq-event-probability}
\mathbb{P} \big( E^{(A)}_\lambda \big), \mathbb{P}\big( E^{(z)}_\lambda \big) \geq 1 - c \exp(-\lambda)
\end{equation}
and such that the following properties are satisfied: 
\begin{enumerate}[label=(\roman*)]
    \item (Resolvent estimate) On the event $E^{(A)}_\lambda$, we have for all $M,N\in \dyadic$, $T\geq 1$, and $\nu \in \R$ that 
    \begin{align}
    \Big\| \big( 1 + \Lin[ll][\leq N] \big)^{-1} \Big\|_{X^{\nu,b_0}([0,T]) \rightarrow X^{\nu,b_0}([0,T])} &\leq \exp \Big( C (T\lambda)^C\Big), \label{operator:eq-resolvent-1} \\
     \Big\| \big( 1 + \Lin[ll][\leq M] \big)^{-1} - \big( 1 + \Lin[ll][\leq N] \big)^{-1} \Big\|_{X^{\nu,b_0}([0,T]) \rightarrow X^{\nu,b_0}([0,T])} &\leq \exp \Big( C (T\lambda)^C\Big) \min(M,N)^{-\kappa}. \label{operator:eq-resolvent-2} 
    \end{align}
\item (Bound on $\chi$) On the event $E^{(A)}_\lambda \medcap E^{(z)}_\lambda$, we have for all $M,N\in \dyadic$ and $T\geq 1$ that
\begin{align}
\big\| \chi_{\leq N} \big\|_{X^{-\delta_1,b_0}([0,T])} 
&\leq \exp \Big( C (T\lambda)^C\Big), 
\label{operator:eq-chi-1} \\ 
\big\| \chi_{\leq M} - \chi_{\leq N} \big\|_{X^{-\delta_1,b_0}([0,T])} 
&\leq \exp \Big( C (T\lambda)^C\Big) \min(M,N)^{-\kappa}. 
\label{operator:eq-chi-2} 
\end{align}
\end{enumerate}
\end{lemma}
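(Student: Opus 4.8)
The plan is to deduce everything from the random operator estimates in Proposition \ref{operator:prop-main}\ref{operator:item-low-high} together with Corollary \ref{vector:cor-regularity-z}, by first fixing good events on which the relevant random norms are of size at most a polynomial in $T\lambda$, and then running purely deterministic Neumann-series and Gronwall-type arguments on those events. First I would define $E^{(A)}_\lambda$ to be the event on which
\begin{equation*}
\big\| \Lin[ll][K] \big\|_{X^{\nu,b_0}([0,T]) \rightarrow X^{\nu,b_+}([0,T])} \leq C_0 \, T^\theta \, \lambda \, K^{\delta_1-1/4-\kappa}
\end{equation*}
holds simultaneously for all $K\in \dyadic$ and all $T\in \dyadic$ (say), where $C_0$ is a fixed constant; by Proposition \ref{operator:prop-main}\ref{operator:item-low-high}, Chebyshev/Markov at a well-chosen power $p\sim \lambda$, and a union bound over the (at most $\log$-many relevant, hence summable after paying an extra power of $\lambda$) dyadic parameters, this event has probability at least $1-c\exp(-\lambda)$; a standard monotonicity-in-$T$ argument upgrades the dyadic $T$ to all $T\geq 1$ at the cost of enlarging $\theta$. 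Similarly I would define $E^{(z)}_\lambda$ to be the event on which $\| P_K z\|_{X^{-\delta_1,b_+}([0,T])} \leq C_0 T^\theta \lambda K^{-\kappa}$ for all $K$ and all $T$, using Corollary \ref{vector:cor-regularity-z} (with $\epsilon = 2\delta_1$, which is admissible since $\delta_1 \gg b_+-1/2$); note $E^{(z)}_\lambda$ is $\sigma_z$-measurable and $E^{(A)}_\lambda$ is $\sigma_A$-measurable as required, because $\Lin[ll][\leq N]$ depends only on $A$ and $z$ depends only on $\zeta$.

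Next, on $E^{(A)}_\lambda$ I would establish \eqref{operator:eq-resolvent-1}. Summing the dyadic bounds, $\| \Lin[ll][\leq N]\|_{X^{\nu,b_0}\to X^{\nu,b_+}} \leq \sum_{K\leq N} C_0 T^\theta \lambda K^{\delta_1-1/4-\kappa} \lesssim T^\theta \lambda$ since $\delta_1 - 1/4 - \kappa < 0$ (here I use $X^{\nu,b_+}\hookrightarrow X^{\nu,b_0}$ since $b_0<b_+$). This does not immediately give a small operator norm, so the Neumann series need not converge directly; instead I would use the standard time-subdivision trick. Choose $\tau = \tau(T,\lambda) > 0$ so small that, by Lemma \ref{prelim:lem-Xnub}(iv) (time-localization, trading $b_0$ against $b_+$) and the energy estimate Lemma \ref{prelim:lem-Xnub}(v), the operator $\Lin[ll][\leq N]$ restricted to $X^{\nu,b_0}([0,\tau])$ has norm $\leq 1/2$; one can take $\tau$ of the form $(C T^\theta \lambda)^{-C'}$, i.e. inverse-polynomial in $T\lambda$. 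On each subinterval the Neumann series converges and gives a resolvent bound $\leq 2$; gluing $O(T/\tau) = (T\lambda)^{O(1)}$ subintervals together — each gluing step composing the local solution operators and paying a bounded factor from the interval-transition in $X^{\nu,b_0}$ — produces the claimed global bound $\exp(C(T\lambda)^C)$. For \eqref{operator:eq-resolvent-2} I would use the resolvent identity
\begin{equation*}
\big( 1 + \Lin[ll][\leq M] \big)^{-1} - \big( 1 + \Lin[ll][\leq N] \big)^{-1} = \big( 1 + \Lin[ll][\leq M] \big)^{-1} \big( \Lin[ll][\leq N] - \Lin[ll][\leq M] \big) \big( 1 + \Lin[ll][\leq N] \big)^{-1},
\end{equation*}
bound the two resolvents by \eqref{operator:eq-resolvent-1}, and bound the middle difference $\Lin[ll][\leq N] - \Lin[ll][\leq M] = \sum_{\min(M,N) < K \leq \max(M,N)} \Lin[ll][K]$ on $E^{(A)}_\lambda$ by $\sum_{K > \min(M,N)} C_0 T^\theta \lambda K^{\delta_1-1/4-\kappa} \lesssim T^\theta\lambda \min(M,N)^{\delta_1-1/4-\kappa} \leq T^\theta\lambda\min(M,N)^{-\kappa}$, which is exactly the gain claimed.

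Finally, for part (ii), on $E^{(A)}_\lambda \cap E^{(z)}_\lambda$ I would use $\chi_{\leq N} = (1+\Lin[ll][\leq N])^{-1} z$, so $\| \chi_{\leq N}\|_{X^{-\delta_1,b_0}([0,T])} \leq \exp(C(T\lambda)^C) \|z\|_{X^{-\delta_1,b_0}([0,T])}$, and $\|z\|_{X^{-\delta_1,b_0}([0,T])} \leq \sum_K \|P_K z\|_{X^{-\delta_1,b_0}([0,T])} \leq \sum_K C_0 T^\theta\lambda K^{-\kappa} \lesssim T^\theta\lambda$ on $E^{(z)}_\lambda$, giving \eqref{operator:eq-chi-1} after adjusting constants. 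For \eqref{operator:eq-chi-2} I would write $\chi_{\leq M} - \chi_{\leq N} = \big[(1+\Lin[ll][\leq M])^{-1} - (1+\Lin[ll][\leq N])^{-1}\big] z$ (note $z$ itself carries no truncation), and apply \eqref{operator:eq-resolvent-2} together with the bound on $\|z\|_{X^{-\delta_1,b_0}}$. I expect the main technical obstacle to be the bookkeeping in the time-subdivision argument for \eqref{operator:eq-resolvent-1}: one must verify that $\tau$ can be taken inverse-polynomial in $T\lambda$ uniformly in $N$ (which requires that the $\tau^{b_+-b_0}$ gain in Lemma \ref{prelim:lem-Xnub}(iv) beats the $\sum_{K\leq N}$ from the dyadic decomposition — it does, because that sum is bounded uniformly in $N$), and that the number of gluings and the per-gluing constants combine into the stated double-exponential-free form $\exp(C(T\lambda)^C)$ rather than something worse.
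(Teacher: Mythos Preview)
Your proposal is correct and follows essentially the same approach as the paper: define the good events via Proposition~\ref{operator:prop-main}\ref{operator:item-low-high} and Corollary~\ref{vector:cor-regularity-z} plus union bounds, then run a time-step iteration for \eqref{operator:eq-resolvent-1}, use the resolvent identity for \eqref{operator:eq-resolvent-2}, and combine with the bound on $z$ for part~(ii). One implementation detail to be careful with: your phrasing ``gluing $O(T/\tau)$ subintervals'' suggests working on disjoint intervals $[j\tau,(j+1)\tau]$, but Proposition~\ref{operator:prop-main} only gives operator bounds on intervals of the form $[0,t]$, not $[t_0,t_1]$ with $t_0>0$ (since $\Lin[ll][\leq N]$ is a Duhamel integral from $0$). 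The paper handles this by iterating over the \emph{growing} intervals $[0,j\tau]$, setting $D_j := \|\chi\|_{X^{\nu,b_0}([0,j\tau])} + \|\Lin[ll][\leq N]\chi\|_{X^{\nu,b_0}([0,j\tau])}$ and using Lemma~\ref{prelim:lem-Xnub}(iv) to bound $D_{j+1}$ by $D_j$ plus a $\tau^{b_+-b_0}$-small multiple of $D_{j+1}$; this absorbs cleanly and iterates to the claimed bound. Your argument goes through once rewritten in this form.
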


In the second lemma, we obtain a commutator estimate, which will be useful in the proof of Proposition \ref{product:prop-main} below.

\begin{lemma}[Commutator estimate]\label{operator:lem-commutator}
Let $C=C(b_0,b_+,\delta_1)\geq 1$ be sufficiently large and $0<c\leq 1$ be sufficiently small. Then, for all $\lambda \geq 1$, there exists an event $E^{(A)}_\lambda \in \sigma_A$ which satisfies
\begin{equation}\label{operator:eq-commutator-probability}
\mathbb{P} \big( E^{(A)}_\lambda \big) \geq 1 - c \exp(-\lambda)
\end{equation}
and such that, on this event, the following estimates hold: For all $L,M,N \in \dyadic$, $T\geq 1$, and $\nu \in \R$, it holds that 
\begin{align}
&\Big\| \big[ P_L , (1+\Lin[ll][\leq N])^{-1}\big] \Big\|_{X^{\nu,b_0}([0,T])\rightarrow X^{\nu,b_0}([0,T])} 
\leq L^{-1/4+\delta_1} \exp\big( C (T\lambda)^C \big), \label{operator:eq-commutator-1} \\ 
& \Big\| \big[ P_L , (1+\Lin[ll][\leq M])^{-1} - (1+\Lin[ll][\leq N])^{-1}\big] \Big\|_{X^{\nu,b_0}([0,T])\rightarrow X^{\nu,b_0}([0,T])}  \label{operator:eq-commutator-2} \\
&\leq L^{-1/4+\delta_1} \exp\big( C (T\lambda)^C \big) \min(M,N)^{-\kappa}.\notag
\end{align}
\end{lemma}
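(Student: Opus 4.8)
The plan is to reduce the lemma to the operator estimate of Proposition~\ref{operator:prop-main}(iii), using the commutator structure to gain a power of $L$. Write $R_N := \big(1+\Lin[ll][\leq N]\big)^{-1}$ and $D_{M,N} := \Lin[ll][\leq N]-\Lin[ll][\leq M]$. The two algebraic inputs are the commutator--resolvent identity
\begin{equation*}
\big[ P_L , R_N \big] = - R_N \, \big[ P_L , \Lin[ll][\leq N] \big] \, R_N ,
\end{equation*}
valid for any bounded invertible operator, and, via the resolvent identity $R_M - R_N = R_M D_{M,N} R_N$ together with the Leibniz rule for commutators,
\begin{equation*}
\big[ P_L , R_M - R_N \big] = \big[ P_L , R_M \big] D_{M,N} R_N + R_M \big[ P_L , D_{M,N} \big] R_N + R_M D_{M,N} \big[ P_L , R_N \big] .
\end{equation*}
On the event of Lemma~\ref{operator:lem-resolvent} the norms $\| R_N\|$ and $\| R_M - R_N\|$ are already under control, and moreover $\big\| D_{M,N} \big\|_{X^{\nu,b_0}([0,T])\to X^{\nu,b_0}([0,T])} \lesssim T^\theta\min(M,N)^{-\kappa}$ there, by summing Proposition~\ref{operator:prop-main}(iii) over the dyadic scales between $\min(M,N)$ and $\max(M,N)$ (the exponent $\delta_1-1/4-\kappa$ being negative). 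Thus it suffices to bound $\big[ P_L , \Lin[ll][\leq N] \big]$ and $\big[ P_L , D_{M,N} \big]$ on $X^{\nu,b_0}([0,T])$; and since these are built from low$\times$high paraproducts, whose output frequency is always comparable to the high input frequency, conjugating by $\langle\nabla\rangle^\nu$ changes them only by a multiplier bounded above and below, so their operator norms do not depend on $\nu$ and it suffices to treat $\nu=0$.

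The heart of the argument is a per-scale commutator bound. Since $A_K^\alpha = P_K A^\alpha$ is localized at frequency $\sim K$, up to finitely many harmless boundary scales one has $A_K^\alpha \parall \phi = \sum_{M\gg K} A_K^\alpha P_M \phi$, and a short Fourier computation shows that $\big[ P_L , \Lin[ll][K] \big] = 0$ when $K\gg L$, and that for $K\ll L$
\begin{equation*}
\big[ P_L , \Lin[ll][K] \big]\phi = 2i \, \Duh \Big[ \partial_\alpha \big( \mathcal{T}^{(L)}_{A_K^\alpha}\phi \big) \Big], \qquad \mathcal{T}^{(L)}_{A_K^\alpha}\phi := P_L \big( A_K^\alpha \parall \phi \big) - A_K^\alpha \parall \big( P_L \phi \big),
\end{equation*}
where $\mathcal{T}^{(L)}_{A_K^\alpha}$ is the modified paraproduct carrying, on the pair consisting of the high $\phi$-mode $m$ and the low $A_K^\alpha$-mode $k$ (with $|k|\sim K \ll |m|\sim L$), the symbol $\rho_L(m+k)-\rho_L(m)$. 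By the mean value theorem $|\rho_L(m+k)-\rho_L(m)| \lesssim |k| L^{-1}\lesssim K L^{-1}$, so the tensor attached to $\mathcal{T}^{(L)}_{A_K^\alpha}$ is that of the ordinary low$\times$high paraproduct multiplied entrywise by a deterministic factor of size $\lesssim K L^{-1}$. Consequently the proof of Proposition~\ref{operator:prop-main}(iii) --- which runs through Lemma~\ref{prelim:lem-Xnub-to-ell2}, the tensor moment method (Lemma~\ref{prelim:lem-moment-method}), and the lattice point counting of Lemma~\ref{prelim:lem-basic-counting} --- applies verbatim and gives, for $K\ll L$,
\begin{equation*}
\E \Big[ \big\| \big[ P_L , \Lin[ll][K] \big] \big\|_{X^{0,b_0}([0,T])\to X^{0,b_+}([0,T])}^p \Big]^{1/p} \lesssim \sqrt{p}\, T^\theta \big( K L^{-1}\big) K^{\delta_1-1/4-\kappa} = \sqrt{p}\, T^\theta L^{-1} K^{3/4+\delta_1-\kappa},
\end{equation*}
while the finitely many boundary scales $K\sim L$ obey the same bound (with the factor $K L^{-1}$ replaced by $1$) directly from Proposition~\ref{operator:prop-main}(iii).

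To conclude, one sums the per-scale bounds. Using $\|v\|_{X^{0,b_0}([0,T])}\le\|v\|_{X^{0,b_+}([0,T])}$, the $\nu$-independence noted above, the decompositions $\big[ P_L , \Lin[ll][\leq N] \big]=\sum_{K\le N}\big[ P_L , \Lin[ll][K] \big]$ and $\big[ P_L , D_{M,N} \big]=\sum_{\min(M,N)<K\le\max(M,N)}\big[ P_L , \Lin[ll][K] \big]$, Minkowski's inequality, and the geometric sum $\sum_{K\lesssim L}K^{3/4+\delta_1-\kappa}\lesssim L^{3/4+\delta_1-\kappa}$ (positive exponent), one obtains
\begin{align*}
\E \Big[ \sup_N \big\| \big[ P_L , \Lin[ll][\leq N] \big] \big\|_{X^{\nu,b_0}([0,T])\to X^{\nu,b_0}([0,T])}^p \Big]^{1/p} &\lesssim \sqrt{p}\, T^\theta L^{-1/4+\delta_1}, \\
\E \Big[ \big\| \big[ P_L , D_{M,N} \big] \big\|_{X^{\nu,b_0}([0,T])\to X^{\nu,b_0}([0,T])}^p \Big]^{1/p} &\lesssim \sqrt{p}\, T^\theta L^{-1/4+\delta_1}\min(M,N)^{-\kappa},
\end{align*}
where the second line discards a surplus $L^{-\kappa}$ and uses that the sum there is empty unless $\min(M,N)\lesssim L$, so that $L^{-\kappa}\le\min(M,N)^{-\kappa}$. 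Building the event from the individual atoms $\big[ P_L , \Lin[ll][K] \big]$ (so that both bounds above follow by summation for all $L,M,N$ at once), taking $p\sim\lambda$, converting moments to tails, and summing the stretched-exponential tail probabilities over $L,K\in\dyadic$ and over dyadic time scales (general $T\ge1$ recovered by restriction, exactly as in the proof of Lemma~\ref{operator:lem-resolvent}), one produces an event in $\sigma_A$ of probability $\ge 1-c\exp(-\lambda)$ on which the two displayed bounds hold for all $L,M,N$ and all $\nu$, the surviving $\sqrt{\lambda}\,T^\theta$ being absorbed into $\exp(C(T\lambda)^C)$. Intersecting this event with that of Lemma~\ref{operator:lem-resolvent} and feeding all bounds into the two identities of the first paragraph yields \eqref{operator:eq-commutator-1}--\eqref{operator:eq-commutator-2}. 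The one genuinely non-routine point is the claim in the second paragraph that the random tensor and lattice-counting proof of Proposition~\ref{operator:prop-main}(iii) transfers, without loss, to the commutator symbol $\rho_L(\cdot)-\rho_L(\cdot)$ with the advertised gain $K L^{-1}$; everything else is commutator algebra and a routine union bound.
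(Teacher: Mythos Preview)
Your proof is correct and follows essentially the same route as the paper: the commutator--resolvent identity \eqref{operator:eq-commutator-p1}, the mean-value gain $|\rho_L(k+l)-\rho_L(l)|\lesssim K L^{-1}$ on the symbol (your $\mathcal{T}^{(L)}$, the paper's \eqref{operator:eq-commutator-p4}), and then the random-operator machinery. The one difference worth noting is that the paper invokes Lemma~\ref{operator:lem-basic} (the basic random operator estimate) \emph{directly} rather than Proposition~\ref{operator:prop-main}\ref{operator:item-low-high}: because the commutator gain $K L^{-1}$ is so strong, the $\max(K,L,M)^{\delta_0}=L^{\delta_0}$ loss from Lemma~\ref{operator:lem-basic} is harmless here, and one avoids the more delicate Steps~(A)--(E) of the low$\times$high proof altogether. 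Your description of the machinery (``runs through Lemma~\ref{prelim:lem-Xnub-to-ell2}, the tensor moment method, and the lattice point counting'') is in fact exactly the content of Lemma~\ref{operator:lem-basic}, so in substance you are doing the same thing; quoting the output $K^{\delta_1-1/4-\kappa}$ of Proposition~\ref{operator:prop-main}\ref{operator:item-low-high} is a slight overstatement of what that direct route yields (it gives $L^{\delta_0}K^{-1/4}$), but the discrepancy is absorbed by $\delta_0\ll\delta_1$ after summing in $K\lesssim L$. Your Leibniz-rule treatment of \eqref{operator:eq-commutator-2} is a clean way to make explicit what the paper dismisses as ``a minor modification.''
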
 

The proofs of Lemma \ref{operator:lem-resolvent}  and Lemma \ref{operator:lem-commutator} will be presented at the end of Subsection \ref{section:operator-proof} and we first focus on the proof of Proposition \ref{operator:prop-main}. 

We split the proof of Proposition \ref{operator:prop-main} over the following two subsections. In Subsection \ref{section:operator-basic}, we first prove a basic random operator estimate which will serve as the main ingredient in the proof of Proposition \ref{operator:prop-main}. In Subsection \ref{section:operator-proof}, we then present the proof of Proposition \ref{operator:prop-main}. In addition to using the basic random operator estimate, we need to carefully treat cases in which the frequency-scales of the vector potential $A$ and argument $\phi$ are far apart. These cases are very delicate since, as mentioned in the introduction and further discussed in Section \ref{section:ansatz}, there is no nonlinear smoothing. 

\subsection{Basic random operator estimate}\label{section:operator-basic}

In this subsection, we state and proof a basic random operator estimate which will be the main ingredient in the proof of Proposition \ref{operator:prop-main}. 

\begin{lemma}[Basic random operator estimate]\label{operator:lem-basic}
Let $M,K,L\in \dyadic$, let $(c_{kl})_{k,l\in \Z^2}$ be a deterministic sequence, let $\sigma_1 \in \{-1,1\}$, and let $\sigma_2 \in \{ -1,0,1\}$. Define a random operator $\mathcal{R}$ by 
\begin{equation}
\begin{aligned}
\mathcal{R} \phi :=& 
\sum_{k,l \in \Z^2} \bigg( \rho_M(k+l) \rho_{K}(k) \rho_{L}(l) c_{kl} e^{i\langle k+l, x \rangle} \\
&\times \int_0^t \dt^\prime \, 
e^{i\sigma_1 (t-t^\prime) |k+l|} \bigg( \int_0^{t^\prime} \mathrm{d}W_s(k) \, e^{i\sigma_2 (t^\prime-s)|k|} \bigg) \widehat{\phi}(t^\prime,l) \bigg), 
\end{aligned}
\end{equation}
where $(W_s(k))_{k\in \Z^2}$ is as in Subsection \ref{section:probability}. Then, for all $T\geq 1$ and $p\geq 1$, it holds that 
\begin{equation}\label{operator:eq-basic-1}
\E \Big[ \big\| \, \mathcal{R} \, \big\|_{X^{0,b_0}([0,T]) \rightarrow X^{0,b_+}([0,T])}^p \Big]^{1/p} \lesssim \sqrt{p} T^\theta \max(K,L,M)^{\delta_0} \min(K,L,M)^{-1/4} K \big\| c_{kl} \big\|_{\ell^\infty_k \ell^\infty_l}. 
\end{equation}
In the case $\sigma_1 \neq \sigma_2$, we have the stronger estimate
\begin{equation}\label{operator:eq-basic-2}
\E \Big[ \big\| \, \mathcal{R} \, \big\|_{X^{0,b_0}([0,T]) \rightarrow X^{0,b_+}([0,T])}^p \Big]^{1/p} \lesssim \sqrt{p} T^\theta \max(K,L,M)^{\delta_0} \min(K,M)^{-1/4} K \big\| c_{kl} \big\|_{\ell^\infty_k \ell^\infty_l}, 
\end{equation}
in which $\min(K,L,M)$ has been replaced by $\min(K,M)$. 
\end{lemma}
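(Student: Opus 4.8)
The plan is to reduce the operator norm bound to a family of deterministic tensor estimates via the random tensor estimate (Lemma~\ref{prelim:lem-moment-method}), combined with the transference-type estimate from Lemma~\ref{prelim:lem-Xnub-to-ell2} and the $X^{\nu,b}$-bound for It\^o integrals (Lemma~\ref{prelim:lem-Xnub-ito}). First I would test the operator $\mathcal{R}$ on modulated exponentials $\phi = e^{i\langle l,x\rangle}\varphi_l(t)$ with $\|e^{i\langle l,x\rangle}\varphi_l\|_{X^{0,b_0}([0,T])}\le 1$; by Lemma~\ref{prelim:lem-Xnub-to-ell2} it suffices to bound the $\ell^2\to X^{0,b_+}$ norm of the associated operator $\mathcal{R}^{(\varphi)}$. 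Expanding in the temporal Fourier variable, the kernel factorizes: the phase $e^{i\sigma_1(t-t')|k+l|}$ produces the output modulation $\langle |\lambda|-|k+l|\rangle^{b_+}$, the inner It\^o integral $\int_0^{t'}\mathrm{d}W_s(k)\,e^{i\sigma_2(t'-s)|k|}$ is handled by Lemma~\ref{prelim:lem-Xnub-ito} (contributing $K^{4(b_+-1/2)}=K^{O(\delta_0)}$, harmless), and the $t'$-integration of $\widehat\phi(t',l)$ against these two oscillatory factors produces a denominator $\langle \lambda - \sigma_1|k+l| + \mu - \sigma_2|k|\rangle$ where $\mu$ is the dual variable to $\varphi_l$, i.e. comparable to $|l|$ up to the $b_0$-modulation weight.

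The heart of the matter is then a deterministic tensor-norm estimate. After discretizing the $\lambda$-integral into unit-length pieces (paying a summable weight because $b_+,b_0>1/2$ and the relevant modulations are $\lesssim \max(K,L,M)^{10}$), one is left to bound, for fixed $\mu_0 := \lambda - \sigma_1\mu' \in \R$, the tensor
\begin{equation*}
h_{k\,l} := \rho_M(k+l)\,\rho_K(k)\,\rho_L(l)\, c_{kl}\, \mathbf{1}\big\{ \big| \sigma_1|k+l| + \sigma_2|k| - \mu_0 \big| \le 1 \big\}
\end{equation*}
(viewed as having a summation index $k_0 = k$ in the language of Lemma~\ref{prelim:lem-moment-method}, with the stochastic integral attached to it), in the operator norms $\|h\|_{k\to l}$ and $\|h\|_{\to k\,l}$ appearing in the moment method; the latter is just the Hilbert–Schmidt norm. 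Each of these is estimated by the Schur test: a row/column sum over $k$ (resp. over $l$) of the indicator function, which is exactly the lattice point count controlled by Lemma~\ref{prelim:lem-basic-counting}. Writing $m := k+l$ with $|m|\sim M$, the relevant count is $\#\{k : |k|\sim K,\ |\sigma_1|k+l| + \sigma_2|k| - \mu_0|\le 1\}$ and, depending on the relative sizes of $K,L,M$ and the signs, this is $\lesssim \min(K,L)^{-1/2}K^2$ via \eqref{prelim:eq-basic-counting-minus}–\eqref{prelim:eq-basic-counting-zero}. Taking geometric means of the row- and column-bounds in the Schur test converts the $\min(K,L)^{-1/2}K^2$-type counts into the factor $K\,\min(K,L,M)^{-1/4}$ claimed in \eqref{operator:eq-basic-1} (up to the harmless $\max(K,L,M)^{\delta_0}$ losses from the It\^o estimate, the $\lambda$-discretization, and the $K^\epsilon$ in Lemma~\ref{prelim:lem-moment-method}), and the $\sqrt p$ comes directly from the moment method.

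For the improved bound \eqref{operator:eq-basic-2} when $\sigma_1\ne\sigma_2$, the point is that one of the two phases is a "$+$" combination, so the relevant lattice count is governed by \eqref{prelim:eq-basic-counting-plus} (an ellipsoid rather than a hyperboloid) or by \eqref{prelim:eq-basic-counting-zero}: the near-parallel degeneracy that produces the $\min(K,L)^{-1/2}$ in the hyperboloid case is absent, and one instead gets $K^{-1/2}K^2$ uniformly — equivalently $\min(K,M)^{-1/4}K$ after the Schur symmetrization, with no dependence on $L$. The main obstacle I anticipate is the bookkeeping in the case analysis: one must check that in \emph{every} configuration of $(K,L,M)$ — in particular the high$\times$high$\to$low regime $M\ll K\sim L$ and the low$\times$high regime $K\ll L\sim M$ — the two tensor norms in Lemma~\ref{prelim:lem-moment-method} are both controlled by the stated power of $\min(\cdot)$, and that the assignment of which frequency index plays the role of the "$k_0$" carrying the stochastic integral is done so that the $\ell^\infty_{k_0}\ell^\infty_{k_\Jc}L^2_s$-norm of the deterministic factor $f$ is genuinely $O(K^{O(\delta_0)})$ uniformly. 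Getting the signs $\sigma_1,\sigma_2$ to line up correctly with the three counting estimates of Lemma~\ref{prelim:lem-basic-counting} is where care is needed, but no new idea is required beyond what is already assembled in the excerpt.
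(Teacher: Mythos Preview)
Your overall strategy --- reduce to $\ell^2\to\ell^2$ operators indexed by input and output frequencies, apply the moment method, then bound the resulting deterministic tensor norms via the lattice-point counts of Lemma~\ref{prelim:lem-basic-counting} --- is the paper's, and your identification of why $\sigma_1\neq\sigma_2$ upgrades $\min(K,L,M)$ to $\min(K,M)$ is correct. But two of the auxiliary lemmas you invoke do not do what you claim. Lemma~\ref{prelim:lem-Xnub-to-ell2} bounds $\|\mathcal L^{(\varphi)}\|_{\ell^2\to X}$ \emph{by} $\|\mathcal L\|_{X\to X}$, not conversely, so it cannot be used to reduce the $X^{0,b_0}\to X^{0,b_+}$ bound to an $\ell^2\to X$ bound. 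The paper instead expands the input directly via the half-wave decomposition $\widehat\phi(t',l)=\sum_{\sigma'}\int\mathrm d\lambda'\,e^{it'(\sigma'|l|+\lambda')}\widetilde\phi^{(\sigma')}(\lambda',l)$ and uses Cauchy--Schwarz in $\lambda'$; this also makes the sign $\sigma'$ on $|l|$ explicit in the phase constraint, which your tensor $h_{kl}$ is missing. Similarly, you should not invoke Lemma~\ref{prelim:lem-Xnub-ito} to bound the It\^o integral separately: the randomness in the stochastic index $k$ is precisely what Lemma~\ref{prelim:lem-moment-method} consumes, and a pathwise bound on $\int_0^{t'}\mathrm dW_s(k)\,e^{i\sigma_2(t'-s)|k|}$ would forfeit that gain. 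The paper keeps $\mathrm dW_s(k)$ inside and applies the moment method directly; the deterministic factor $f_{klm}(s)$ packages the $t,t'$-integrals, and its $\ell^\infty_{klm}L^2_s$-norm (together with the $L^2_\lambda\ell^1_\mu$-summation) produces the time-frequency denominators you describe.

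One smaller correction: the tensor must carry the output index $m=k+l$, since the reduced operators map $\ell^2_l\to\ell^2_m$. The moment method with partition $(\{l\},\{m\})$ then yields $\max\bigl(\|h\|_{kl\to m},\|h\|_{l\to km}\bigr)$ --- a \emph{maximum} of two one-sided Schur counts $\sup_m\#\{k:\dots\}$ and $\sup_l\#\{k:\dots\}$, not a geometric mean. The first is always $\lesssim \min(K,M)^{-1/4}K$ regardless of the signs; the second is $\lesssim \min(K,L)^{-1/4}K$ when $\sigma_1=\sigma_2$ (via \eqref{prelim:eq-basic-counting-minus}) and $\lesssim K^{-1/4}K$ when $\sigma_1\neq\sigma_2$ (via \eqref{prelim:eq-basic-counting-plus} or \eqref{prelim:eq-basic-counting-zero}). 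Their maximum is exactly the claimed bound.
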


The proof is based on the lattice point counting estimates from Lemma \ref{prelim:lem-basic-counting} and the random tensor estimates from \cite{DNY20}, which we recalled in Lemma \ref{prelim:lem-moment-method} above. 

\begin{proof}[Proof of Lemma \ref{operator:lem-basic}:]
The proof consists of three main steps. In the first step, we reduce the desired estimates \eqref{operator:eq-basic-1} and \eqref{operator:eq-basic-2} to a random tensor estimate. In the second step, we then reduce the random tensor estimate to a deterministic tensor estimate (using the moment method from Lemma \ref{prelim:lem-moment-method}). In the last step, we then prove the deterministic tensor estimate. \\

To unify and simplify the notation, we denote the main factor in the right-hand sides of \eqref{operator:eq-basic-1} and \eqref{operator:eq-basic-2} as $\frakC$, i.e., 
\begin{equation}
\frakC := K \| c_{kl} \|_{\ell^\infty_k \ell^\infty_l} 
\begin{cases}
\begin{tabular}{ll}
$\min(K,L,M)^{-1/4}$ &if $\sigma_1=\sigma_2$ \\ 
$\min(K,M)^{-1/4}$ &if $\sigma_1 \neq \sigma_2$
\end{tabular}
\end{cases}.
\end{equation}

\emph{Step 1: Reduction to a random tensor estimate.} 
Due to the definition of restricted $X^{\nu,b}$-norms, $\phi\in X^{0,b_0}([0,T])$ can be replaced by $\phi \in X^{0,b_0}(\R)$. Then, we can write
\begin{equation}
\widehat{\phi}(t^\prime,l) = \sum_{\sigma^\prime = \pm 1} \int_{\R} \dlambda^\prime \, e^{it^\prime \sigma^\prime |l|} e^{it^\prime \lambda^\prime} \widetilde{\phi}^{(\sigma^\prime)}(\lambda^\prime,l),
\end{equation}
where 
\begin{equation*}
\max_{\sigma^\prime = \pm 1} \big\| \langle \lambda^\prime \rangle^{b_0} \widetilde{\phi}^{(\sigma^\prime)}(\lambda^\prime,l) \big\|_{L_{\lambda^\prime}^2 \ell_l^2}  \sim \big\| \phi \big\|_{X^{0,b_0}(\R)}. 
\end{equation*}
We now write 
\begin{equation}
\Rc \phi = \sum_{\sigma^\prime = \pm 1} \int_\R \dlambda^\prime \Rc_{\sigma^\prime, \lambda^\prime} \widetilde{\phi}^{(\sigma^\prime)}(\lambda^\prime),
\end{equation}
where the operators
$\Rc_{\sigma^\prime,\lambda^\prime}\colon \ell^2(\Z^2) \rightarrow X^{0,b_+}([0,T])$ are defined by 
\begin{equation}
\begin{aligned}
\Rc_{\sigma^\prime, \lambda^\prime} v :=& \sum_{k,l \in \Z^2} \bigg[ \rho_M(k+l) \rho_K(k) \rho_L(l) c_{kl}\, e^{i\langle k+ l , x \rangle} 
\int_0^t \dt^\prime \, \bigg(  e^{i\sigma_1 (t-t^\prime) |k+l|}
\\
&\times  \Big( \int_0^{t^\prime} \mathrm{d}W_s(k) \, e^{i\sigma_2 (t^\prime-s)|k|}\Big) e^{i\sigma^\prime t^\prime |l|} e^{it^\prime \lambda^\prime} \bigg) \,  v_l \bigg]
\end{aligned}
\end{equation}
for all $v\in \ell^2(\Z^2)$. Using Cauchy-Schwarz in $\lambda^\prime$ and $b_0>1/2$,  we can then reduce the desired estimates \eqref{operator:eq-basic-1} and \eqref{operator:eq-basic-2} of the random operator $\Rc$ to the estimate
\begin{equation}\label{operator:eq-basic-p1}
\begin{aligned}
\E \bigg[ \max_{\sigma^\prime =\pm 1} \sup_{\lambda^\prime \in \R} \langle \lambda^\prime \rangle^{-(b_0-b_-) p} \big\| \Rc_{\sigma^\prime, \lambda^\prime} \big\|_{\ell^2 \rightarrow X^{0,b_+}([0,T])}^p \bigg]^{1/p} 
\lesssim \sqrt{p} T^\theta \max(K,L,M)^{\delta_0} \frakC. 
\end{aligned}
\end{equation}
Using Lemma \ref{prelim:lem-maxima-random} (see also the reductions in \cite[Subsection 5.7]{BDNY24}), \eqref{operator:eq-basic-p1} can be further reduced to proving that  
\begin{equation}\label{operator:eq-basic-p2}
\begin{aligned}
\max_{\sigma^\prime =\pm 1} \sup_{\lambda^\prime \in \R} \E \bigg[  \big\| \Rc_{\sigma^\prime, \lambda^\prime} \big\|_{\ell^2 \rightarrow X^{0,b_+}([0,T])}^p \bigg]^{1/p} 
\lesssim \sqrt{p} T^\theta \max(K,L,M)^{\delta_0/2} \frakC. 
\end{aligned}
\end{equation}
We now want to decrease the $b_+$-parameter in \eqref{operator:eq-basic-p2}. To this end, we first use the crude estimate
\begin{align*}
\big\| \Rc_{\sigma^\prime, \lambda^\prime} v \big\|_{X^{0,1}([0,T])} 
&\lesssim 
\big\| \partial_t \Rc_{\sigma^\prime, \lambda^\prime} v \big\|_{L_t^2 L_x^2([0,T]\times \T^2)}
+ \big\| \nabla_x \Rc_{\sigma^\prime, \lambda^\prime} v \big\|_{L_t^2 L_x^2([0,T]\times \T^2)}
+ \big\| \Rc_{\sigma^\prime, \lambda^\prime} v \big\|_{L_t^2 L_x^2([0,T]\times \T^2)} \\
&\lesssim T^\theta (KL)^3 \max_{\substack{\,\,\,k \in \Z^2\colon \\ |k|\sim K}} \max_{t^\prime \in [0,T]}
\bigg| \int_0^{t^\prime} \mathrm{d}W_s(k) \, e^{i\sigma_2 (t^\prime-s)|k|} \bigg| \, \max_{\substack{\,\,\, l \in \Z^2 \colon \\ |l|\sim L}} |v_l|, 
\end{align*}
which follows directly from the definition of $\Rc_{\sigma^\prime, \lambda^\prime}$. As a result, Lemma \ref{prelim:lem-maxima-random} and Doob's maximal inequality imply that 
\begin{align*}
    \max_{\sigma^\prime =\pm 1} \sup_{\lambda^\prime \in \R} \E \bigg[  \big\| \Rc_{\sigma^\prime, \lambda^\prime} \big\|_{\ell^2 \rightarrow X^{0,1}([0,T])}^p \bigg]^{1/p} 
    &\lesssim T^\theta (KL)^3 \E \bigg[  \max_{\substack{\,\,\,k \in \Z^2\colon \\ |k|\sim K}} \max_{t^\prime \in [0,T]}
\bigg| \int_0^{t^\prime} \mathrm{d}W_s(k) \, e^{i\sigma_2 (t^\prime-s)|k|} \bigg|^p \bigg]^{1/p} \\
&\lesssim \sqrt{p} T^\theta (KL)^4. 
\end{align*}
By interpolation, \eqref{operator:eq-basic-p2} can therefore be reduced to the estimate 
\begin{equation}\label{operator:eq-basic-p3}
\begin{aligned}
\max_{\sigma^\prime =\pm 1} \sup_{\lambda^\prime \in \R} \E \bigg[  \big\| \Rc_{\sigma^\prime, \lambda^\prime} \big\|_{\ell^2 \rightarrow X^{0,b_-}([0,T])}^p \bigg]^{1/p} 
\lesssim \sqrt{p} T^\theta \max(K,L,M)^{\delta_0/4} \frakC. 
\end{aligned}
\end{equation}
Comparing \eqref{operator:eq-basic-p3} with \eqref{operator:eq-basic-p2}, we decreased the $b$-parameter from $b_+$ to $b_-$, but this cost us a factor of  $\max(K,L,M)^{\delta_0/4}$. 

We now let $\varphi \in C^\infty_c(\R\rightarrow [0,1])$ be a smooth, compactly supported cut-off function satisfying $\varphi|_{[-1,1]}=1$. For any $\lambda \in \R$, $m\in \Z^2$,  and $v\in \ell^2(\Z^2)$, we then write 
\begin{equation}\label{operator:eq-basic-p4}
\int_{\R} \dt e^{-it\lambda} e^{-i t \sigma_1 |m|} \varphi(t/T) \widehat{(\Rc_{\sigma^\prime,\lambda^\prime} v)}(t,m) 
= \sum_{\mu \in \Z} \big(\Rc_{\lambda,\mu,\sigma^\prime,\lambda^\prime} v\big)  (m),
\end{equation}
where the operators $\Rc_{\lambda,\mu,\sigma^\prime,\lambda^\prime}\colon \ell^2(\Z^2)\rightarrow \ell^2(\Z^2)$ are defined as
\begin{equation}
\begin{aligned}
&\big( \Rc_{\lambda,\mu,\sigma^\prime,\lambda^\prime} v \big)(m) \\
:=& \, \sum_{\substack{k,l\in \Z^2\colon \\ k+l=m }}
\bigg[ \rho_M(k+l) \rho_K(k) \rho_L(l) c_{kl}  \, 
\mathbf{1} \big\{ - \sigma_1 |m| + \sigma_2 |k| + \sigma^\prime |l| \in [\mu,\mu+1) \big\} \\ 
&\times \bigg( \int_\R \mathrm{d}W_s(k) \, \bigg( \int_{\R} \dt \int_{\R} \dt^\prime \mathbf{1} \big\{ 0 \leq s \leq t^\prime \leq t \big\} \varphi(t/T) e^{-it\lambda} \\
&\times 
e^{it^\prime ( - \sigma_1 |m| + \sigma_2 |k| + \sigma^\prime |l| + \lambda^\prime)} e^{-i\sigma_2 s |k|} \bigg) \bigg) v_l
\bigg]. 
\end{aligned}
\end{equation}
In order to prove \eqref{operator:eq-basic-p3}, it therefore remains to prove 
\begin{equation}\label{operator:eq-basic-p5}
\max_{\sigma^\prime =\pm 1} 
\sup_{\lambda^\prime \in \R} 
\bigg\| \langle \lambda \rangle^{b_-} \, 
\E \Big[  \big\| \Rc_{\lambda,\mu,\sigma^\prime, \lambda^\prime} \big\|_{\ell^2 \rightarrow \ell^2}^p \Big]^{1/p} \bigg\|_{L_\lambda^2 \ell_\mu^1}
\lesssim \sqrt{p} T^\theta \max(K,L,M)^{\delta_0/4} \frakC.
\end{equation}
\emph{Step 2: From a random to a deterministic tensor estimate.} Using the moment method (Lemma \ref{prelim:lem-moment-method}), we now reduce the random tensor estimate \eqref{operator:eq-basic-p5} to a deterministic tensor estimate. Using Lemma \ref{prelim:lem-moment-method}, it holds that 
\begin{equation}\label{operator:eq-basic-p6}
\begin{aligned}
&\E \Big[ \big\| \Rc_{\lambda,\mu,\sigma^\prime, \lambda^\prime} \big\|_{\ell^2 \rightarrow \ell^2}^p \Big]^{1/p} \\
\lesssim& \, \sqrt{p}  \max(K,L,M)^{\delta/8} \max \Big( \| h_{klm} \|_{kl\rightarrow m}, \| h_{klm} \|_{l \rightarrow km} \Big) \max_{k,l,m\in \Z^2} \big\| f_{klm} (s) \big\|_{L_s^2},
\end{aligned}
\end{equation}
where 
\begin{equation}\label{operator:eq-basic-p7}
\begin{aligned}
h_{klm} &:= \big| c_{kl} \big| \, 
\mathbf{1}\big\{ m= k+l \big\}  
\mathbf{1}\big\{ |k| \sim K\big\} \mathbf{1}\big\{ |l| \sim L\big\} \mathbf{1}\big\{ |m| \sim M\big\} \\
&\times \mathbf{1} \big\{ - \sigma_1 |m| + \sigma_2 |k| + \sigma^\prime |l| \in [\mu,\mu+1) \big\}
\end{aligned}
\end{equation}
and 
\begin{equation}\label{operator:eq-basic-p8}
\begin{aligned}
f_{klm}(s) &:=
\mathbf{1}\big\{ m= k+l \big\}  
\mathbf{1}\big\{ |k| \sim K\big\} \mathbf{1}\big\{ |l| \sim L\big\} \mathbf{1}\big\{ |m| \sim M\big\} \\
&\times \mathbf{1} \big\{ - \sigma_1 |m| + \sigma_2 |k| + \sigma^\prime |l| \in [\mu,\mu+1) \big\} \\ 
&\times \int_{\R} \dt \int_{\R} \dt^\prime \mathbf{1} \big\{ 0 \leq s \leq t^\prime \leq t \big\} \varphi(t/T) e^{-it\lambda} 
e^{it^\prime ( - \sigma_1 |m| + \sigma_2 |k| + \sigma^\prime |l| + \lambda^\prime)} e^{-i\sigma_2 s |k|}.
\end{aligned}
\end{equation}
We note that $h$ depends on $\mu$ and $\sigma^\prime$ and $f$ depends on $\lambda$, $\mu$, $\sigma^\prime$, and $\lambda^\prime$, but we do not reflect this in our notation. In order to estimate $f_{klm}(s)$, we first integrate in $t^\prime$ and then integrate in $t$, which yields
\begin{equation}\label{operator:eq-basic-p9}
\big| f_{klm}(s) \big| 
\lesssim T^\theta 
\mathbf{1}\big\{ 0 \leq s \lesssim T \big\} \mathbf{1}\big\{ |\mu| \lesssim \max(K,L,M) \big\} 
\langle \mu + \lambda^\prime \rangle^{-1} 
\Big( \langle \lambda \rangle^{-1} + \langle \lambda - \mu - \lambda^\prime \rangle^{-1} \Big).
\end{equation}
As a result, it follows that
\begin{equation}\label{operator:eq-basic-p10}
\begin{aligned}
&\Big\| \langle \lambda \rangle^{b_-} \max_{k,l,m\in \Z^2} \big\| f_{klm}(s) \big\|_{L_s^2} \Big\|_{L_\lambda^2\ell_\mu^1} 
 \\
\lesssim& \, T^\theta \, \Big\| 
\mathbf{1}\big\{ |\mu| \lesssim \max(K,L,M) \big\}
\langle \lambda \rangle^{b_-} 
\langle \mu + \lambda^\prime \rangle^{-1} 
\big( \langle \lambda \rangle^{-1} + \langle \lambda - \mu - \lambda^\prime \rangle^{-1} \big) \Big\|_{L_\lambda^2 \ell_\mu^1}
\\ 
\lesssim& \,  \, T^\theta \log\big(2+\max(K,L,M)\big)
\big\| \langle \lambda \rangle^{b_- - 1} \big\|_{L_\lambda^2} \\
\lesssim&\, T^\theta \max(K,L,M)^{\delta_0/16}. 
\end{aligned}
\end{equation}
By inserting \eqref{operator:eq-basic-p10} into \eqref{operator:eq-basic-p6}, it follows that 
\begin{equation}\label{operator:eq-basic-p11}
\begin{aligned}
& \max_{\sigma^\prime =\pm 1} 
\sup_{\lambda^\prime \in \R}  \bigg\| \langle \lambda \rangle^{b_-} \, 
\E \Big[  \big\| \Rc_{\lambda,\mu,\sigma^\prime, \lambda^\prime} \big\|_{\ell^2 \rightarrow \ell^2}^p \Big]^{1/p} \bigg\|_{L_\lambda^2 \ell_\mu^1} \\
\lesssim& \, \sqrt{p} \max(K,L,M)^{\delta_0/8}
 \max_{\sigma^\prime =\pm 1}  \sup_{\mu \in \Z} \max \Big( \| h_{klm} \|_{kl\rightarrow m}, \| h_{klm} \|_{l \rightarrow km} \Big)\\
&\times \max_{\sigma^\prime =\pm 1} 
\sup_{\lambda^\prime \in \R}  \, \Big\| \langle  \lambda \rangle^{b_-} \max_{k,l,m\in \Z^2} \big\| f_{klm} \big\|_{L_s^2(\R)} \Big\|_{L_\lambda^2 \ell_\mu^1} \\
\lesssim&\, \sqrt{p} T^\theta \max(K,L,M)^{\delta_0/8+\delta_0/16}  \max_{\sigma^\prime =\pm 1}  \sup_{\mu \in \Z} \max \Big( \| h_{klm} \|_{kl\rightarrow m}, \| h_{klm} \|_{l \rightarrow km} \Big). 
\end{aligned}
\end{equation}
In order to prove \eqref{operator:eq-basic-p5}, it therefore remains to prove that 
\begin{equation}\label{operator:eq-basic-p12}
\begin{aligned}
  \max_{\sigma^\prime =\pm 1}  \sup_{\mu \in \Z} \max \Big( \| h_{klm} \|_{kl\rightarrow m}, \| h_{klm} \|_{l \rightarrow km} \Big) 
  \lesssim \max(K,L,M)^{\delta_0/16} \frakC. 
\end{aligned}
\end{equation}
\emph{Step 3: A deterministic tensor estimate.}
In the last step of this argument, we prove the deterministic tensor estimate in \eqref{operator:eq-basic-p12}.   We estimate the two arguments in the maximum in \eqref{operator:eq-basic-p12} separately. Using Schur's test and the lattice point counting estimate (Lemma \ref{prelim:lem-basic-counting}), the first argument is estimated by 
\begin{align*}
&\big\| h_{klm} \big\|_{kl\rightarrow m}^2\\
\lesssim& \, \| c_{kl}\|_{\ell^\infty_k \ell^\infty_l}^2 
\sup_{\substack{m \in \Z^2 \colon \\ |m| \sim M }} 
\sum_{\substack{k,l\in \Z^2}} \mathbf{1} \big\{ k+l=m \big\} \,  \mathbf{1} \big\{ |k| \sim K \big\}  \, 
\mathbf{1} \big\{ - \sigma_1 |m| + \sigma_2 |k| + \sigma^\prime |l| \in [\mu,\mu+1) \big\} \\
=& \, \| c_{kl}\|_{\ell^\infty_k \ell^\infty_l}^2 
\sup_{\substack{m \in \Z^2 \colon \\ |m| \sim M }} 
\sum_{\substack{k\in \Z^2  }}  \mathbf{1} \big\{ |k| \sim K \big\} \,  
\mathbf{1} \big\{ - \sigma_1 |m| + \sigma_2 |k| + \sigma^\prime |k-m| \in [\mu,\mu+1) \big\} \\
\lesssim&\, \min(K,M)^{-1/2} K^2 \| c_{kl}\|_{\ell^\infty_k \ell^\infty_l}^2  \lesssim \frakC^2. 
\end{align*}
We note that the first argument always obeys the better bound in \eqref{prelim:eq-basic-counting-plus}, i.e., the bound involving $\min(K,M)$ instead of $\min(K,L,M)$. 
Using Schur's test, the second argument is estimated by 
\begin{align}
&\big\| h_{klm} \big\|_{l\rightarrow km}^2 \notag \\
\lesssim& \, \| c_{kl}\|_{\ell^\infty_k \ell^\infty_l}^2 
\sup_{\substack{l \in \Z^2 \colon \\ |l| \sim L }} 
\sum_{\substack{k,m\in \Z^2}} \mathbf{1} \big\{ k+l=m \big\} \,  \mathbf{1} \big\{ |k| \sim K \big\}  \, 
\mathbf{1} \big\{ - \sigma_1 |m| + \sigma_2 |k| + \sigma^\prime |l| \in [\mu,\mu+1) \big\} \notag \\
\lesssim& \, \| c_{kl}\|_{\ell^\infty_k \ell^\infty_l}^2 
\sup_{\substack{l \in \Z^2 \colon \\ |l| \sim L }} 
\sum_{\substack{k\in \Z^2}}  \mathbf{1} \big\{ |k| \sim K \big\}  \, 
\mathbf{1} \big\{ - \sigma_1 |k+l| + \sigma_2 |k| + \sigma^\prime |l| \in [\mu,\mu+1) \big\}. \label{operator:eq-basic-p13}
\end{align}
We now estimate \eqref{operator:eq-basic-p13} using \eqref{prelim:eq-basic-counting-minus} when $\sigma_1 = \sigma_2$ and using either \eqref{prelim:eq-basic-counting-plus} or \eqref{prelim:eq-basic-counting-zero} when $\sigma_1 \neq \sigma_2$, which yields 
\begin{equation*}
\, \| c_{kl}\|_{\ell^\infty_k \ell^\infty_l}^2 
\sup_{\substack{l \in \Z^2 \colon \\ |l| \sim L }} 
\sum_{\substack{k\in \Z^2}}  \mathbf{1} \big\{ |k| \sim K \big\}  \, 
\mathbf{1} \big\{ - \sigma_1 |k+l| + \sigma_2 |k| + \sigma^\prime |l| \in [\mu,\mu+1) \big\} 
\lesssim \frakC^2. 
\end{equation*}
This completes the proof of \eqref{operator:eq-basic-p12} and hence the proof of this lemma.
\end{proof}

\subsection{Proof of Proposition \ref{operator:prop-main} and its consequences}\label{section:operator-proof}

In the main part of this subsection, we prove Proposition \ref{operator:prop-main} using the basic random operator estimate (Lemma \ref{operator:lem-basic}). At the end of this subsection, we then use Proposition \ref{operator:prop-main} (and its proof) to prove Lemma \ref{operator:lem-resolvent} and Lemma \ref{operator:lem-commutator}.

\begin{proof}[Proof of Proposition \ref{operator:prop-main}:]
We prove the high$\times$high, high$\times$low, and low$\times$high-estimate separately. \\

\emph{Proof of \ref{operator:item-high-high}: The high$\times$high-estimate.} We first use the dyadic decomposition 
\begin{equation}
(2i)^{-1} \Lin[sim][K] \phi = 
\sum_{\substack{L,M\in \dyadic \colon \\ K \sim L \gtrsim M}}
P_M \Duh \Big[ \partial_\alpha \Big( A^\alpha_K P_L \phi \Big)\Big]. 
\end{equation}
We then write the contribution of the spatial components as 
\begin{equation}\label{operator:eq-hh-p1}
\begin{aligned}
&P_M \Duh \Big[ \partial_a \Big( A^a_K P_L \phi \Big)\Big] \\
=& \, - i \sum_{k,l \in \Z^2} \bigg( \rho_K(k) \rho_L(l) \rho_{M}(k+l) (k+l)_a e^{i\langle k + l , x \rangle} \\
&\times \int_0^t \dt^\prime \, \frac{\sin\big( (t-t^\prime) |k+l|\big)}{|k+l|} \widehat{A}^a(t^\prime,k) \widehat{\phi}(t^\prime,l) \bigg). 
\end{aligned}
\end{equation}
Furthermore, using integration by parts and $A^0(0)=0$, we write the contribution of the temporal component as 
\begin{equation}\label{operator:eq-hh-p2}
\begin{aligned}
&P_M \Duh \Big[ \partial_0 \Big( A^0_K P_L \phi \Big)\Big] \\
=& \, - P_M \int_0^t \dt^\prime \, \frac{\sin\big( (t-t^\prime) |\nabla|\big)}{|\nabla|} \partial_{t^\prime} \big( A^0_K(t^\prime) P_L \phi(t^\prime) \big) \\
=&  P_M \int_0^t \dt^\prime \, \cos \big( (t-t^\prime) |\nabla| \big) A_K^0(t^\prime) P_L \phi (t^\prime) \\ 
=&  \sum_{k,l\in \Z^2} \bigg(  \rho_K(k) \rho_L(l) \rho_{M}(k+l)  e^{i\langle k + l , x \rangle} 
\int_0^t \dt^\prime \, \cos\big( (t-t^\prime) |k+l|\big) \widehat{A}^0(t^\prime,k) \widehat{\phi}(t^\prime,l) \bigg). 
\end{aligned}
\end{equation}
By using \eqref{operator:eq-hh-p1}, \eqref{operator:eq-hh-p2}, the explicit formula from Lemma \ref{vector:lem-explicit}, and the basic random operator estimate (Lemma \ref{operator:lem-basic}), it follows that 
\begin{align*}
&\E \Big[ \Big\| P_M \Duh \Big[ \partial_\alpha \Big( A^\alpha_K P_L \phi \Big) \Big] \Big\|_{X^{\nu,b_0}([0,T])\rightarrow X^{1/4+\nu-\delta_1,b_+}([0,T])}^p \Big]^{1/p} \\
\lesssim& \, \sqrt{p} T^\theta M^{\nu+1/4-\delta_1} L^{-\nu} \max(K,L,M)^\delta \min(K,L,M)^{-1/4}. 
\end{align*}
Since $K \sim L \gtrsim M$ and $\nu \neq \delta_1$, it holds that 
\begin{equation*}
M^{\nu+1/4-\delta_1} L^{-\nu}  \max(K,L,M)^\delta \min(K,L,M)^{-1/4} \sim  M^{\nu-\delta_1} K^{\delta-\nu} \lesssim K^{\delta-\delta_1}. 
\end{equation*}
Since $\kappa \ll \delta \ll \delta_1$, this implies the desired estimate. \\

\emph{Proof of \ref{operator:item-high-low}: The high$\times$low-estimate.} We first use the dyadic decomposition 
\begin{equation*}
(2i)^{-1} \Lin[gg][K] \phi = \sum_{\substack{L,M \in \dyadic \colon \\ M \sim K \gg L}} P_M \Duh \Big[ \partial_\alpha \Big( A^\alpha_K P_L \phi \Big) \Big]. 
\end{equation*}
Using the Lorenz condition, it holds that 
\begin{equation*}
\partial_\alpha \Big( A^\alpha_K P_L \phi \Big) 
= A_K^\alpha \partial_\alpha P_L \phi = A^0_K \partial_0 P_L \phi + A^a_K \partial_a P_L \phi. 
\end{equation*}
We now treat the temporal component ($\alpha=0$) and the spatial components ($\alpha=1,2$) separately. \\

\emph{The spatial components:} To treat the contributions of the spatial components, we write 
\begin{align*}
&P_M \Duh \Big[ A^a_K \partial_a P_L \phi \Big] \\
=& \, - i \sum_{k,l \in \Z^2} \bigg( \rho_K(k) \rho_L(l) \rho_{M}(k+l)\,  l_a \, e^{i\langle k+l ,x \rangle}  \int_0^t \dt^\prime \, \frac{\sin\big( (t-t^\prime) |k+l|\big)}{|k+l|} \widehat{A}^a(t^\prime,k) \widehat{\phi}(t^\prime,l) \bigg). 
\end{align*}
By inserting the explicit formula (Lemma \ref{vector:lem-explicit}) and using the basic random operator estimate (Lemma \ref{operator:lem-basic}), it follows that 
\begin{align*}
& \E \Big[ \Big\| P_M \Duh \Big[ A_K^a \partial_a P_L \phi \Big] \Big\|_{X^{\nu,b_0}([0,T]) \rightarrow X^{\nu,b_+}([0,T])}^p \Big]^{1/p} \\
\lesssim&\, \sqrt{p} T^\theta M^{\nu} L^{-\nu} \max(K,L,M)^\delta \min(K,L,M)^{-1/4} L M^{-1}. 
\end{align*}
Since $M \sim K \gg L$  and $\nu \leq 0$, it holds that 
\begin{equation*}
    M^{\nu} L^{-\nu} \max(K,L,M)^\delta \min(K,L,M)^{-1/4} L M^{-1} 
    \lesssim K^{-1+\delta} L^{3/4} \lesssim K^{-1/4+\delta}. 
\end{equation*}
Since $\delta \ll \delta_1$, this yields an acceptable contribution. \\

\emph{The temporal component:} This case is slightly technical. Since there is not much room in the $b$-parameter, we will prevent the time-derivative from hitting $\phi$. Using integration by parts and $A^0(0)=0$, we first write
\begin{equation}\label{operator:eq-hl-p1}
\begin{aligned}
&\Duh \Big[ A^0_K \partial_0 P_L \phi \Big] \\
=& \, - \sum_{k,l \in \Z^2} \bigg( \rho_K(k) \rho_L(l) \rho_{M}(k+l) e^{i \langle k+ l , x\rangle} 
\int_0^t \dt^\prime \, \frac{\sin\big( (t-t^\prime) |k+l|\big)}{|k+l|} 
\widehat{A}^0(t^\prime,k) \partial_{t^\prime} \widehat{\phi}(t^\prime,l) \bigg) \\
=&   \, \sum_{k,l \in \Z^2} \left( \rho_K(k) \rho_L(l) \rho_{M}(k+l) e^{i \langle k+ l , x\rangle} 
\int_0^t \dt^\prime \,  \partial_{t^\prime} \bigg( \frac{\sin\big( (t-t^\prime) |k+l|\big)}{|k+l|} 
\widehat{A}^0(t^\prime,k) \bigg)  \widehat{\phi}(t^\prime,l) \right). 
\end{aligned}
\end{equation}
Using Lemma \ref{vector:lem-explicit}, we further compute\footnote{We note that the It\^{o}-integral below is differentiable in $t^\prime$ since the $t^\prime$-dependent integrand vanishes at $s=t^\prime$.}
\begin{equation}\label{operator:eq-hl-p2}
\begin{aligned}
&\partial_{t^\prime} \left( \frac{\sin\big( (t-t^\prime) |k+l|\big)}{|k+l|} 
\widehat{A}^0(t^\prime,k) \right) \\
=& \, - i \frac{k_a}{|k|^2 |k+l|} 
\partial_{t^\prime} \left( \int_0^{t^\prime} \mathrm{d}W_s^a(k) \, \sin\big((t-t^\prime) |k+l|\big) \Big( \cos \big( (t^\prime-s) |k|\big)-1\Big) \right) \\
=&  \, - i \frac{k_a}{|k|^2 |k+l|}  \int_0^{t^\prime} \mathrm{d}W^a_s(k)\,  \partial_{t^\prime} \bigg( \sin\big((t-t^\prime) |k+l|\big) \Big( \cos \big( (t^\prime-s) |k|\big)-1\Big) \bigg). 
\end{aligned}
\end{equation}
Using trigonometric identities, it follows that 
\begin{align}
&\partial_{t^\prime} \bigg( \sin\big((t-t^\prime) |k+l|\big) \Big( \cos \big( (t^\prime-s) |k|\big)-1\Big) \bigg) \notag \\
=& -\frac{1}{2} \Big( |k+l| - |k| \Big) \cos \Big( (t-t^\prime) |k+l| + (t^\prime-s) |k| \Big) \label{operator:eq-hl-p3} \\
-& \frac{1}{2} \Big( |k+l| + |k| \Big) \cos \Big( (t-t^\prime) |k+l| - (t^\prime-s) |k| \Big) \label{operator:eq-hl-p4} \\
+& |k+l| \cos \Big( (t-t^\prime) |k+l|\Big). \label{operator:eq-hl-p5}
\end{align}
We now insert \eqref{operator:eq-hl-p3}-\eqref{operator:eq-hl-p5} into \eqref{operator:eq-hl-p2}, then insert \eqref{operator:eq-hl-p2} into \eqref{operator:eq-hl-p1}, and finally use the basic random operator estimate (Lemma \ref{operator:lem-basic}). For all three terms \eqref{operator:eq-hl-p3}, \eqref{operator:eq-hl-p4}, and \eqref{operator:eq-hl-p5},  this yields an acceptable contribution. For \eqref{operator:eq-hl-p3}, the reason is that $\big| |k+l|-|k| \big| \lesssim |l|$. For \eqref{operator:eq-hl-p4} and \eqref{operator:eq-hl-p5}, the reason is that we can use the improvement for $\sigma_1\neq \sigma_2$ in Lemma \ref{operator:lem-basic}. \\ 

\emph{Proof of \ref{operator:item-low-high}: The low$\times$high-estimate.} It suffices to treat the case $\nu=0$, since the estimates for $\nu\in \R$ are all equivalent. Since there is no nonlinear smoothing, i.e., no gain in the spatial frequency of $\phi$, the following argument is rather delicate. In particular, we cannot directly apply the basic random operator estimate (Lemma \ref{operator:lem-basic}), since this would lead to a $\delta$-loss in the highest frequency scale. We therefore split the proof into several steps. \\

\emph{Step (A): Reduction to much higher frequency-scales.} In the first step, we reduce to the case when the spatial frequency of $\phi$ is much higher than the spatial frequency of $A_K$. To this end, we decompose
\begin{align}
\Duh \Big[ \partial_\alpha \Big( A_K^\alpha \parall \phi \Big) \Big] 
&= \Duh \Big[ \partial_\alpha \Big( A_K^\alpha \parall P_{< K^{100}} \phi \Big) \Big] \label{operator:eq-lh-p1}\\
&+  \Duh \Big[ \partial_\alpha \Big( A_K^\alpha  P_{\geq K^{100}} \phi \Big) \Big] \label{operator:eq-lh-p2}. 
\end{align}
Similar as in the proof of \ref{operator:item-high-high}, it follows from the basic random operator estimate (Lemma \ref{operator:lem-basic}) and $\kappa,\delta_0 \ll \delta_1$ that 
\begin{align*}
  \E \Big[ \Big\| \Duh \Big[ \partial_\alpha \Big( A_K^\alpha  \parall P_{< K^{100}} \phi \Big) \Big] \Big\|_{X^{0,b_0}([0,T]) \rightarrow X^{0,b_+}([0,T])}^p \Big]^{1/p} 
  &\lesssim \sqrt{p} T^\theta \big( K^{100}\big)^{\delta} K^{-1/4} \\
  &\lesssim \sqrt{p} T^\theta K^{-1/4+\delta_1-\kappa}. 
\end{align*}
Thus, it remains to treat the contribution of \eqref{operator:eq-lh-p2}.\\

\emph{Step (B): Reduction to low-modulations.} 
Using the definition of the $X^{0,b_0}([0,T])$-norm, we can replace $\phi \in X^{0,b_0}([0,T])$ by $\phi \in X^{0,b_0}(\R)$. We then employ the usual decomposition of $\phi$ into time-modulated half-waves. To be precise, we write 
\begin{equation}\label{operator:eq-lh-p3}
\phi = \sum_{\sigma = \pm 1 } \int_{\R} \dlambda \, e^{i\lambda t} e^{i\sigma t|\nabla|} \phi^{(\sigma)}(\lambda),
\end{equation}
where, for each $\sigma\in \{\pm 1\}$ and $\lambda \in \R$, $\phi^{(\sigma)}(\lambda) \in L^2(\T^2_x)$ and 
\begin{equation*}
\| \phi \|_{X^{0,b_0}(\R)} \sim \max_{\sigma = \pm 1 }
\big\| \langle \lambda \rangle^{b_0} \phi^{(\sigma)}(\lambda) \big\|_{L_\lambda^2 L_x^2}. 
\end{equation*}
We then further decompose $\phi$ into a high and low-modulation term. To this end, we define\footnote{We introduce the parameter $\Lambda$ for expository purposes. Indeed, there is a-priori no reason for choosing  $\Lambda$ as being equal to $K$, but this choice can be justified using the numerology below.} $\Lambda := K$ and decompose
\begin{align}
\phi_{\hi} :=  \sum_{\sigma = \pm 1 } \int_{\R} \dlambda \,  \mathbf{1} \big\{ |\lambda| \geq \Lambda \big\} e^{i\lambda t} e^{i\sigma t|\nabla|} \phi^{(\sigma)}(\lambda), \label{operator:eq-lh-p4} \\ 
\phi_{\lo} :=  \sum_{\sigma = \pm 1 } \int_{\R} \dlambda \,  \mathbf{1}\big\{ |\lambda| \leq \Lambda \big\} e^{i\lambda t} e^{i\sigma t|\nabla|} \phi^{(\sigma)}(\lambda). \label{operator:eq-lh-p5}
\end{align}
For the high-modulation term, we have that 
\begin{align*}
&\Big\| \Duh \Big[ \partial_\alpha \Big( A^\alpha_K P_{\geq K^{100}} \phi_{\hi} \Big) \Big] \Big\|_{X^{0,b_+}([0,T])} \\
\lesssim& \, T^\theta \max_{\alpha=0,1,2} \Big\| A^\alpha_K  P_{\geq K^{100}} \phi_{\hi}  \Big\|_{L_t^2 L_x^2([0,T]\times \T^2)} \\
\lesssim& \, T^\theta \max_{\alpha=0,1,2} \Big\| A^\alpha_K \Big\|_{L_t^\infty L_x^\infty([0,T]\times \T^2)} 
\Big\| P_{\geq K^{100}} \phi_{\hi}  \Big\|_{L_t^2 L_x^2([0,T]\times \T^2)} \\
\lesssim& \,  T^\theta  \Lambda^{-b_0}  \Big\| A^\alpha_K \Big\|_{L_t^\infty L_x^\infty([0,T]\times \T^2)}  \big\| \phi \big\|_{X^{0,b_0}(\R)}. 
\end{align*}
Using Corollary \ref{vector:cor-regularity}, this yields an acceptable contribution. Thus, it remains to control the contribution of the low-modulation term in \eqref{operator:eq-lh-p5}. \\

\emph{Step (C): Simplifying the Duhamel integrals.} For the temporal component, we obtain from integration by parts and $A^0(0)=0$ that 
\begin{align*}
\Duh \Big[ \partial_0 \Big( A^0_K P_{\geq K^{100}} \phi_{\lo} \Big) \Big] = - \int_0^t \dt^\prime \, \cos \big( (t-t^\prime) |\nabla| \big) \Big( A^0_K P_{\geq K^{100}} \phi_{\lo} \Big). 
\end{align*}
For the spatial components, we write 
\begin{align*}
\Duh \Big[ \partial_a \Big( A^a_K P_{\geq K^{100}} \phi_{\lo} \Big) \Big] 
=  \frac{\partial_a}{|\nabla|}  
\int_0^t \dt^\prime \, \sin \big( (t-t^\prime) |\nabla| \big) \Big( A^a_K P_{\geq K^{100}} \phi_{\lo} \Big)
\end{align*}
Due to the boundedness of $\partial_a / |\nabla|$ on $L^2(\T_x^2)$, it therefore suffices to estimate 
\begin{equation}\label{operator:eq-lh-p6}
e^{i\sigma t |\nabla|} \int_0^t \dt^\prime \, e^{-i\sigma t^\prime |\nabla|} \Big(  A^\alpha_K \, P_{\geq K^{100}} \phi_{\lo} \Big)
\end{equation}
for all $\sigma = \pm 1$ and $0\leq \alpha \leq 2$. \\

\emph{Step (D): Angular decomposition.} For any $\varphi \in (-\pi,\pi]$, we define the projection operator $\Qphi$ by 
\begin{equation}\label{operator:eq-lh-p7}
\widehat{\Qphi f}(m) = \mathbf{1} \big\{ \big| \angle (m , e_1) - \varphi\big| \leq 2\pi K^{-99} \big\} \widehat{f}(m)
\end{equation}
for all $m\in \Z^2\backslash \{0\}$ and\footnote{In our argument, the operator $\Qphi$ is only applied to high-frequency functions, and therefore the definition for $m=0$ is irrelevant.} $\widehat{\Qphi f}(0)=0$. In \eqref{operator:eq-lh-p7}, the distance between $\angle (m,e_1)$ and $\varphi$ is measured in $\T_x$, i.e., modulo multiples of $2\pi$. For future use, we also define the unit normal corresponding to the angle $\varphi$ as
\begin{equation}\label{operator:eq-lh-p8}
\nphi := \big( \cos \varphi , \,   \sin \varphi \big).  
\end{equation}
For any $k\in \Z^2$ and $l\in \Z^2$ satisfying $|k|\sim K$ and $|l| \gtrsim K^{100}$, it holds that 
\begin{equation*}
\Big| \angle ( k+l , e_1 ) - \angle (l, e_1) \Big| \lesssim K^{-99}. 
\end{equation*}
Using almost orthogonality and Lemma \ref{prelim:lem-maxima-random}, it therefore suffices to estimate 
\begin{equation}\label{operator:eq-lh-p9}
e^{i\sigma t |\nabla|} \int_0^t \dt^\prime \, e^{-i\sigma t^\prime |\nabla|} \Big(  A^\alpha_K  \cdot \Qphi P_{\geq K^{100}} \phi_{\lo} \Big)
\end{equation}
for a fixed $\varphi \in (-\pi,\pi]$. We now simplify the action of $e^{-i\sigma t^\prime |\nabla|}$ on the product of $A^\alpha_K$ and  $\Qphi P_{\geq K^{100}} \phi_{\lo}$. For any $k,l\in \Z^2$ satisfying $|k|\sim K$, $|l|\gtrsim K^{100}$, and $|\angle (l, e_1 )  -\varphi|\lesssim K^{-99}$, it holds that 
\begin{equation}\label{operator:eq-lh-p10}
\Big| |k+ l| - \big( \nphi \cdot k + |l| \big) \Big| \lesssim K^{-98}. 
\end{equation}
Then, it easily follows that 
\begin{equation}\label{operator:eq-lh-p11}
\begin{aligned}
&\Big\| 
e^{-i\sigma t^\prime |\nabla|} \Big(  A^\alpha_K  \cdot \Qphi P_{\geq K^{100}} \phi_{\lo} \Big) 
-   \Big( e^{-\sigma t^\prime \nphi \cdot \nabla}   A^\alpha_K  \Big) \cdot \Big( e^{-i\sigma t^\prime |\nabla|} \Qphi P_{\geq K^{100}} \phi_{\lo} \Big) \Big\|_{L_x^2} \\
\lesssim& \, K^{-96} \big\| A_K^\alpha(t^\prime) \big\|_{L_x^2} \big\| \phi_{\lo} \big\|_{L_x^2}. 
\end{aligned}
\end{equation} 
Due to \eqref{operator:eq-lh-p11}, it then remains to control 
\begin{equation}\label{operator:eq-lh-p12}
e^{i\sigma t |\nabla|} \int_0^t \dt^\prime \, 
\Big( e^{-\sigma t^\prime \nphi \cdot \nabla}   A^\alpha_K  \Big)
\cdot 
\Big( e^{-i\sigma t^\prime |\nabla|} \Qphi P_{\geq K^{100}} \phi_{\lo} \Big) . 
\end{equation}

\emph{Step (D): Inserting the half-wave decomposition of $\phi_{\lo}$.}
Using \eqref{operator:eq-lh-p5}, it holds that 
\begin{align}
&e^{i\sigma t |\nabla|} \int_0^t \dt^\prime \, 
\Big( e^{-\sigma t^\prime \nphi \cdot \nabla}   A^\alpha_K  \Big)
\cdot 
\Big( e^{-i\sigma t^\prime |\nabla|} \Qphi P_{\geq K^{100}} \phi_{\lo} \Big) \notag  \\
=& \, \sum_{\sigma^\prime=\pm 1} \int_{\R} \dlambda \mathbf{1} \big\{ |\lambda| \leq \Lambda \big\} e^{i\sigma t |\nabla|} \int_0^t \dt^\prime \, \bigg[  \Big( e^{-\sigma t^\prime \nphi \cdot \nabla}   A^\alpha_K  \Big) \label{operator:eq-lh-p13} \\
&\times \, \Big( e^{it^\prime \lambda} e^{i(\sigma^\prime-\sigma)t^\prime |\nabla|} \Qphi P_{\geq K^{100}}  \phi_{\lo}^{(\sigma^\prime)}(\lambda) \Big) \bigg]. \notag 
\end{align}
If $\sigma^\prime \neq \sigma$, integration by parts in $t^\prime$ gains a factor of 
\begin{equation*}
K^{-100} \big( \Lambda + K \big) \lesssim K^{-99}, 
\end{equation*}
 so that the corresponding contribution can be controlled using crude arguments. Thus, it remains to treat the case $\sigma^\prime=\sigma$.  In this case, we can simplify \eqref{operator:eq-lh-p13}, which can be written as 
 \begin{equation}\label{operator:eq-lh-p14}
\begin{aligned}
\int_{\R} \dlambda \mathbf{1} \big\{ |\lambda| \leq \Lambda \big\} \,  e^{i\sigma t |\nabla|} \bigg[  \bigg(\int_0^t \dt^\prime \, e^{it^\prime\lambda}  e^{-\sigma t^\prime \nphi \cdot \nabla}   A^\alpha_K  \bigg) \cdot
\Qphi P_{\geq K^{100}}  \phi_{\lo}^{(\sigma)}(\lambda) \bigg].
\end{aligned}
 \end{equation}
 The most important aspect of \eqref{operator:eq-lh-p14} is that there no longer are any $t^\prime$-dependent operators acting on  $\phi^{(\sigma)}_{\lo}(\lambda)$, which can therefore be pulled outside of the $t^\prime$-integral. \\
 
 \emph{Step (E): Finishing up the proof.} 
 It remains to estimate \eqref{operator:eq-lh-p14}. We have that 
 \begin{align}
&\bigg\| \int_{\R} \dlambda \mathbf{1} \big\{ |\lambda| \leq \Lambda \big\} \,  e^{i\sigma t |\nabla|} \bigg[  \bigg(\int_0^t \dt^\prime \, e^{it^\prime\lambda}  e^{-\sigma t^\prime \nphi \cdot \nabla}   A^\alpha_K  \bigg) \cdot
\Qphi P_{\geq K^{100}}  \phi_{\lo}^{(\sigma)}(\lambda) \bigg] \bigg\|_{X^{0,b_+}([0,T])} \notag \\
\leq& \, \int_{\R} \dlambda \,  \bigg\| e^{i\sigma t |\nabla|} \bigg[  \bigg(\int_0^t \dt^\prime \, e^{it^\prime\lambda}  e^{-\sigma t^\prime \nphi \cdot \nabla}   A^\alpha_K  \bigg) \cdot
\Qphi P_{\geq K^{100}}  \phi_{\lo}^{(\sigma)}(\lambda) \bigg] \bigg\|_{X^{0,b_+}([0,T])} \notag \\
\lesssim& \, \int_{\R} \dlambda \,   \bigg\|  \bigg(\int_0^t \dt^\prime \, e^{it^\prime\lambda}  e^{-\sigma t^\prime \nphi \cdot \nabla}   A^\alpha_K  \bigg) \cdot
\Qphi P_{\geq K^{100}}  \phi_{\lo}^{(\sigma)}(\lambda)  \bigg\|_{L_x^2 H_t^{b_+}(\T^2 \times [0,T])} \notag \\
\lesssim& \, \bigg\| \bigg(\int_0^t \dt^\prime \, e^{it^\prime\lambda}  e^{-\sigma t^\prime \nphi \cdot \nabla}   A^\alpha_K  \bigg) \bigg\|_{L_x^\infty H_t^{b_+}(\T^2\times [0,T])} \, 
\Big\| \Qphi P_{\geq K^{100}}  \phi_{\lo}^{(\sigma)}(\lambda) \Big\|_{L_\lambda^1 L_x^2(\R\times \T^2)} \label{operator:eq-lh-p15}. 
 \end{align}
 The first factor in \eqref{operator:eq-lh-p15} is now estimated using Lemma \ref{vector:lem-dispersive-time-integral} and the second factor in \eqref{operator:eq-lh-p15} is estimated using Cauchy-Schwarz in $\lambda$ and the definition of the $X^{0,b_0}$-norm. 
\end{proof}

Equipped with Proposition \ref{operator:prop-main}, Lemma \ref{operator:lem-resolvent} follows from Gronwall-type argument. The only slightly non-standard aspect of the argument is that Proposition \ref{operator:prop-main} only yields bounds on intervals $[t_0,t_1]$ when $t_0=0$ and not for general $t_0 \geq 0$. For this reason, we sketch the (otherwise standard) argument.

\begin{proof}[Proof of Lemma \ref{operator:lem-resolvent}:]
We first prove the resolvent estimate \eqref{operator:eq-resolvent-1}, which is the main part of the argument.  To this end, let $C_0=C_0(b_0,b_+)$ be sufficiently large. Using Proposition \ref{operator:prop-main}  and a standard union-bound estimate over time-scales $T\in \dyadic$, we obtain that there exists an event $E^{(A)}_\lambda \in \sigma_A$ which satisfies the probability estimate \eqref{operator:eq-event-probability} and such that, on this event, we have for all $t\geq 0$ that  
\begin{equation}\label{operator:eq-resolvent-p2}
\sup_N \big\| \Lin[ll][\leq N] \big\|_{X^{\nu,b_0}([0,t])\rightarrow X^{\nu,b_+}([0,t])} \leq C_0  \max(1,|t|)^{C_0} \lambda. 
\end{equation}
For the rest of this argument, we restrict ourselves to this event $E^{(A)}_\lambda$. \\

We now let $N\in \dyadic$, let $T\geq 1$, and let $v \in X^{\nu,b_0}([0,T])$. Using soft arguments (due to the finiteness of $N$), it follows that there exists a unique solution $\chi \in X^{\nu,b}([0,T])$ of 
\begin{equation}\label{operator:eq-resolvent-p3}
\chi + \Lin[ll][\leq N] \chi = v. 
\end{equation}
In order to prove the resolvent bound \eqref{operator:eq-resolvent-1}, it remains to prove that 
\begin{equation}\label{operator:eq-resolvent-p4}
\big\| \chi \big\|_{X^{\nu,b_0}([0,T])} \leq \exp\big( C (T\lambda)^C \big) \big\| v \big\|_{X^{\nu,b_0}([0,T])}.
\end{equation}
To this end, we let $\tau=\tau(T,\lambda)\in (0,1)$ be a time-step which remains to be chosen. For all $j\geq 1$, we define 
\begin{equation*}
D_j := \big\| \chi \big\|_{X^{\nu,b_0}([0,j\tau])} + \big\| \Lin[ll][\leq N] \chi \big\|_{X^{\nu,b_0}([0,j\tau])}. 
\end{equation*}
Assuming that $j\in \Nzero$ satisfies $(j+1)\tau \leq T$, we obtain from 
Lemma \ref{prelim:lem-Xnub}  and \eqref{operator:eq-resolvent-p3} that
\begin{align}
D_{j+1} &= \big\| \chi \big\|_{X^{\nu,b_0}([0,(j+1)\tau])} + \big\| \Lin[ll][\leq N] \chi \big\|_{X^{\nu,b_0}([0,(j+1)\tau])} \notag \\
&\leq 2 \big\| \Lin[ll][\leq N] \chi \big\|_{X^{\nu,b_0}([0,(j+1)\tau])}
 + \big\| v \big\|_{X^{\nu,b_0}([0,(j+1)\tau])} \notag \\
&\lesssim_{b_0,b_+} \big\| \Lin[ll][\leq N] \chi \big\|_{X^{\nu,b_0}([0,j\tau])}
+ \tau^{b_+-b_0}  \big\| \Lin[ll][\leq N] \chi \big\|_{X^{\nu,b_+}([0,(j+1)\tau])} + \big\| v \big\|_{X^{\nu,b_0}([0,T])}. \label{operator:eq-resolvent-p5}
\end{align}
Using \eqref{operator:eq-resolvent-p2}, it follows that
\begin{align}
\eqref{operator:eq-resolvent-p5} \, &\lesssim_{b_0,b_+} D_j + \tau^{b_+-b_0} T^{C_0} \lambda \big\| \chi \big\|_{X^{\nu,b_0}([0,(j+1)\tau])} 
+ \big\| v \big\|_{X^{\nu,b_0}([0,T])} \notag \\
&\lesssim_{b_0,b_+} D_j +  \big\| v \big\|_{X^{\nu,b_0}([0,T])} + \tau^{b_+-b_0}  T^{C_0} \lambda D_{j+1}.   \label{operator:eq-resolvent-p6}
\end{align}
We now let $C_1=C_1(b_0,b_+,C_0)\geq 1$ be sufficiently large. Assuming that the time-step $\tau$ satisfies
\begin{equation*}
 C_1 (T\lambda)^{C_1} \tau \leq \frac{1}{2}, 
\end{equation*}
 it follows from \eqref{operator:eq-resolvent-p6} that
\begin{equation}\label{operator:eq-resolvent-p7}
D_{j+1} \leq C_1 \Big( D_j +  \big\| v \big\|_{X^{\nu,b_0}([0,T])} \Big).  
\end{equation}
The desired estimate \eqref{operator:eq-resolvent-p4} now follows by simply iterating \eqref{operator:eq-resolvent-p7} and our condition on $\tau$. This completes the proof of \eqref{operator:eq-resolvent-1}.\\

It remains to prove the second resolvent estimate \eqref{operator:eq-resolvent-2} and the two estimates \eqref{operator:eq-chi-1} and \eqref{operator:eq-chi-2} for the structured component. The second resolvent estimate \eqref{operator:eq-resolvent-2} follows directly from the resolvent identity, the first resolvent estimate \eqref{operator:eq-resolvent-1}, and the decay in $K$ in Proposition \ref{operator:prop-main}. The estimates \eqref{operator:eq-chi-1} and \eqref{operator:eq-chi-2} for the structured component follow directly from the two resolvent estimates \eqref{operator:eq-resolvent-1} and \eqref{operator:eq-resolvent-2}  and the regularity estimate for $z$ (Corollary \ref{vector:cor-regularity-z}).
\end{proof}

At the end of this subsection, we now present the proof of Lemma \ref{operator:lem-commutator}. 

\begin{proof}[Proof of Lemma \ref{operator:lem-commutator}:]
We only prove \eqref{operator:eq-commutator-1}, since \eqref{operator:eq-commutator-2} follows from a minor modification. First, we observe the commutator identity 
\begin{equation}\label{operator:eq-commutator-p1}
\big[ P_L , (1+\Lin[ll][\leq N])^{-1} \big] = 
-  (1+\Lin[ll][\leq N])^{-1} \big[ P_L,  1+\Lin[ll][\leq N] \big]  (1+\Lin[ll][\leq N])^{-1}. 
\end{equation}
Due to \eqref{operator:eq-commutator-p1} and Lemma \ref{operator:lem-resolvent}, it suffices to prove for all $L\in \dyadic$, $p\geq 1$, and $T\geq 1$ that
\begin{equation}\label{operator:eq-commutator-p2}
\E \Big[ \sup_{N \in \dyadic}
\Big\| \Big[  P_L,  \Lin[ll][\leq N] \Big] \Big\|_{X^{\nu,b_0}([0,T])\rightarrow X^{\nu,b_0}([0,T])}^p \Big]^{1/p} \lesssim \sqrt{p} T^\theta L^{-1/4+\delta_1-\kappa}. 
\end{equation}
To this end, we further decompose
\begin{equation}\label{operator:eq-commutator-p3}
\Big[  P_L,  \Lin[ll][\leq N] \Big] 
= 2i \sum_{\substack{K,L^\prime \in \dyadic \colon \\ K \ll L \sim L^\prime}} \Duh \Big[ P_L \Big( \partial_\alpha \big( A^\alpha_K \,  P_{L^\prime} \phi\big) \Big) - \partial_\alpha \big( A^\alpha_K \, P_L P_{L^\prime} \phi \big) \Big]. 
\end{equation}
We now note that, for any $k,l\in \Z^2$, it holds that  
\begin{equation}\label{operator:eq-commutator-p4}
\Big| \rho_L(k+l) \rho_K(k) \rho_{L^\prime}(l) - \rho_L(l)  \rho_K(k) \rho_{L^\prime}(l) \Big| \lesssim K L^{-1} \rho_K(k) \rho_{L^\prime}(l).
\end{equation}
By combining a Fourier expansion of \eqref{operator:eq-commutator-p3}, the estimate \eqref{operator:eq-commutator-p4}, and the basic random operator estimate (Lemma \ref{operator:lem-basic}), it follows that 
\begin{align*}
&\E \bigg[ \bigg\|  \Duh \Big[ P_L \Big( \partial_\alpha \big( A^\alpha_K \,  P_{L^\prime} \phi\big) \Big) - \partial_\alpha \big( A^\alpha_K \, P_L P_{L^\prime} \phi \big) \Big] \bigg\|_{X^{\nu,b_0}([0,T]) \rightarrow X^{\nu,b_+}([0,T])}^p \bigg]^{1/p} \\ 
\lesssim& \,  \sqrt{p} T^\theta  K^{3/4+\delta} L^{-1} 
\lesssim \sqrt{p} T^\theta   L^{-1/4+\delta}.  
\end{align*}
This implies the desired estimate \eqref{operator:eq-commutator-p2}.

\end{proof}

\section{A product estimate and its applications}\label{section:product}

In this section, we control the contributions of 
\begin{equation}\label{product:eq-motivation}
\partial_\alpha \big( A^\alpha_{\leq N} \parasim \chi_{\leq N} \big)  \qquad \text{and} \qquad \Big( A_{\leq N,\alpha} A^\alpha_{\leq N} - \scrm^{\hspace{-0.4ex}2}_{\leq N} \Big) \,  \chi_{\leq N},
\end{equation}
which involve the structured component 
$\chi_{\leq N} = (1-\Lin[ll][\leq N])^{-1} z$ (as introduced in Section \ref{section:ansatz}). 
Due to the negative regularity of $A^\alpha_{\leq N}$ and $\chi_{\leq N}$,  both terms in \eqref{product:eq-motivation} cannot be defined (even as space-time distributions) using purely deterministic arguments. Instead, our argument will rely heavily on the independence of the vector potential $A^\alpha$ and the linear stochastic object $z$. Our main estimates are collected in the following proposition. 

\begin{proposition}\label{product:prop-main} 
Let $C=C(b_0,b_+,\delta_1,\delta_2)\geq 1$ be sufficiently large and let $0<c\leq 1$ be sufficiently small. Then, for all $\lambda \geq 1$, there exists an event $E_\lambda$ which satisfies
\begin{equation}\label{product:eq-main-probability}
\mathbb{P}\big( E_\lambda \big) \geq 1 - c  \exp\big( - \lambda \big) 
\end{equation}
and such that, on this event, the following estimates are satisfied: 
\begin{enumerate}[label=(\roman*)]
\item (Cubic estimate) For all $T\geq 1$ and $M,N \in \dyadic$, it holds that 
\begin{align}
&\Big\|  \Big( A_{\leq N,\alpha} A^\alpha_{\leq N} - \scrm^{\hspace{-0.4ex}2}_{\leq N} \Big) \,  \chi_{\leq N} \Big\|_{L_t^2 H_x^{-\delta_2}([0,T]\times \T^2)}  \leq \exp\big( C(T\lambda)^C\big), \label{product:eq-cubic-1} \\
&\Big\|  \Big( A_{\leq M,\alpha} A^\alpha_{\leq M} - \scrm^{\hspace{-0.4ex}2}_{\leq M} \Big) \,  \chi_{\leq M}  
- \Big( A_{\leq N,\alpha} A^\alpha_{\leq N} - \scrm^{\hspace{-0.4ex}2}_{\leq N} \Big) \,  \chi_{\leq N} \Big\|_{L_t^2 H_x^{-\delta_2}([0,T]\times \T^2)} \label{product:eq-cubic-2} \\
 \leq&\, \exp\big( C(T\lambda)^C\big) \min(M,N)^{-\kappa}. \notag 
\end{align}
\item (High$\times$high-estimate) For all $T\geq 1$ and $M,N\in \dyadic$, it holds that 
\begin{align}
\big\| \Lin[sim][\leq N] \chi_{\leq N} \big\|_{X^{1/4-\delta_2,b_+}([0,T])} 
&\leq \exp\big( C (T\lambda)^C \big), \label{product:eq-hh-1} \\
\big\| \Lin[sim][\leq M] \chi_{\leq M} - \Lin[sim][\leq N] \chi_{\leq N} \big\|_{X^{1/4-\delta_2,b_+}([0,T])} &\leq \exp\big( C (T\lambda)^C \big) \min(M,N)^{-\kappa}.  \label{product:eq-hh-2}
\end{align}
\end{enumerate}
\end{proposition}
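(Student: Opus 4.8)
The plan is to reduce all four estimates to Gaussian second-moment computations by conditioning on the vector potential, and then to close these using the moment method (Lemma~\ref{prelim:lem-moment-method}) and the lattice point counting estimates (Lemma~\ref{prelim:lem-basic-counting}). Write $\mathcal{Q}_{\leq N}:=A_{\leq N,\alpha}A^\alpha_{\leq N}-\scrm^{\hspace{-0.4ex}2}_{\leq N}$. I would work on the intersection of the $\sigma_A$-events of Lemma~\ref{vector:lem-quadratic}, Lemma~\ref{operator:lem-resolvent} and Lemma~\ref{operator:lem-commutator}, together with a $\sigma_z$-event on which Corollary~\ref{vector:cor-regularity-z} and Lemma~\ref{prelim:lem-Xnub-ito} hold; on this event $\mathcal{Q}_{\leq N}$, $(1+\Lin[ll][\leq N])^{-1}$, $\chi_{\leq N}$, the commutators $[P_L,(1+\Lin[ll][\leq N])^{-1}]$ and the relevant differences in $M,N$ are all bounded by $\exp(C(T\lambda)^C)$, uniformly over the dyadic scales. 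The two structural facts I would exploit are: that $\mathcal{Q}_{\leq N}$, $\Lin[ll][\leq N]$, $\Lin[sim][\leq N]$ and $(1+\Lin[ll][\leq N])^{-1}$ are $\sigma_A$-measurable while $z$ is $\sigma_z$-measurable, with $\sigma_A$ and $\sigma_z$ independent; and that, since $\chi_{\leq N}=(1+\Lin[ll][\leq N])^{-1}z$ is linear in $z$, every quantity in \eqref{product:eq-cubic-1}--\eqref{product:eq-hh-2} is, conditionally on $\sigma_A$, a polynomial of degree at most one in $z$. By the conditional version of Gaussian hypercontractivity (Lemma~\ref{prelim:lem-hypercontractivity}) it then suffices to bound the conditional second moments, and the passage to the event $E_\lambda$ is a routine Chebyshev argument plus a union bound over the dyadic scales $N$ and $T$, exactly as in Lemma~\ref{operator:lem-resolvent}. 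I would prove only \eqref{product:eq-cubic-1} and \eqref{product:eq-hh-1}; the difference estimates \eqref{product:eq-cubic-2} and \eqref{product:eq-hh-2} follow verbatim after inserting the difference bounds of Lemmas~\ref{vector:lem-quadratic}, \ref{operator:lem-resolvent}, \ref{operator:lem-commutator}, which supply the factor $\min(M,N)^{-\kappa}$.

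For the cubic estimate \eqref{product:eq-cubic-1} I would first decompose $\mathcal{Q}_{\leq N}\,\chi_{\leq N}$ into low$\times$high, high$\times$high and high$\times$low paraproducts (in the vector-potential versus $\chi_{\leq N}$ frequencies). The pieces $\mathcal{Q}_{\leq N}\parall\chi_{\leq N}$ and $\mathcal{Q}_{\leq N}\paragg\chi_{\leq N}$ need no probabilistic input beyond the good-event bounds: using $\mathcal{Q}_{\leq N}\in L_t^\infty \Cs_x^{-\epsilon}$, $\chi_{\leq N}\in X^{-\delta_1,b_0}$ and the hierarchy $\kappa\ll\delta_0\ll\delta_1\ll\delta_2$, Bony's estimates place both in $L_t^2 H_x^{-\delta_2}$. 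For \eqref{product:eq-hh-1} no paraproduct decomposition is needed, since $\Lin[sim][\leq N]$ is by construction the high$\times$high interaction. In both estimates what remains is the high$\times$high interaction of the vector potential with $\chi_{\leq N}$, and this is the heart of the matter: there is no nonlinear smoothing in the low-output-frequency regime, and the negative regularity of $\chi_{\leq N}$ precludes a direct application of Proposition~\ref{operator:prop-main}\ref{operator:item-high-high}. Here I would split $\chi_{\leq N}=z-\Lin[ll][\leq N]\chi_{\leq N}$ and treat the two parts separately.

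The $z$-parts, $\Lin[sim][\leq N]z$ and $\mathcal{Q}_{\leq N}\parasim z$, are genuine bilinear expressions in the two independent families of It\^o integrals defining $A$ and $z$ (unlike the setting of Lemma~\ref{operator:lem-basic}, where the second factor was abstract and forced an extra factor $K$). Applying the moment method (Lemma~\ref{prelim:lem-moment-method}) together with the explicit covariances (Lemma~\ref{vector:lem-covariance}, Corollary~\ref{vector:cor-regularity-z}) and the lattice point counting estimates (Lemma~\ref{prelim:lem-basic-counting}) gives the required $X^{1/4-\delta_2,b_+}$, resp. $L_t^2 H_x^{-\delta_2}$, bounds with decay summable in the interacting frequency; the gain is that independence annihilates the resonant pairings between vector-potential and $z$-frequencies, so the would-be high$\times$high$\to$low divergence disappears. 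For the remaining part $\chi_{\leq N}-z=-\Lin[ll][\leq N]\chi_{\leq N}$, which still has regularity $-\delta_1-$, I would use the commutator identity $\widetilde P_L\chi_{\leq N}=(1+\Lin[ll][\leq N])^{-1}\widetilde P_Lz+[\widetilde P_L,(1+\Lin[ll][\leq N])^{-1}]z=:g_L+r_L$. By Lemma~\ref{operator:lem-commutator} the remainder $r_L$ is localized at frequency $\sim L$ and has gained a full $\tfrac14$-derivative over $z$, so its contributions sum over $L$ after applying Proposition~\ref{operator:prop-main}\ref{operator:item-high-high} (for \eqref{product:eq-hh-1}) or the deterministic high$\times$high bound for $\mathcal{Q}_{\leq N}$ (for \eqref{product:eq-cubic-1}).

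The genuinely delicate term is $g_L=(1+\Lin[ll][\leq N])^{-1}\widetilde P_Lz$, which is still of regularity $0-$ with no decay in $L$ — this is the step I expect to be the main obstacle. Here I would condition on $\sigma_A$, decompose $\widetilde P_Lz$ into its independent Fourier modes, use the across-mode independence to split the conditional second moments of $\Lin[sim][\leq N]g_L$ and $\mathcal{Q}_{\leq N}\parasim g_L$ into sums over $m$ of single-mode contributions of the $\sigma_A$-measurable operators $\Lin[sim][\leq N](1+\Lin[ll][\leq N])^{-1}$, resp. $\mathcal{Q}_{\leq N}\parasim(1+\Lin[ll][\leq N])^{-1}$, applied to $e^{i\langle m,x\rangle}\widehat z(\cdot,m)$. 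These single-mode contributions I would estimate by combining Lemma~\ref{prelim:lem-Xnub-to-ell2}, the resolvent bound, the It\^o-integral estimate of Lemma~\ref{prelim:lem-Xnub-ito}, and — crucially, in order to recover summability over the mode $m$, which the crude operator bounds just fail to give by a logarithm — the dispersive gain supplied by Lemma~\ref{vector:lem-dispersive-time-integral} (for the outer Duhamel present in $\Lin[sim][\leq N]$) and Lemma~\ref{prelim:lem-basic-counting} (for the frequency interaction of the two vector potentials hidden in $\mathcal{Q}_{\leq N}$ with $\widehat z(\cdot,m)$). Summing over $L$ and recombining the $z$- and $(\chi_{\leq N}-z)$-parts then yields \eqref{product:eq-cubic-1} and \eqref{product:eq-hh-1}, and the remaining assertions follow as indicated above.
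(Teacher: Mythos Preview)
Your overall architecture matches the paper's: work on the good $\sigma_A$-event, use the commutator decomposition
\[
\widetilde P_L\chi_{\leq N}=(1+\Lin[ll][\leq N])^{-1}\widetilde P_Lz+\big[\widetilde P_L,(1+\Lin[ll][\leq N])^{-1}\big]z=:g_L+r_L,
\]
dispose of $r_L$ via Lemma~\ref{operator:lem-commutator}, and exploit the $\sigma_A/\sigma_z$ independence for the rest. The preliminary split $\chi_{\leq N}=z-\Lin[ll][\leq N]\chi_{\leq N}$ and the separate bilinear treatment of $\Lin[sim][\leq N]z$, $\mathcal Q_{\leq N}\parasim z$ is not needed in the paper and adds work; the paper goes straight to the commutator split and treats $g_L$ uniformly.

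The genuine gap is in your treatment of $g_L$. You are right that single-mode bounds plus Lemma~\ref{prelim:lem-Xnub-to-ell2} leave a small loss (of order $\delta_1$, not merely logarithmic) when summed over $m$, and you propose to repair this with Lemma~\ref{vector:lem-dispersive-time-integral} and Lemma~\ref{prelim:lem-basic-counting}. But Lemma~\ref{vector:lem-dispersive-time-integral} is a statement about time integrals of $A_K^\alpha$ against modulated linear phases, obtained only after the angular approximation $|k+l|\approx\mathbf n_\varphi\cdot k+|l|$ that is valid when $|l|\gg|k|$; in the high$\times$high regime $|k|\sim|l|$ this factorization fails, and the lemma gives nothing. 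Likewise, lattice-point counting applies to explicit phase interactions, not to products with $(1+\Lin[ll][\leq N])^{-1}(e^{i\langle m,x\rangle}\varphi_m)$, which is no longer a single-frequency half-wave. So your proposed fix does not close.

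The paper's device is different and sharper: it proves a \emph{product estimate} (Lemma~\ref{product:lem-product}) via duality. Writing $\mathcal M=(1+\Lin[ll][\leq N])^{-1}$ and $\mathcal M^{(\varphi)}v:=\sum_m v_m\,\mathcal M(e^{i\langle m,x\rangle}\varphi_m)$, one has for each output frequency $n$
\[
\sum_{|m|\sim L}\big|\langle e^{i\langle n,x\rangle}\overline{P_Kf},\,\mathcal M^{(\varphi)}_m\rangle\big|^2
=\big\|(\mathcal M^{(\varphi)})^\ast\big(e^{i\langle n,x\rangle}\overline{P_Kf}\big)\big\|_{\ell^2}^2
\le\|f\|_{L_x^2}^2\,\|\mathcal M^{(\varphi)}\|_{\ell^2\to L_x^2}^2,
\]
and then Lemma~\ref{prelim:lem-Xnub-to-ell2} together with Lemma~\ref{prelim:lem-Xnub-ito} give $\|\mathcal M^{(\varphi)}\|_{\ell^2\to X^{0,b_0}}\lesssim L^{-1+\delta_1}\|\mathcal M\|$. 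This is what eliminates the loss you were worried about, without any dispersive input. For \eqref{product:eq-hh-1} one further detail is missing from your sketch: the product estimate alone controls $P_M\Lin[sim][K]\chi_{\leq N}$ only as $M^{1/4}K^{-\delta_2/10}$, which is not summable in $K$; the paper interpolates this against the crude bound $M^{-\delta_2}K^{2\delta_1}$ coming from Proposition~\ref{operator:prop-main}\ref{operator:item-high-high} applied with $\nu=\delta_1$, and only the geometric mean is summable in both $K$ and $M$.
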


\begin{remark}
Proposition \ref{product:prop-main} can likely also be proven by combining Lemma \ref{operator:lem-resolvent} with the tensor merging estimates from \cite{DNY20}. However, we present a more elementary argument, which requires less of the tensor machinery. 
\end{remark}

The main ingredient in the proof of Proposition \ref{product:prop-main} is a product estimate. Due to its importance, the product estimate is captured in its own lemma.

\begin{lemma}[Product estimate]\label{product:lem-product}
Let $T\geq 1$, let $2\leq  q <\infty$, and let $z\colon \R_t \times \T_x^2 \rightarrow \R$ be as in \eqref{ansatz:eq-z}. Furthermore, let $f\colon [0,T] \times \T_x^2 \rightarrow \C$, let $\Mc \colon X^{0,b_0}([0,T]) \rightarrow X^{0,b_0}([0,T])$, and assume that both $f$ and $\Mc$ are probabilistically independent of $z$. Then, it holds for all $K,L \in \dyadic$ satisfying $K\sim L$ and all $p\geq 1$ that 
\begin{equation}
\begin{aligned}
&\E_z \Big[ \Big\| P_K f \, \Mc P_L z \Big\|_{L_t^q H_x^{-1}([0,T]\times \T_x^2)}^p \Big]^{1/p}  \\
\lesssim& \,  \sqrt{p} T^\theta K^{-1+\delta_1} \| f \|_{L_t^q L_x^2([0,T])} \| \Mc \|_{X^{0,b_0}([0,T])\rightarrow X^{0,b_0}([0,T])}. 
\end{aligned}
\end{equation}
Here, $\E_z$ denotes the expectation taken only with respect to the $\sigma$-Algebra $\sigma_z$. 
\end{lemma}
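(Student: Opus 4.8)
\emph{Proof plan.} The natural starting point is the explicit Fourier representation of the linear object. Since $z = \Duh[\zeta]$, one has for every $t\geq 0$ and $m\in\Z^2\setminus\{0\}$ that
\[
\widehat{z}(t,m) = -\,|m|^{-1}\,\xi_m(t),\qquad \xi_m(t) := \int_0^t \sin\big((t-s)|m|\big)\,\mathrm dZ_s(m),
\]
so that $P_L z = -\sum_{|m|\sim L}\rho_L(m)\,|m|^{-1}\,e^{i\langle m,x\rangle}\xi_m$ (the contribution of $m=0$ occurs only when $L=1$, in which case the statement is a finite-dimensional triviality). The factor $|m|^{-1}\sim L^{-1}\sim K^{-1}$ coming from the $|\nabla|^{-1}$ in the Duhamel operator is the source of the gain $K^{-1+\delta_1}$. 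Since $f$, $\Mc$ and the family $(\xi_m)_m$ are $\sigma_z$-independent of one another, since the $\xi_m$ are independent across $m$ with vanishing $\E_z$-mean, and since each $\xi_m$ is linear in $Z(m)$, the quantity $P_K f\,\Mc P_L z$ is, after freezing $(f,\Mc)$, a polynomial of degree one in the Gaussians $Z(m)$. By Gaussian hypercontractivity (Lemma~\ref{prelim:lem-hypercontractivity}) applied pointwise in $t$ together with Minkowski's inequality, it therefore suffices to establish the bound for $p=2$, and it suffices to treat the $L^2_t$-endpoint of the range $2\leq q<\infty$ (the general $q$ following by the same chaos estimate).

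For $p=q=2$ the mean-zero and independence properties make the cross terms in $m$ vanish, so
\[
\E_z\big\|P_K f\,\Mc P_L z\big\|_{L^2_t H^{-1}_x([0,T])}^2
= \sum_{|m|\sim L}\rho_L(m)^2\,|m|^{-2}\,\E_z\big\|P_K f\cdot\Mc\!\big(e^{i\langle m,\cdot\rangle}\xi_m\big)\big\|_{L^2_t H^{-1}_x([0,T])}^2 .
\]
The summand is then handled by peeling off the operator from the multiplication. First, $\Mc(e^{i\langle m,\cdot\rangle}\xi_m)\in X^{0,b_0}([0,T])$ with norm at most $\|\Mc\|_{X^{0,b_0}\to X^{0,b_0}}\,\|e^{i\langle m,\cdot\rangle}\xi_m\|_{X^{0,b_0}([0,T])}$, and $X^{0,b_0}([0,T])\hookrightarrow\Cs^0_t L^2_x$ by Lemma~\ref{prelim:lem-Xnub}; the random input norm is controlled by Lemma~\ref{prelim:lem-Xnub-ito}, which gives $\E_z\big[\sup_{|m|\sim L}\|e^{i\langle m,\cdot\rangle}\xi_m\|_{X^{0,b_+}([0,T])}^p\big]^{1/p}\lesssim\sqrt p\,T^\theta L^{4(b_+-1/2)}$ after absorbing the logarithmic loss of Lemma~\ref{prelim:lem-maxima-random} (here one uses $b_0<b_+$). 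Second, for any $h$ with spatial frequencies in a shell of size $\sim K$ one has the bilinear bound $\|h\,g\|_{H^{-1}_x(\T^2)}\lesssim(\log K)^{1/2}\|h\|_{L^2_x}\|g\|_{L^2_x}$, obtained from Cauchy--Schwarz together with the elementary two-dimensional counting bound $\sup_{n\in\Z^2}\sum_{|k|\sim K}\langle k+n\rangle^{-2}\lesssim\log K$ (a soft form of Lemma~\ref{prelim:lem-basic-counting}). One then organizes the estimate so that the $\langle n\rangle^{-1}$ weight in $H^{-1}_x$ is kept attached to the Fourier modes $m$ of $z$ rather than discarded: this is what ensures that the $|m|^{-2}$-weighted sum over the $\sim L^2$ lattice points $m$ retains the factor $L^{-2}$ (rather than producing only $L^0$), and combined with Lemma~\ref{prelim:lem-Xnub-to-ell2}, which transfers the $X^{0,b_0}\to X^{0,b_0}$ bound on $\Mc$ to the mode-indexed $\ell^2$-operator, yields the claim; the parameter hierarchy $4(b_+-1/2)\ll\delta_1$ absorbs the remaining $X^{0,b_+}$-loss and the logarithms into $K^{\delta_1}$.

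\emph{Main obstacle.} The delicate point is that $\Mc$ is a completely unstructured bounded operator — in particular it need not be even approximately diagonal in frequency — so one may neither commute $\Mc$ past the multiplication by $P_K f$ nor treat $\Mc P_L z$ as a frequency-localized function. The gain $K^{-1}$ must be produced purely from the $|\nabla|^{-1}$ in the Duhamel formula for $z$, and it survives the summation over the $\sim L^2$ modes of $z$ only because the $H^{-1}_x$-output weight is paired with the \emph{exact} orthogonality of these modes (i.e.\ with the diagonalization of the $\E_z$-variance) rather than with crude absolute-value bounds; getting this bookkeeping correct, uniformly in $\Mc$, $f$, and $q$, is the technical heart of the argument. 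The remaining issue — that the deterministic, bounded, $\sigma_z$-independent operator $\Mc$ may be interchanged with $\E_z$ and with the It\^o integration defining the $\xi_m$ — is routine and is handled by a stochastic Fubini argument on finite-dimensional truncations of $z$.
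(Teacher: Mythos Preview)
Your proposal contains a genuine gap at precisely the point you flag as the ``main obstacle.'' After the variance expansion
\[
\E_z\big\|P_K f\cdot\Mc P_L z\big\|_{L^2_t H^{-1}_x}^2
= \sum_{|m|\sim L}\rho_L(m)^2\,|m|^{-2}\,\E_z\big\|P_K f\cdot\Mc\!\big(e^{i\langle m,\cdot\rangle}\xi_m\big)\big\|_{L^2_t H^{-1}_x}^2,
\]
you bound each summand via your bilinear inequality $\|h\,g\|_{H^{-1}_x}\lesssim(\log K)^{1/2}\|h\|_{L^2_x}\|g\|_{L^2_x}$ together with $\|\Mc(e^{imx}\xi_m)\|_{L^2_x}\lesssim\|\Mc\|\,\|e^{imx}\xi_m\|_{X^{0,b_0}}$. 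This yields a per-mode bound independent of $m$, so the $m$-sum returns $\sum_{|m|\sim L}|m|^{-2}\sim 1$, not $L^{-2}$: the argument produces $K^{0}$ rather than $K^{-2+2\delta_1}$. Your proposed remedy---``keep the $\langle n\rangle^{-1}$ weight in $H^{-1}_x$ attached to the Fourier modes $m$ of $z$''---cannot work as stated, because $\Mc$ is an arbitrary bounded operator on $X^{0,b_0}$ and sends $e^{i\langle m,x\rangle}\xi_m$ to a function with \emph{no} frequency localization; the output mode $n$ bears no relation to $m$. Your own bilinear bound in fact \emph{discards} the $\langle n\rangle^{-1}$ weight (trading it for $(\log K)^{1/2}$), which is the opposite of what you say is required. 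Invoking Lemma~\ref{prelim:lem-Xnub-to-ell2} at the end does not repair this: that lemma controls $\|\Mc^{(\varphi)}\|_{\ell^2\to X^{0,b_0}}$, but your estimate never produces that operator norm---it produces a sum of individual column norms, i.e.\ a Hilbert--Schmidt quantity.

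The paper's proof resolves exactly this issue by a different organization. A Bernoulli randomization $\xi_m\mapsto\sigma_m\xi_m$ (which leaves $P_Lz$ equal in law) followed by Khintchine converts the problem into a \emph{square function} $\big(\sum_l|P_N(P_Kf\cdot\Mc_l^{(\varphi)})|^2\big)^{1/2}$ in $L^q_tL^2_x$. Plancherel then writes this, for each output frequency $n$, as $\sum_l|\langle e^{i\langle n,x\rangle}\overline{P_Kf},\,\Mc_l^{(\varphi)}\rangle|^2$, and \emph{duality} bounds this $l$-sum by $\|f\|_{L^2_x}^2\,\|\Mc^{(\varphi)}\|_{\ell^2\to L^2_x}^2$---an \emph{operator norm}, not a Hilbert--Schmidt sum. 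Only now does Lemma~\ref{prelim:lem-Xnub-to-ell2} enter: it bounds $\|\Mc^{(\varphi)}\|_{\ell^2\to X^{0,b_0}}$ by $\|\Mc\|\cdot\sup_{|l|\sim L}\|e^{i\langle l,x\rangle}\varphi_l\|_{X^{0,b_0}}$, and because $\varphi_l$ already carries the factor $|l|^{-1}$, Lemma~\ref{prelim:lem-Xnub-ito} gives the supremum size $\sim L^{-1+\delta_1/2}$. The crucial point is that the $K^{-1}$ gain comes from a \emph{supremum} over $l$ (via the operator norm), not from a weighted \emph{sum}; your decomposition into independent $m$-contributions forfeits precisely this structure. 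A secondary issue: hypercontractivity reduces $p$ to $q$, not $q$ to $2$; the paper treats general $q$ directly via Minkowski and Khintchine rather than reducing to the $L^2_t$-endpoint.
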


\begin{remark}
Due to the frequency-scales, time-scales, moments, and $X^{0,b}$-spaces, the statement of Lemma \ref{product:lem-product} is much more complicated than the main idea behind its proof. To illustrate the main idea, we state a toy-version of the same estimate. To this end, let $f\in \ell^2(\Z^2)$ be deterministic, let $\Mc \colon \ell^2(\Z^2) \rightarrow \ell^2(\Z^2)$ be deterministic, and let $(g_m)_{m\in \Z^2}$ be a sequence of independent, standard complex-valued Gaussian random variables. Then, it is easy to show that 
\begin{equation}
\sup_{n\in \Z^2} \E \Big[ \Big| \sum_{\substack{k,l\in \Z^2 \colon \\ k+l =n}} f_k \big( \Mc g \big)_l \Big|^2 \Big] \lesssim \big\| \Mc^\ast f \big\|_{\ell^2}^2 \lesssim \big\| \Mc^\ast \big\|_{\ell^2 \rightarrow \ell^2}^2 \big\| f \big\|_{\ell^2}^2 = \big\| \Mc \big\|_{\ell^2 \rightarrow \ell^2}^2 \big\| f \big\|_{\ell^2}^2.
\end{equation}
\end{remark}

\begin{proof}[Proof of Lemma \ref{product:lem-product}:]
In the following, all space-time norms are implicitly restricted to $[0,T]$. Using Gaussian hypercontractivity (Lemma \ref{prelim:lem-hypercontractivity}), it suffices to treat the case $p=q$. Furthermore, since the conclusion is trivial for $K,L\lesssim 1$, we can assume that $K,L \gg 1$.  \\ 

We now make use of a small trick which allows us to easily use Lemma \ref{prelim:lem-Xnub-to-ell2}. To this end, let $(Z_l)_{l\in \Z^2}$ be the Gaussian process from Subsection \ref{section:probability}. Furthermore, let $(\sigma_l)_{l\in \Z^2}$ be  a sequence of independent Bernoulli variables. Due to the rotation invariance of complex-valued Brownian motions, the two sequences $(\sigma_l)_{l\in \Z^2}$ and $(\sigma_l Z_l)_{l\in \Z^2}$ are still independent. We now note that the stochastic object $P_L z$ can be written as 
\begin{equation}\label{product:eq-product-p1}
\begin{aligned}
P_L z 
&= \sum_{\substack{l\in \Z^2 \colon \\ |l|\sim L}} 
e^{i\langle l, x \rangle} \int_0^t \mathrm{d}Z_s(l) \frac{\sin\big((t-s)|l|\big)}{|l|} \\ 
&= \sum_{\substack{l \in \Z^2 \colon \\ |l| \sim L}} 
\sigma_l e^{i\langle l,x \rangle} \int_0^t \mathrm{d}\big( \sigma_l Z_s(l)\big) \frac{\sin\big((t-s)|l|\big)}{|l|} \\
&=:  \sum_{\substack{l \in \Z^2 \colon \\ |l| \sim L}} \sigma_l e^{i\langle l, x \rangle} \varphi_l(t). 
\end{aligned}
\end{equation}
Similar as discussed above, we note that $(\sigma_l)_{l\in \Z^2}$ and $(\varphi_l)_{l\in \Z^2}$ are independent. In the following, we denote the corresponding expectations by $\E_\sigma$ and $\E_\varphi$ and the corresponding $L^p$-spaces by $L_\sigma^p$ and $L_\varphi^p$, respectively. We now define $\Mc^{(\varphi)}$ similar as in Lemma \ref{prelim:lem-Xnub-to-ell2}, i.e., for all $l \in \Z^2$ and $v\in \ell^2(\Z^2)$, we define 
\begin{equation*}
\Mc^{(\varphi)}_l := \Mc \big( e^{i \langle l, x\rangle} \varphi_l(t) \big) \quad \text{and} \qquad 
\Mc^{(\varphi)} v := \sum_{\substack{l \in \Z^2 \colon \\ |l| \sim L}} \Mc^{(\varphi)}_l v_l. 
\end{equation*}

After these preparations, we now proof the product estimate. To this end, let $N\in \dyadic$. Using Minkowski's integral inequality, \eqref{product:eq-product-p1}, and Khintchine's inequality, it holds that 
\begin{equation}\label{product:eq-product-p2} 
\begin{aligned}
\Big\| P_N \Big( P_K f(t) \Mc P_L z(t) \Big) \Big \|_{L_\varphi^q L_\sigma^q L_t^q L_x^2} 
\lesssim& \,  
\Big\| P_N \Big( P_K f(t) \Mc P_L z(t) \Big) \Big \|_{L_\varphi^q  L_t^q L_x^2 L_\sigma^q}  \\
\lesssim_q &\, \bigg\| \Big( \sum_{\substack{l \in \Z^2 \colon \\ |l|\sim L}} \big| P_N \big(P_K f(t) \Mc_l^{(\varphi)} \big) \big|^2 \Big)^{1/2} \bigg\|_{L_\varphi^q L_t^q L_x^2}. 
\end{aligned}
\end{equation}
Using Plancherell's theorem, it holds that 
\begin{equation}\label{product:eq-product-p3}
\begin{aligned}
 \bigg\| \Big( \sum_{\substack{l \in \Z^2 \colon \\ |l|\sim L}} \big| P_N \big(P_K f(t) \Mc_l^{(\varphi)} \big) \big|^2 \Big)^{1/2} \bigg\|_{L_x^2}^2
&=\sum_{\substack{l\in \Z^2 \colon \\ |l| \sim L }} \Big\|  P_N \Big( P_K f(t) \Mc^{(\varphi)}_l(t) \Big)\Big\|_{L_x^2}^2 \\
&\lesssim \sum_{\substack{n \in \Z^2 \colon \\ |n|\sim N}} \sum_{\substack{l\in \Z^2 \colon \\ |l| \sim L }}
\Big| \big\langle e^{i\langle n ,x \rangle} \overline{P_K f}(t), \Mc^{(\varphi)}_l(t) \big\rangle_{L^2_x} \Big|^2. 
\end{aligned}
\end{equation}
Using duality and the embedding $X^{0,b_0}\hookrightarrow L_t^\infty L_x^2$, it holds for all $n\in \Z^2$ that 
\begin{equation}\label{product:eq-product-p4}
\begin{aligned}
&\sum_{\substack{l\in \Z^2 \colon \\ |l| \sim L }}
\Big| \big\langle e^{i\langle n ,x \rangle} \overline{P_K f}(t), \Mc^{(\varphi)}_l(t) \big\rangle_{L^2_x} \Big|^2 \\
 \leq& \,  \big\| e^{i \langle n,x \rangle} \overline{P_K f}(t) \big\|_{L_x^2}^2 \big\| \Mc^{(\varphi)}(t) \big\|_{\ell^2\rightarrow L_x^2}^2 
 \leq \big\| f(t) \big\|_{L_x^2}^2 \big\| \Mc^{(\varphi)}\big\|_{\ell^2 \rightarrow L_t^\infty L_x^2}^2 
 \lesssim \big\| f(t) \big\|_{L_x^2}^2 \big\| \Mc^{(\varphi)}\big\|_{\ell^2 \rightarrow X^{0,b_0}}^2. 
\end{aligned}
\end{equation}
By combining \eqref{product:eq-product-p2}, \eqref{product:eq-product-p3}, and \eqref{product:eq-product-p4}, we obtain that 
\begin{equation}\label{product:eq-product-p5}
\E_\varphi \E_\sigma \Big[ \Big\| P_N \Big( P_K f(t) \, \Mc P_L z (t) \Big) \Big\|_{L_t^q L_x^2}^q \Big]^{1/q} \\
\lesssim N \big\| f \big\|_{L_t^q L_x^2} 
\E_\varphi \Big[ \big\| \Mc^{(\varphi)}\big\|_{\ell^2 \rightarrow X^{0,b_0}}^q \Big]^{1/q}. 
\end{equation}
By using Lemma \ref{prelim:lem-Xnub-to-ell2}, Lemma \ref{prelim:lem-Xnub-ito}, and the definition of $\varphi_l$ from \eqref{product:eq-product-p1}, it follows that 
\begin{align*}
\E_\varphi \Big[ \Big\| \Mc^{(\varphi)} \Big\|_{\ell^2 \rightarrow X^{0,b_0}}^q \Big]^{1/q} 
\lesssim& \, \big\| \Mc \big\|_{X^{0,b_0}\rightarrow X^{0,b_0}} \E_\varphi \Big[ \sup_{\substack{l \in \Z^2 \colon \\ |l|\sim L}} \big\| e^{i\langle l,x \rangle} \varphi_l(t) \big\|_{X^{0,b_0}}^q \Big]^{1/q} \\
\lesssim& \,  L^{-1+\delta_1/2} T^\theta \big\| \Mc \big\|_{X^{0,b_0}\rightarrow X^{0,b_0}} . 
\end{align*}
After inserting this back into \eqref{product:eq-product-p5}, we obtain the desired estimate. 
\end{proof} 

Equipped with Lemma \ref{product:lem-product}, we can now prove the main proposition of this section. 
\begin{proof}[Proof of Proposition \ref{product:prop-main}:]
Let $\lambda\geq 1$ be arbitrary but fixed. In the following argument, we refer to events satisfying the probability estimate \eqref{product:eq-main-probability} simply as events with sufficiently high probability. We also note that, after possibly adjusting the choice of $0<c\ll 1$, the intersection of finitely many events with sufficiently high probability still has sufficiently high probability.
Furthermore, we let $T\geq 1$ be arbitrary but fixed and choose all of our events as not depending on $T$.  We also implicitly restrict all space-time norms to $[0,T]\times \T^2$.   \\ 

We now split the proof into three substeps. In the first step, we prove a general frequency-localized estimate. In the second and third step, we then use the general frequency-localized estimate to show the cubic and high$\times$high-estimate, respectively. \\ 

\emph{Step 1: A general frequency-localized estimate.} 
Let $q\geq 2$ and let $f\colon \R \times \T^2 \rightarrow \C$ be measurable w.r.t. $\sigma_A$. Then, there exists an event $E_{\lambda,f}$ with sufficiently high probability such that, on this event, the estimate  
\begin{equation}\label{product:eq-general-localized}
 \Big\|  P_K f P_L \chi_{\leq N}  \Big\|_{L_t^q H_x^{-\delta_2}([0,T]\times \T_x^2)} 
 \lesssim \max(K,L)^{-\delta_2/10} \big\|  P_K f \big\|_{L_t^\infty \Cs_x^{-\delta_1}} \exp\Big( C (T\lambda)^C \Big) 
\end{equation}
is satisfied for all $K,L\in \dyadic$. In order to prove \eqref{product:eq-general-localized}, we first estimate
\begin{equation*}
\Big\|  P_K f P_L \chi_{\leq N}  \Big\|_{L_t^q H_x^{-\delta_2}([0,T]\times \T_x^2)} 
 \leq \sum_{M\in \dyadic} M^{-\delta_2}  \Big\| P_M \Big( P_K f P_L \chi_{\leq N}  \Big) \Big\|_{L_t^q L_x^2([0,T]\times \T_x^2)} 
\end{equation*}
We now use different estimates depending on the relative sizes of $K$, $L$, and $M$.  \\ 

\emph{Step 1.(a): Case $M\geq \max(K,L)^{1/5}$.} 
Using Lemma \ref{prelim:lem-Xnub} and Lemma \ref{operator:lem-resolvent}, it holds on an event with sufficiently high probability that 
\begin{align*}
 M^{-\delta_2}  \Big\| P_M \Big( P_K f P_L \chi_{\leq N} \Big) \Big\|_{L_t^q L_x^2([0,T]\times \T_x^2)}  
&\lesssim T  M^{-\delta_2}  \Big\| P_K f P_L \chi_{\leq N} \Big\|_{L_t^\infty L_x^2([0,T]\times \T_x^2)}      \\
&\lesssim T  M^{-\delta_2}  \big\| P_K f \big\|_{L_t^\infty L_x^\infty} \big\| P_L \chi_{\leq N} \big\|_{L_t^\infty L_x^2} \\
&\lesssim T  M^{-\delta_2}  \big\| P_K f \big\|_{L_t^\infty L_x^\infty} 
\big\| P_L \chi_{\leq N} \big\|_{X^{0,b_0}} \\
&\lesssim  T  M^{-\delta_2}  K^{\delta_1} L^{\delta_1} \big\| P_K f \big\|_{L_t^\infty \Cs_x^{\delta_1}} \exp\Big( C (T\lambda)^C \Big). 
\end{align*}
Since $M\geq \max(K,L)$ and $\delta_1\ll \delta_2$, this yields an acceptable contribution.\\ 

\emph{Step 1.(b): Case $M<\max(K,L)^{1/5}$.} In this case, it automatically holds that $K\sim L$.  
 We recall that $\chi_{\leq N}=(1+\Lin[ll][\leq N])^{-1} z$ and decompose
\begin{align}
P_M \Big( P_K f P_L \chi_{\leq N} \Big) 
&= P_M \Big( P_K f \, (1+\Lin[ll][\leq N])^{-1} P_L z \Big) \label{product:eq-general-p1} \\
&+P_M \Big( P_K f \, \Big[ P_L, \big( 1+ \Lin[ll][\leq N]\big)^{-1} \Big] \, z \Big) \label{product:eq-general-p2} 
\end{align}
We now estimate \eqref{product:eq-general-p1} and \eqref{product:eq-general-p2} separately. \\

\emph{Estimate of \eqref{product:eq-general-p1}:}
Using Lemma \ref{product:lem-product}, we obtain on an event with sufficiently high probability that
\begin{align*}
\Big\| P_M \Big( P_K f \, (1+\Lin[ll][\leq N])^{-1} P_L z \Big) \Big\|_{L_t^q H_x^{-\delta_2}}
& \lesssim M^{1-\delta_2} \, \Big\|  P_K f  \,  (1+\Lin[ll][\leq N])^{-1} P_L z  \Big\|_{L_t^q H_x^{-1}} \\
& \lesssim M^{1-\delta_2} L^{-1+\delta_1} \Big\|  P_K f \Big\|_{L_t^q L_x^2} \Big\| (1+\Lin[ll][\leq N])^{-1} \Big\|_{X^{0,b_0}\rightarrow X^{0,b_0}}. 
\end{align*}
Using Lemma \ref{operator:lem-resolvent}, it follows that 
\begin{align*}
&M^{1-\delta_2} L^{-1+\delta_1} \Big\|  P_K f \Big\|_{L_t^q L_x^2} \Big\| (1+\Lin[ll][\leq N])^{-1} \Big\|_{X^{0,b_0}\rightarrow X^{0,b_0}}\\  
\lesssim& \,  M^{1-\delta_2} K^{\delta_1} L^{-1+\delta_1} \Big\|  P_K f \Big\|_{L_t^\infty \Cs_x^{-\delta_1}} \exp\big( C (T\lambda)^C \big). 
\end{align*}
Since $K\sim L$ and $M < \max(K,L)^{1/5}$, this clearly yields an acceptable contribution. \\

\emph{Estimate of \eqref{product:eq-general-p2}:} 
Using  Lemma \ref{prelim:lem-Xnub}, Corollary \ref{vector:cor-regularity-z}, and Lemma \ref{operator:lem-commutator}, we obtain on an event with sufficiently high probability that
\begin{align*}
\Big\| P_M \Big( f \, \Big[ P_L, \big( 1+ \Lin[ll][\leq N]\big)^{-1} \Big] \, z \Big) \Big\|_{L_t^q H_x^{-\delta_2}} 
\lesssim&\,  
\Big\| P_K f \Big\|_{L_t^\infty \Cs_x^{\delta_2}} \Big\| \Big[ P_L, \big( 1+ \Lin[ll][\leq N]\big)^{-1} \Big] \, z \Big\|_{L_t^\infty H_x^{-\delta_1}} \\
\lesssim&\,  K^{2\delta_2} L^{-1/4+\delta_1} \exp\big( C (T\lambda)^C\big) \Big\| P_K f \Big\|_{L_t^\infty \Cs_x^{-\delta_2}} 
\big\| z \big\|_{X^{-\delta_1,b_0}} \\
\lesssim&\, K^{2\delta_2} L^{-1/4+\delta_1} \exp\big( C (T\lambda)^C\big). 
\end{align*}
Since $K\sim L \gtrsim M$ and $\delta_2 \ll 1$, this clearly yields an acceptable contribution. 
This completes the proof of the general frequency-localized estimate \eqref{product:eq-general-localized}.\\

\emph{Step 2: Proof of the cubic estimate.} We only prove \eqref{product:eq-cubic-1}, since \eqref{product:eq-cubic-2} follows from minor modifications. Using \eqref{product:eq-general-localized}, we obtain on an event with sufficiently high probability that 
\begin{equation*}
\begin{aligned}
&\Big\|  \Big( A_{\leq N,\alpha} A^\alpha_{\leq N} - \scrm^{\hspace{-0.4ex}2}_{\leq N} \Big) \,  \chi_{\leq N} \Big\|_{L_t^2 H_x^{-\delta_2}} \\
\lesssim& \,  \Big\|  A_{\leq N,\alpha} A^\alpha_{\leq N} - \scrm^{\hspace{-0.4ex}2}_{\leq N} \Big\|_{L_t^\infty \Cs_x^{-\delta_1}} \exp\Big( C (T\lambda)^C \Big).  
\end{aligned}
\end{equation*}
Together with Lemma \ref{vector:lem-quadratic}, this yields the desired estimate. \\ 

\emph{Step 3: Proof of the high$\times$high-estimate.} 
We only prove \eqref{product:eq-hh-1}, since \eqref{product:eq-hh-2} follows from minor modifications. Using \eqref{product:eq-general-localized} and Corollary \ref{vector:cor-regularity}, we obtain on an event with sufficiently high probability that, for all $K,M,N\in \dyadic$, 
\begin{equation*}
\big\| P_M \Lin[sim][K] \chi_{\leq N} \big\|_{X^{1/4-\delta_2,b_+}}
\lesssim T \max_{\alpha=0,1,2} \big\| P_M \big( A_K^\alpha \parasim \chi_{\leq N} \big) \big\|_{L_t^2 H_x^{1/4-\delta_2}} \lesssim M^{1/4}K^{-\delta_2/10} \exp\big( C (T\lambda)^C \big). 
\end{equation*}
Using Proposition \ref{operator:prop-main}.\ref{operator:item-high-high} and Lemma \ref{operator:lem-resolvent}, we also obtain on an event with sufficiently high probability that, for all $K,M,N\in \dyadic$, 
\begin{align*}
\big\| P_M \Lin[sim][K] \chi_{\leq N} \big\|_{X^{1/4-\delta_2,b_+}} 
&\lesssim M^{-\delta_2} \big\| P_M \Lin[sim][K] \chi_{\leq N} \big\|_{X^{1/4,b_+}} \\ 
&\lesssim T^\theta M^{-\delta_2} 
\big\|\widetilde{P}_K \chi_{\leq N} \big\|_{X^{\delta_1,b_0}}  \\
&\lesssim M^{-\delta_2} K^{2\delta_1} \exp\big( C (T\lambda)^C\big). 
\end{align*}
Since 
\begin{equation*}
\min\Big( M^{1/4}K^{-\delta_2/10} , M^{-\delta_2} K^{2\delta_1} \Big) 
\lesssim \Big( M^{1/4}K^{-\delta_2/10}  \Big)^{\delta_2} \Big( M^{-\delta_2} K^{2\delta_1} \Big)^{1-\delta_2}  \lesssim K^{-2\kappa}, 
\end{equation*}
the combined estimate yields \eqref{product:eq-hh-1}. 
\end{proof} 

 We also obtain the following $\Cs_t^0$-estimate, which essentially corresponds to the endpoint $q=\infty$ in \eqref{product:eq-general-localized} from the proof of Proposition \ref{product:prop-main}. The $\Cs_t^0$-estimate will be needed in the proof of Proposition \ref{proof:prop-phi} below. 

\begin{corollary}[The product of $A^0$ and $\chi$]\label{product:cor-Ct}
 Let $C=C(b_-,b_0,b_+,\delta_0,\delta_1,\delta_2)$ be sufficiently large and $0<c\ll 1$ be sufficiently small. Then, for all $\lambda \geq 1$, there exists an event $E_{\lambda}$ which satisfies 
\begin{equation}\label{product:eq-Ct-probability}
\mathbb{P}\big( E_{\lambda} \big) \geq 1 - c \exp(-\lambda)
\end{equation}
and such that, on this event, the following estimates are satisfied: For all $M,N \in \dyadic$ and $T\geq 1$, it holds that 
\begin{align}
\big\| A^0_{\leq N}(t) \chi_{\leq N}(t) \big\|_{\Cs_t^0 H_x^{-\delta_2}([0,T]\times \T^2)} &\leq \exp\big( C(T\lambda)^C \big) \label{product:eq-Ct-1}, \\ 
\big\| A^0_{\leq M}(t) \chi_{\leq M}(t) - A^0_{\leq N}(t) \chi_{\leq N}(t) \big\|_{\Cs_t^0 H_x^{-\delta_2}([0,T]\times \T^2)} &\leq \exp\big( C(T\lambda)^C \big) \min(M,N)^{-\kappa}. \label{product:eq-Ct-2} 
\end{align}
\end{corollary}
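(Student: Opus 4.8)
The plan is to mimic the proof of Proposition \ref{product:prop-main}, but to replace the $L_t^q$-norms by $\Cs_t^0$-norms throughout, using only the temporal component $A^0$ in place of the full quadratic/bilinear expressions. First I would set up the same frequency decomposition: writing $A^0_{\leq N}(t)\chi_{\leq N}(t) = \sum_{K,L}P_K A^0 \, P_L\chi_{\leq N}$, and for each triple $(K,L,M)$ of dyadic scales bounding the output frequency by $M$, estimating $M^{-\delta_2}\|P_M(P_K A^0 \, P_L\chi_{\leq N})\|_{\Cs_t^0 L_x^2}$, then summing in $M$. The only genuinely new ingredient compared to Proposition \ref{product:prop-main} is that we need a $\Cs_t^0$-version of the product estimate (Lemma \ref{product:lem-product}); everything else is a recapitulation of Step~1 of that proof.

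For the case $M\geq\max(K,L)^{1/5}$, I would argue exactly as in Step~1.(a): use the energy/continuity estimates from Lemma \ref{prelim:lem-Xnub} to pass from $\Cs_t^0 L_x^2$ (or $H_x^{-\delta_2}$) to $L_t^\infty L_x^\infty$ on the $A^0$ factor — here invoking Corollary \ref{vector:cor-regularity} for $\|A^0_K\|_{\Cs_t^0\Cs_x^{-\epsilon}}$ — and the resolvent bound \eqref{operator:eq-chi-1} from Lemma \ref{operator:lem-resolvent} together with the continuity embedding $X^{-\delta_1,b_0}\hookrightarrow \Cs_t^0 H_x^{-\delta_1}$ for the $\chi_{\leq N}$ factor; the gain $M^{-\delta_2}$ beats the polynomial losses $K^{\delta_1}L^{\delta_1}$ since $\delta_1\ll\delta_2$. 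For the case $M<\max(K,L)^{1/5}$ (forcing $K\sim L$), I would again split $\chi_{\leq N}=(1+\Lin[ll][\leq N])^{-1}z$ into the term $P_M(P_K A^0\,(1+\Lin[ll][\leq N])^{-1}P_Lz)$ and the commutator term $P_M(P_K A^0\,[P_L,(1+\Lin[ll][\leq N])^{-1}]z)$. The commutator term is handled verbatim as in Step~1.(b) using Lemma \ref{operator:lem-commutator}, the $X^{\nu,b}$-estimate for $z$ (Corollary \ref{vector:cor-regularity-z}), and the continuity embedding, since the $\Cs_t^0 H_x^{-\delta_2}$-norm on a fixed time interval is dominated by an $X^{\cdot,b_0}$-norm. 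The main term requires the $\Cs_t^0$-analogue of Lemma \ref{product:lem-product}, i.e. a bound of the shape
\begin{equation*}
\E_z\Big[\big\| P_K f\,\Mc P_L z\big\|_{\Cs_t^0 H_x^{-1}([0,T]\times\T^2)}^p\Big]^{1/p}\lesssim \sqrt p\, T^\theta K^{-1+\delta_1}\|f\|_{\Cs_t^0 L_x^2}\|\Mc\|_{X^{0,b_0}\to X^{0,b_0}},
\end{equation*}
valid for $f$ and $\Mc$ independent of $z$.

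I expect the verification of this $\Cs_t^0$-product estimate to be the main obstacle, although it is more a matter of bookkeeping than of a new idea. The proof of Lemma \ref{product:lem-product} crucially uses an $L_t^q$-norm (via Gaussian hypercontractivity in the time variable and Minkowski's inequality to interchange the $L_t^q$ and $L_\sigma^q$ norms). With a $\Cs_t^0$-norm one cannot directly commute the randomization-Gaussian expectation with the time-supremum. The fix is to first use a Sobolev-type embedding in time: since the Duhamel integrals in question are solutions of wave-type equations, one has $\|\cdot\|_{\Cs_t^0 H_x^{-\delta_2}} \lesssim \|\cdot\|_{X^{-\delta_2/2,b_0}}$ (or one uses $\|\cdot\|_{\Cs_t^0}\lesssim \|\cdot\|_{H_t^{b}}$ with $b>1/2$ after localizing in time, paying a factor $T^\theta$), and the $X^{\cdot,b_0}$-norm is an $L_t^2$-type norm in which the argument of Lemma \ref{product:lem-product} runs without change — indeed Lemma \ref{prelim:lem-Xnub-to-ell2} and Lemma \ref{prelim:lem-Xnub-ito} are already stated in the $X^{\nu,b}$-language. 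Concretely, I would prove the displayed estimate above with $\Cs_t^0 H_x^{-1}$ replaced by $X^{-1+\kappa,b_0}$ on the left-hand side and $\Cs_t^0 L_x^2$ replaced by an appropriate $X^{0,b_0}$-norm controlling $f$, then invoke continuity of $X^{\cdot,b_0}$ into $\Cs_t^0 H_x^{\cdot}$; since $A^0$ is itself a Duhamel integral of white noise, $\|A^0_K\|_{X^{-\delta_1,b_0}}$ is controlled by Corollary \ref{vector:cor-regularity-z}-type arguments (or directly Lemma \ref{prelim:lem-Xnub-ito}), which closes the loop. Finally, the difference estimate \eqref{product:eq-Ct-2} follows from the same argument applied to $A^0_{\leq M}\chi_{\leq M}-A^0_{\leq N}\chi_{\leq N}$, split as $(A^0_{\leq M}-A^0_{\leq N})\chi_{\leq M} + A^0_{\leq N}(\chi_{\leq M}-\chi_{\leq N})$, using the $\min(M,N)^{-\kappa}$-gains in Corollary \ref{vector:cor-regularity} (for the frequency-truncation difference of $A^0$) and in \eqref{operator:eq-chi-2} (for $\chi_{\leq M}-\chi_{\leq N}$).
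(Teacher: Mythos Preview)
The paper takes a simpler and genuinely different route. Rather than proving a new $\Cs_t^0$-version of the product estimate, it simply reuses the general frequency-localized bound \eqref{product:eq-general-localized} already established in Step~1 of the proof of Proposition~\ref{product:prop-main}, which controls $\|A^0_K \, P_L \chi_{\leq N}\|_{L_t^q H_x^{-\delta_2}}$ for arbitrarily large finite $q$ with decay $\max(K,L)^{-\delta_2/10}$. It then pairs this with a crude $\Cs_t^\gamma H_x^{-\delta_2}$-bound (taking $\gamma = \tfrac12(b_0-\tfrac12)$), obtained directly from the H\"older-in-time embedding of Lemma~\ref{prelim:lem-Xnub} for $\chi_{\leq N}\in X^{-\delta_1,b_0}$ and the regularity of $A^0_K$ from Corollary~\ref{vector:cor-regularity}; this crude bound only grows like $K^{2\delta_2}$. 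The elementary interpolation inequality $\|g\|_{\Cs_t^0}\lesssim \|g\|_{L_t^q}^{q\gamma/(q\gamma+1)}\|g\|_{\Cs_t^\gamma}^{1/(q\gamma+1)}$ then yields the $\Cs_t^0$-bound, and for $q=q(\delta_2,b_0)$ large enough the exponent $1/(q\gamma+1)$ on the crude factor is so small that the net power of $K$ is negative and the dyadic sum closes. No new product estimate is needed.

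Your proposed route has a gap precisely at the step you flag as ``more bookkeeping than a new idea.'' The claim that the proof of Lemma~\ref{product:lem-product} runs without change in an $X^{\cdot,b_0}$ or $H_t^{b_0}$ norm is too optimistic: the decisive step \eqref{product:eq-product-p4} is a duality argument carried out at each \emph{fixed} time $t$, and it is exactly this pointwise-in-time structure that converts the $\ell^2_l$-sum over the random modes into the operator norm $\|\Mc^{(\varphi)}\|_{\ell^2\to L_t^\infty L_x^2}$ (and thence into $\|\Mc\|_{X^{0,b_0}\to X^{0,b_0}}$). Once the time variable is entangled through a weight $\langle|\lambda|-|n|\rangle^{b_0}$ or $\langle\lambda\rangle^{b_0}$, that structure is lost and there is no evident substitute that still captures the random $\ell^2$-orthogonality. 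Relatedly, the product $P_K A^0 \cdot (1+\Lin[ll][\leq N])^{-1} P_L z$ is not itself a Duhamel integral of anything, so appealing to ``the Duhamel integrals in question'' to justify the embedding into $\Cs_t^0 H_x^{-\delta_2}$ is circular: you would first have to place the bare product in $X^{-\delta_2/2,b_0}$, which is precisely the missing bilinear estimate. The paper's interpolation trick sidesteps this entirely by never asking for more than $L_t^q$ from the random product estimate.
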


\begin{proof}
We only prove the first estimate \eqref{product:eq-Ct-1}, since \eqref{product:eq-Ct-2} follows from minor modifications. 
Let $1\leq q=q(\delta_2,b_0)<\infty$ remain to be chosen. Using \eqref{product:eq-general-localized} from the proof of Proposition \ref{product:prop-main} and Corollary \ref{vector:cor-regularity}, we obtain on an event with sufficiently high probability that, for all $K,L,N\in \dyadic$, 
\begin{equation}\label{product:eq-Ct-p1}
\big\| A^0_K(t) P_L \chi_{\leq N}(t) \big\|_{L_t^q H_x^{-\delta_2}} 
\leq \max(K,L)^{-\delta_2/10} \exp\big( C(T\lambda)^C \big). 
\end{equation}

We now set $\gamma:= \frac{1}{2} (b_0-1/2)$. Using the Hölder estimate from Lemma \ref{prelim:lem-Xnub} and Corollary \ref{vector:cor-regularity}, we obtain on an event with sufficiently high probability that, for all $K,L,N\in \dyadic$, 
\begin{equation}\label{product:eq-Ct-p2}
\begin{aligned}
\big\| A^0_K(t) P_L \chi_{\leq N}(t) \big\|_{\Cs_t^\gamma H_x^{-\delta_2}} &\lesssim \big\| A^0_K(t) \big\|_{\Cs_t^\gamma \Cs_x^{\delta_2}}
\big\| \chi_{\leq N} \big\|_{\Cs_t^\gamma H_x^{-\delta_1}} 
\lesssim \big\| A^0_K(t) \big\|_{\Cs_t^\gamma \Cs_x^{\delta_2}}
\big\| \chi_{\leq N} \big\|_{X^{-\delta_1,b_0}} \\
&\leq K^{2\delta_2} \exp\big( C (T\lambda)^C \big). 
\end{aligned}
\end{equation}
As long as $q=q(\delta_2,b_0)$ is sufficiently large, \eqref{product:eq-Ct-p1} and \eqref{product:eq-Ct-p2} imply the desired estimate \eqref{product:eq-Ct-1}. 
\end{proof}

\section{The smooth remainder and proof of the main theorem} \label{section:proof-thm}

The goal of this section is to prove our main theorem (Theorem \ref{intro:thm-main}). To this end, we first prove two different estimates. In Proposition \ref{proof:prop-psi}, we control the smooth remainder $\psi_{\leq N}$ (from Section \ref{section:ansatz}). The corresponding estimate essentially follows from our estimates in Section \ref{section:vector}, Section \ref{section:operator}, and Section \ref{section:product}, and the argument therefore primarily consists of references to earlier estimates. Together with Lemma \ref{operator:lem-resolvent}, Proposition \ref{proof:prop-psi} yields convergence of $\phi_{\leq N}$ in $X^{-\delta,b_0}$. \\

In Proposition \ref{proof:prop-phi}, we obtain the convergence of $\phi_{\leq N}[t]=(\phi_{\leq N}(t),\partial_t \phi_{\leq N}(t))$ in $\Cs_t^0 \mathscr{H}_x^{-\delta}$. Due to the embedding $X^{-\delta,b_0}\hookrightarrow \Cs_t^0 H_x^{-\delta}$, the convergence of $\phi_{\leq N}(t)$ directly follows from our previous results. However,  the convergence of the time-derivatives $\partial_t \phi_{\leq N}(t)$ has to be shown separately. This is primarily because of the contribution from $\partial_t (A_{\leq N}^0 \phi_{\leq N})$, since $\partial_t (A_{\leq N}^0 \phi_{\leq N})$ cannot be placed in $L_t^2 H_x^{-1-\delta}$. 
At the end of this section, we then prove Theorem \ref{intro:thm-main}, which follows almost directly from Corollary \ref{vector:cor-regularity} and Proposition \ref{proof:prop-phi}. 

\begin{proposition}[\protect{The smooth remainder $\psi_{\leq N}$}]\label{proof:prop-psi}
 Let $R\geq 1$ and let $(\phi_0,\phi_1) \in \mathscr{H}^{1/4}_x(\T^2)$ satisfy $\| (\phi_0,\phi_1)\|_{\mathscr{H}_x^{1/4}}\leq R$.
Furthermore, let $C=C(b_-,b_0,b_+,\delta_0,\delta_1,\delta_2)$ be sufficiently large and $0<c\ll 1$ be sufficiently small. Then, for all $\lambda \geq 1$, there exists an event $E_{\lambda}$ which satisfies 
\begin{equation}\label{proof:eq-psi-probability}
\mathbb{P}\big( E_{\lambda} \big) \geq 1 - c \exp(-\lambda)
\end{equation}
and such that, on this event, the following estimates are satisfied: For all $M,N \in \dyadic$ and $T\geq 1$, it holds that 
\begin{align}
\big\| \psi_{\leq N} \big\|_{X^{1/4-\delta_2,b_0}([0,T])} &\leq R \exp\big( C (T\lambda)^C \big), \label{proof:eq-psi-1} \\
\big\| \psi_{\leq M} - \psi_{\leq N} \big\|_{X^{1/4-\delta_2,b_0}([0,T])} &\leq R \exp\big( C (T\lambda)^C \big) \min(M,N)^{-\kappa}. \label{proof:eq-psi-2} 
\end{align}
\end{proposition}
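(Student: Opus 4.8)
The plan is to set up a fixed-point argument for $\psi_{\leq N}$ in $X^{1/4-\delta_2,b_0}$, treating \eqref{ansatz:eq-psi-1}-\eqref{ansatz:eq-psi-2} as a linear equation in $\psi_{\leq N}$ driven by source terms that have already been estimated in Sections \ref{section:vector}, \ref{section:operator}, and \ref{section:product}. Rewriting \eqref{ansatz:eq-psi-1}-\eqref{ansatz:eq-psi-2} using $\phi_{\leq N} = \chi_{\leq N} + \psi_{\leq N}$ and the resolvent $(1+\Lin[ll][\leq N])^{-1}$, one gets
\begin{equation*}
\psi_{\leq N} = \big(1 + \Lin[ll][\leq N]\big)^{-1}\Big( - \big(\Lin[sim][\leq N] + \Lin[gg][\leq N]\big)(\chi_{\leq N}+\psi_{\leq N}) + \Duh\big[(A_{\leq N,\alpha}A_{\leq N}^\alpha - \scrm^{\hspace{-0.4ex}2}_{\leq N})(\chi_{\leq N}+\psi_{\leq N})\big] + \Wp(t)(\phi_0,\phi_1)\Big).
\end{equation*}
On the intersection of the good events from Lemma \ref{operator:lem-resolvent} and Proposition \ref{product:prop-main} (which still has probability $\geq 1 - c\exp(-\lambda)$ after adjusting $c$), I would bound each source term in $X^{1/4-\delta_2,b_+}$ or $X^{1/4-\delta_2-1,b_+-1}$ with an $\exp(C(T\lambda)^C)$ constant: the $\Lin[sim]$-term by \eqref{product:eq-hh-1} applied to $\chi$ and by Proposition \ref{operator:prop-main}\ref{operator:item-high-high} (summed over $K$) applied to $\psi$; the $\Lin[gg]$-term by Proposition \ref{operator:prop-main}\ref{operator:item-high-low}; the cubic term by \eqref{product:eq-cubic-1} applied to $\chi$ and by Lemma \ref{vector:lem-quadratic} together with the product rule in $X^{\nu,b}$ applied to $\psi$; and the linear evolution term by Lemma \ref{prelim:lem-Xnub}(i), which is where the factor $R$ and the loss from $\mathscr{H}^{1/4}$ enter. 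Then apply $(1+\Lin[ll][\leq N])^{-1}$, bounded by \eqref{operator:eq-resolvent-1}.

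The terms containing $\psi_{\leq N}$ itself are handled by a Gronwall/continuity argument over a growing sequence of time intervals, exactly as in the proof of Lemma \ref{operator:lem-resolvent}: split $[0,T]$ into steps of length $\tau = \tau(T,\lambda)$ small enough that the operator norms of $\Lin[sim][\leq N]$, $\Lin[gg][\leq N]$ (each gaining a power of $\tau^{b_+-b_0}$ via the time-localization estimate Lemma \ref{prelim:lem-Xnub}(iv)) become a small contraction factor on each step, then iterate the resulting recursion $D_{j+1} \leq C_1(D_j + \text{source})$ over $\sim T/\tau$ steps to produce the $\exp(C(T\lambda)^C)$ bound. The one genuinely new wrinkle — flagged in the remark after Theorem \ref{intro:thm-main} — is that $(\phi_0,\phi_1)\in\mathscr{H}^{1/4}$ is \emph{not} preserved by the flow of $\phi$, so one cannot simply re-run a local result from a later initial time $t_0>0$. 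This is circumvented by always iterating from time $0$ on intervals $[0, j\tau]$ rather than restarting, so that $\Wp(t)(\phi_0,\phi_1)$ appears only once (with its fixed $\mathscr{H}^{1/4}$ norm $\leq R$) and the growth in $\psi$ on $[0,T]$ is absorbed into the exponential constant; the $1/4$ in $X^{1/4-\delta_2,b_0}$ is consistent with the regularity of $\Wp(t)(\phi_0,\phi_1)\in X^{1/4,b}$ so no regularity is lost relative to the data.

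The difference estimate \eqref{proof:eq-psi-2} follows from the same argument applied to $\psi_{\leq M} - \psi_{\leq N}$, using the second (difference) half of each cited estimate — \eqref{operator:eq-resolvent-2}, \eqref{operator:eq-chi-2}, \eqref{product:eq-cubic-2}, \eqref{product:eq-hh-2}, \eqref{vector:eq-quadratic-2}, and the $K^{-\kappa}$-type decay in Proposition \ref{operator:prop-main} — together with the resolvent identity to convert $(1+\Lin[ll][\leq M])^{-1} - (1+\Lin[ll][\leq N])^{-1}$ into something controlled by $\min(M,N)^{-\kappa}$; the Gronwall iteration is identical and simply carries the extra $\min(M,N)^{-\kappa}$ factor through. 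The main obstacle is organizational rather than conceptual: one must keep careful track of which norm ($X^{1/4-\delta_2,b_0}$, $X^{\cdot,b_+}$, or $L_t^2 H_x^{-\delta_2}$) each source term is naturally estimated in, and verify that the parameter hierarchy \eqref{prelim:eq-parameter-condition} (in particular $\delta_1 \ll \delta_2$ and $\kappa \ll \delta_1$) makes all the exponents line up so that the right-hand sides genuinely sit in $X^{1/4-\delta_2-1,b_+-1}$ before Duhamel and the energy estimate Lemma \ref{prelim:lem-Xnub}(v) are applied; the high$\times$high product in the cubic term, which requires the positive-regularity input $\nu \geq \delta_1$ in Proposition \ref{operator:prop-main}\ref{operator:item-high-high}, is the tightest point and is exactly why $\psi$ is placed at regularity $1/4-\delta_2 > 0$ rather than at $0-$.
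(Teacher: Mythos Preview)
Your overall strategy is right and closely matches the paper's: a Gronwall iteration over growing intervals $[0,j\tau]$ (never restarting at $t_0>0$, exactly as you note), with each source term controlled by the estimates you cite. There is, however, one organizational point where your proposal diverges from the paper in a way that matters quantitatively.

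You propose to first invert $1+\Lin[ll][\leq N]$ via the resolvent bound \eqref{operator:eq-resolvent-1} and \emph{then} run a Gronwall over the remaining $\psi$-dependent terms $\Lin[sim][\leq N]\psi$, $\Lin[gg][\leq N]\psi$, and the cubic term. But the resolvent bound is already $\exp(C(T\lambda)^C)$, so after composing, the $\psi$-dependent terms carry an exponential coefficient, not a polynomial one. To make $\tau^{b_+-b_0}$ times that coefficient a contraction, you would need $\tau\sim\exp(-C'(T\lambda)^{C'})$, hence $\sim T\exp(C'(T\lambda)^{C'})$ iteration steps, and the final bound becomes doubly exponential rather than the stated $\exp(C(T\lambda)^C)$.

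The paper avoids this by \emph{not} applying the resolvent at all in this proof. It keeps $\Lin[ll][\leq N]\psi_{\leq N}$ on the right-hand side as just one more term in the single Gronwall iteration (see \eqref{proof:eq-psi-p1}). Crucially, all of the operators $\Lin[ll][\leq N]$, $\Lin[sim][\leq N]$, $\Lin[gg][\leq N]$ and the cubic Duhamel map $X^{\cdot,b_0}\to X^{\cdot,b_+}$ with norm that is only \emph{polynomial} in $T\lambda$ on the good event (directly from Proposition~\ref{operator:prop-main} and Lemma~\ref{vector:lem-quadratic}). So the per-step recursion is $D_{j+1}\leq C_0 D_j + (\text{source}) + \tau^{b_+-b_0}C_0(T\lambda)^{C_0}D_{j+1}$, the time-step $\tau$ is only polynomially small in $T\lambda$, the number of steps is polynomial, and the final bound is simply exponential as claimed. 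Everything else in your plan (the list of which lemma controls which source term, the handling of the initial data via Lemma~\ref{prelim:lem-Xnub}(i), and the difference estimate \eqref{proof:eq-psi-2} via the ``difference'' halves of the cited lemmas) matches the paper.
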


\begin{proof}
We only prove \eqref{proof:eq-psi-1}, since \eqref{proof:eq-psi-2} follows from minor modifications. As in the proof of Proposition \ref{product:prop-main}, we let $\lambda\geq 1$ be arbitrary but fixed and refer to events satisfying \eqref{proof:eq-psi-probability} as having sufficiently high probability. We also let $T\geq 1$ be arbitrary but fixed and implicitly restrict all space-time norms to $[0,T] \times \T^2$. \\ 

We let $\tau = \tau(T,\lambda)\in (0,1)$ remain to be chosen and define
\begin{equation*}
D_j := \big\| \psi_{\leq N} \big\|_{X^{1/4-\delta_2,b_0}([0,j\tau])}
\end{equation*}
for all $j\in \Nzero$. If $j\in \Nzero$ satisfies $(j+1)\tau\leq T$, it follows from Lemma \ref{prelim:lem-Xnub} that 
\begin{equation}\label{proof:eq-psi-p0}
\begin{aligned}
D_{j+1} &=  \big\| \psi_{\leq N} \big\|_{X^{1/4-\delta_2,b_0}([0,(j+1)\tau])} \\
&\lesssim D_j + \tau^{b_+-b_0} \big\| \psi_{\leq N} \big\|_{X^{1/4-\delta_2,b_+}([0,(j+1)\tau])}. 
\end{aligned}
\end{equation}
Using the evolution equation for $\psi_{\leq N}$, i.e., \eqref{ansatz:eq-psi-1}-\eqref{ansatz:eq-psi-2}, it follows that \begin{align}
&\hspace{3ex}\big\| \psi_{\leq N}  \big\|_{X^{1/4-\delta_2,b_+}([0,(j+1)\tau])} \notag  \\ 
&\lesssim  
\big\|  \Lin[ll][\leq N] \psi_{\leq N}   \big\|_{X^{1/4-\delta_2,b_+}([0,(j+1)\tau])} \label{proof:eq-psi-p1} \\ 
&+
\big\|    \Lin[sim][\leq N] \big( \chi_{\leq N} + \psi_{\leq N} \big) \big\|_{X^{1/4-\delta_2,b_+}([0,(j+1)\tau])} \label{proof:eq-psi-p2} \\ 
&+
\big\|      \Lin[gg][\leq N] \big( \chi_{\leq N} + \psi_{\leq N} \big) \big\|_{X^{1/4-\delta_2,b_+}([0,(j+1)\tau])} \label{proof:eq-psi-p3} \\ 
&+
\Big\|    \Duh \Big[ \big( A_{\leq N,\alpha} A^\alpha_{\leq N} - \scrm^{\hspace{-0.4ex}2}_{\leq N} \big) \,  \big( \chi_{\leq N} +\psi_{\leq N} \big)  \Big]  \Big\|_{X^{1/4-\delta_2,b_+}([0,(j+1)\tau])} \label{proof:eq-psi-p4} \\ 
&+
\big\| \mathcal{W}(t) (\phi_0,\phi_1)    \big\|_{X^{1/4-\delta_2,b_+}([0,(j+1)\tau])} \label{proof:eq-psi-p5}.
\end{align}
We now let $C_0=C_0(b_0,b_+,\delta_1,\delta_2)$ be sufficiently large. Then, we claim that, on an event with sufficiently high probability, it holds that 
\begin{equation}\label{proof:eq-psi-p6}
\begin{aligned}
&\eqref{proof:eq-psi-p1} + \eqref{proof:eq-psi-p2} + \eqref{proof:eq-psi-p3} + \eqref{proof:eq-psi-p4} + \eqref{proof:eq-psi-p5} \\ 
\leq& \, C_0 (T\lambda)^{C_0} \Big( \big\| \psi_{\leq N}  \big\|_{X^{1/4-\delta_2,b_0}([0,(j+1)\tau])} + R \Big) + \exp\big( C_0 (T\lambda)^{C_0} \big). 
\end{aligned}
\end{equation}
The estimates of the five individual terms in \eqref{proof:eq-psi-p6} can now be obtained as follows: 
\begin{itemize}
    \item[$\bullet$] For \eqref{proof:eq-psi-p1}, this estimate follows from Proposition \ref{operator:prop-main}.\ref{operator:item-low-high}. 
    \item[$\bullet$]  For \eqref{proof:eq-psi-p2}, this estimate follows from Proposition \ref{product:prop-main} (for the $\chi_{\leq N}$-term) and Proposition \ref{operator:prop-main}.\ref{operator:item-high-high} (for the $\psi_{\leq N}$-term).
    \item[$\bullet$] For \eqref{proof:eq-psi-p3}, this estimate follows from Proposition \ref{operator:prop-main}.\ref{operator:item-high-low}.
    \item[$\bullet$] For \eqref{proof:eq-psi-p4}, this estimate follows from Proposition \ref{product:prop-main} (for the $\chi_{\leq N}$-term) and just Corollary \ref{vector:cor-regularity} (for the $\psi_{\leq N}$-term). 
    \item[$\bullet$] Finally, for \eqref{proof:eq-psi-p5}, this estimate follows from Lemma \ref{prelim:lem-Xnub}. 
\end{itemize}
By combining \eqref{proof:eq-psi-p0} and \eqref{proof:eq-psi-p6}, it now follows that 
\begin{equation*}
D_{j+1} \leq C_0 \Big( D_j + (T\lambda)^{C_0} R + \exp\big( C_0 (T\lambda)^{C_0}\big) \Big)  + \tau^{b_+-b_0} C_0 (T\lambda)^{C_0} D_{j+1}. 
\end{equation*}
Similar as in the proof of Lemma \ref{operator:lem-resolvent}, the desired estimate now follows by first choosing $\tau=\tau(\lambda,T)$ and then iterating. 
\end{proof}

\begin{proposition}[Control of $\phi_{\leq N}$]\label{proof:prop-phi}
Let $R\geq 1$ and let $(\phi_0,\phi_1) \in \mathscr{H}^{1/4}_x(\T^2)$ satisfy $\| (\phi_0,\phi_1)\|_{\mathscr{H}_x^{1/4}}\leq R$.
Furthermore, let $C=C(b_-,b_0,b_+,\delta_0,\delta_1,\delta_2)$ be sufficiently large and $0<c\ll 1$ be sufficiently small. Then, for all $\lambda \geq 1$, there exists an event $E_{\lambda}$ which satisfies 
\begin{equation}\label{proof:eq-phi-probability}
\mathbb{P}\big( E_{\lambda} \big) \geq 1 - c \exp(-\lambda)
\end{equation}
and such that, on this event, the following estimates are satisfied: For all $M,N \in \dyadic$ and $T\geq 1$, it holds that 
\begin{align}
\big\| \phi_{\leq N}[t] \big\|_{\Cs_t^0 \mathscr{H}_x^{-\delta}([0,T]\times \T^2)} &\leq R \exp\big( C (T\lambda)^C \big), \label{proof:eq-phi-1} \\
\big\| \phi_{\leq M}[t] - \phi_{\leq N}[t] \big\|_{\Cs_t^0 \mathscr{H}_x^{-\delta}([0,T]\times \T^2)} &\leq R \exp\big( C (T\lambda)^C \big) \min(M,N)^{-\kappa}. \label{proof:eq-phi-2} 
\end{align}
\end{proposition}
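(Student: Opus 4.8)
The plan is to decompose $\phi_{\leq N} = \chi_{\leq N} + \psi_{\leq N}$ and handle the scalar field and its time derivative separately, since the main subtlety is that $\partial_t\phi_{\leq N}$ involves the term $\partial_t(A^0_{\leq N}\phi_{\leq N})$ which cannot be placed in $L^2_tH^{-1-\delta}_x$. First I would record that the $H^{-\delta}_x$-component of \eqref{proof:eq-phi-1}–\eqref{proof:eq-phi-2} follows immediately: by Proposition \ref{proof:prop-psi} we have $\psi_{\leq N}\in X^{1/4-\delta_2,b_0}$ with the stated bounds, and by Lemma \ref{operator:lem-resolvent} (the bound \eqref{operator:eq-chi-1}–\eqref{operator:eq-chi-2}) we have $\chi_{\leq N}\in X^{-\delta_1,b_0}$ with the stated bounds; since $-\delta_1 \geq -\delta$ and $X^{\nu,b_0}\hookrightarrow \Cs^0_tH^\nu_x$ by Lemma \ref{prelim:lem-Xnub}(ii), this gives $\|\phi_{\leq N}\|_{\Cs^0_tH^{-\delta}_x}\leq R\exp(C(T\lambda)^C)$ and likewise for the difference, with the $\min(M,N)^{-\kappa}$ gain coming from the difference estimates in Proposition \ref{proof:prop-psi} and Lemma \ref{operator:lem-resolvent}.

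Next I would treat $\partial_t\phi_{\leq N}$, which by differentiating the Duhamel formulation \eqref{ansatz:eq-phiN-a} in time reduces to controlling $\partial_t z$, $\partial_t\mathcal{W}(t)(\phi_0,\phi_1)$, and the time derivatives of the three Duhamel terms $\Duh[\partial_\alpha(A^\alpha_{\leq N}\phi_{\leq N})]$, $\Duh[(A_{\leq N,\alpha}A^\alpha_{\leq N}-\scrm^2_{\leq N})\phi_{\leq N}]$. The linear piece $\partial_t z$ is controlled by Corollary \ref{vector:cor-regularity-z} (it lies in $\Cs^0_tH^{-1-\delta}_x$ by the same argument used for $\partial_tA^\alpha_K$ in Corollary \ref{vector:cor-regularity}), and $\partial_t\mathcal{W}(t)(\phi_0,\phi_1)$ is handled by Lemma \ref{prelim:lem-Xnub}(i). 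For the cubic Duhamel term and for the spatial parts $\partial_a(A^a_{\leq N}\phi_{\leq N})$ of the derivative nonlinearity, one uses $\partial_t\Duh[G] = -\int_0^t \cos((t-s)|\nabla|)G(s)\,ds$, so these map $X^{-1-\delta,b-1}$-type inputs into $\Cs^0_tH^{-\delta}_x$; here the cubic term is controlled in $L^2_tH^{-\delta_2}_x\hookrightarrow X^{-\delta_2,b-1}$ by Proposition \ref{product:prop-main}(i) together with Corollary \ref{vector:cor-regularity} for the $\psi_{\leq N}$-contribution, and the spatial derivative nonlinearity splits into low$\times$high, high$\times$high, high$\times$low pieces controlled by Proposition \ref{operator:prop-main} and Proposition \ref{product:prop-main}(ii), all after peeling off the resolvent via Lemma \ref{operator:lem-resolvent}.

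The main obstacle is the temporal component $\partial_t(A^0_{\leq N}\phi_{\leq N})$ of the derivative nonlinearity: one cannot afford to let the time derivative fall on $\phi_{\leq N}$ or na\"ively on $A^0_{\leq N}$ inside an $X^{s,b}$ estimate. The key observation is that in the Duhamel integral $\Duh[\partial_0(A^0_{\leq N}\phi_{\leq N})]$ one integrates by parts in time (using $A^0_{\leq N}(0)=0$, as already done in \eqref{operator:eq-hh-p2} and \eqref{operator:eq-hl-p1}) to obtain $\partial_t\Duh[\partial_0(A^0_{\leq N}\phi_{\leq N})] = -\partial_t\int_0^t\cos((t-s)|\nabla|)A^0_{\leq N}(s)\phi_{\leq N}(s)\,ds = -A^0_{\leq N}(t)\phi_{\leq N}(t) + |\nabla|\int_0^t\sin((t-s)|\nabla|)A^0_{\leq N}(s)\phi_{\leq N}(s)\,ds$. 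The boundary term $A^0_{\leq N}(t)\phi_{\leq N}(t) = A^0_{\leq N}\chi_{\leq N} + A^0_{\leq N}\psi_{\leq N}$ is controlled in $\Cs^0_tH^{-\delta_2}_x$ precisely by Corollary \ref{product:cor-Ct} for the $\chi$-part (this is exactly why that corollary was proven) and by Corollary \ref{vector:cor-regularity} together with Proposition \ref{proof:prop-psi} for the $\psi$-part; the remaining integral term, being smoothed by $|\nabla|\cdot|\nabla|^{-1}$ relative to a $\Duh$, is again handled by the product and operator estimates as above. Finally, after assembling all contributions on the intersection of the relevant high-probability events (from Proposition \ref{proof:prop-psi}, Lemma \ref{operator:lem-resolvent}, Proposition \ref{product:prop-main}, and Corollary \ref{product:cor-Ct}), one runs the same time-step/Gronwall iteration over growing subintervals $[0,j\tau]$ as in the proof of Lemma \ref{operator:lem-resolvent} to absorb the nonlinear dependence on $\psi_{\leq N}$ and close \eqref{proof:eq-phi-1}; the difference estimate \eqref{proof:eq-phi-2} follows from the same argument with the $\min(M,N)^{-\kappa}$ gains tracked through each ingredient.
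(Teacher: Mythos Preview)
Your approach is essentially the same as the paper's: the decomposition $\phi_{\leq N}=\chi_{\leq N}+\psi_{\leq N}$, the reduction of the $\Cs^0_tH^{-\delta}_x$ bound to Lemma~\ref{operator:lem-resolvent} and Proposition~\ref{proof:prop-psi}, and most importantly the integration-by-parts identity for $\partial_t\Duh[\partial_0(A^0_{\leq N}\phi_{\leq N})]$ producing the boundary term $A^0_{\leq N}(t)\phi_{\leq N}(t)$ handled via Corollary~\ref{product:cor-Ct} all match the paper exactly. The one unnecessary step is your final Gronwall iteration: at this point $\psi_{\leq N}$ is already fully controlled by Proposition~\ref{proof:prop-psi}, so there is no nonlinear dependence left to absorb and the paper simply assembles the pieces directly.
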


\begin{proof}[Proof of Proposition \ref{proof:prop-phi}:]
We only prove \eqref{proof:eq-phi-1}, since \eqref{proof:eq-phi-2} follows from minor modifications. 
Using Proposition \ref{ansatz:prop-ansatz}, it follows that $\phi_{\leq N}=\chi_{\leq N}+\psi_{\leq N}$. Then, the bound on $\phi_{\leq N}$ in $\Cs_t^0 H_x^{-\delta}$ follows directly from the embedding in Lemma \ref{prelim:lem-Xnub}, Lemma \ref{operator:lem-resolvent}, and Proposition \ref{proof:prop-psi}. Thus, it remains to control the time-derivative. \\

In the following proof, all space-time norms are restricted to $[0,T]\times \T^2$. From \eqref{ansatz:eq-phiN-a}, it follows that
\begin{align}
\partial_t \phi_{\leq N} 
&= - 2 i \partial_t \Duh \Big[ \partial_0 \big( A_{\leq N}^0 \phi_{\leq N} \big) \Big] \label{proof:eq-time-p1} \\
&- 2 i \partial_t \Duh \Big[ \partial_a \big( A_{\leq N}^a \phi_{\leq N} \big) \Big]\label{proof:eq-time-p2} \\
&+ \partial_t \Duh \Big[ \Big( A_{\leq N,\alpha} A^\alpha_{\leq N} - \scrm^{\hspace{-0.4ex}2}_{\leq N} \Big) \phi_{\leq N} \Big] \label{proof:eq-time-p3} \\
&+ \partial_t z + \partial_t \mathcal{W}(t) (\phi_0,\phi_1). \label{proof:eq-time-p4} 
\end{align}
We first address the contribution of \eqref{proof:eq-time-p1}, which is the main term. Using $A^0_{\leq N}(0)=0$ and integration by parts, it follows that 
\begin{equation}\label{proof:eq-time-p9}
\begin{aligned}
\partial_t \Duh \Big[ \partial_0 \big( A_{\leq N}^0 \phi_{\leq N} \big) \Big] 
=& \partial_t \int_0^t \frac{\sin \big( (t-t^\prime) |\nabla|\big)}{|\nabla|} \partial_{t^\prime}
\big(A^0_{\leq N}(t^\prime) \phi_{\leq N}(t^\prime) \big) \\ 
=& \int_0^t \cos \big( (t-t^\prime) |\nabla| \big) \partial_{t^\prime}
\big(A^0_{\leq N}(t^\prime) \phi_{\leq N}(t^\prime) \big)  \\
=& A^0_{\leq N}(t) \phi_{\leq N}(t) - |\nabla| \int_0^t \dt^\prime \sin \big( (t-t^\prime) |\nabla| \big) \big( A^0_{\leq N}(t^\prime) \phi_{\leq N}(t^\prime) \big). 
\end{aligned}
\end{equation}
We now estimate 
\begin{equation}\label{proof:eq-time-p7} 
\big\| A_{\leq N}^0(t) \phi_{\leq N}(t) \big\|_{\Cs_t^0 H_x^{-\delta}} 
\lesssim \big\| A_{\leq N}^0(t) \chi_{\leq N}(t) \big\|_{\Cs_t^0 H_x^{-\delta}} 
 + \big\| A_{\leq N}^0(t) \psi_{\leq N}(t) \big\|_{\Cs_t^0 H_x^{-\delta}}. 
\end{equation}
The first summand in \eqref{proof:eq-time-p7} is the subject of Corollary \ref{product:cor-Ct} and the second summand in \eqref{proof:eq-time-p7} can be estimated using Corollary \ref{vector:cor-regularity} and Proposition \ref{proof:prop-psi}. We further estimate 
\begin{equation}\label{proof:eq-time-p8}
\begin{aligned}
&\Big\| |\nabla| \int_0^t \dt^\prime \sin \big( (t-t^\prime) |\nabla| \big) \big( A^0_{\leq N}(t^\prime) \phi_{\leq N}(t^\prime) \big) \Big\|_{C_t^0 H_x^{-1-\delta}} \\
\lesssim& \, \big\| A^0_{\leq N}  \phi_{\leq N} \big\|_{L_t^1 H_x^{-\delta}} 
\lesssim \, T  \big\|  A^0_{\leq N}  \phi_{\leq N} \big\|_{L_t^\infty H_x^{-\delta}}. 
\end{aligned}
\end{equation}
Thus, \eqref{proof:eq-time-p8} is controlled by \eqref{proof:eq-time-p7}. This completes the estimate of \eqref{proof:eq-time-p9} and therefore the estimate of the main term \eqref{proof:eq-time-p1}. \\

It remains to control the contributions of \eqref{proof:eq-time-p2}, \eqref{proof:eq-time-p3}, and \eqref{proof:eq-time-p4}.  Using Lemma \ref{prelim:lem-Xnub} and our Ansatz, it follows that 
\begin{align}
&\big\| \eqref{proof:eq-time-p2} \big\|_{\Cs_t^0 H_x^{-1-\delta}} 
+ \big\| \eqref{proof:eq-time-p3} \big\|_{\Cs_t^0 H_x^{-1-\delta}}  \notag \\
\lesssim& \, 
T^2 \Big\|  \partial_a \big( A_{\leq N}^a \phi_{\leq N} \big) \Big\|_{L_t^2 H_x^{-1-\delta}}
+ T^2 \Big\|  \Big( A_{\leq N,\alpha} A^\alpha_{\leq N} - \scrm^{\hspace{-0.4ex}2}_{\leq N} \Big) \phi_{\leq N}  \Big\|_{L_t^2 H_x^{-1-\delta}} \notag \\
\lesssim& \, 
T^2 \max_{a=1,2} \Big\|   A_{\leq N}^a \chi_{\leq N}  \Big\|_{L_t^2 H_x^{-\delta}}
+ T^2 \Big\|  \Big( A_{\leq N,\alpha} A^\alpha_{\leq N} - \scrm^{\hspace{-0.4ex}2}_{\leq N} \Big) \chi_{\leq N}  \Big\|_{L_t^2 H_x^{-1-\delta}} \label{proof:eq-time-p5} \\
+& \, 
T^2 \max_{a=1,2} \Big\|   A_{\leq N}^a \psi_{\leq N}  \Big\|_{L_t^2 H_x^{-\delta}}
+ T^2 \Big\|  \Big( A_{\leq N,\alpha} A^\alpha_{\leq N} - \scrm^{\hspace{-0.4ex}2}_{\leq N} \Big) \psi_{\leq N}  \Big\|_{L_t^2 H_x^{-1-\delta}}. \label{proof:eq-time-p6}
\end{align}
The terms in \eqref{proof:eq-time-p5} can be bounded using Proposition \ref{product:prop-main}. The terms in \eqref{proof:eq-time-p6} can be bounded using Corollary \ref{vector:cor-regularity}, Lemma \ref{vector:lem-quadratic}, and Proposition \ref{proof:prop-psi}. 

Finally, the contribution of \eqref{proof:eq-time-p4} can bounded directly using Corollary \ref{vector:cor-regularity-z} and Lemma \ref{prelim:lem-Xnub}. 
\end{proof}

Equipped with Proposition \ref{proof:prop-phi}, we can  now prove our main theorem. 

\begin{proof}[Proof of Theorem \ref{intro:thm-main}:]
By time-reversal symmetry, it suffices to prove almost-sure convergence on $[0,T]\times \T^2$ for all $T\geq 1$. The almost-sure convergence of $A_{\leq N}[t]$ follows from Corollary \ref{vector:cor-regularity}. The almost-sure convergence of $\phi_{\leq N}[t]$ follows from Proposition \ref{proof:prop-phi}.
\end{proof}

\section{Failure of a probabilistic null-form estimate} \label{section:null-form}

In this section, we prove the failure of a probabilistic null-form estimate, i.e., Theorem \ref{intro:thm-failure}. To this end, we first state the following lemma regarding the space-time covariances of $z$.

\begin{lemma}[Space-time covariances of $z$]\label{failure:lem-covariance}
For all $k, l \in \Z^2 \backslash \{0\}$ and $t,s\geq 0$, it holds that 
\begin{align*}
&\E \Big[ \widehat{z}(t,k) \overline{\widehat{z}(s,l)}\Big]  \\
=& \,  \mathbf{1} \big\{ k = l \big\} \bigg( \frac{1}{2|k|^2} (t\wedge s) \cos \big( (t-s) |k| \big) + \frac{1}{4|k|^3} \Big( \sin\big( |t-s| |k| \big) - \sin \big( (t+s) |k| \big) \Big) \bigg). 
\end{align*}
\end{lemma}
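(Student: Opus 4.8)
The statement is an explicit covariance computation for the first Picard iterate $z$ of the inhomogeneous wave equation, so the plan is to compute it directly from the spectral representation of $z$ and It\^o's isometry, in complete parallel with the proof of Lemma \ref{vector:lem-covariance}. First I would write $\widehat{z}(t,k) = \int_0^t \mathrm{d}Z_s(k)\, \frac{\sin((t-s)|k|)}{|k|}$ for $k\neq 0$, which follows directly from \eqref{ansatz:eq-z}, the definition of the Duhamel integral \eqref{ansatz:eq-duhamel}, and the definition of the complex-valued space-time white noise \eqref{prelim:eq-zeta}. Since $\overline{Z_s(l)}$ is (the conjugate of) an independent Brownian motion unless $l=k$ — recall from Subsection \ref{section:probability} that distinct Brownian motions in the family are independent — It\^o's isometry forces $k=l$ and yields
\begin{equation*}
\E\Big[\widehat{z}(t,k)\overline{\widehat{z}(s,l)}\Big]
= \mathbf{1}\{k=l\}\,\frac{1}{|k|^2}\int_0^{t\wedge s}\mathrm{d}r\,\sin\big((t-r)|k|\big)\sin\big((s-r)|k|\big).
\end{equation*}

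Next I would carry out the elementary $r$-integral. Writing $\sin((t-r)|k|)\sin((s-r)|k|) = \tfrac12\big(\cos((t-s)|k|) - \cos((t+s-2r)|k|)\big)$, the first term integrates to $\tfrac12 (t\wedge s)\cos((t-s)|k|)$, and the second term, after substituting $u = t+s-2r$, contributes $-\tfrac12\cdot\tfrac{1}{2|k|}\big(\sin((t+s)|k|) - \sin(|t-s||k|)\big)$ (one checks the endpoints: at $r=0$ the argument is $(t+s)|k|$, at $r = t\wedge s$ it is $|t-s||k|$). Multiplying by $|k|^{-2}$ gives exactly
\begin{equation*}
\mathbf{1}\{k=l\}\bigg(\frac{1}{2|k|^2}(t\wedge s)\cos\big((t-s)|k|\big) + \frac{1}{4|k|^3}\Big(\sin\big(|t-s||k|\big) - \sin\big((t+s)|k|\big)\Big)\bigg),
\end{equation*}
which is the claimed identity.

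There is essentially no obstacle here: the only mild point of care is the bookkeeping of conjugates and the reality condition $\overline{Z_s(n)} = Z_s(-n)$ — one must confirm that $\E[\widehat z(t,k)\overline{\widehat z(s,l)}]$ picks out $k=l$ (not $k=-l$), which is correct because the covariance being computed involves a conjugate, so It\^o's isometry pairs $\mathrm{d}Z_r(k)$ with $\overline{\mathrm{d}Z_r(l)}$ and these are correlated precisely when $k=l$. I would also note, as in Lemma \ref{vector:lem-explicit}, that the integrand $\sin((t-r)|k|)/|k|$ is a genuine function (not a distribution) so the It\^o integral and isometry apply without subtlety. The case $k=0$ or $l=0$ is excluded from the statement, so no separate zero-mode analysis is needed here (in contrast with Lemma \ref{vector:lem-covariance}\ref{vector:item-convariance-spatial}).
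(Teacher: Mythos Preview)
Your proposal is correct and follows exactly the approach the paper indicates: the paper's own proof is simply ``The computation is essentially as in Lemma~\ref{vector:lem-covariance} and we therefore omit the details,'' and what you wrote is precisely that computation carried out in full. One small remark: the reality condition $\overline{Z_s(n)}=Z_s(-n)$ you mention does \emph{not} hold here---it is imposed only on the $W$-processes (because the currents $J^j$ are real-valued), whereas the $(Z_t(n))_{n\in\Z^2}$ are genuinely independent complex Brownian motions since $\zeta$ is complex-valued; this does not affect your argument, and indeed makes the bookkeeping simpler than you suggest.
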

\begin{proof}
The computation is essentially as in Lemma \ref{vector:lem-covariance} and we therefore omit the details. 
\end{proof}

Equipped with Lemma \ref{failure:lem-covariance}, we now prove Theorem \ref{intro:thm-failure}. 

\begin{proof}[Proof of Theorem \ref{intro:thm-failure}:] 
We first reduce the first claim \eqref{intro:eq-failure-1} to the second claim \eqref{intro:eq-failure-2}. To this end, assume that $\psi\in \Cs^\infty_c((0,\infty) \times \T_x^2 \rightarrow [-1,1])$ satisfies \eqref{intro:eq-failure-2}. Since the vector-field $(-\partial_2 \psi, \, \partial_1 \psi)$ is divergence-free, it holds that 
\begin{equation*}
\int_{\R} \dt \int_{\T^2} \dx \, 
\begin{pmatrix} - \partial_2 \psi \\[-1ex] \partial_1 \psi \end{pmatrix}
\cdot 
\Im \big( P_{\leq N} z \, \overline{\nabla P_{\leq N} z} \big) 
= \int_{\R} \dt \int_{\T^2} \dx \, 
\begin{pmatrix} - \partial_2 \psi \\[-1ex] \partial_1 \psi \end{pmatrix}
\cdot 
\Leray \Im \big( P_{\leq N} z \,  \overline{\nabla P_{\leq N} z} \big).
\end{equation*}
By inserting the identity \eqref{intro:eq-null-identity}, then using that $\partial_1 \psi$ and $\partial_2 \psi$ are mean-zero to eliminate the contribution of the constant term in \eqref{intro:eq-null-identity}, and finally using $Q_{11}=Q_{22}=0$ and $Q_{12}=-Q_{21}$, it follows that 
\begin{align}
&\int_{\R} \dt \int_{\T^2} \dx \, 
\begin{pmatrix} - \partial_2 \psi \\[-1ex] \partial_1 \psi \end{pmatrix}
\cdot 
\Leray \Im \big( P_{\leq N} z \,  \overline{\nabla P_{\leq N} z} \big) \notag \\
=&\, -\int_{\R} \dt \int_{\T^2} \dx \, 
\begin{pmatrix} - \partial_2 \psi \\[-1ex] \partial_1 \psi \end{pmatrix}
\cdot 
\begin{pmatrix} 
\Delta^{-1} \partial_2 \Im Q_{12} \big( P_{\leq N} z, \overline{P_{\leq N} z} \big)  \\[-1ex]
\Delta^{-1} \partial_1 \Im Q_{21} \big( P_{\leq N} z, \overline{P_{\leq N} z} \big) 
\end{pmatrix}  \notag \\
=& \,  -\int_{\R} \dt \int_{\T^2} \dx \, 
\Big( - \Delta^{-1}\partial_2^2 \psi \cdot  \Im Q_{12}\big( P_{\leq N} z, \overline{P_{\leq N} z} \big)    
+ \Delta^{-1} \partial_1^2 \psi \cdot \Im Q_{21} \big( P_{\leq N} z, \overline{P_{\leq N} z} \big)  \Big) \notag \\ 
=& \, \int_{\R} \dt \int_{\T^2} \dx \,  \psi  \cdot \Im Q_{12}  \big( P_{\leq N} z \,  \overline{P_{\leq N} z} \big) . \label{failure:eq-p1}
\end{align}
Since we are currently assuming \eqref{intro:eq-failure-2}, it therefore follows that
\begin{equation}\label{failure:eq-p2}
 \operatorname{Var} 
\left( \int_{\R} \dt \int_{\T^2} \dx \, 
\begin{pmatrix} - \partial_2 \psi \\[-1ex] \partial_1 \psi \end{pmatrix}
\cdot 
\Im \big( P_{\leq N} z \overline{\nabla P_{\leq N} z} \big) \right) \geq c \log(N) - c^{-1}.  
\end{equation}
As a result, it follows that 
\begin{align*}
&\max \left( 
 \operatorname{Var}\left( \int_{\R} \dt \int_{\T^2} \dx \, 
 \partial_1 \psi \,
\Im \big( P_{\leq N} z \overline{\partial_2 P_{\leq N} z} \big) \right),
\operatorname{Var}\left( \int_{\R} \dt \int_{\T^2} \dx \, 
 \partial_2 \psi \,
\Im \big( P_{\leq N} z \overline{\partial_1 P_{\leq N} z} \big) \right)
\right) 
\\ \geq& \,  c \log(N) - c^{-1}. 
\end{align*}
Thus, by choosing $\varphi \in C^\infty_c((0,\infty) \times \T_x^2 \rightarrow [-1,1])$ as a scalar-multiple of $\partial_2 \psi$ or $\partial_1 \psi$, we see that the desired claim \eqref{intro:eq-failure-1} is satisfied for $j=1$ or $j=2$ (or both). Using the symmetry in the first and second coordinate, however, \eqref{intro:eq-failure-1} then has to be satisfied for both $j=1$ and $j=2$ (with possibly different choices of $\varphi$).\\

It now remains to prove the second claim \eqref{intro:eq-failure-2} and we start with elementary simplifications.  We first note that
\begin{equation*}
\overline{Q_{12}\big( P_{\leq N} z, \overline{ P_{\leq N} z} \big)} 
= Q_{12}\big( \overline{ P_{\leq N} z}, P_{\leq N} z \big)
= - Q_{12}\big( P_{\leq N} z, \overline{ P_{\leq N} z} \big). 
\end{equation*}
Thus, $Q_{12}( P_{\leq N} z, \overline{ P_{\leq N} z} )$ is purely imaginary and we can therefore consider \eqref{intro:eq-failure-2} with the imaginary part $\Im Q_{12}( P_{\leq N} z, \overline{ P_{\leq N} z} )$ replaced by $Q_{12}( P_{\leq N} z, \overline{ P_{\leq N} z})$. Furthermore, we can consider $\psi \in C^\infty_c((0,\infty)\times \T_x^2 \rightarrow \C)$ instead of  $\psi \in C^\infty_c((0,\infty)\times \T_x^2 \rightarrow [-1,1])$, since we can eventually replace $\psi$ by a scalar multiple of its real or imaginary part. We now choose
\begin{equation*}
\psi(t,x) = \chi(t) e^{i \langle m,x\rangle},
\end{equation*}
where $m\in \Z^2\backslash \{0\}$ is arbitrary and $\chi=\chi_m\in \Cs^\infty_c((0,\infty)\rightarrow \R)$ remains to be chosen. In order to compute the space-time integral in \eqref{intro:eq-failure-2}, we first compute
\begin{align*}
&\int_{\T_x^2} \dx \, e^{i \langle m ,x \rangle} Q_{12} \big( P_{\leq N} z , \overline{P_{\leq N} z} \big) \\
=& \sum_{k,l\in \Z^2 \backslash \{ 0 \} } \rho_{\leq N}(k) \rho_{\leq N}(l) \big( k_1 l_2 - k_2 l_1\big) \widehat{z}(t,k) \overline{\widehat{z}(t,l)} \int_{\T_x^2} \dx \, e^{i\langle m ,x \rangle} e^{i \langle k-l,x \rangle} \allowdisplaybreaks[3] \\
=& (2\pi)^2 \sum_{\substack{k,l \in \Z^2 \backslash \{ 0 \}  \colon \\ l = k+m}}
 \rho_{\leq N}(k) \rho_{\leq N}(l) \big( k_1 l_2 - k_2 l_1\big) \widehat{z}(t,k) \overline{\widehat{z}(t,l)} \allowdisplaybreaks[3] \\ 
 =& (2\pi)^2 \sum_{ \substack{k\in \Z^2 \colon \\ k \neq 0, \pm m}}
 \rho_{\leq N}(k) \rho_{\leq N}(k+m) \big( k_1 m_2 - k_2 m_1\big) \widehat{z}(t,k) \overline{\widehat{z}(t,k+m)}. 
\end{align*}
In the last line, we used that $k_1 k_2 - k_2 k_1 =0$, i.e., the null structure. Using the   factor \mbox{$(k_1 m_2 - k_2 m_1)$}, we also restricted to $k\neq 0,\pm m$, which will be convenient (for notational reasons) below. It then follows that
\begin{equation}\label{failure:eq-p3}
\begin{aligned}
&\int_{\R} \dt \int_{\T^2} \dx \, \psi \, Q_{12}\big( P_{\leq N} z, \overline{P_{\leq N} z} \big) \\
=& (2\pi)^2 \int_\R \dt \, \chi(t) \sum_{ \substack{k\in \Z^2 \colon \\ k \neq 0, \pm m}} \rho_{\leq N}(k) \rho_{\leq N}(k+m) \big( k_1 m_2 - k_2 m_1\big) \widehat{z}(t,k) \overline{\widehat{z}(t,k+m)}. 
\end{aligned}
\end{equation}
Due to Lemma \ref{failure:lem-covariance}, \eqref{failure:eq-p3} has zero expectation. It therefore follows that 
\begin{align}
&\frac{1}{(2\pi)^4} \operatorname{Var} \left( 
\int_{\R} \dt \int_{\T^2} \dx \, \psi \, Q_{12}\big( P_{\leq N} z, \overline{P_{\leq N} z} \big) 
\right)  \notag \\
=&\, \int_{\R} \dt \int_{\R} \dt^\prime \, \chi(t) \chi(t^\prime) \sum_{ \substack{k,k^\prime\in \Z^2 \colon \\ k,k^\prime \neq 0, \pm m}}  \bigg( 
 \rho_{\leq N}(k) \rho_{\leq N}(k+m)
 \rho_{\leq N}(k^\prime) \rho_{\leq N}(k^\prime+m) \label{failure:eq-p4} \\
 &\times\,\big( k_1 m_2 - k_2 m_1\big) 
  \big( k_1^\prime m_2 - k_2^\prime m_1\big) 
 \E \Big[ \widehat{z}(t,k) \overline{\widehat{z}(t,k+m)} \, 
  \overline{\widehat{z}(t^\prime,k^\prime)} \widehat{z}(t^\prime,k^\prime+m) \Big] \bigg). \notag 
\end{align}
Using Wick's theorem for products of Gaussian random variables, Lemma \ref{failure:lem-covariance}, and $m \neq 0$, it holds that 
\begin{align}
 &\E \Big[ \widehat{z}(t,k) \overline{\widehat{z}(t,k+m)} \, 
  \overline{\widehat{z}(t^\prime,k^\prime)} \widehat{z}(t^\prime,k^\prime+m) \Big] \notag  \\ 
  =& \,   \E \Big[ \widehat{z}(t,k) 
  \overline{\widehat{z}(t^\prime,k^\prime)} \Big]
  \cdot 
  \E \Big[ \overline{\widehat{z}(t,k+m)} \, 
   \widehat{z}(t^\prime,k^\prime+m) \Big] \notag \\ 
   =& 1 \big\{ k=k^\prime \big\} \, \E \Big[ \widehat{z}(t,k) 
  \overline{\widehat{z}(t^\prime,k)} \Big]
  \cdot 
  \overline{\E \Big[ \widehat{z}(t,k+m) \, 
   \overline{\widehat{z}(t^\prime,k+m)}\Big]}. \label{failure:eq-p5}
\end{align}
After inserting \eqref{failure:eq-p5} and using Lemma \ref{failure:lem-covariance}, it follows that
\begin{align}
&\hspace{1ex}\eqref{failure:eq-p4} \notag \\
\geq& \int_{\R} \dt \int_{\R} \dt^\prime \, \chi(t) \chi(t^\prime)
(t\wedge t^\prime)^2 \sum_{ \substack{k\in \Z^2 \colon \\ k \neq 0, \pm m}}  \bigg( 
 \rho_{\leq N}^2(k) \rho_{\leq N}^2(k+m)
  \big( k_1 m_2 - k_2 m_1\big)^2  \label{failure:eq-p6} \\ 
  &\times \,  \frac{\cos\big( (t-t^\prime) |k| \big) \cos\big( (t-t^\prime) |k+m|\big)}{4|k|^2 |k+m|^2} \bigg) \notag \\
  -&\,  C_{m,\chi} \bigg( \sum_{\substack{k\in \Z^2\colon \\ k \neq 0, \pm m}} \frac{(k_1 m_2 - m_2 k_1)^2}{|k|^5} \bigg), \label{failure:eq-p7}
\end{align}
where $C_{\chi,m}$ is a sufficiently large constant depending on $m$ and $\chi$. Since the sum in \eqref{failure:eq-p7} is absolutely convergent, it remains to prove a lower bound for \eqref{failure:eq-p6}. To this end, we recall the trigonometric identity 
\begin{equation}\label{failure:eq-p8}
\begin{aligned}
&\cos\big( (t-t^\prime) |k| \big) \cos\big( (t-t^\prime) |k+m|\big)  \\
=& \frac{1}{2} \Big( \cos\big( (t-t^\prime) (|k+m|-|k|) \big) + \cos\big( (t-t^\prime) (|k+m|+|k|) \big) \Big). 
\end{aligned}
\end{equation}
Due to the favorable sign in the time-frequency, the contribution of
$\cos\big( (t-t^\prime) (|k+m|+|k|) \big)$ to \eqref{failure:eq-p6} can easily be controlled via integration by parts. Thus, it remains to obtain a lower bound on 
\begin{equation}\label{failure:eq-p9} 
\begin{aligned}
&\int_{\R} \dt \int_{\R} \dt^\prime \, \chi(t) \chi(t^\prime) (t\wedge t^\prime)^2 
\sum_{ \substack{k\in \Z^2 \colon \\ k \neq 0, \pm m}}  
\bigg( 
 \rho_{\leq N}^2(k) \rho_{\leq N}^2(k+m)
  \big( k_1 m_2 - k_2 m_1\big)^2  \\ 
  &\times \, \frac{\cos\big( (t-t^\prime) (|k+m|-|k|) \big)}{8|k|^2 |k+m|^2} \bigg). 
\end{aligned}
\end{equation}
We now choose a deterministic,  nonnegative, cut-off function $\chi\in C^\infty_c((0,\infty) \rightarrow [0,1])$, which depends only on $m\in \Z^2\backslash \{0\}$ and satisfies 
\begin{equation*}
\chi(t) = 
\begin{cases} 
\begin{tabular}{ll}
$0$ \hspace{7ex}  & if $t\leq 2^{-8} |m|^{-1}$, \\ 
$1$  & if $2^{-7} |m|^{-1} \leq t\leq 2^{-6} |m|^{-1}$, \\ 
$0$  & if $t\geq 2^{-5} |m|^{-1}$. 
\end{tabular}
\end{cases}
\end{equation*}
Since $\big| |k+m| - |k| \big| \leq |m|$ for all $k\in \Z^2$, it follows that
\begin{equation}\label{failure:eq-p10}
\chi(t) \chi(t^\prime) \cos\big( (t-t^\prime) (|k+m|-|k|) \big) 
\geq \frac{1}{2} \chi(t) \chi(t^\prime). 
\end{equation}
After inserting this into \eqref{failure:eq-p9} and integrating in $t$ and $t^\prime$, it follows that
\begin{equation*}
\eqref{failure:eq-p9} \geq c_{\chi,m} 
\sum_{ \substack{k\in \Z^2 \colon \\ k \neq 0, \pm m}}
 \bigg( 
 \rho_{\leq N}^2(k) \rho_{\leq N}^2(k+m)
  \frac{\big( k_1 m_2 - k_2 m_1\big)^2}{|k|^2 |k+m|^2} \bigg). 
\end{equation*}
This sum diverges logarithmically in $N$, which finally yields the desired claim. 
\end{proof}

\bibliography{Covariant_Library}
\bibliographystyle{myalpha}

\end{document}